\newtheorem{thm}{Theorem}[section]
\newtheorem{lem}{Lemma}[section]
\newtheorem{prop}[lem]{Proposition}
\newtheorem{cor}[lem]{Corollary}
\newtheorem{q}[lem]{Question}
\newtheorem{defn}[lem]{Definition}
\newtheorem{rem}[lem]{Remark}
\newtheorem{claim}[lem]{Claim}
\numberwithin{equation}{section}
\newcommand \eps{\varepsilon}
\newlength{\originalbase}
\newcommand{\spacing}[1]{\setlength{\baselineskip}{#1\originalbase}}
\begin{document}               

\newcommand{\avint}{{- \hspace{-3.5mm} \int}}

\spacing{1}

\title{On the constant scalar curvature K\"ahler metrics(III)\\---general automorphism group}
\author{Xiuxiong Chen, Jingrui Cheng}
\maketitle
\begin{abstract}
In this paper, we derive estimates for scalar curvature type equations with more singular right hand side.
As an application, we prove Donaldson's conjecture on the equivalence between geodesic stability and existence of cscK when $Aut_0(M,J)\neq0$. Moreover, we also show that when $Aut_0(M,J)\neq0$, the properness of $K$-energy with respect to a suitably defined distance implies the existence of cscK.
\end{abstract}

\tableofcontents
\date{\today}
\section{Introduction}
This is the third and final paper in our series of papers studying extremal K\"ahler metrics in K\"ahler manifolds without boundary \cite{cc1}, \cite{cc2}. In this paper, we continue our study of the twisted cscK equation
\[
t(R_\varphi -\underline{R}) = (1-t) (tr_\varphi \chi- \underline{\chi}), \qquad {\rm where}\;\; t\in [0,1].
\]
  We studied a priori estimates for the preceding equation with $t=1$ in \cite{cc1}
and studied this one-parameter family of equations with $\chi$ being some fixed smooth real $(1,1)$ form in \cite{cc2}.  In this paper, $\chi $ is allowed to vary in a fixed K\"ahler class
with some constraints.
More specifically, we consider
\begin{equation}\label{rhs}
\chi = \chi_0 +\sqrt{-1}\partial \bar \partial f\geq0,\qquad  \sup_M f =0,\qquad \int_Me^{-pf}<\infty\textrm{ for some $p>1$.}
\end{equation}
We are able to extend many of our previous estimates in \cite{cc1}, \cite{cc2} to these  more general right hand side as (\ref{rhs}). These new apriori estimates are crucial for us to extend our proof
of  Donaldson's conjecture on geodesic stability and the Properness conjecture for $K$-energy to the setting with general automorphism group.  The results in this paper generalize our previous results in \cite{cc2} , where we assume the automorphism group is discrete.  For simplicity, we only state and prove the results on constant scalar curvature K\"ahler metrics in this paper. Analogous results for extremal K\"ahler metrics can be proved in a similar way without additional estimates. \\

Throughout this paper, we denote
$ G=Aut_0(M,J) $ as the identity component of automorphism group.
One of the main goals of this paper is to extend the Donaldson's conjecture on geodesic stability ( i.e. geodesic stability is equivalent to the existence of cscK metrics) to the general case. 
One difficulty is to characterize the {\it borderline case} where the geodesic ray is not strictly stable.  This is analogous to the case
of test configurations, for which we want to define $K$-stability, and how to characterize the test configuration whose Donaldson-Futaki invariant vanishes is a key issue.
The precise version of Donaldson's conjecture we prove is the following:
\begin{thm}\label{t1.4new}The following statements are equivalent.
\begin{enumerate}
\item There exists no constant scalar curvature K\"ahler metrics in $(M,[\omega_0])$;
\item Either the Calabi-Futaki invariant of $(M,[\omega_0])$ is nonzero, or there exists a K\"ahler potential $\varphi\in\mathcal{E}_0^1$ with $K(\varphi)<\infty$, and a locally finite energy geodesic ray in $\mathcal{E}_0^1$ initiating from $\varphi$ where the $K$-energy is non-increasing but it is not parallel to a holomorphic line;
\item Either the Calabi-Futaki invariant of $(M,[\omega_0])$ is nonzero, or for any K\"ahler potential $\varphi_0\in \mathcal{E}^1_0$, there exists a locally finite energy geodesic ray initiated from $\varphi_0$ where $K$-energy is non-increasing but it is not parallel to a holomorphic line.
\end{enumerate}
\end{thm}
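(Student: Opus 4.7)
The plan is to establish the cycle of implications $(3) \Rightarrow (2) \Rightarrow (1) \Rightarrow (3)$. The implication $(3) \Rightarrow (2)$ is immediate, since any smooth Kähler potential lies in $\mathcal{E}_0^1$ and has finite $K$-energy, so one may take that potential as the $\varphi$ in (2).

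For $(2) \Rightarrow (1)$ I would argue the contrapositive. Suppose a cscK metric $\psi$ exists. Then the Calabi-Futaki invariant vanishes automatically and $\psi$ minimizes $K$ on $\mathcal{E}_0^1$. Let $\{\varphi_s\}_{s \ge 0}$ be a locally finite-energy geodesic ray in $\mathcal{E}_0^1$ from a potential $\varphi$ with $K(\varphi) < \infty$ along which $K$ is non-increasing. By convexity of $K$ along finite-energy geodesics (Berman--Berndtsson), $s \mapsto K(\varphi_s)$ is convex and non-increasing, hence has a constant non-positive slope; the lower bound $K(\varphi_s) \ge K(\psi)$ forces this slope to vanish, so $K$ is constant along the ray. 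The rigidity theorem of Berman--Berndtsson for constancy of $K$-energy along finite-energy geodesic rays (which remains valid in the presence of a nontrivial $G$) then gives that the ray is generated by a real holomorphic vector field in $\mathfrak{aut}_0(M,J)$, i.e.\ parallel to a holomorphic line.

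The main new work is $(1) \Rightarrow (3)$. Assume there is no cscK metric and the Futaki invariant vanishes, and fix $\varphi_0 \in \mathcal{E}_0^1$, which by approximation we may take to be smooth. Set $\chi = \omega_{\varphi_0}$ and run the twisted continuity path
\[
t(R_{\varphi_t} - \underline{R}) = (1-t)(tr_{\varphi_t} \chi - \underline{\chi}), \qquad \varphi_t|_{t=0} = \varphi_0.
\]
Openness and the apriori estimates of \cite{cc1, cc2} show that the set of solvable $t \in [0,1]$ is open and non-empty; since there is no cscK it cannot reach $t=1$, so it must fail at some $t_\infty \in (0,1]$ along a sequence $t_k \uparrow t_\infty$ with $d_1(\varphi_0, \varphi_{t_k}) \to \infty$ (the estimates rule out every other failure mode). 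Parameterizing by arclength the $d_1$-geodesic segments $\gamma_k$ from $\varphi_0$ to $\varphi_{t_k}$, a subsequential limit produces a geodesic ray $\{\psi_s\}_{s \ge 0}$ in $\mathcal{E}_0^1$ emanating from $\varphi_0$. Using the monotonicity $K(\varphi_{t_k}) \le K(\varphi_0)$ along the continuity path, convexity of $K$ along each $\gamma_k$, and the divergence $d_1(\varphi_0, \varphi_{t_k}) \to \infty$, the limit map $s \mapsto K(\psi_s)$ inherits convexity and has right-derivative $\le 0$ at every $s$, hence is non-increasing.

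The main obstacle is showing that $\{\psi_s\}$ is \emph{not} parallel to a holomorphic line. Suppose for contradiction it is, generated by a real holomorphic vector field $\mathrm{Re}(X)$ with $X \in \mathfrak{aut}_0(M,J)$. Then elements $\sigma_k \in G$ obtained by flowing along $X$ for large time can be chosen so that $\sigma_k^*\varphi_{t_k}$ stays $d_1$-bounded. Each pullback satisfies a twisted equation of the same form but with right hand side $\sigma_k^* \chi = \chi + \sqrt{-1}\partial \bar\partial f_k$, where the $f_k$ are plurisubharmonic, may be normalized by $\sup_M f_k = 0$, and satisfy a uniform estimate $\int_M e^{-pf_k} < \infty$ for a fixed $p > 1$ (a standard bound for potentials of $G$-translates of a smooth Kähler form). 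This is exactly the class of right hand sides \eqref{rhs} for which the new apriori estimates of this paper are designed; they yield uniform higher-order control on $\sigma_k^*\varphi_{t_k}$, allowing extraction of a limit solution at $t = t_\infty$. Combining with openness of the solvability set within this enlarged class of equations, one produces solutions past $t_\infty$, contradicting its choice as the failure point. This rules out parallelism to a holomorphic line and completes the proof of $(1) \Rightarrow (3)$.
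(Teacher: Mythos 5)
Your skeleton ($(3)\Rightarrow(2)$ trivial, then $(2)\Rightarrow(1)$ and $(1)\Rightarrow(3)$) is the same as the paper's, but both nontrivial implications contain genuine gaps. In $(2)\Rightarrow(1)$, the assertion that a convex, non-increasing, bounded-below function ``has a constant non-positive slope, hence is constant'' is false ($s\mapsto e^{-s}$ is a counterexample); what you actually get is only $\yen[\rho]=0$, and the ray need not consist of minimizers nor have constant $K$-energy, since it starts at an arbitrary $\varphi$ with $K(\varphi)<\infty$. Consequently the ``rigidity theorem for constancy of $K$ along finite-energy rays'' you invoke is both inapplicable and not the statement needed (you need \emph{parallel to} a holomorphic line, not \emph{equal to} one). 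The paper's route is different and unavoidable: assuming a cscK exists, it takes unit-speed geodesic segments from the cscK potential to $\rho(t_k)$, uses convexity of $K$ plus compactness to extract a limiting ray $r_\infty$ consisting of $K$-minimizers (hence smooth cscK potentials), shows $r_\infty$ is parallel to $\rho$ via the convexity estimate of Theorem \ref{t5.1new} (Lemma \ref{l4.6}), and only then applies the Berman--Berndtsson uniqueness theorem to smooth cscK metrics to see that $r_\infty$ is a holomorphic line.

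In $(1)\Rightarrow(3)$ there are two problems. First, the Section~2 estimates require $e^{-pf}\in L^1$ for $p\geq\kappa_n$; for the pulled-back path the twist potential is $f_k=\frac{1-t}{t}\theta_k$ and Tian's $\alpha$-invariant only controls $\int e^{-pf_k}$ when $p\frac{1-t}{t}<\alpha$ (Lemma \ref{l3.10}), so the needed exponent is available only in the limit $t\to1$. At a fixed failure time $t_\infty<1$ your scheme breaks down; the paper handles non-solvability at $t_0<1$ by an entirely different mechanism (Corollary 6.4 of \cite{cc2} plus coercivity of $J_{\omega_0}$ yields a ray with $\yen[\rho]\leq -C(1-t_0)/t_0<0$, which cannot be parallel to a holomorphic line because parallel rays share the $\yen$ invariant by Theorem \ref{t1.2} and a holomorphic line has $\yen=0$ once the Futaki invariant vanishes). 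Second, your concluding contradiction runs the logic backwards: the clean implication is that $d_{1,G}$-unboundedness of the solutions $\varphi_{t_k}$ forces the limit ray to be $d_{1,G}$-unbounded (via the lemma following Proposition \ref{p5.4}, $d_{1,G}(\varphi,\rho(t))\geq d_1(\varphi,\rho(t))-\eps$), which directly rules out parallelism to a holomorphic line and hands you the ray required by (3); whereas if the $\varphi_{t_k}$ are $d_{1,G}$-bounded, Proposition \ref{l3.17} produces a cscK outright. Your version requires both transferring ``parallel to a holomorphic line'' back into a normalization of the $\varphi_{t_k}$ and an openness statement for the singular-twist equations past $t_\infty$, neither of which is established in the paper.
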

In the above, {\it holomorphic line} means a continuous curve $h:[0,\infty)\rightarrow\mathcal{E}_0^1$, such that for any $t>0$, the $(1,1)$ current $\omega_{h(t)}:=\omega_0+\sqrt{-1}\partial\bar{\partial}h(t)=\sigma_t^*\omega_{h(0)}$ for a one-parameter family $\sigma_t\in G$ and ``parallelism" is defined as in Definition \ref{d1.4new}. 
$\mathcal{E}^1$ is the metric completion of $\mathcal{H}$ under $L^1$ geodesic distance, and $\mathcal{E}_0^1=\mathcal{E}^1\cap\{\phi:I(\phi)=0\}$.
We refer to section 2 of our second paper \cite{cc2} and references therein for more details.

We will prove this as a result of equivalence between existence of cscK and geodesic stability, a notion to be defined in definition 1.4  below.
Following \cite{cc2}, we introduce $\yen$ invariant associated with geodesic ray and the notion of ``parallelism" between two  locally finite energy geodesic rays. This invariant characterizes the growth of $K$-energy along a geodesic ray.
\begin{defn}
Let $\phi\in\mathcal{E}_0^1$ with $K(\phi)<\infty$. Let $\rho:[0,\infty)\rightarrow\mathcal{E}_0^1$ be a locally finite energy geodesic ray with unit speed, we define:
$$
\yen[\rho]=\lim\inf_{k\rightarrow\infty}\frac{K(\rho(k))}{k}.
$$
\end{defn}
\begin{rem}
From the convexity of $K$-energy along locally finite energy geodesic ray (c.f. \cite{Darvas1602}, Theorem 4.7), we see that actually the above limit exists, namely
$$\yen[\rho]=\lim_{k\rightarrow\infty}\frac{K(\rho(k))}{k}.
$$
Moreover if $K(\rho(t))<\infty$ for any $t\geq0$, 
$$
\yen[\rho]=\lim_{k\rightarrow\infty}\big(K(\rho(k+1))-K(\rho(k))\big).
$$
\end{rem}
\begin{defn}\label{d1.4new}
Let $\rho_i:[0,\infty)\rightarrow\mathcal{E}_0^1$ be two \sloppy continuous curves, $i=1,2$.
We say that $\rho_1$ and $\rho_2$ are parallel, if   
 \sloppy $\sup_{t>0}d_1(\rho_1(t),\rho_2(t))<\infty$.
\end{defn}
Obviously, one can modify this according to $d_p$ topology for any $p \geq 1.\;$ Indeed, the notion of parallelism was first introduced in \cite{cc2}.
We can define a notion of geodesic stability/semistability in terms of $\yen$ invariant as follows:
\begin{defn}\label{d5.3}
Let $\phi_0\in\mathcal{E}_0^1$ be such that $K(\phi_0)<\infty$.
We say $(M,[\omega_0])$ is geodesic stable at $\phi_0$ if for any locally finite energy geodesic ray $\rho:[0,\infty)\rightarrow\mathcal{E}_0^1$ with unit speed, exactly one of the following alternative holds:
\begin{enumerate}
\item $\yen[\rho] > 0$, 
\item $\yen[\rho]=0$, and $\rho$ is parallel to another geodesic ray $\rho':[0,\infty)\rightarrow \mathcal{E}_0^1$, generated from a holomorphic vector field $X \in aut(M, J)$.
\end{enumerate}
We say $(M,[\omega_0])$ is geodesic semistable at $\phi_0$ as long as $\yen[\rho]\geq0$ for all geodesic ray $\rho$ described above.

We say $(M,[\omega_0])$ is geodesic stable(resp. semistable) if it is geodesic stable(resp. semistable) at every $\phi\in\mathcal{E}_0^1$.
\end{defn}
This notion of geodesic stability is equivalent to existence of cscK:
\begin{thm} There exists a cscK metric if and only if $(M,[\omega_0])$ is geodesic stable.
\end{thm}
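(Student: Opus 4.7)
\bigskip

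\noindent\textbf{Proof proposal.} The plan is to deduce this theorem directly from Theorem 1.4new (together with the authors' convexity/rigidity results for $K$-energy along $\mathcal{E}^1$-geodesic rays), handling the two implications separately.

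For the direction \emph{cscK exists $\Rightarrow$ geodesic stable}, the plan is to combine convexity of $K$-energy along locally finite energy geodesic rays with a rigidity statement for the equality case. If a cscK metric $\psi$ exists, then $K$ attains its infimum on $\mathcal{E}^1_0$ modulo the $G$-action; together with convexity of $t\mapsto K(\rho(t))$ this forces $\yen[\rho]\ge 0$ for every unit-speed geodesic ray $\rho$ from any $\phi_0\in\mathcal{E}^1_0$ with $K(\phi_0)<\infty$. When $\yen[\rho]=0$, convexity forces $K\circ\rho$ to be bounded, and the Berman--Berndtsson--Darvas--Lu rigidity for minimizers/asymptotic minimizers of $K$ on $\mathcal{E}^1$ should yield that $\rho$ coincides, up to acting by a one-parameter subgroup of $G$, with a ray generated by some $X\in aut(M,J)$. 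Parallelism in $d_1$ between $\rho$ and such a holomorphic line then follows from the boundedness of $d_1(\rho(t),\sigma_t^*\phi_0)$ that comes out of the rigidity argument. This verifies Definition 1.4new at every admissible base point.

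For the direction \emph{geodesic stable $\Rightarrow$ cscK exists}, the plan is to argue by contrapositive, invoking the equivalence (1)$\Leftrightarrow$(3) of Theorem 1.4new. If there is no cscK metric, then either (a) the Calabi--Futaki invariant on $aut(M,J)$ is nonzero, or (b) for every $\phi_0\in\mathcal{E}^1_0$ there exists a locally finite energy geodesic ray from $\phi_0$ along which $K$ is non-increasing but which is not parallel to any holomorphic line. In case (a), pick $X\in aut(M,J)$ with nonzero Futaki invariant and form the unit-speed ray generated by $\pm X$ from any $\phi_0$ with $K(\phi_0)<\infty$; one of the two signs gives a ray with $\yen[\rho]<0$, which violates both clauses of Definition \ref{d5.3}, hence destabilizes. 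In case (b), the produced ray has $\yen[\rho]\le 0$ and is by assumption not parallel to any holomorphic line, so either $\yen[\rho]<0$ or $\yen[\rho]=0$ with the non-parallelism condition violating clause (2). In either case $(M,[\omega_0])$ fails to be geodesic stable at $\phi_0$.

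The principal obstacle is the rigidity half of the forward direction: namely, proving that a ray $\rho\subset\mathcal{E}^1_0$ with $K\circ\rho$ convex and of zero slope must, up to $G$-action, be generated by a holomorphic vector field, together with the upgrade to parallelism in the specific $d_1$-distance used in Definition 1.4new. This is where the genuinely new apriori estimates of the present paper for scalar curvature equations with singular right-hand side \eqref{rhs} enter, via the authors' $\epsilon$-approximation scheme; the remaining bookkeeping (translating Theorem 1.4new's destabilizing rays into violations of Definition \ref{d5.3}) is essentially tautological once parallelism has been pinned down.
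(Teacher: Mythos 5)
Your proposal for the direction ``geodesic stable $\Rightarrow$ cscK'' is circular as stated: you invoke the equivalence $(1)\Leftrightarrow(3)$ of Theorem \ref{t1.4new}, but in the paper that theorem is deduced \emph{as an application} of the present one (the paper says explicitly that Theorem \ref{t1.4new} is proved ``as an application of equivalence between geodesic stability and existence of cscK metric''), and its proof uses Remark \ref{r4.5}, i.e.\ the existence machinery being established here. By routing through Theorem \ref{t1.4new} you bypass the entire content of this implication, which in the paper is: stability at $\phi_0$ forces $G$-invariance of $K$ (Lemma \ref{l4.1new}) and a lower bound for $K$ (Proposition \ref{p5.4}); the twisted path (\ref{3.5}) is then solvable for all $t<1$; and as $t_i\to 1$ one either has $\sup_i d_{1,G}(\phi_0,\tilde\varphi_i)<\infty$, in which case Proposition \ref{l3.17} --- this is where the new singular-right-hand-side estimates of Section 2 are actually consumed, since the pulled-back equation (\ref{eq-n-1})--(\ref{eq-n-2}) has twist $\frac{1-t_i}{t_i}\omega_{\theta_i}$ with only an $\alpha$-invariant integrability bound on $e^{-f_i}$ --- produces a smooth cscK limit, or else one extracts a geodesic ray with $\yen\le 0$ that is $d_{1,G}$-unbounded, contradicting stability. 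None of this survives in your outline. A smaller point in the same direction: the curve generated by a one-parameter subgroup of $G$ need not itself be a locally finite energy geodesic ray, so ``form the unit-speed ray generated by $\pm X$'' requires the limiting construction of Lemma \ref{l4.1new} (connect $\phi_0$ to $\varphi_{t_k}$ by geodesic segments, use convexity, compactness and lower semicontinuity of $K$) rather than being immediate.

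In the forward direction you also misplace the technical weight. The paper does not prove that a ray with $\yen[\rho]=0$ ``coincides, up to the $G$-action, with a ray generated by $X\in aut(M,J)$''; Definition \ref{d5.3} only asks for \emph{parallelism}, and the proof constructs a second ray $r_\infty$ as a limit of geodesic segments from $0$ to $\rho(t_k)$, shows $r_\infty$ consists of minimizers (hence of cscK potentials, related by automorphisms via the Berman--Berndtsson uniqueness theorem), and then proves $d_1(\rho(t),r_\infty(t))$ stays bounded. That last step is the genuinely new ingredient here, and it is \emph{not} the singular-right-hand-side estimates you cite: it is Theorem \ref{t6.1} of the appendix (the convexity $d_p(\phi_{0,t},\phi_{1,t})\le(1-t)d_p(\phi_0,\phi_1)+t d_p(\phi_0',\phi_1')$, an NPC-type statement for $(\mathcal{E}^p,d_p)$ proved via $\eps$-geodesics and Proposition \ref{p6.1}), applied in Lemma \ref{l4.6}. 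There is no off-the-shelf ``rigidity for asymptotic minimizers'' that delivers your stronger conclusion, so this half of your argument has a real gap rather than merely a deferred computation.
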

We remark that the notion of geodesic stability/semistability  is independent of the choice of base potential $\phi_0$, in virtue of  Theorem \ref{t1.2}. (see below).

After we prove this theorem, we obtain the following characterization of geodesic semistability.
\begin{thm}\label{t1.6}$(M,[\omega_0])$ is geodesic semistable if and only if the continuity path $t(R_{\varphi}-\underline{R})=(1-t)(tr_{\varphi}\omega_0-n)$ has a solution for any $t<1$.
\end{thm}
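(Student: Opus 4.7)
The proof goes in two directions.

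\textbf{Sufficiency: solvability implies semistability.} Suppose the continuity path admits a smooth solution $\varphi_t$ for every $t<1$. Each $\varphi_t$ is a minimiser of the twisted Mabuchi functional $\mathcal{F}_t(\varphi) := t\,K(\varphi) + (1-t)\,J_{\omega_0}(\varphi)$, which is convex along $C^{1,1}$-geodesics (convexity of $K$ by Berman--Berndtsson, of $J_{\omega_0}$ classically). Evaluating $\mathcal{F}_t(\rho(s)) \geq \mathcal{F}_t(\varphi_t)$ on a unit-speed locally finite energy geodesic ray $\rho$, dividing by $s$ and sending $s \to \infty$ gives
\[
t\,\yen[\rho] + (1-t)\lim_{s\to\infty}\frac{J_{\omega_0}(\rho(s))}{s} \,\geq\, 0.
\]
Since the $J_{\omega_0}$-slope of a unit-speed $d_1$-ray is a bounded quantity, sending $t\to 1^-$ forces $\yen[\rho]\geq 0$, i.e.\ geodesic semistability.

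\textbf{Necessity: semistability implies solvability for every $t<1$.} My plan is a continuity method on $T := \{ t\in[0,1) : \text{the equation is smoothly solvable at } t\}$, which is nonempty since $0\in T$ (take $\varphi\equiv 0$). \emph{Openness} follows from the implicit function theorem applied to the fourth-order linearisation, which has the form $t\,\mathcal{D}^*\mathcal{D} + (1-t)\cdot(\text{non-negative zeroth order})$; its kernel consists of potentials of $\omega_0$-Killing holomorphic vector fields in $aut(M,J)$, so the equation is inverted on a slice transverse to the $G$-orbit.

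\emph{Closedness} is the heart of the argument. Given $t_k\nearrow t_*<1$ with solutions $\varphi_{t_k}$, the plan is: (i) find $\sigma_k\in G$ with $\sup_k d_1(\sigma_k\cdot\varphi_{t_k},0)<\infty$; (ii) feed this $d_1$-bound into the apriori estimates of the present paper, which handle right-hand sides of type \eqref{rhs} and in particular the smooth case $\chi=\omega_0$ here, to upgrade it to $C^{k,\alpha}$ bounds of all orders and extract a smooth limit solution at $t_*$. For (i), I argue by contradiction: if no such normalisation exists, then Darvas's $\mathcal{E}^1$-compactness applied to the normalised $d_1$-geodesic segments from $0$ to the $\sigma_k\cdot\varphi_{t_k}$ produces, after subsequence, a unit-speed locally finite energy geodesic ray $\rho$. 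The minimisation identity $\mathcal{F}_{t_k}(\varphi_{t_k}) \leq \mathcal{F}_{t_k}(0) = 0$, together with convexity of $K$ along the approximating segments and the Darvas slope formulas, is expected to yield the bound
\[
\yen[\rho] \,\leq\, -\frac{1-t_*}{t_*}\lim_{s\to\infty}\frac{J_{\omega_0}(\rho(s))}{s} \,\leq\, 0.
\]
Semistability ($\yen[\rho]\geq 0$) then forces equality, and a further analysis (of the type that governs the equality case in Definition~\ref{d5.3}) identifies $\rho$ as parallel to a holomorphic line generated by some $X\in aut(M,J)$. This line is tangent to the $G$-orbit, so a further $G$-correction absorbs the divergence, contradicting the choice of $\sigma_k$. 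The hardest step throughout is this final loop --- constructing $\rho$, deriving the sharp $\yen$-upper bound, and identifying the equality case with a $G$-orbit direction --- which genuinely relies on the singular-RHS apriori estimates of this paper to extend the discrete-$Aut_0$ argument of \cite{cc2} to arbitrary $G$.
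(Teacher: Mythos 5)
Your sufficiency direction (solvable for all $t<1$ implies semistable) is essentially the paper's argument: the solutions minimize $K_{\omega_0,t}=tK+(1-t)J_{\omega_0}$, so this functional is bounded below, and the upper bound $J_{\omega_0}(\varphi)\leq C'd_1(0,\varphi)$ (Lemma 4.4 of \cite{cc2}) controls the $J_{\omega_0}$-slope along a unit-speed ray, giving $\yen[\rho]\geq -(1-t)C'/t\to 0$. That half is fine.

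The necessity direction has a genuine gap, concentrated exactly where you place ``the hardest step''. First, for $t<1$ the functional $K_{\omega_0,t}$ is \emph{not} $G$-invariant (because $J_{\omega_0}$ is not), so normalizing the solutions by $\sigma_k\in G$ in your step (i) destroys both the minimization identity and the equation itself; likewise the closing move ``a further $G$-correction absorbs the divergence'' has nothing to act on. Second, the planned equality-case analysis is simultaneously unnecessary and insufficient: by \cite{CoSz} (Propositions 21--22, quoted in Section 3 of this paper) one has $J_{\omega_0}(\varphi)\geq \delta\, d_1(0,\varphi)-D$ with $\delta>0$, so along any unit-speed divergent ray the $J_{\omega_0}$-slope is at least $\delta$, and your displayed inequality becomes $\yen[\rho]\leq -\tfrac{(1-t_*)\delta}{t_*}<0$ \emph{strictly} --- an outright contradiction with semistability, with no borderline case left to analyze. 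Conversely, had an equality case occurred, ``parallel to a holomorphic line'' only bounds the ray in $d_{1,G}$, not in $d_1$, so it would not contradict $d_1(0,\varphi_{t_k})\to\infty$ and your loop would not close. (A smaller point: for $t<1$ the linearization is injective --- the twist removes the holomorphy-potential kernel, which is the content of \cite{chen15}, \cite{zeng}, \cite{Hashi} --- so no slice transverse to the $G$-orbit is needed for openness.) For comparison, the paper does not rerun a continuity method at all: it invokes Corollary 6.4 of \cite{cc2}, which says that non-solvability at $t_0$ produces a unit-speed locally finite energy geodesic ray along which $K_{\omega_0,t_0}$ is non-increasing, and then the coercivity $J_{\omega_0}\geq Cd_1-D$ immediately yields $\yen[\rho]\leq -C(1-t_0)/t_0<0$. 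Your outline can be repaired into a self-contained proof of that corollary, but only after deleting the $G$-corrections and replacing the equality-case analysis by the strict coercivity of $J_{\omega_0}$.
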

As a consequence of this theorem and Theorem 1.6 of our second paper \cite{cc2}, we deduce
\begin{cor}
If the $K$-energy is bounded from below in $(M,[\omega_0])$, then $(M,[\omega_0])$ is geodesic semistable.
\end{cor}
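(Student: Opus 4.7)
The corollary will follow by chaining Theorem \ref{t1.6} of the present paper together with Theorem 1.6 of the second paper \cite{cc2}. The plan is essentially a two-line concatenation once the hypotheses are matched.

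First, I would invoke Theorem 1.6 of \cite{cc2}, which, under the assumption that the $K$-energy is bounded from below on $(M,[\omega_0])$, produces a smooth K\"ahler potential $\varphi_t$ solving the continuity path
\[
t(R_{\varphi} - \underline{R}) = (1-t)(tr_\varphi \omega_0 - n)
\]
for every $t \in [0,1)$. The mechanism there is the continuity method: openness at an interior $t$ is the standard implicit function theorem statement once one quotients out the kernel of the linearized operator, while closedness at each $t \in (0,1)$ comes from the a priori estimates of \cite{cc1,cc2} combined with the fact that a lower bound on $K$ furnishes a uniform lower bound for the twisted functional $tK+(1-t)J_{\omega_0}$, thereby preventing blow-up of $\varphi_t$ in $\mathcal{E}^1_0$ as one approaches any interior value of the parameter.

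Second, I would apply the ``solvability for every $t<1$ implies geodesic semistability'' direction of Theorem \ref{t1.6}: the existence of solutions to the continuity path above for every $t\in[0,1)$ is, by that theorem, exactly the statement that $(M,[\omega_0])$ is geodesic semistable. Composing the two inputs yields the conclusion.

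The only genuine obstacle is bookkeeping: one must verify that the solvability interval granted by Theorem 1.6 of \cite{cc2} really is all of $[0,1)$ as required in Theorem \ref{t1.6}, and that no extra properness or $G$-invariance condition is silently being imposed. This is routine, since boundedness from below of $K$ is strictly weaker than properness and precisely suffices to control the interior of the continuity path; properness would only be needed to reach the endpoint $t=1$, i.e.\ to produce an actual cscK metric, which is \emph{not} what is being claimed here. Hence the corollary drops out of the two cited theorems with no additional argument.
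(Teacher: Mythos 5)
Your proposal is correct and is essentially the paper's own argument: the authors deduce the corollary by combining the lower bound on $K$ (which, via Theorem 1.6 of \cite{cc2} — equivalently Lemma \ref{l3.5} here, using properness of $J_{\omega_0}$ with respect to $d_1$ — gives solvability of the continuity path for all $t<1$) with the ``solvable for all $t<1$ implies semistable'' direction of Theorem \ref{t1.6}. No further argument is needed, exactly as you say.
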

It is an interesting question to ask if the converse is also true. Namely if $(M,[\omega_0])$ is geodesic semistable, does it follow that $K$-energy is bounded from below?

Note that for the corresponding statement in the algebraic case,  we don't know how to conclude the existence of a lower bound of $K$-energy from $K$-stability  or uniform stability except in
the Fano manifold where the authors proved it indirectly in route of CDS's theorem.\\

We have the following theorem which is useful
to our characterization of borderline case.

\begin{thm}\label{t1.2}  Let $\rho_1(t):[0,\infty)\rightarrow\mathcal{E}_0^1$ be a locally finite energy geodesic ray with unit speed. Then for any $\varphi\in\mathcal{E}_0^1$, there exists at most one unit speed locally finite energy geodesic ray $\rho_2(t):[0,\infty)\rightarrow\mathcal{E}_0^1$ initiating from $\varphi$ which is parallel to $\rho_1$. Moreover, $\yen[\rho_1]=\yen[\rho_2]$ for any such geodesic ray $\rho_2$.

 If $\yen[\rho_1]<\infty$ and $K(\varphi)<\infty$, then there exists such a geodesic ray $\rho_2$ initiating from $\varphi$ and is parallel to $\rho_1$.
\end{thm}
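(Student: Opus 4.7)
The plan is to establish the three assertions separately, relying on two standard inputs from the Darvas theory of $(\mathcal{E}^1,d_1)$: the convexity of $d_1$ along pairs of $d_1$-geodesics, and the convexity together with $d_1$-lower semicontinuity of the $K$-energy. For \textbf{uniqueness}, let $\rho_2,\rho_2'$ be two unit-speed locally finite energy geodesic rays from $\varphi$ both parallel to $\rho_1$. Then by the triangle inequality they are parallel to each other, so $f(t):=d_1(\rho_2(t),\rho_2'(t))$ is bounded, convex (convexity of $d_1$ along pairs of geodesics), nonnegative, and vanishes at $0$. Hence $f(t)/t$ is nondecreasing; combined with $f(t)/t\le \sup f/t\to 0$ this forces $f\equiv 0$, so $\rho_2=\rho_2'$.

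For \textbf{existence}, with $\yen[\rho_1]<\infty$ and $K(\varphi)<\infty$, let $\gamma_T:[0,L_T]\to\mathcal{E}^1_0$ be the unit-speed finite-energy $d_1$-geodesic from $\varphi$ to $\rho_1(T)$, where $L_T:=d_1(\varphi,\rho_1(T))$. Setting $D:=d_1(\varphi,\rho_1(0))$, the triangle inequality gives $|L_T-T|\le D$, so $L_T/T\to 1$. I show $\{\gamma_T(s)\}_T$ is $d_1$-Cauchy for each fixed $s$. Reparametrize $\gamma_{T_2}$ at constant speed as $\tilde\gamma_{T_2}(s):=\gamma_{T_2}(sL_{T_2}/T_2)$ on $[0,T_2]$. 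Convexity of $d_1$ along the pair $\tilde\gamma_{T_2},\rho_1|_{[0,T_2]}$ with endpoint values $D$ and $0$ gives $d_1(\tilde\gamma_{T_2}(s),\rho_1(s))\le D(1-s/T_2)$, and evaluating this suitably yields $d_1(\rho_1(T_1),\gamma_{T_2}(L_{T_1}))\le 2D$ for $T_1<T_2$. A second application of $d_1$-convexity, now along $\gamma_{T_1},\gamma_{T_2}$ on $[0,L_{T_1}]$ with endpoint values $0$ and $\le 2D$, produces
\begin{equation*}
d_1(\gamma_{T_1}(s),\gamma_{T_2}(s))\le \frac{2Ds}{L_{T_1}},\qquad s\in[0,L_{T_1}].
\end{equation*}
For fixed $s$ this vanishes as $T_1,T_2\to\infty$; completeness of $(\mathcal{E}^1_0,d_1)$ yields a limit $\rho_2(s)$. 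Passing the identity $d_1(\gamma_T(s_1),\gamma_T(s_2))=|s_1-s_2|$ to the limit identifies $\rho_2$ as a unit-speed geodesic ray, and the first convexity estimate in the limit gives parallelism $d_1(\rho_2(s),\rho_1(s))\le D$. Finally, convexity of $K$ along $\gamma_T$ together with $K(\rho_1(T))/T\to \yen[\rho_1]$ and $d_1$-lower semicontinuity of $K$ give $K(\rho_2(t))\le K(\varphi)+t\,\yen[\rho_1]<\infty$, so $\yen[\rho_2]\le\yen[\rho_1]$.

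For the \textbf{$\yen$-equality}, uniqueness identifies any parallel ray from $\varphi$ with the ray constructed above, so $\yen[\rho_2]\le\yen[\rho_1]$ whenever $\yen[\rho_1]<\infty$. The reverse inequality is obtained by a swap: pick $t_0$ with $K(\rho_1(t_0))<\infty$ (such a $t_0$ exists by the locally finite energy hypothesis and convexity of $K$ along $\rho_1$) and run the existence construction with $\rho_2$ in place of $\rho_1$, starting at $\rho_1(t_0)$. Since $\rho_1(t_0+\cdot)$ is itself a unit-speed parallel ray from $\rho_1(t_0)$, uniqueness forces the output to coincide with it, and the construction yields $\yen[\rho_1]\le\yen[\rho_2]$. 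The degenerate case $\yen[\rho_1]=+\infty$ is then handled by the contrapositive of the same swap argument.

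The \textbf{principal obstacle} is the double application of $d_1$-convexity needed to establish the Cauchy property of $\{\gamma_T(s)\}_T$: one must juggle the mismatch between arclength parametrization on $[0,L_T]$ and the $\rho_1$-parameter on $[0,T]$ via an intermediate reparametrization, and the precise off-diagonal estimate $d_1(\rho_1(T_1),\gamma_{T_2}(L_{T_1}))\le 2D$ is the crux of the construction. Secondary technicalities are the verification that the $d_1$-limit $\rho_2$ lies in $\mathcal{E}^1_0$ with locally finite $K$-energy (which requires using the $K$-energy upper bound inherited from the convex combination along $\gamma_T$) and the $\yen$-equality in the pathological case $\yen[\rho_1]=+\infty$ noted above.
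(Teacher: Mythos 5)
Your uniqueness argument and your $\yen$-equality argument match the paper's: uniqueness is exactly Corollary \ref{c5.6} (a bounded convex function on $[0,\infty)$ vanishing at $0$ is identically $0$), and the ``swap'' you use for $\yen[\rho_1]\le\yen[\rho_2]$ is the same mechanism as the Proposition closing the Appendix, where the segments from $\rho_1(0)$ to $\rho_2(t_k)$ are shown via Theorem \ref{t6.1} to converge back to $\rho_1$. Where you genuinely diverge is the existence part. The paper extracts a limit of the connecting segments $r_k$ by compactness: the hypotheses $\yen[\rho_1]<\infty$ and $K(\varphi)<\infty$ feed the convexity of the $K$-energy along $r_k$ to produce a uniform entropy bound at each fixed time, and then \cite{Darvas1602}, Corollary 4.8 plus a diagonal argument give a convergent subsequence, with Theorem \ref{t6.1} used only afterwards (Lemma \ref{l4.6}) to show the limit is parallel. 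You instead apply Theorem \ref{t6.1} a second time, to the pair $\gamma_{T_1}$, $\gamma_{T_2}|_{[0,L_{T_1}]}$ sharing the initial point $\varphi$, and obtain the quantitative Cauchy estimate $d_1(\gamma_{T_1}(s),\gamma_{T_2}(s))\le Cs/L_{T_1}$; completeness of $(\mathcal{E}^1,d_1)$ then replaces compactness entirely. This is a real difference with a real payoff: your construction of the parallel ray uses no energy hypotheses at all ($\yen[\rho_1]<\infty$ and $K(\varphi)<\infty$ enter only in the final step bounding $K(\rho_2(t))\le K(\varphi)+t\,\yen[\rho_1]$), so it in fact answers affirmatively the question raised in Remark 1.10 about unconditional existence of parallel rays. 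This stronger route is legitimate and is essentially the one taken in the subsequent literature on the space of rays.

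Three small points to tighten. First, your off-diagonal constant: chasing the reparametrization gives $d_1(\rho_1(T_1),\gamma_{T_2}(L_{T_1}))\le 3D$ rather than $2D$ (one $D$ from the first convexity estimate, and up to $2D$ from $|L_{T_1}-T_1L_{T_2}/T_2|$); this is immaterial since only boundedness is used. Second, passing $d_1(\gamma_T(s_1),\gamma_T(s_2))=|s_1-s_2|$ to the limit only shows $\rho_2$ is a unit-speed metric geodesic; since $d_1$-geodesics are not unique in $\mathcal{E}^1$, you should invoke endpoint stability of finite energy geodesics (\cite{Darvas1602}, Proposition 4.3, used in the same way in the paper's Appendix) to conclude that each restriction $\rho_2|_{[0,S]}$ is the finite energy geodesic from $\varphi$ to $\rho_2(S)$. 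Third, your assertion that some $t_0$ with $K(\rho_1(t_0))<\infty$ exists ``by the locally finite energy hypothesis'' is not justified: membership in $\mathcal{E}^1$ does not control the entropy term of $K$. The swap argument should instead be phrased as: if $K=+\infty$ along all of $\rho_1$ then $\yen[\rho_1]=+\infty$ and only the (trivial or vacuous) inequalities remain, while otherwise a suitable $t_0$ exists; the paper's own proof of the $\yen$-equality carries the same implicit assumption $K(\rho_1(0))<\infty$, so this is a shared, minor caveat rather than a defect of your approach.
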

\begin{rem}
It is an interesting question whether such a parallel geodesic ray exists in general, i.e. with no assumption on $\yen$ invariant.
\end{rem}
The uniqueness part and that $\yen$ invariant for the two rays is equal will be proved in the Appendix.
For existence part, we will first give a proof in the special case that $\rho_1(0),\varphi\in \mathcal{E}^2$ which allows us to use the Calabi-Chen theorem (c.f. \cite{calabi-chen}) that $(\mathcal{E}^2, d_2)$ 
is non-positively curved space. Note that when $p\neq 2, $ the infinite dimensional space $(\mathcal{E}^p, d_p)$ is no longer
Riemannian formally. Nonetheless, we prove the following theorem, which follows from the NPC (non-positively curved) property when $p=2$.
\begin{thm}\label{t5.1new}
Let $1\leq p<\infty$. Let $\phi_0$, $\phi_0'$, $\phi_1$, $\phi_1'\in\mathcal{E}^p$.
Denote $\{\phi_{0,t}\}_{t\in[0,1]}$, $\{\phi_{1,t}\}_{t\in[0,1]}$ be the finite energy geodesics connecting $\phi_0$ with $\phi_0'$ and $\phi_1$ with $\phi_1'$ respectively. Then we have
$$
d_p(\phi_{0,t},\phi_{1,t})\leq (1-t)d_p(\phi_0,\phi_1)+td_p(\phi_0',\phi_1').
$$
\end{thm}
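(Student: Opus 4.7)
My plan is to first establish the inequality in the Hilbert case $p=2$ using the NPC property of $(\mathcal{E}^2,d_2)$ proved by Calabi--Chen, and then to bootstrap to general $p\geq 1$ by density and $\epsilon$-geodesic approximation.

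For $p=2$, I will invoke the standard fact that in any NPC metric space the distance between two constant-speed geodesics is a convex function of the parameter: if $\gamma_0,\gamma_1:[0,1]\to X$ are geodesics, then
$$
d(\gamma_0(t),\gamma_1(t))\leq (1-t)\,d(\gamma_0(0),\gamma_1(0))+t\,d(\gamma_0(1),\gamma_1(1)).
$$
This follows by applying the CAT(0) comparison inequality to the quadrilateral with vertices $\gamma_i(0),\gamma_i(1)$. Specialized to $X=\mathcal{E}^2$ with $\gamma_i(t)=\phi_{i,t}$, it settles the case $p=2$ at once.

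For general $p\geq 1$, I will first reduce to the case where all four endpoints are smooth K\"ahler potentials. By density of $\mathcal{H}$ in $(\mathcal{E}^p,d_p)$ (Darvas) and the continuity of finite energy geodesics in their endpoints with respect to $d_p$, the inequality is preserved under approximation, so we may assume $\phi_0,\phi_0',\phi_1,\phi_1'\in \mathcal{H}$. I will then approximate each weak geodesic by Chen's smooth $\epsilon$-geodesics $\phi_{0,t}^\epsilon$ and $\phi_{1,t}^\epsilon$, which solve the perturbed homogeneous complex Monge--Amp\`ere equation and converge to the respective weak geodesics as $\epsilon\downarrow 0$. Standard stability of $d_p$ under uniform $C^{1,\bar{1}}$-bounded approximation allows one to pass to the limit in $d_p$.

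The heart of the argument---and the main obstacle---will be to show that for every $\epsilon>0$ the function $F_\epsilon(t)=d_p(\phi_{0,t}^\epsilon,\phi_{1,t}^\epsilon)$ is convex on $[0,1]$. For each fixed $t$, I would connect $\phi_{0,t}^\epsilon$ to $\phi_{1,t}^\epsilon$ by a constant-speed $d_p$-geodesic $s\mapsto \psi_{s,t}^\epsilon$, so that
$$
F_\epsilon(t)^p=\int_M |\partial_s\psi^\epsilon_{s,t}|^p\,\omega_{\psi^\epsilon_{s,t}}^n\qquad(s\in[0,1]),
$$
and then try to prove $\partial_t^2 F_\epsilon^p\geq 0$ by combining the $s$-geodesic equation with the $\epsilon$-geodesic equation in $t$ for $\phi_{0,t}^\epsilon$ and $\phi_{1,t}^\epsilon$. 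When $p=2$ the cross terms regroup into a sum of non-negative squares thanks to the NPC (Hilbertian) identity, but for $p\ne 2$ there is no such clean algebraic reduction; this is the main difficulty. The strategy I propose for getting around it is to prove instead the equivalent midpoint inequality $F_\epsilon(\tfrac12)\leq \tfrac12\bigl(F_\epsilon(0)+F_\epsilon(1)\bigr)$, by constructing the candidate curve $s\mapsto \tfrac12(\psi^\epsilon_{s,0}+\psi^\epsilon_{s,1})$ (projected to $\mathcal{E}^p$ via the upper envelope) as a competitor between $\phi_{0,1/2}^\epsilon$ and $\phi_{1,1/2}^\epsilon$, and estimating its $d_p$-length by the arithmetic mean of $F_\epsilon(0)$ and $F_\epsilon(1)$ using the convexity of $r\mapsto r^p$ together with the multilinearity of the Monge--Amp\`ere operator. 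Iterating this midpoint inequality over dyadic $t\in[0,1]$ and using continuity yields convexity of $F_\epsilon$. Finally, sending $\epsilon\downarrow 0$ and then removing the smoothness assumption on the endpoints completes the proof.
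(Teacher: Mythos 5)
Your reduction to smooth endpoints and the use of $\eps$-geodesics match the paper's strategy, and the $p=2$ case via NPC is fine, but the heart of your argument for $p\neq 2$ --- the midpoint inequality --- has a genuine gap. First, the competitor curve $s\mapsto\tfrac12(\psi^{\eps}_{s,0}+\psi^{\eps}_{s,1})$ does not have the right endpoints: at $s=0$ it equals $\tfrac12(\phi_0+\phi_0')$, which is the arithmetic average of the endpoints of the $t$-direction $\eps$-geodesic, not its midpoint $\phi^{\eps}_{0,1/2}$; these differ in general, so the curve does not connect $\phi^{\eps}_{0,1/2}$ to $\phi^{\eps}_{1,1/2}$ and cannot serve as a competitor for $F_\eps(\tfrac12)$. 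Second, even if one repaired the endpoints, the claimed length estimate is unsubstantiated: the $d_p$-length involves $\int_M|\partial_s\psi|^p\,\omega_\psi^n$, and for the averaged potential the volume form expands as $2^{-n}\sum_k\binom{n}{k}\omega_{\psi_0}^k\wedge\omega_{\psi_1}^{n-k}$; the mixed terms paired with $\bigl|\tfrac12(\partial_s\psi_0+\partial_s\psi_1)\bigr|^p$ are not controlled by the arithmetic mean of the two lengths, and convexity of $r\mapsto r^p$ does not close this. So the step on which everything rests is missing.

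What the paper actually does avoids comparing distances of midpoints altogether. It fills in the quadrilateral with $\eps$-geodesics in the $t$-direction over two arbitrary smooth transversal curves $c_1(s)$, $c_2(s)$ (themselves taken to be $\eps$-geodesics joining $\phi_0$ to $\phi_1$ and $\phi_0'$ to $\phi_1'$), and proves that the $d_p$-\emph{length} $L_p^{\eps}(t)$ of the transversal curve $s\mapsto\varphi^{\eps}(s,t)$ is convex in $t$. The key is a second variation inequality $\partial_t^2\int_M\chi(\partial_s\varphi)\,dvol_\varphi\geq\int_M\chi''(\partial_s\varphi)(\nabla_XY)^2dvol_\varphi$ valid for any smooth convex $\chi$ along $\eps$-geodesics (proved via the Mabuchi connection, the Poisson-bracket curvature formula, and an integration by parts absorbing the $\eps$-term); specializing to $\chi_\delta(x)=(x^2+\delta^2)^{p/2}$, the algebraic inequality $\chi_\delta''\chi_\delta\geq\frac{p-1}{p}(\chi_\delta')^2$ lets Cauchy--Schwarz absorb the first-derivative term in $\partial_t^2(|Y|_{\chi_\delta}^{1/p})$, giving convexity of $L_{p,\delta}^{\eps}$ for every $p\geq1$. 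Since length dominates distance and the endpoint lengths $L_p^{\eps}(0)$, $L_p^{\eps}(1)$ converge to $d_p(\phi_0,\phi_1)$, $d_p(\phi_0',\phi_1')$ as $\eps\to0$, the theorem follows. If you want to salvage your outline, replace the midpoint construction with this length-convexity argument; the rest of your reductions can stand.
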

\begin{rem}
In \cite{Darvas1602}, Proposition 5.1, the authors obtained Theorem \ref{t5.1new} for the case   $p=1$, using a representation formula of $d_1$.
\end{rem}
The fact that any two parallel geodesic rays have the same $\yen$ invariant motivates us to propose the following

\begin{defn}  We define the sphere at infinity $\mathcal{S}_\infty$ for $\mathcal H$ as the collection of all locally finite energy geodesic rays with unit speed in $\mathcal H$ modulo equivalent classes defined by parallelism.
\end{defn}

Given any base potential $\varphi_0$,  the sphere at infinity $\mathcal{S}_\infty$  is precisely the collection of all locally finite energy geodesic rays with unit speed in $\mathcal H$  (initiating from $\varphi_0$). It follows that $\mathcal{S}_\infty$ can be embedded as a subset of $T_{\varphi_0} \mathcal H. \;$ Given $p\geq1$, following T. Darvas \cite{Darvas1402}, one can complete $\mathcal H$ into a complete metric space $(\mathcal{E}^p, d_p)$ and to this we can adjoin the sphere of infinity $\tilde{\mathcal{S}}_\infty $ as in finite dimensional case.  Then, there is a natural map from $\tilde{\mathcal{S}}_\infty$ to $L^p(M, \omega)$ by sending every locally finite energy geodesic ray to its tangent vector at time 0 (i.e. initial velocity).  The following question
is interesting:

\begin{q} How do we characterize the set of $\psi\in L^p(M,\omega)$ ($p\geq1$) which is the initial velocity of a locally finite energy geodesic ray with unit speed? Or more importantly, is the set of velocities (which give rise to a locally finite geodesic
ray) a convex subset of $L^p(M, \omega)? $ 
\end{q}

In order to answer the above question, one first should be able to define rigorously the notion of ``initial velocity of a locally finite energy geodesic ray",
 which is an interesting question in its own. We remark that the set of initial velocities of a locally finite energy geodesic ray at time  $t=0$ is much more restrictive than the set of tangent vectors for a geodesic segment at initial point (regardless of the length). 
A typical example is on toric varieties where the first set consists of all convex functions
in a polytope and the second set consists of all functions in a polytope. \\

Recall that in the algebraic setting, $(L, h)$ is a line bundle over $M$ such that its curvature form gives rise to the background K\"ahler form $\omega.\;$ For $k$ large enough, $(M, L^k)$  admits ample
holomorphic sections which we can use to construct holomorphic embeddings from $(M, [\omega])$ to $\mathbb{C}\mathbb{P}^N$ for some $N = N(k).\;$ Let $B_N$ be the collection of all induced metrics
 from  holomorphic embeddings of $(M, [\omega])$ into $\mathbb{C}\mathbb{P}^N.\;$  According to Tian's thesis, the space of K\"ahler potentials
 can be approximated by a sequence of  finite dimensional Bergman spaces $B_N; $  Note that these Bergman spaces are
precisely symmetric spaces $SL(N+1, \mathbb{C})/ U(N+1, \mathbb{C})\;$ and the test configuration defines a ``point" at infinity of this finite dimensional Bergman space $B_N$. From complex geometric point of view, the sphere
of infinity of $B_N$ is precisely the closure or ``completion" of the space of test configurations associated with line bundle $(M, L^k)\;$ under appropriate topology. 
 It is natural to conjecture that the sphere at infinity  $\mathcal{S}_\infty$ is precisely the limit of these
 ``spheres at infinity" of these finite dimensional Bergman spaces.  In some sense this is not surprising if one notes that in an earlier paper by Chen-Sun \cite{CS2012}, the authors proved
that the $L^2$ distance in $\mathcal H$ can be approximated by the $L^2$ distance in the finite dimensional Bergman spaces (up to scaling).  Following
this trend of thoughts, one can naturally ask the following question:

\begin{q}  Is the set of points defined by test configurations dense in the sphere of infinity ${\mathcal S}_\infty.\;$  More importantly, does the uniform stability or filtrated stability in the sense of G. Sz$\acute{\text{e}}$kelyhidi\cite{Sz06} imply geodesic stability?
\end{q}

In the intriguing paper \cite{Ross11}, the authors defined a notion of analytic test configurations which is an one-parameter family of K\"ahler potentials, concave with respect to this parameter. They proved that each analytic test configuration defines a unique geodesic ray (up to parallelism), thus defines a point at ${\mathcal S}_\infty.$  It is important to
understand the converse: when does a locally finite energy geodesic ray define a test configuration?
 More importantly, can one extend the definition of $\yen$ invariant for analytic test configuration? Note that by Theorem 1.4, we can view $\yen$ invariant as a function defined on the sphere of infinity ${\mathcal S}_\infty.$

One important technical ingredient to prove existence of cscK from geodesic stability is the following properness theorem, which says that if $K$-energy is ``proper" with respect to a suitably defined distance (defined precisely in Definition \ref{d1.1} below), then cscK exists.
 To extend the properness theorem to the case of general automorphism group, we need
to extend the notion of properness (c.f. \cite{Darvas1605} \cite{Dar-Rub-17}).

\begin{defn}\label{d1.1}
We say $K$-energy is proper with respect to  $L^1$ geodesic distance modulo $G$, if 
\begin{enumerate}
\item For any sequence $\{\varphi_i\}\subset\mathcal{H}_0$, $ \displaystyle \inf_{\sigma\in G}d_1(\omega_0,\sigma^*\omega_{\varphi_i})\rightarrow\infty$ implies $K(\varphi_i)\rightarrow+\infty$,
\item $K$-energy is bounded from below.
\end{enumerate}
\end{defn}
 With this in mind, we will prove that

\begin{thm}\label{t1.1}(Theorem \ref{t3.1})  There exists a constant scalar curvature K\"ahler metric  if and only if  the K energy functional is proper with respect to the $L^1$ distance modulo $G$.
\end{thm}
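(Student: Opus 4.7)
The plan is to establish both directions by the continuity method framework of \cite{cc1,cc2}, with the key new ingredient being the a priori estimates for scalar curvature type equations whose reference form has the singular form (\ref{rhs}). These accommodate the Kähler forms $\sigma^{*}\omega_0$ arising when we normalize using automorphisms $\sigma \in G$, and they are the technical heart of the paper.

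For the forward direction (existence of cscK implies properness modulo $G$), I would mirror the strategy used in \cite{cc2} for the discrete case while incorporating the $G$-action. Fix a cscK potential $\varphi^{*}$ and suppose for contradiction that there is a sequence $\{\varphi_i\} \subset \mathcal{H}_0$ with $K(\varphi_i)$ bounded but $\inf_{\sigma \in G}d_1(\omega_0,\sigma^{*}\omega_{\varphi_i}) \to \infty$. Pick $\sigma_i \in G$ realizing this infimum up to $1$ and set $\tilde\varphi_i := \sigma_i^{*}\varphi_i$; then $K(\tilde\varphi_i)=K(\varphi_i)$ is bounded while $d_1(\varphi^{*},\tilde\varphi_i)$ still tends to infinity. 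Take unit speed $d_1$-geodesic segments from $\varphi^{*}$ to $\tilde\varphi_i$ and extract, after a subsequence, a locally finite energy geodesic ray $\rho$ emanating from $\varphi^{*}$. Convexity of $K$ along weak geodesics (Berman--Berndtsson) together with the fact that $\varphi^{*}$ minimizes $K$ forces $\yen[\rho] = 0$ and in fact $K \equiv K(\varphi^{*})$ along $\rho$. The rigidity for constant $K$-energy geodesic rays (uniqueness of cscK minimizers modulo $G$) then implies that $\rho$ is parallel to a ray generated by a holomorphic vector field $X \in \mathrm{aut}(M,J)$, i.e.\ $\rho(t)$ stays at bounded $d_1$-distance from $\exp(tX)^{*}\varphi^{*}$. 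Composing with $\exp(-L_iX)$ then gives an automorphism that sends $\tilde\varphi_i$ back to a bounded $d_1$-neighborhood of $\varphi^{*}$, contradicting $\inf_{\sigma \in G}d_1(\omega_0,\sigma^{*}\omega_{\varphi_i}) \to \infty$.

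For the reverse direction (properness modulo $G$ implies existence), the plan is to run the continuity path $t(R_{\varphi} - \underline{R}) = (1 - t)(tr_{\varphi}\omega_0 - n)$. Solvability at $t = 0$ is Calabi--Yau, and openness reduces to the linearization argument of \cite{cc1,cc2}. The crux is closedness as $t \to 1$. Given solutions $\varphi_{t_i}$ with $t_i \to 1$, choose $\sigma_i \in G$ with $d_1(\omega_0,\sigma_i^{*}\omega_{\varphi_{t_i}})$ within $1$ of $\inf_{\sigma \in G}d_1(\omega_0,\sigma^{*}\omega_{\varphi_{t_i}})$. The normalized potentials $\tilde\varphi_i := \sigma_i^{*}\varphi_{t_i}$ satisfy the continuity equation in which $\omega_0$ is replaced by $\chi_i := \sigma_i^{*}\omega_0 = \omega_0 + \sqrt{-1}\partial\bar\partial f_i$, a form in the class (\ref{rhs}): the integrability $\int_M e^{-pf_i} < \infty$ holds uniformly because $\chi_i$ is biholomorphic to $\omega_0$. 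The $K$-energy is monotone along the continuity path, hence bounded using the lower bound from properness; $G$-invariance $K(\tilde\varphi_i)=K(\varphi_{t_i})$ together with properness then yields $\sup_i d_1(\omega_0,\omega_{\tilde\varphi_i}) < \infty$. Applying the new a priori estimates to the equations satisfied by $\tilde\varphi_i$ gives uniform higher regularity, and a subsequential limit produces the desired cscK metric.

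The principal obstacle is the closedness step, where the reference forms $\chi_i = \sigma_i^{*}\omega_0$ may degenerate if $\sigma_i$ escape to infinity in $G$. It is essential that the a priori bounds depend on $f_i$ only through the uniform integrability constant in (\ref{rhs}) and on the controlled quantity $\sup_i d_1(\omega_0,\omega_{\tilde\varphi_i})$, rather than on any smoothness of $f_i$. Establishing this uniformity is precisely the new analytic content of the present paper; granting those estimates, the overall scheme is a direct adaptation of the discrete-automorphism argument of \cite{cc2}, with $G$-normalization inserted at every step where boundedness along the continuity path is needed.
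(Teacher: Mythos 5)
Your overall scheme for the direction ``properness implies existence'' coincides with the paper's: solve the continuity path $t(R_\varphi-\underline{R})=(1-t)(tr_\varphi\omega_0-n)$ for $t<1$, normalize the solutions by $\sigma_i\in G$ so that the $d_1$-distance to the base point stays bounded, observe that the normalized potentials solve a twisted equation whose reference form is $\sigma_i^*\omega_0$, and invoke the estimates of Section 2. (For the converse direction the paper does not give a new proof but cites \cite{Darvas1605} and \cite{Dar-Rub-17}; your sketch of a direct argument is plausible in outline, but your final step --- deducing that the endpoints $\tilde\varphi_i$ themselves return to a bounded neighborhood because the limiting ray is parallel to a holomorphic ray --- is not the right mechanism: the ray controls nothing about the far endpoints of the approximating segments. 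The correct contradiction, as in Proposition \ref{p5.4}, is that the limit ray is $d_{1,G}$-unbounded since $d_{1,G}(\varphi,\rho(t))\ge d_1(\varphi,\rho(t))-\eps$ when the endpoints nearly realize $d_{1,G}$, while a ray of minimizers lies in a single $G$-orbit.)

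There is, however, a genuine gap at the crux of your closedness argument. You assert that $\int_Me^{-pf_i}<\infty$ ``holds uniformly because $\chi_i$ is biholomorphic to $\omega_0$.'' That is not a valid reason: writing $\sigma_i^*\omega_0=\omega_0+\sqrt{-1}\partial\bar\partial\theta_i$ with $\sup_M\theta_i=0$, the only uniform exponential integrability available for the $\omega_0$-psh functions $\theta_i$ is Tian's $\alpha$-invariant bound $\int_Me^{-\alpha\theta_i}dvol_g\leq C$ for one fixed $\alpha>0$ depending on $(M,\omega_0)$; for exponents larger than $\alpha$ the integral can blow up as $\sigma_i$ degenerates. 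Since Theorem \ref{t2.3} requires $p_0\geq\kappa_n$ for a large dimensional constant, your argument as written cannot invoke the Section 2 estimates at all. The missing observation is that, once the pulled-back equation is put in the form (\ref{eq-?1})--(\ref{eq-?2}), the relevant potential is not $\theta_i$ but $f_i=\frac{1-t_i}{t_i}\theta_i$, so that $\int_Me^{-pf_i}$ is uniformly bounded for \emph{any} $p$ as soon as $p\frac{1-t_i}{t_i}<\alpha$, i.e.\ once $t_i$ is close enough to $1$ (Lemma \ref{l3.10}); it is exactly this improvement of the integrability exponent to infinity along the path that closes the argument. A second, smaller omission: after extracting weak limits one must still identify the limit as a weak cscK potential, which requires $e^{-f_i}\rightarrow1$ in $L^p$; this is not automatic and is proved in Lemma \ref{l3.13} using $(1-t_i)d_1(0,\tilde\varphi_i)\rightarrow0$, a consequence of the energy asymptotics in Lemma \ref{l3.6}, before the quasi-isometry and bootstrap steps can be run.
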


The direction that existence of cscK implies properness has been established by  Berman, Darvas and Lu in \cite{Darvas1605}.  For the converse direction, Darvas and Rubinstein in \cite{Dar-Rub-17} have reduced this problem to a problem of regularity of weak minimizers of $K$-energy over the space $\mathcal{E}^1$, which we resolved in the second paper \cite{cc2}. (In the special case of toric varieties, Zhou-Zhu\cite{zz08} proved the existence
of toric invariant weak minimizers of the modified $K$-energy  under properness assumption.)
Hence Theorem \ref{t1.1} has been established by combining the known results.
Nonetheless, in this paper we will show how to obtain Theorem \ref{t1.1} by solving along the continuity path already considered in the second paper \cite{cc2}. 
For this purpose, we  develop new estimates for scalar curvature type equations which may be of independent interest. The following is the main result from section 2.
\begin{thm}(Theorem \ref{t2.3})
Let $\varphi$ be a smooth solution to (\ref{eq-?1}), (\ref{eq-?2}), with assumptions in (\ref{rhs}) hold.  Suppose additionally that $p\geq\kappa_n$ for some constant $\kappa_n$ depending only on $n$. Then for any $p'<p$,
$$
||F+f||_{W^{1,2p'}}\leq C_{25.1},\,\,\,||n+\Delta\varphi||_{L^{p'}(\omega_0^n)}\leq C_{25.1}.
$$
Here $C_{25.1}$ depends only on an upper bound of entropy $\int_M\log\big(\frac{\omega_{\varphi}^n}{\omega_0^n}\big)\omega_{\varphi}^n$, $p$, $p'$, the bound for $\int_Me^{-pf}dvol_g$, $||R||_0$, $\max_M|\beta_0|_g$ and background metric $\omega_0$.
\end{thm}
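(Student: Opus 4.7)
The plan is to adapt the integral estimates developed in \cite{cc1,cc2} for smooth twisting forms to the singular setting of (\ref{rhs}), working throughout with the combined quantity $F+f$ in place of $F$ alone. The structural observation is that $\chi=\chi_0+\sqrt{-1}\partial\bar\partial f$ contributes a term $tr_\varphi\chi_0+\Delta_\varphi f$ to the scalar curvature equation, so moving $\Delta_\varphi f$ to the left-hand side and normalizing yields, schematically,
$$-\Delta_\varphi(F+f)=h-(1-t)\,tr_\varphi\chi_0,$$
where $h$ is pointwise bounded in terms of $\|R\|_0$, $\max|\beta_0|_g$ and the background data $\underline{R},\underline{\chi},Ric(\omega_0)$. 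The singular $f$ has been absorbed into the unknown, and the sign $\chi\geq 0$, i.e.\ $\Delta_\varphi f\geq -tr_\varphi\chi_0$, enters favorably. Thus $F+f$ ought to inherit the integral estimates previously derived for $F$ in the smooth case, provided the error terms generated by the non-smoothness of $f$ can be absorbed via $\int_M e^{-pf}\,dvol_g<\infty$.

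For the $W^{1,2p'}$ estimate I would test the above equation against weights of the form $(F+f)^{2k-1}e^{\lambda(F+f)}$, integrate by parts with respect to $\omega_\varphi^n$, and perform a finite Moser iteration. The entropy bound $\int_M\log(\omega_\varphi^n/\omega_0^n)\,\omega_\varphi^n\leq C$ provides the base $L^1$ integrability. At each step one switches measure via $\omega_\varphi^n=e^F\omega_0^n$; the resulting factors of $e^{-f}$ are handled by H\"older's inequality with dual exponents $p/(p-p')$ and $p/p'$, the first factor being controlled by $\int e^{-pf}<\infty$. This is precisely the mechanism producing both the lower threshold $p\geq\kappa_n$ (needed so that the dimensional loss from the Sobolev embedding on $(M,\omega_0)$ can be absorbed at every Moser step) and the strict inequality $p'<p$ in the conclusion. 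For the $L^{p'}$ bound on $n+\Delta\varphi$ I would combine the Chern--Lu inequality with the equation above: for suitable $A>0$, the auxiliary function $u:=\log(n+\Delta\varphi)-A(F+f)$ satisfies an elliptic inequality whose right-hand side lies in $L^{p'}(\omega_\varphi^n)$ by the previous step, and an Alexandrov--Bakelman--Pucci or Moser iteration then yields the $L^{p'}$ bound on $u$, hence on $n+\Delta\varphi$.

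The main obstacle is controlling the cross terms of the form $|\nabla f|\cdot|\nabla(F+f)|^{2k-1}\cdot(\cdots)$ produced by integration by parts: $|\nabla f|$ admits no pointwise bound, and only the one-sided Laplacian estimate $\Delta f\geq -tr\chi_0$ coming from $\chi\geq 0$ is available a priori. The strategy is to convert $|\nabla f|^2$ contributions into exact divergences plus controllable $\Delta f$ terms (modulo $tr_\varphi\chi_0$ errors), and then to close the estimates through Trudinger-type inequalities that upgrade $e^{-pf}\in L^1$ into $L^q$ bounds on $f_-$ for every finite $q$, with bound depending on $p$. Tracking the resulting loss of integrability consistently through the Moser iteration, so that the H\"older cost at each step stays a controllable fraction of $p$, is the technically delicate part and is what fixes the dimensional threshold $\kappa_n$ and forces $p'<p$ rather than $p'=p$.
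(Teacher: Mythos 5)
Your overall framing --- absorbing $f$ into $F$, exploiting $\chi\ge 0$ only through $f_{i\bar j}\ge -(\beta_0)_{i\bar j}$, and paying for each appearance of $e^{-f}$ with a H\"older split against $\int_M e^{-pf}$ (which is indeed where $p'<p$ and $\kappa_n$ come from) --- matches the paper. But the logical order of your two main steps is reversed relative to what the estimates can support, and this is a genuine gap rather than a presentational difference. In the paper the chain is: first $\|F+f\|_{L^\infty}$ (Corollary \ref{c2.2}, Lemma \ref{l2.3}); then the weighted bound $\int_M e^{(p-1)f}(n+\Delta\varphi)^p\,dvol_g\le C_p$ (Theorem \ref{t2.1}) and hence $\|n+\Delta\varphi\|_{L^q}$ for $q<p$ (Corollary \ref{c2.4}); only then the pointwise gradient bound $|\nabla_\varphi(F+f)|_\varphi\le C$ (Theorem \ref{t2.2}), whose Moser iteration needs $\int_M(n+\Delta\varphi)^{4n-1}$ and $\int_M(n+\Delta\varphi)^{8n(2n-1)}$ as input --- that is exactly what fixes $\kappa_n$ --- because every passage from $|\nabla_\varphi\cdot|_\varphi$ to $|\nabla\cdot|$ and every use of the background Sobolev inequality costs a power of $n+\Delta\varphi$. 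Your plan runs the $W^{1,2p'}$ estimate for $F+f$ first, with only the entropy as input; that iteration cannot close, because the factors $tr_\varphi g$ and $n+\Delta\varphi$ that appear are not yet integrable to high powers. Relatedly, the $L^\infty$ bound on $F+f$ is not obtained by testing against powers of $F+f$: the right-hand side $tr_\varphi(Ric-\beta_0)$ is only in $L^1(\omega_\varphi^n)$ a priori, so Moser fails there too; the paper instead applies the Alexandrov maximum principle to $F+f+\eps_0\psi-C\varphi$ with an auxiliary potential $\psi$ solving $\omega_\psi^n\propto e^F\sqrt{F^2+1}\,\omega_0^n$, so that $\Delta_\varphi\psi$ produces a good term of size $(F^2+1)^{1/2n}$ dominating the constants wherever $F$ is large, combined with Tian's $\alpha$-invariant. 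This device is absent from your sketch.

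The second gap is in your route to $\|n+\Delta\varphi\|_{L^{p'}}$. The auxiliary function $\log(n+\Delta\varphi)-A(F+f)$ together with Chern--Lu produces the term $\Delta F$ (third derivatives of $\varphi$) with no pointwise control, and neither ABP nor Moser removes it in the form you describe. The paper instead works with $v=e^{-\kappa(F+\delta f+C\varphi)}(n+\Delta\varphi)$, integrates $v^{p-1}(-\Delta_\varphi v)$, handles $\Delta F$ by an explicit integration by parts against the weight $e^{(1-\kappa)F-\kappa\delta f-\kappa C\varphi}$, and --- crucially --- chooses $\delta=(\kappa-1)/\kappa$ so that the two $f$-Hessian contributions combine into $-\sum_{i\ne j}f_{i\bar i}(1+\varphi_{j\bar j})/(1+\varphi_{i\bar i})$, which is controlled by $\max_M|\beta_0|_g$ alone; the resulting recursion gains the fixed increment $1/(n-1)$ in the exponent at each step. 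Your proposal to convert $|\nabla f|^2$ contributions into divergences plus $\Delta f$ terms does not identify this cancellation, and without it the singular cross terms are not absorbed. As it stands the proposal is therefore not a proof: the dependency order must be reversed, and the two specific devices above (the auxiliary Monge--Amp\`ere potential for the $L^\infty$ bound, and the $\delta=(\kappa-1)/\kappa$ cancellation for the second-order estimate) must be supplied.
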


In a series of three fundamental papers \cite{Dona02new}, \cite{Dona08}, \cite{Dona09}, S. Donaldson proved that in toric K\"ahler surfaces, the existence of cscK metric is indeed equivalent to the $K$-stability. This is partially generalized in \cite{CLS10} to extremal K\"ahler metrics (c.f. \cite{CHLS14} and references therein for interior regularity estimates on K\"ahler toric varieties). However, in general algebraic  K\"ahler manifolds, one expects that the $K$-stability might fall short of
the existence of cscK metrics. There are a lot of works in literatures trying to formulate the right algebraic conditions and one particular important notion is due to G. Sz$\acute{\text{e}}$kelyhidi\cite{Sz06}
where he defined some notion of uniform stability through filtrations of test configurations. 
In a surprising paper \cite{BBJ15}, the authors proved that the uniform stability implies properness
of Ding functional in terms  of Aubin functional on Fano K\"ahler manifolds. Moreover, using the properness of Ding functional, they are able to prove subsequently that
uniform stability  implies the existence of KE metrics. To understand these different notions of stability, perhaps it will be instructive for us to take a brief detour into toric K\"ahler manifolds. Following Donaldson \cite{Dona02new}, \cite{Dona08}, \cite{Dona09}, 
a polarized K\"ahler manifold $(M,[\omega], L) $ corresponds to a certain closed polytope $P \subset \mathbb{R}^n$. In the following, $d\,\sigma$ denotes the standard surface measure on the boundary and $d\mu$ is the $n$-dimensional Lebesgue measure. Then, we can define a linear functional on the space of functions on $P.\;$

\[
{\mathcal L}_P(f) = \displaystyle \int_{\partial P} \; f \; d\, \sigma - A  \int_P \; f\;d\,\mu, \qquad {\rm where}\; A = {{\displaystyle \int_{\partial P}  \; d\, \sigma}\over {\int_P \;d\,\mu}}.
\]
A function is normalized if it is perpendicular to constant and linear functions (under the $L^2$ inner product defined by $d\mu$) in the polytope $P.\;$ For the convenience of readers, we list various notions of
stability as follows.
\begin{enumerate}
\item {\bf K stable}:  For all non zero piecewise linear convex function $f$ in $P$, we have ${\mathcal L}_P(f) > 0.\;$
\item {\bf Filtrated stable} (in the sense of G. Sz$\acute{\text{e}}$kelyhidi\cite{Sz06}): For any convex, continuous  function $f$, we have ${\mathcal L}_P(f) > 0.\;$
\item {\bf Uniform stability}: There is an $\epsilon > 0$ such that for all normalized, non zero piecewise linear convex function $f$ in $P$, we have ${\mathcal L}_P(f) > \epsilon \displaystyle \int_P\; |f| d\,\mu.\;$
\item {\bf $L^1$ stability}: For all convex function $f$ whose boundary value lie in $L^1(\partial P, d\,\sigma), \;$ we have ${\mathcal L}_P(f)  > 0.\;$
\end{enumerate}

It appears that the notion of uniform stability is strongest among the four. However, if one examines carefully the original papers of Donaldson \cite{Dona02new}, \cite{Dona08}, \cite{Dona09} on this subject, it is already known through work there that $L^1$ stability implies both filtrated stability and uniform stability, from which properness of $K$-energy follows.  In other words, $L^1$ stability implies that $K$-energy is proper in terms of $L^1$ distance among all toric invariant potentials in the setting of toric varieties (c.f. Section 5 of Donaldson \cite{Dona02new}).\footnote{ We notice a recent work of T. Hisamoto \cite{HIS16} who proved that the uniform stability implies the properness of $K$-energy in toric K\"ahler manifolds which leads to a different path from
uniform stability to properness in the toric setting. } Hence one can use  Theorem \ref{t1.1} to deduce the existence of cscK.  Alternatively, Donaldson \cite{Dona02new} already proved the existence of weak minimizers of $K$-energy among toric invariant metrics (c.f. Theorem 4.7 of CLS \cite{CLS11}) in symplectic coordinates. Consequently, one should be able to conclude that there exists a smooth toric invariant cscK metric, as long as one can show the regularity of mimimizers in this setting, similar to what we obtained in Theorem 1.5 of \cite{cc2}. Therefore
we have
 \begin{thm}\label{t1.8new}  On toric K\"ahler manifold, the existence of cscK metric is equivalent to the $L^1$ stability. In other words, there exists a constant scalar curvature K\"ahler metric
if and only if ${\mathcal L}_P(f)  > 0\;$ holds for all convex function $f$
 such that $f\mid_{\partial P} \subset L^1(\partial P, d\,\sigma).\;$ \end{thm}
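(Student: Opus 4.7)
The plan is to prove each direction separately. For the direction existence of cscK $\Rightarrow$ $L^1$ stability, one combines the equivalence (established elsewhere in this paper) between cscK and geodesic stability with Donaldson's identification of toric geodesic rays with convex functions on the polytope. In symplectic coordinates every convex $u$ on $P$ with $u|_{\partial P}\in L^1(\partial P, d\sigma)$ determines a toric-invariant locally finite energy geodesic ray $\rho_u$, and the asymptotic slope of the K-energy along $\rho_u$ equals a positive multiple of $\mathcal{L}_P(u)$. Geodesic stability then forces $\yen[\rho_u]\geq 0$, with equality only if $\rho_u$ is parallel to a holomorphic line; on a toric manifold such a parallel ray corresponds to $u$ being linear, for which $\mathcal{L}_P(u)$ automatically vanishes modulo the Futaki normalization. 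Hence $\mathcal{L}_P(u)>0$ for all non-degenerate convex $u$, which is the $L^1$ stability condition.

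The harder direction is $L^1$ stability $\Rightarrow$ existence of cscK, and here I would invoke Theorem \ref{t1.1} by verifying properness of $K$-energy with respect to $d_1$ modulo $G$. Step 1: on the toric-invariant slice, passing to symplectic coordinates one has $K = \mathcal{L}_P(u) - \int_P \log\det(u_{ij})\,d\mu$ up to bounded corrections, and $d_1$-distance modulo the space of linear functions on $P$ is comparable to an $L^1(P)$-type norm on $u$. Step 2: upgrade the qualitative inequality $\mathcal{L}_P(u)>0$ to a quantitative bound $\mathcal{L}_P(u)\geq \varepsilon \int_P |u|\,d\mu - C$ on normalized convex functions with $L^1$-boundary values, via weak compactness in the cone of such functions combined with lower semicontinuity of the boundary integral. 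Step 3: reduce the general case to the toric-invariant one by averaging a sequence $\varphi_i\in\mathcal{H}_0$ over the real torus $T^n\subset G$; convexity of K-energy along torus orbits gives $K(\tilde\varphi_i)\leq K(\varphi_i)$, while $\inf_{\sigma\in G} d_1(\omega_0, \sigma^*\omega_{\tilde\varphi_i})\to\infty$ follows since torus averaging commutes with the $G$-quotient up to a bounded shift. Combining the three steps establishes properness with respect to $d_1$ modulo $G$, and Theorem \ref{t1.1} then delivers a cscK metric.

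The principal obstacle is Step 2. Donaldson's original quantitative argument in \cite{Dona02new} is carried out for piecewise linear convex $u$, while here one must treat the full $L^1$-boundary class, and weak limits of normalized convex functions can in principle concentrate boundary mass so that $u\mapsto \int_{\partial P} u\,d\sigma$ becomes discontinuous. The resolution is to use convexity itself, which forces equi-integrability of $u$ near $\partial P$, together with normalization modulo linear functions, to close the compactness argument. An alternative route, sketched in the paragraph before the theorem, bypasses properness entirely: Donaldson \cite{Dona02new} (see also Theorem 4.7 of \cite{CLS11}) constructs a weak minimizer of the K-energy in the toric-invariant class, and the regularity theorem Theorem 1.5 of the second paper \cite{cc2} promotes this weak minimizer to a smooth toric-invariant cscK metric; the obstacle in this route is verifying that the interior regularity estimates of \cite{cc2} transfer to the symplectic-coordinate side. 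Either way, the heart of the argument is the translation of the convex-analytic condition on $P$ into the analytic properness or regularity statement required by this series.
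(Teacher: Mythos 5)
Your overall strategy coincides with the paper's: the paper gives no self-contained proof of Theorem \ref{t1.8new}, but derives it from Theorem \ref{t1.1} together with Donaldson's toric analysis (that $L^1$ stability implies uniform stability and hence properness of the $K$-energy among toric-invariant potentials, cf.\ Section 5 of \cite{Dona02new}), with the route via Donaldson's weak minimizers plus the regularity theory of \cite{cc2} offered as an alternative. Your two routes, and your identification of where the real work lies (the quantitative coercivity in your Step 2, and the dictionary between symplectic potentials, geodesic rays and $\mathcal{L}_P$), match this discussion; for the forward direction note that one only needs $\yen[\rho_f]\leq\mathcal{L}_P(f)$, which is what the concavity of $\log\det$ actually gives, rather than equality of the two slopes.

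There is one step that would fail as written: Step 3. The assertion that ``convexity of $K$-energy along torus orbits gives $K(\tilde\varphi_i)\leq K(\varphi_i)$'' is not correct. Since the compact torus $T^n$ preserves $\omega_0$, the $K$-energy is \emph{constant} on each $T^n$-orbit, but this says nothing about its value at the affine barycenter $\tilde\varphi_i=\int_{T^n}\sigma.\varphi_i\,d\sigma$: the $K$-energy is convex along geodesics, not along affine segments of potentials (the entropy term $\int_M\log\big(\frac{\omega_{\varphi}^n}{\omega_0^n}\big)\omega_{\varphi}^n$ is not convex in $\varphi$), so averaging can increase $K$. The correct reduction to the invariant slice is done at the level of the continuity path: the solutions of (\ref{3.5}) for $t<1$ are the unique minimizers of the twisted $K$-energy and hence automatically $T^n$-invariant, so in Proposition \ref{l3.17} and the surrounding compactness argument one only ever needs properness of $K$ restricted to toric-invariant potentials modulo the complex torus --- which is exactly what Donaldson's estimate supplies; alternatively one can invoke the properness principle of \cite{Dar-Rub-17} relative to the invariant subspace. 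With Step 3 repaired in this way, the rest of your outline is consistent with the paper.
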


It is not immediately clear if filtrated stability alone will be sufficient to imply the existence of cscK metrics. More broadly, there are some intriguing questions
which might link a stability notion in the  algebraic sense to some analytical or geometric stability.  With Theorem \ref{t1.8new} in mind, one wonders if we can replace the $L^1$ stability condition
 by some algebraic conditions which can be checked relatively easily.\\

Continuing from Question 1.10 above and discussions in our second paper \cite{cc2}  in this series, the first named author believes the following question is very interesting.
\begin{q} In a polarized algebraic manifold $(M, L, [\omega])$,  the  following notions of stability are listed in the seemingly increasing order of strength:
\begin{enumerate}
\item  Filtrated Stable;
\item Uniform Stable;
\item Geodesic stable in $\mathcal{E}_0^\infty$; 
\item Geodesic stable in $\mathcal{E}_0^p (1<p < \infty);$ 
\item  Geodesic stable in $\mathcal{E}_0^1. $ 
\end{enumerate}
Are these notions of stability actually equivalent?
\end{q}

Here uniform stability and filtrated stability are algebraic notions (G. Sz$\acute{\text{e}}$kelyhidi \cite{Sz06}). Among all the space $\mathcal{E}_0^p (1\leq p \leq \infty) $ we studied
in this and preceding paper \cite{cc2},  perhaps both the two extremal cases $\mathcal{E}_0^1$ and  $\mathcal{E}_0^ \infty$ are special and important.
The notion of stability at $\mathcal{E}_0^ \infty$ might be the one which bridge over from algebraic notions of stability to more
analytic notions of stability. There is
a large and rapidly increasing volume of research centered around the concept of filtrated stability and uniform stability where we refer to Hisamoto  \cite{HIS16}  J. Ross
\cite{Ross05}, G. Sz$\acute{\text{e}}$kelyhidi \cite{Sz06},  Berman-Boucksom-Jonsson \cite{BBJ15}, Boucksom-Hisamoto-Jonsson \cite{BHJ16},  R. Dervan \cite{Dervan171}and references therein for more details.\\

Finally we explain the organization of the paper:

In section 2, we derive estimates for scalar curvature type equations with more general right hand side.

In section 3, we apply the estimates obtained in section 2 to prove the properness conjecture when the automorphism group is non-discrete.

In section 4, we prove the equivalence between geodesic stability and existence of cscK metrics.

In the appendix, we prove some results about the non-positively curved properties of the metric space $(\mathcal{E}^p,d_p)$ which will be useful to us. Such results may be of independent interest.\\

\noindent {\bf Acknowledgement} Both authors wish to thank Chen Gao for his meticulously careful readings of the earlier draft of this set of papers
and numerous valuable comments, corrections.  The second named author wishes to thank his advisor Mikhail Feldman for his interest in this work and warm encouragement. Both authors are also grateful for Sir Simon Donaldson, Weiyong He, Sun Song and Chengjian Yao for their interest in this work
and  their insightful comments and suggestions. \\

In this set of three papers, the first named author has been partially supported by NSF grant DMS-1515795.

\section{Scalar curvature type equations with singular right hand side}
Let $(M,J,\omega_0)$ be a compact K\"ahler manifold.
We consider the following scalar curvature type equations:
\begin{align}
\label{eq-?1}
&\det(g_{i\bar{j}}+\varphi_{i\bar{j}})=e^F\det g_{i\bar{j}},\\
\label{eq-?2}
&\Delta_{\varphi}F=tr_{\varphi}(Ric-\beta)-R.
\end{align}
In the above, $\beta=\beta_0+\sqrt{-1}\partial\bar{\partial}f\geq0$. Also we assume that $\beta_0$ is a bounded $(1,1)$ form and $f$ is normalized to be $\sup_Mf=0$, $e^{-f}\in L^{p_0}(M)$ for some $p_0>1$.
$R$ is a bounded function.

As before, $\varphi$ should be such that $g_{i\bar{j}}+\varphi_{i\bar{j}}>0$ on $M$, so that $\omega_{\varphi}:=\omega_0+\sqrt{-1}\partial\bar{\partial}\varphi$ defines a new K\"ahler metric in the same class as $\omega_0$.
We note that (\ref{eq-1}), (\ref{eq-2}) can be combined to give the following scalar curvature type equations:
\begin{equation}
R_{\varphi}=tr_{\varphi}\beta+R.
\end{equation}
Here $R_{\varphi}$ denotes the scalar curvature of the metric $\omega_{\varphi}$.
In the following, we will always assume that the solution $\varphi$ is smooth and our goal is to derive apriori estimates.
\subsection{Boundedness of $F+f$}
The estimate in this subsection only requires a bound for $\int_Me^{-p_0f}dvol_g$ for some $p_0>1$. In particular, we don't need any positivity assumption on the form $\beta$.
\begin{lem}\label{l2.1}
Let $\psi$ be the solution to the following equation:
\begin{align}
\label{MA-1}
&\det(g_{i\bar{j}}+\psi_{i\bar{j}})=\frac{e^F\sqrt{F^2+1}}{\int_Me^F\sqrt{F^2+1}dvol_g}\det g_{i\bar{j}},\\
\label{MA-2}
&\sup_M\psi=0.
\end{align}
Suppose also $\sup_M\varphi=0$. Then for any $0<\eps_0<1$, there exists a constant $C_0$, such that
$$
F+f+\eps_0\psi-2(\max_M|Ric-\beta_0|_g+1)\varphi\leq C_0.
$$
Here $C_0$ depends only on $\eps_0$, the upper bound of the entropy $\int_MFe^Fdvol_g$, the bound for $\max_M|\beta_0|_g$, $||R||_0$ and the background metric $(M,\omega_0)$.
\end{lem}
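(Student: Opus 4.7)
My plan is to apply the maximum principle to the test function
\[
u \;:=\; F + f + \eps_0 \psi - \Lambda \varphi, \qquad \Lambda := 2\bigl(\max_M|Ric-\beta_0|_g+1\bigr).
\]
The first step is to compute $\Delta_\varphi u$ using the defining equations. Since $\Delta_\varphi F = tr_\varphi(Ric-\beta)-R$, and $\sqrt{-1}\partial\bar\partial f = \beta - \beta_0$ gives $\Delta_\varphi f = tr_\varphi(\beta-\beta_0)$, the $\beta$-dependent pieces cancel and $\Delta_\varphi(F+f) = tr_\varphi(Ric-\beta_0)-R$. Combining with $\Delta_\varphi\psi = tr_\varphi\omega_\psi - tr_\varphi\omega_0$, $\Delta_\varphi\varphi = n - tr_\varphi\omega_0$, and the linear-algebra bound $tr_\varphi(Ric-\beta_0)\geq -\max_M|Ric-\beta_0|_g\cdot tr_\varphi\omega_0$, the specific choice of $\Lambda$ makes the coefficient of $tr_\varphi\omega_0$ at least $1$ for every $\eps_0<1$, so that
\[
\Delta_\varphi u \;\geq\; tr_\varphi\omega_0 \;+\; \eps_0\, tr_\varphi\omega_\psi \;-\; C_1,
\]
for some $C_1$ depending only on $\|R\|_0$, $\max_M|Ric-\beta_0|_g$ and $n$.

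At a maximum point $x_0$ of $u$, the maximum principle gives $tr_\varphi\omega_0(x_0) + \eps_0\,tr_\varphi\omega_\psi(x_0)\leq C_1$. The role of the auxiliary MA equation for $\psi$ is precisely to make $tr_\varphi\omega_\psi$ encode pointwise information on $F$: by AM--GM,
\[
tr_\varphi\omega_\psi \;\geq\; n\Bigl(\frac{\det\omega_\psi}{\det\omega_\varphi}\Bigr)^{1/n} \;=\; n\Bigl(\frac{\sqrt{F^2+1}}{I}\Bigr)^{1/n},
\]
with $I := \int_M e^F\sqrt{F^2+1}\,dvol_g$. The normalization $I$ is bounded purely in terms of the entropy using $\sqrt{F^2+1}\leq|F|+1$ together with the elementary estimate $-xe^x\leq 1/e$ on $\{x\leq 0\}$. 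This extracts $|F(x_0)|\leq C_2/\eps_0^n$, and the companion inequality $tr_\varphi\omega_0\geq n e^{-F/n}$ supplies a matching lower bound, yielding at $x_0$ the two-sided control $\omega_\varphi\asymp\omega_0$.

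The main obstacle is then the final step of upgrading this pointwise control at $x_0$ into the claimed bound $u(x_0)\leq C_0$, since $-\Lambda\varphi(x_0)$ --- the only positive contribution to $u(x_0)=F(x_0)+f(x_0)+\eps_0\psi(x_0)-\Lambda\varphi(x_0)$ --- is not directly controlled by the computation above. My plan to close the estimate is to supplement the maximum principle with global integral control: the $\alpha$-invariant of $(M,\omega_0)$ together with the entropy hypothesis, applied through the Young--Orlicz duality $\int fg\leq\int f\log f+\log\int e^g$ with $f=e^F$ and $g=\alpha|\varphi|$, gives $\int_M|\varphi|\,e^F\omega_0^n\leq C$, and analogous bounds for $f$ and $\psi$ follow from $\sup f=\sup\psi=0$ together with $\int_M e^{-p_0 f}<\infty$. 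Combined with the Laplacian lower bound $\Delta_\varphi u\geq -C_1$, a mean-value / ABP-type estimate for the operator $\Delta_\varphi$ then converts the $\omega_\varphi^n$-weighted $L^1$ bound on $u$ into the desired pointwise bound $\sup_M u\leq C_0$. Making this last conversion quantitative with constants depending only on the data in the statement, without any appeal to $\|\varphi\|_\infty$, is in my view the most delicate point of the proof.
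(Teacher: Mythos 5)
Your setup is the same as the paper's: the test function $u=F+f+\eps_0\psi-C\varphi$ with $C=2(\max_M|Ric-\beta_0|_g+1)$, the cancellation $\Delta_\varphi(F+f)=tr_\varphi(Ric-\beta_0)-R$, and the AM--GM use of the auxiliary Monge--Amp\`ere equation to produce the good term $\eps_0 A^{-1/n}(F^2+1)^{1/2n}$. You also correctly diagnose that evaluating $\Delta_\varphi u\le 0$ at the maximum point only controls $F$ and $tr_\varphi\omega_0$ there, not $u$ itself. But the step you defer --- converting a $\omega_\varphi^n$-weighted $L^1$ bound on $u^+$ plus $\Delta_\varphi u\ge -C_1$ into $\sup_M u\le C_0$ via ``a mean-value / ABP-type estimate'' --- is precisely where the entire difficulty of the lemma sits, and as stated it does not go through. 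Any local maximum principle of Moser or Krylov--Safonov type for $\Delta_\varphi$ has constants depending on the ellipticity ratio of $g_\varphi$ relative to $g$ (equivalently on $\|n+\Delta\varphi\|_\infty$ and a lower bound for $\omega_\varphi$), which is exactly what is unavailable at this stage; and the Alexandrov estimate, which is the right tool because its constant only involves $(\det g_\varphi^{i\bar j})^{-1/n}=e^{F/n}$, bounds the interior maximum by the \emph{boundary} supremum plus $\bigl(\int e^{2F}|h^-|^{2n}\bigr)^{1/2n}$ --- so applied directly to $u$ it neither absorbs the uncontrolled boundary term nor keeps the $e^{2F}$ weight finite from the entropy alone.

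The paper's mechanism, which your outline is missing, is to apply ABP not to $u$ but to $e^{\delta u}\eta_{p_0}$, where $\eta_{p_0}$ is a cutoff equal to $1$ at the max point $p_0$ and to $1-\theta$ outside a small ball (so the boundary term $(1-\theta)e^{\delta u(p_0)}$ can be absorbed into the left-hand side), and where $\delta=\alpha/(2nC)$ is tuned to Tian's $\alpha$-invariant. Two things then make the ABP integral finite with the stated dependence: (i) after choosing $\theta$ small, the negative part of $\Delta_\varphi(e^{\delta u}\eta_{p_0})$ is supported on $\{F\le C_3\}$, because on the complement the good term $\eps_0A^{-1/n}(F^2+1)^{1/2n}$ dominates $C_1$ --- this is what tames the $e^{(2+2n\delta)F}$ weight; and (ii) on that set, $e^{2n\delta u}\le e^{2n\delta F}e^{-2nC\delta\varphi}=e^{2n\delta F}e^{-\alpha\varphi}$ (using $f\le0$, $\psi\le0$), and $\int_Me^{-\alpha\varphi}$ is bounded by the $\alpha$-invariant. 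Your Orlicz-duality bound $\int_M(-\varphi)e^F\omega_0^n\le C$ is true but is not the input the argument needs; the exponential change of variable with the specific exponent $\delta=\alpha/(2nC)$, together with the cutoff and the support restriction from the good term, is. Without supplying that (or an equivalent device), the proof is incomplete at its essential step.
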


\begin{proof}
Similar to the cscK case, the proof is by Alexandrov maximum principle. Observe from (\ref{eq-?2}) that
\begin{equation}
\Delta_{\varphi}(F+f)=tr_{\varphi}(Ric-\beta_0)-R.
\end{equation}
Denote $C=2(\max_M|Ric-\beta_0|_g+1)$ and we compute
\begin{equation}\label{1.7}
\Delta_{\varphi}(F+f+\eps_0\psi-C\varphi)=tr_{\varphi}(Ric-\beta_0)-R+\eps_0\Delta_{\varphi}\psi-Cn+Ctr_{\varphi}g.
\end{equation}
Using arithmetic-geometric mean inequality, we have
$$
\Delta_{\varphi}\psi=g_{\varphi}^{i\bar{j}}(g_{i\bar{j}}+\psi_{i\bar{j}})-tr_{\varphi}g\geq A^{-\frac{1}{n}}(F^2+1)^{\frac{1}{2n}}-tr_{\varphi}g.
$$
Here $A=\int_Me^F\sqrt{F^2+1}dvol_g$. 
Also due to our choice of the constant $C$, we obtain from (\ref{1.7}) that
\begin{equation}
\Delta_{\varphi}(F+f+\eps_0\psi-C\varphi)\geq \frac{C}{2}tr_{\varphi}g+\eps_0A^{-\frac{1}{n}}(F^2+1)^{\frac{1}{2n}}-C_1.
\end{equation}
Here $C_1$ has the said dependence as stated in the lemma.
By Proposition 2.1 in \cite{tian87}, there exists $\alpha>0$, and a constant $C_2$, such that for any $\omega_0$-psh function $\phi$, we have 
$$
\int_Me^{-\alpha(\phi-\sup_M\phi)}dvol_g\leq C_2.
$$
Now denote $u=F+f+\eps_0\psi-C\varphi$, $\delta=\frac{\alpha}{2nC}$, and let $0<\theta<1$ to be determined. First for any $p\in M$, we can construct a cut-off function $\eta_p$, 
so that $\eta_p(p)=1$, $\eta_{p}\equiv 1-\theta$ outside the ball $B_{d_0}(p)$, 
and $|\nabla\eta_p|\leq\frac{2\theta}{d_0}$, $|\nabla^2\eta|\leq \frac{2\theta}{d_0^2}$.
Here $d_0$ is a sufficiently small constant depending only on the background metric $(M,\omega_0)$.
Assume that the function $u$ achieves maximum at the point $p_0$, then we compute
\begin{equation}\label{1.9}
\begin{split}
&\Delta_{\varphi}(e^{\delta u}\eta_{p_0})=e^{\delta u}\delta^2|\nabla_{\varphi}u|_{\varphi}^2\eta_{p_0}+e^{\delta u}\eta_{p_0}\delta\Delta_{\varphi}u+e^{\delta u}\Delta_{\varphi}\eta_{p_0}+2e^{\delta u}\delta\nabla_{\varphi}u\cdot_{\varphi}\nabla_{\varphi}\eta_{p_0}\\
&\geq e^{\delta u}\delta^2|\nabla_{\varphi}u|_{\varphi}^2\eta_{p_0}+e^{\delta u}\delta\eta_{p_0}(\frac{C}{2}tr_{\varphi}g+\eps_0A^{-\frac{1}{n}}(F^2+1)^{\frac{1}{2n}}-C_1)\\
&-e^{\delta u}|\nabla^2\eta_{p_0}|tr_{\varphi}g-e^{\delta u}\delta^2|\nabla_{\varphi}u|_{\varphi}^2\eta_{p_0}-e^{\delta u}\frac{|\nabla_{\varphi}\eta_{p_0}|_{\varphi}^2}{\eta_{p_0}}\\
&\geq e^{\delta u}\eta_{p_0}\big(\frac{\delta C}{2}-\frac{2\theta}{d_0^2(1-\theta)}-\frac{4\theta^2}{d_0^2(1-\theta)^2}\big)tr_{\varphi}g+e^{\delta u}\delta \eta_{p_0}\big(\eps_0A^{-\frac{1}{n}}(F^2+1)^{\frac{1}{2n}}-C_1\big)
\end{split}
\end{equation}
Choose $\theta$ small enough so that (note that $\delta C=\frac{\alpha}{2n}$)
$$
\frac{\delta C}{2}-\frac{2\theta}{d_0^2(1-\theta)}-\frac{4\theta^2}{d_0^2(1-\theta)}>0.
$$
With this choice of $\theta$, (\ref{1.9}) gives
\begin{equation}\label{2.10new}
\Delta_{\varphi}(e^{\delta u}\eta_{p_0})\geq e^{\delta u}\delta \eta_{p_0}(\eps_0 A^{-\frac{1}{n}}(F^2+1)^{\frac{1}{2n}}-C_1)\geq -e^{\delta u}\delta \eta_{p_0}C_1\chi_{\{F\leq C_3\}}.
\end{equation}
Here $\chi_{\{F\leq C_3\}}$ is the indicator function of the set $\{F\leq C_3\}$, and $C_3$ is a constant determined by the inequality
$$
\eps_0 A^{-\frac{1}{n}}\big(F^2+1\big)^{\frac{1}{2n}}-C_1\leq 0\textrm{ implies $F\leq C_3$.}
$$ 
Hence $C_3$ depends only on $\eps_0$, $C_1$ and $A$. 
We wish to apply Alexandrov maximum principle to (\ref{2.10new}) inside $B_{d_0}(p_0)$, and with a similar derivation as (5.16) in the first paper \cite{cc1}, we obtain:
\begin{equation}\label{2.11}
\begin{split}
e^{\delta u}&\eta_{p_0}(p_0)\leq \sup_{\partial B_{d_0}(p_0)}e^{\delta u}\eta_{p_0}\\
&+C_nd_0\bigg(\int_{B_{d_0}(p_0)}\delta^{2n}e^{2F}e^{2n\delta u}\big((\eps_0A^{-\frac{1}{n}}(F^2+1)^{\frac{1}{2n}}-C_1)^-\big)^{2n}dvol_g\bigg)^{\frac{1}{2n}}.
\end{split}
\end{equation}
To estimate the integral appearing above, observe that $f\leq 0$, $\psi\leq0$, then we have
\begin{equation}
\begin{split}
\int_{B_{d_0}(p_0)}&e^{2F}e^{2n\delta u}\big((\eps_0A^{-\frac{1}{n}}(F^2+1)^{\frac{1}{2n}}-C_1)^-\big)^{2n}dvol_g\\
&\leq \int_Me^{2F+2n\delta F}e^{-2nC\delta\varphi}\chi_{\{F\leq C_3\}}C_1^{2n}dvol_g\leq e^{(2+2n\delta)C_3}C_2(C_1)^{2n}.
\end{split}
\end{equation}
Since $\eta_{p_0}\leq 1-\theta$ on $\partial B_{d_0}(p_0)$, the result follows from (\ref{2.11}).
Indeed, since $e^{\delta u}$ achieves maximum at $p_0$, we have
\begin{equation*}
e^{\delta u}(p_0)\leq (1-\theta)e^{\delta u}(p_0)+C_nd_0\delta e^{(2+2n\delta)C_3}C_2(C_1)^{2n}.
\end{equation*}
The desired estimate then follows.
\end{proof}
\begin{cor}\label{c2.2}
There exists a constant $C_4$, such that
$$
F+f\leq C_4.
$$
In particular, if $\varphi$ is normalized so that $\sup_M\varphi=0$, then 
$$
||\varphi||_0\leq C_{4.5}.
$$
Here $C_4$ and $C_{4.5}$ depends only on the upper bound for the entropy $\int_Me^FFdvol_g$, the bound for $\max_M|\beta_0|_g$, $||R||_0$, $p_0$(uniform for $p_0>1$ as long as $p_0-1$ bounded away from 0), the bound $\int_Me^{-p_0f}dvol_g$ and the background metric $(M,\omega_0)$.
\end{cor}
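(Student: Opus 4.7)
The plan is to combine Lemma \ref{l2.1} with standard Kolodziej-type $L^\infty$ estimates for the complex Monge--Amp\`ere equation. The auxiliary potential $\psi$ in Lemma \ref{l2.1} is introduced precisely so that, once it is shown to be uniformly bounded, the term $\eps_0\psi$ may be absorbed into a constant; the exponential upper bound on $F+f$ then turns the right-hand side of (\ref{eq-?1}) into an $L^{p_0}$ density, at which point a second application of Kolodziej gives the $L^\infty$ bound on $\varphi$.

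First I would fix any $\eps_0\in(0,1)$, say $\eps_0=\frac{1}{2}$, and apply Lemma \ref{l2.1}. Since $\sup_M\varphi=0$ forces $\varphi\leq 0$ and the constant $C=2(\max_M|Ric-\beta_0|_g+1)$ is nonnegative, the term $-C\varphi$ on the left is nonnegative and can be dropped, yielding $F+f+\eps_0\psi\leq C_0$, equivalently
$$F+f\leq C_0+\eps_0\,||\psi||_0.$$
Thus the first conclusion $F+f\leq C_4$ reduces entirely to a uniform $L^\infty$ bound on $\psi$.

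Next I would bound $||\psi||_0$ via Kolodziej applied to the Monge--Amp\`ere equation (\ref{MA-1})--(\ref{MA-2}). The relevant input is integrability of the normalized density $e^F\sqrt{F^2+1}/A$, with $A=\int_M e^F\sqrt{F^2+1}\,dvol_g$. Using the entropy hypothesis $\int_M Fe^F\,dvol_g\leq C$ together with the elementary inequality $|x|e^x\leq xe^x+e^{-1}$ on $\bR$ (the negative-$x$ contribution is bounded because $ye^{-y}\leq e^{-1}$ for $y\geq 0$), one gets $\int_M|F|e^F\,dvol_g\leq C'$, so $V\leq A\leq \int_M e^F(|F|+1)\,dvol_g\leq C''$. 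A parallel computation shows the normalized measure $e^F\sqrt{F^2+1}\,dvol_g/A$ has bounded relative entropy with respect to $\omega_0^n$, so Kolodziej's $L^\infty$ estimate in its entropy formulation delivers $||\psi||_0\leq C$. Plugging this back in gives $F+f\leq C_4$ as claimed.

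For the bound $||\varphi||_0\leq C_{4.5}$, I would rewrite $F+f\leq C_4$ as $e^F\leq e^{C_4}e^{-f}$ and integrate the $p_0$-th power, obtaining
$$||e^F||_{L^{p_0}(\omega_0^n)}\leq e^{C_4}\Big(\int_M e^{-p_0 f}\,dvol_g\Big)^{1/p_0}.$$
Thus the right-hand side of the Monge--Amp\`ere equation (\ref{eq-?1}) has density in $L^{p_0}$ with $p_0>1$, and the classical Kolodziej $L^\infty$ estimate yields the desired bound on $\varphi$, with dependence exactly as stated in the corollary. The only nontrivial external inputs are the two variants of the Kolodziej estimate (entropy RHS for $\psi$, and $L^{p_0}$ RHS for $\varphi$); both are standard, and the main point I expect to require care is the bookkeeping showing that the entropy of $e^F\sqrt{F^2+1}/A$ is indeed controlled, since the $\sqrt{F^2+1}$ factor was inserted in (\ref{MA-1}) precisely to make this work.
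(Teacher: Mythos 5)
There is a genuine gap at the step where you bound $\|\psi\|_0$. You invoke ``Kolodziej's $L^\infty$ estimate in its entropy formulation'' for the Monge--Amp\`ere equation (\ref{MA-1}): but a bound on the relative entropy of the density (i.e.\ the density lying in $L\log L$) does \emph{not} imply an $L^\infty$ bound on the potential when $n\geq 2$. Kolodziej's Orlicz-type criterion requires roughly $L(\log L)^{p}$ with $p>n$ (plus a Dini-type condition), and $L\log L$ falls well short of this; there are finite-entropy densities with unbounded potentials. Moreover, even the entropy bookkeeping you defer does not close: the relative entropy of $e^F\sqrt{F^2+1}/A$ has leading term $\frac{1}{A}\int_M e^F F\sqrt{F^2+1}\,dvol_g\sim\int_M F^2e^F\,dvol_g$, which is not controlled by the hypothesis $\int_M Fe^F\,dvol_g\leq C$. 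So the reduction of the first conclusion to ``$\|\psi\|_0\leq C$'' cannot be discharged the way you propose, and the $\sqrt{F^2+1}$ factor in (\ref{MA-1}) is not there to make an entropy estimate work.

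The paper's route avoids bounding $\|\psi\|_0$ at this stage altogether. It only uses Tian's $\alpha$-invariant estimate $\int_Me^{-\alpha\psi}\,dvol_g\leq C_2$ (valid for any normalized $\omega_0$-psh function, bounded or not), which combined with the pointwise inequality of Lemma \ref{l2.1} and $\varphi\leq 0$ yields the \emph{exponential integrability} $\int_Me^{p(F+f)}\,dvol_g\leq C_5$ for any prescribed $p$ (choosing $\eps_0=\alpha/p$). A H\"older interpolation against $\int_Me^{-p_0f}\,dvol_g$ then puts $e^F$ in $L^{1+\eps_1}$ with $\eps_1=\frac{p_0-1}{2}$; only now does Kolodziej (in its standard $L^{1+\eps}$ form) apply, giving $\|\varphi\|_0,\|\psi\|_0\leq C_6$, after which one returns to Lemma \ref{l2.1} to read off $F+f\leq C_4$. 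Your final step (from $F+f\leq C_4$ to $e^F\leq e^{C_4}e^{-f}\in L^{p_0}$ and then Kolodziej for $\varphi$) is fine, but it is downstream of the bound you have not actually established. To repair the argument you should replace your step 2 by the integral estimate $\int_M e^{p(F+f)}\leq C_5$ derived from Lemma \ref{l2.1} and the $\alpha$-invariant, and then proceed as above.
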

\begin{proof}
First we obtain from Lemma \ref{l2.1} that
\begin{equation}
\frac{\alpha}{\eps_0}\big(F+f-2(\max_M|Ric-\beta_0|_g+1)\varphi\big)\leq-\alpha\psi+\frac{\alpha C_0}{\eps_0}.
\end{equation}
Hence for any $p>1$, if we choose $\eps_0$ so that $p=\frac{\alpha}{\eps_0}$, then we obtain
\begin{equation}
\int_Me^{p(F+f)}dvol_g\leq C_5.
\end{equation}
The constant $C_5$ has the dependence as described in Lemma \ref{l2.1} with additional dependence on $p$, but will be uniform in $p$ as long as $p$ remains bounded.
Choose $\eps_1=\frac{p_0-1}{2}$, then we can estimate
\begin{equation}\label{2.15}
\begin{split}
\int_M&e^{(1+\eps_1)F}dvol_g=\int_Me^{(1+\eps_1)(F+f)}\cdot e^{-(1+\eps_1)f}dvol_g\\
&\leq \bigg(\int_Me^{-p_0f}dvol_g\bigg)^{\frac{1+\eps_1}{p_0}}\cdot\bigg(\int_Me^{\frac{p_0}{p_0-(1+\eps_1)}(F+f)}dvol_g\bigg)^{1-\frac{1+\eps_1}{p_0}}\leq C_{5.5}.
\end{split}
\end{equation}
Here $C_{5.5}$ is uniform in $p_0$ as long as $p_0-1$ is bounded away from 0. 
Then we can conclude from (\ref{2.15}) and Kolodziej's main result (c.f. \cite{Kolo98}) that 
\begin{equation}
||\varphi||_0,\,\,||\psi||_0\leq C_6.
\end{equation}
The result now follows from Lemma \ref{l2.1}, with choice of $\eps_0$ so that $\frac{\alpha}{\eps_0}=\frac{p_0}{p_0-(1+\eps_1)}=\frac{2p_0}{p_0-1}$.
\end{proof}
Next we would like to estimate the lower bound for $F+f$.
\begin{lem}\label{l2.3}
There exists a constant $C_7$ such that
$$
F+f\geq -C_7.
$$
Here $C_7$ depends only on $||\varphi||_0$, $\max_M|\beta_0|_g$, $||R||_0$, the background metric $g$, the bound for $\int_Me^{-p_0f}dvol_g$, and $p_0$(uniform in $p_0$ as long as $p_0-1$ bounded away from 0).
In particular
$$
F\geq -C_7.
$$
\end{lem}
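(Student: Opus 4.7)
The plan is to mirror the Alexandrov maximum principle argument of Lemma \ref{l2.1}, but applied to a test function designed to produce an \emph{upper} bound on $-(F+f)$ rather than on $F+f$. Let $\psi$ be the auxiliary Monge--Amp\`ere potential from (\ref{MA-1})--(\ref{MA-2}) and set
$$
G \defd -(F+f) + \eps_0 \psi - C\varphi,
$$
with $C \defd 2(\max_M|Ric - \beta_0|_g + 1)$ (the same choice as in Lemma \ref{l2.1}) and $\eps_0 \in (0,1)$ to be fixed. Since $\sup_M\psi = \sup_M\varphi = 0$ and $\|\varphi\|_0 \leq C_{4.5}$ by Corollary \ref{c2.2}, an upper bound on $G$ translates directly into the desired lower bound on $F+f$.

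First I would compute $\Delta_{\varphi}G$ using $\Delta_{\varphi}(F+f) = tr_\varphi(Ric-\beta_0) - R$, the AM--GM bound $\Delta_\varphi\psi \geq nA^{-1/n}(F^2+1)^{1/(2n)} - tr_\varphi g$ with $A \defd \int_M e^F\sqrt{F^2+1}\,dvol_g$, and $\Delta_\varphi\varphi = n - tr_\varphi g$. The sign flip on $(F+f)$ is absorbed by the symmetric bound $|Ric - \beta_0|\leq (\max_M|Ric-\beta_0|_g)\cdot g$, and with the above choice of $C$ one obtains
$$
\Delta_\varphi G \geq \tfrac{C}{2}\,tr_\varphi g + \eps_0 n A^{-1/n}(F^2+1)^{1/(2n)} - C_1',
$$
structurally identical to the inequality (1.8) driving Lemma \ref{l2.1}. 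Next I would build the cutoff $\eta_{p_0}$ around the maximum point $p_0$ of $G$ and apply the Alexandrov--Bakelman--Pucci estimate to $e^{\delta G}\eta_{p_0}$, paralleling (\ref{1.9})--(\ref{2.11}) verbatim; the resulting inequality reads
$$
e^{\delta G}(p_0) \leq (1-\theta)\,e^{\delta G}(p_0) + C_n d_0 \Big(\int_{B_{d_0}(p_0)} \delta^{2n}\, e^{2F}\, e^{2n\delta G}\,\chi_{\{|F|\leq C_3\}}\,dvol_g\Big)^{1/(2n)}
$$
for some $C_3 = C_3(\eps_0, C_1', A)$, so everything reduces to bounding the $L^{2n}$ integral on the right.

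The main obstacle --- and the place where the argument genuinely diverges from Lemma \ref{l2.1} --- is this last integral. Since $G$ carries $-f$ rather than $+f$, the integrand contains a factor $e^{-2n\delta f}$ in place of the harmless $e^{2n\delta f}\leq 1$ that appeared in Lemma \ref{l2.1}. Using $|F|\leq C_3$, $\sup\psi=0$ and $\|\varphi\|_0\leq C_{4.5}$ to control all other factors, one gets
$$
\int_{B_{d_0}(p_0)} e^{2F}\, e^{2n\delta G}\,\chi_{\{|F|\leq C_3\}}\,dvol_g \leq C'\int_M e^{-2n\delta f}\,dvol_g.
$$
I would therefore choose $\delta$ small enough that both $\delta \leq \alpha/(2nC)$ (the constraint from the cutoff computation) and $2n\delta < p_0$ hold; the hypothesis $\int_M e^{-p_0 f}\,dvol_g<\infty$ together with H\"older's inequality then closes the estimate. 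This smallness requirement on $\delta$ is precisely what introduces the dependence of $C_7$ on $p_0$, uniform as long as $p_0 - 1$ stays bounded away from $0$.

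Finally I would conclude $G \leq C_6'$ globally; combined with $\|\psi\|_0\leq C_6$ (from Kolodziej applied to (\ref{MA-1}), available since the $L^{1+\eps}$ bound on $e^F$ in (\ref{2.15}) makes the right-hand side of (\ref{MA-1}) integrable to a power strictly greater than $1$) and $\|\varphi\|_0\leq C_{4.5}$, this yields $F+f \geq -C_7$ with the dependence claimed in the statement.
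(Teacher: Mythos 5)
Your proof is correct in substance, but it closes the key step differently from the paper's own argument, so let me compare. Both proofs run the Alexandrov--Bakelman--Pucci scheme on an exponential of (a perturbation of) $-(F+f+C\varphi)$ at its maximum point; the divergence is in how the resulting integral $\int e^{2F}e^{-2n\delta(F+f+C\varphi)}(\cdots)\,dvol_g$ is controlled. The paper drops the auxiliary potential $\psi$ entirely, takes the specific exponent $\eps_2=\tfrac{p_0}{2n(p_0-1)}$, and splits $e^{(2-2n\eps_2)F}e^{-2n\eps_2 f}$ by H\"older so that the $F$-factor is absorbed by the cohomological constant $\int_M e^F\,dvol_g=vol(M)$ and the $f$-factor by $\int_M e^{-p_0f}\,dvol_g$. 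You instead re-import the $\psi$-trick from Lemma \ref{l2.1}: the term $\eps_0 A^{-1/n}(F^2+1)^{1/(2n)}$ confines the negative part of the differential inequality to $\{|F|\le C_3\}$, so the factor $e^{2F}e^{-2n\delta F}$ is trivially bounded there and you only need $2n\delta\le p_0$ to handle $e^{-2n\delta f}$. What your route buys is robustness in $p_0$: you avoid the exponent bookkeeping altogether (the paper's H\"older step as written tacitly wants $\tfrac{p_0-2}{p_0-1}\ge 0$, i.e.\ $p_0\ge 2$), and $\delta$ can be any sufficiently small positive number. What it costs is an extra dependence of $C_7$ on the entropy, entering through $A=\int_M e^F\sqrt{F^2+1}\,dvol_g$ (hence $C_3$) and through $\|\psi\|_0$; the lemma as stated does not list the entropy among the dependencies. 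This is harmless in every application, since the listed quantity $\|\varphi\|_0$ is itself only obtained from the entropy bound via Corollary \ref{c2.2}, but strictly speaking your constant is weaker than the one claimed. Two small points of hygiene: the constraint $\delta\le\alpha/(2nC)$ you cite is not actually forced by the cutoff computation (that computation only constrains $\theta$ once $\delta$ is fixed; the $\alpha$-invariant was needed in Lemma \ref{l2.1} only because $\|\varphi\|_0$ was not yet known there), and the coefficient of $tr_\varphi g$ in your differential inequality is $\tfrac{C}{2}-\eps_0$ rather than $\tfrac{C}{2}$, which of course changes nothing.
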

\begin{proof}
We choose $C=2(\max_M|Ric-\beta_0|_g+1)$.
Then we have
\begin{equation}
\Delta_{\varphi}(F+f+C\varphi)=tr_{\varphi}(Ric-\beta_0)-R+Cn-Ctr_{\varphi}g\leq -tr_{\varphi}g+||R||_0+Cn.
\end{equation}
Choose $\eps_2=\frac{p_0}{2n(p_0-1)}$, and the cut-off function $\eta_p$ as in the proof of Lemma \ref{l2.1}(with a parameter $\theta$ to be chosen later), and denote $u_1=F+f+C\varphi$. Assume the function $u_1$ achieves minimum at $p_1\in M$. We may compute
\begin{equation}
\begin{split}
&\Delta_{\varphi}(e^{-\eps_2 u_1}\eta_{p_1})=e^{-\eps_2u}(\eps_2^2|\nabla_{\varphi}u_1|_{\varphi}^2\eta_{p_1}\\
&\quad\quad\quad\quad\quad\quad\quad-\eps_2\Delta_{\varphi}u_1\eta_{p_1}+\Delta_{\varphi}\eta_{p_1}-2\eps_2\nabla_{\varphi}u_1\cdot_{\varphi}\nabla_{\varphi}\eta_{p_1})\\
&\geq e^{-\eps_2u}\big(\eps_2^2|\nabla_{\varphi}u_1|_{\varphi}^2\eta_{p_1}+\eps_2tr_{\varphi}g\eta_{p_1}-\eps_2(||R||_0+Cn)-|\nabla^2\eta_{p_1}|_gtr_{\varphi}g\\
&-\eps_2^2|\nabla_{\varphi}u_1|_{\varphi}^2\eta_{p_1}-\frac{|\nabla_{\varphi}\eta_{p_1}|_{\varphi}^2}{\eta_{p_1}}\big)\geq e^{-\eps_2u}\big(tr_{\varphi}g\eta_{p_1}(\eps_2-\frac{2\theta}{d_0^2(1-\theta)}-\frac{4\theta^2}{d_0^2(1-\theta)})\\
&-\eps_2(||R||_0+Cn)\big).
\end{split}
\end{equation}
Since $\eta_{p_1}\geq 1-\theta$, we may choose $\theta$ sufficiently small so that
$$
(1-\theta)\eps_2-\frac{2\theta}{d_0^2(1-\theta)}-\frac{4\theta^2}{d_0^2(1-\theta)}>0.
$$
With this choice, we then have 
\begin{equation}
\Delta_{\varphi}\big(e^{-\eps_2u_1}\eta_{p_1}\big)\geq -\eps_2e^{-\eps_2u}(||R||_0+Cn).
\end{equation}
Hence if we apply the Alexandrov maximum principle in $B_{d_0}(p_0)$, we have
\begin{equation}
\begin{split}
e^{-\eps_2u_1}\eta_{p_1}(p_1)&\leq\sup_{\partial B_{d_0}(p_1)}e^{-\eps_2u_1}\eta_{p_1}\\
&+C_nd_0\bigg(\int_Me^{2F}e^{-2n\eps_2u_1}\eps_2^{2n}(||R||_0+Cn)^{2n}dvol_g\bigg)^{\frac{1}{2n}}.
\end{split}
\end{equation}
To estimate the integral appearing above, we may calculate:
\begin{equation}
\begin{split}
\int_M&e^{2F}e^{-2n\eps_2u_1}\eps_2^{2n}(||R||_0+Cn)^{2n}dvol_g\leq C_8\int_Me^{(2-2n\eps_2)F-2n\eps_2f}dvol_g\\
&=C_8\int_Me^{\frac{p_0-2}{p_0-1}F}\cdot e^{-\frac{p_0}{p_0-1}f}dvol_g\leq C_8\bigg(\int_Me^Fdvol_g\bigg)^{\frac{p_0-2}{p_0-1}}\cdot\bigg(\int_Me^{-p_0f}dvol_g\bigg)^{\frac{1}{p_0-1}}.
\end{split}
\end{equation}
Since we have $\eta_{p_1}=1-\theta$ on $\partial B_{d_0}(p_1)$, the desired estimate then follows in the same way as in the last part of the proof for Lemma \ref{l2.1}.
\end{proof}
\subsection{$W^{2,p}$ estimate}
In this subsection, we will need to assume $\beta\geq0$ (or more generally a lower bound for $\beta$), besides assuming a bound for $\int_Me^{-p_0f}dvol_g$ for some $p_0>1$. 
\begin{thm}\label{t2.1}
Assume $\beta\geq0$ in (\ref{eq-?1}), (\ref{eq-?2}). For any $p\geq1$, there exists a constant $C_p$, depending only on $||F+f||_0$, $||R||_0$, $\max_M|\beta_0|_g$, the background metric $(M,\omega_0)$, a bound for $\int_Me^{-p_0f}dvol_g$, $||\varphi||_0$ and $p$, such that
$$
\int_Me^{(p-1)f}(n+\Delta\varphi)^pdvol_g\leq C_p.
$$
\end{thm}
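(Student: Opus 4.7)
The strategy is to reduce the desired weighted $L^p$ bound to an iterated $L^q$ estimate on $tr_\varphi g$, which is then proved using the scalar curvature equation. First, let $\mu_1,\ldots,\mu_n$ be the eigenvalues of $\omega_\varphi$ relative to $\omega_0$, so that $e^F=\prod\mu_i$, $n+\Delta\varphi=\sum\mu_i$, and $tr_\varphi g=\sum\mu_i^{-1}$. Writing $\mu_i=e^F/\prod_{j\ne i}\mu_j$ and applying AM--GM to $\prod_{j\ne i}\mu_j^{-1}$ yields the pointwise inequality
$$
n+\Delta\varphi\;\le\;n\,e^F(tr_\varphi g)^{n-1}.
$$
Combined with the $L^\infty$ bound $||F+f||_0\le C$ from Corollary \ref{c2.2} and Lemma \ref{l2.3} (so $e^F\le C e^{-f}$), this gives $(n+\Delta\varphi)^p e^{(p-1)f}\le C' e^{-f}(tr_\varphi g)^{p(n-1)}$. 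Integrating, applying H\"older against $\int_M e^{-p_0 f}\,dvol_g<\infty$, and using $dvol_g=e^{-F}\,dvol_\varphi\le C\,dvol_\varphi$, it suffices to bound $\int_M(tr_\varphi g)^q\,dvol_\varphi$ for $q=p(n-1)p_0/(p_0-1)$.

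To obtain this bound, I use the scalar curvature equation (\ref{eq-?2}). Set $A:=\max_M|Ric-\beta_0|_g+1$ and $u:=(F+f)+A\varphi$, which is uniformly bounded by the earlier estimates. The identity $\Delta_\varphi(F+f)=tr_\varphi(Ric-\beta_0)-R$, combined with $\Delta_\varphi\varphi=n-tr_\varphi g$ and the choice of $A$, yields the pointwise inequality
$$
tr_\varphi g+\Delta_\varphi u\;\le\;An+||R||_0\;=:\;C_0.
$$
Multiplying by $(tr_\varphi g)^{q-1}$, integrating, and integrating by parts,
$$
\int(tr_\varphi g)^q\,dvol_\varphi\;\le\;C_0\!\int(tr_\varphi g)^{q-1}\,dvol_\varphi+(q-1)\!\int(tr_\varphi g)^{q-2}\nabla_\varphi(tr_\varphi g)\cdot_\varphi\nabla_\varphi u\,dvol_\varphi,
$$
with base case $\int tr_\varphi g\,dvol_\varphi=nV$ a cohomological constant. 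The gradient integral is controlled by Cauchy--Schwarz: one factor $\int(tr_\varphi g)^{q-3}|\nabla_\varphi(tr_\varphi g)|_\varphi^2\,dvol_\varphi$ is bounded via a Chern--Lu-type inequality for $\log(tr_\varphi g)$ of the form $\Delta_\varphi\log(tr_\varphi g)\ge-K\,tr_\varphi g$, where $K$ depends on the bisectional curvature of $\omega_0$ and the positivity $\beta\ge0$ is used to keep the accompanying Ricci contributions under control; the complementary factor $\int(tr_\varphi g)^{q-1}|\nabla_\varphi u|_\varphi^2\,dvol_\varphi$ is estimated through the Bochner identity $|\nabla_\varphi u|^2=\tfrac12\Delta_\varphi u^2-u\Delta_\varphi u$ combined with $|u|\le C$ and the pointwise bound $|\Delta_\varphi u|\le C(1+tr_\varphi g)$. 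The estimate then closes by induction on $q$.

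The main technical obstacle is closing the induction cleanly as $q$ grows: the constants from Cauchy--Schwarz and the trace Chern--Lu inequality accumulate with $q$, so Young's inequality must be applied with parameters tuned to $q$ in order to absorb the leading $\int(tr_\varphi g)^q$ term on the right into the left-hand side. The hypothesis $\beta\ge0$ (which was not needed for the $L^\infty$ bound on $F+f$ in Section 2.1) plays an essential role throughout this argument: it ensures $tr_\varphi\beta\ge0$ and provides the positivity needed for the Chern--Lu gradient control, without which the induction would break down.
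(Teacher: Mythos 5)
Your reduction to bounding $\int_M(tr_{\varphi}g)^q\,dvol_{\varphi}$ is fine, and the inequality $tr_{\varphi}g+\Delta_{\varphi}u\le C_0$ with $u=F+f+A\varphi$ is correct; the argument breaks where you invoke a Chern--Lu inequality $\Delta_{\varphi}\log(tr_{\varphi}g)\ge -K\,tr_{\varphi}g$ to control $\int_M(tr_{\varphi}g)^{q-3}|\nabla_{\varphi}(tr_{\varphi}g)|_{\varphi}^2\,dvol_{\varphi}$. The Chern--Lu formula for the identity map $(M,\omega_{\varphi})\to(M,\omega_0)$ reads
$$
\Delta_{\varphi}\log tr_{\varphi}\omega_0\;\ge\;\frac{g_{\varphi}^{i\bar j}g_{\varphi}^{k\bar l}\,Ric_{\varphi,i\bar l}\,(g_0)_{k\bar j}}{tr_{\varphi}\omega_0}-B\,tr_{\varphi}\omega_0,
$$
and the first term involves the \emph{full} Ricci tensor $Ric_{\varphi}=Ric(\omega_0)-\sqrt{-1}\partial\bar{\partial}F$ contracted against two copies of $g_{\varphi}^{-1}$, not its $\omega_{\varphi}$-trace. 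Equation (\ref{eq-?2}) only controls $\Delta_{\varphi}F=g_{\varphi}^{i\bar l}F_{i\bar l}$; the asymmetric contraction $g_{\varphi}^{i\bar j}g_{\varphi}^{k\bar l}F_{i\bar l}(g_0)_{k\bar j}$ has no sign and is not bounded by any admissible quantity, and $\beta\ge0$ does not help here (it gives a sign to $tr_{\varphi}\beta$ inside the scalar curvature, not to the off-trace part of $\partial\bar{\partial}F$). This is precisely the fourth-order obstruction the paper's argument is designed to circumvent. Your treatment of the complementary factor $\int_M(tr_{\varphi}g)^{q-1}|\nabla_{\varphi}u|_{\varphi}^2\,dvol_{\varphi}$ has the same defect: integrating $\Delta_{\varphi}(u^2)$ against $(tr_{\varphi}g)^{q-1}$ by parts reproduces $\nabla_{\varphi}(tr_{\varphi}g)$, so the scheme is circular without the missing gradient bound.

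The paper instead works with $v=e^{-\kappa(F+\delta f+C\varphi)}(n+\Delta\varphi)$ and Yau's computation for $\Delta_{\varphi}(\Delta\varphi)$, in which the third-derivative term $|\varphi_{\alpha\bar{\beta}i}|^2/\big((1+\varphi_{\alpha\bar{\alpha}})(1+\varphi_{\beta\bar{\beta}})\big)$ supplies exactly the positivity needed to complete the square against the cross term $-2\kappa\nabla_{\varphi}(F+\delta f+C\varphi)\cdot_{\varphi}\nabla_{\varphi}(\Delta\varphi)$. The remaining $\Delta F$ (background Laplacian) is removed by integrating by parts against the weight $e^{(1-\kappa)F-\kappa\delta f-\kappa C\varphi}$, and the choice $\delta=(\kappa-1)/\kappa$ together with $f_{i\bar i}\ge-(\beta_0)_{i\bar i}$ --- this is where $\beta\ge0$ actually enters --- controls the leftover $f$-terms. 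The iteration then gains a factor $(n+\Delta\varphi)^{1/(n-1)}$ per step from $tr_{\varphi}g\ge e^{-F/(n-1)}(n+\Delta\varphi)^{1/(n-1)}$, which is the analogue of your AM--GM step but deployed inside the integral inequality rather than as an a priori reduction. To salvage your $tr_{\varphi}g$-based scheme you would need an integral substitute for the pointwise Chern--Lu bound, and no such substitute follows from the hypotheses.
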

\begin{proof}
Let $\kappa>0$, $C>0$, $0<\delta<1$ be constants to be chosen later, we will compute:
\begin{equation}\label{2.22}
\begin{split}
\Delta_{\varphi}&\big(e^{-\kappa(F+\delta f+C\varphi)}(n+\Delta\varphi)\big)=\Delta_{\varphi}\big(e^{-\kappa(F+\delta f+C\varphi)}\big)(n+\Delta\varphi)+e^{-\kappa(F+\delta f+C\varphi)}\Delta_{\varphi}(\Delta\varphi)\\
&-2\kappa e^{-\kappa(F+\delta f+C\varphi)}\nabla_{\varphi}(F+\delta f+C\varphi)\cdot_{\varphi}\nabla_{\varphi}(\Delta\varphi).
\end{split}
\end{equation}
We can compute
\begin{equation}
\begin{split}
&\Delta_{\varphi}\big(e^{-\kappa(F+\delta f+C\varphi)}\big)=e^{-\kappa(F+\delta f+C\varphi)}\big(\kappa^2|\nabla_{\varphi}(F+\delta f+C\varphi)|_{\varphi}^2\\
&-\kappa\Delta_{\varphi}(F+f+C\varphi)+\kappa(1-\delta)\Delta_{\varphi}f\big)\\
&=e^{-\kappa(F +\delta f+C\varphi)}\kappa^2|\nabla_{\varphi}(F+\delta f+C\varphi)|_{\varphi}^2\\
&\quad\quad\quad\quad\quad+e^{-\kappa(F +\delta f+C\varphi)}\kappa\big(Ctr_{\varphi}g-tr_{\varphi}(Ric-\beta_0)\big)\\
&\quad\quad\quad\quad\quad +e^{-\kappa(F +\delta f+C\varphi)}(\kappa R-\kappa Cn)+\kappa(1-\delta)e^{-\kappa(F+\delta f+C\varphi)}\Delta_{\varphi}f.
\end{split}
\end{equation}
We choose $C\geq 2(\max_M|Ric-\beta_0|_g+1)$, then we obtain from above:
\begin{equation}\label{2.24}
\begin{split}
\Delta_{\varphi}\big(&e^{-\kappa(F +\delta f+C\varphi)}\big)\geq e^{-\kappa(F +\delta f+C\varphi)}\kappa^2|\nabla_{\varphi}(F+\delta f+C\varphi)|_{\varphi}^2\\
&+e^{-\kappa(F +\delta f+C\varphi)}\frac{\kappa C}{2} tr_{\varphi}g
+\kappa e^{-\kappa(F +\delta f+C\varphi)}(1-\delta)\Delta_{\varphi}f-\kappa e^{-\kappa(F +\delta f+C\varphi)}C_9.
\end{split}
\end{equation}
The constant $C_9$ appearing above will depend on our choice of $C$. 
On the other hand, let $p\in M$, we choose normal coordinate in a neighborhood of $p$ so that
$$
g_{i\bar{j}}(p)=\delta_{ij},\,\,\nabla g_{i\bar{j}}(p)=0,\,\,\varphi_{i\bar{j}}=\varphi_{i\bar{i}}\delta_{ij}.
$$
we have computed in our first paper \cite{cc1} (following Yau \cite{Yau78}) that
\begin{equation}\label{2.25}
\begin{split}
\Delta_{\varphi}&(\Delta\varphi)=\frac{R_{i\bar{i}\alpha\bar{\alpha}}(1+\varphi_{i\bar{i}})}{1+\varphi_{\alpha\bar{\alpha}}}+\frac{|\varphi_{\alpha\bar{\beta}i}|^2}{(1+\varphi_{\alpha\bar{\alpha}})(1+\varphi_{\beta\bar{\beta}})}+\Delta F-R\\
&\geq -C_{10}tr_{\varphi}g(n+\Delta\varphi)+\frac{|\varphi_{\alpha\bar{\beta}i}|^2}{(1+\varphi_{\alpha\bar{\alpha}})(1+\varphi_{\beta\bar{\beta}})}+\Delta F-R.
\end{split}
\end{equation}
Here $C_{10}$ depends only on the curvature bound of $g$. 
Also we notice the complete square similar to our calculation in cscK case:
\begin{equation*}
\begin{split}
&\kappa^2|\nabla_{\varphi}(F+\delta f+C\varphi)|_{\varphi}^2(n+\Delta\varphi)+\frac{|\varphi_{\alpha\bar{\beta}i}|^2}{(1+\varphi_{\alpha\bar{\alpha}})(1+\varphi_{\beta\bar{\beta}})}\\
&\quad\quad\quad\quad-2\kappa\nabla_{\varphi}(F+\delta F+C\varphi)\cdot_{\varphi}\nabla_{\varphi}(\Delta\varphi)\\
&\geq \kappa^2|\nabla_{\varphi}(F+\delta f+C\varphi)|_{\varphi}^2(n+\Delta\varphi)+\frac{|\nabla_{\varphi}(\Delta\varphi)|^2_{\varphi}}{n+\Delta\varphi}\\
&\quad\quad\quad\quad-2\kappa\nabla_{\varphi}(F+\delta f+C\varphi)\cdot_{\varphi}\nabla_{\varphi}\Delta\varphi\geq0.
\end{split}
\end{equation*}
Combining (\ref{2.22}), (\ref{2.24}) and (\ref{2.25}), we conclude
\begin{equation}
\begin{split}
&\Delta_{\varphi}\big(e^{-\kappa(F+\delta f+C\varphi)}(n+\Delta\varphi)\big)\geq e^{-\kappa(F+\delta f+C\varphi)}\big(\frac{\kappa C}{2}-C_{10}\big)tr_{\varphi}g(n+\Delta\varphi)\\
&+\kappa e^{-\kappa(F+\delta f+C\varphi)}(1-\delta)\Delta_{\varphi}f(n+\Delta\varphi)+e^{-\kappa(F+\delta f+C\varphi)}\Delta F-e^{-\kappa(F+\delta f+C\varphi)}(C_9\kappa+R).
\end{split}
\end{equation}
In the following, we will always choose $\kappa\geq1$, hence if we choose $C\geq 4C_{10}$, we obtain for some constant $C_{11}$, it holds:
\begin{equation}
\begin{split}
\Delta_{\varphi}\big(e^{-\kappa(F+\delta f+C\varphi)}(n+\Delta\varphi)\big)&e^{\kappa(F+\delta f+C\varphi)}\geq \frac{\kappa C}{4}tr_{\varphi}g(n+\Delta\varphi)\\
&+\kappa (1-\delta )\Delta_{\varphi}f(n+\Delta\varphi)+\Delta F-\kappa C_{11}.
\end{split}
\end{equation}
The constant $C_{11}$ above will depend on our choice of $C$.
Let $p\geq 1$, denote $v=e^{-\kappa(F+\delta f+C\varphi)}(n+\Delta\varphi)$, we have
\begin{equation}\label{2.28}
\begin{split}
&\int_M(p-1)v^{p-2}|\nabla_{\varphi}v|_{\varphi}^2dvol_{\varphi}=\int_Mv^{p-1}(-\Delta_{\varphi}v)dvol_{\varphi}\\
&\leq -\int_Mv^{p-1}\big(\frac{\kappa C}{4}vtr_{\varphi}g+e^{-\kappa(F+\delta f+C\varphi)}\kappa(1-\delta)\Delta_{\varphi}f(n+\Delta\varphi)\\
&+e^{-\kappa(F+\delta f+C\varphi)}\Delta F-\kappa C_{11}e^{-\kappa(F+\delta f+C\varphi)}\big)dvol_{\varphi}.
\end{split}
\end{equation}
We will handle the term involving $\Delta F$ via integrating by parts, but somewhat differently from the calculation for cscK(here we assume $\kappa>1$):
\begin{equation}\label{2.29}
\begin{split}
-&\int_Mv^{p-1}e^{-\kappa(F+\delta f+C\varphi)}\Delta Fdvol_{\varphi}=-\int_Mv^{p-1}e^{(1-\kappa)F-\kappa\delta f-\kappa C\varphi}\Delta Fdvol_g\\
&=-\int_Mv^{p-1}e^{(1-\kappa)F-\kappa\delta f-\kappa C\varphi}\frac{1}{1-\kappa}\Delta\big((1-\kappa)F-\kappa\delta f-\kappa C\varphi\big)dvol_g\\
&-\int_Mv^{p-1}e^{(1-\kappa)F-\kappa\delta f-\kappa C\varphi}\frac{\kappa\delta \Delta f+\kappa C\Delta\varphi}{1-\kappa}dvol_g.
\end{split}
\end{equation}
For the first term in (\ref{2.29}), we have
\begin{equation}\label{2.30}
\begin{split}
&-\int_Mv^{p-1}e^{(1-\kappa)F-\kappa\delta f-\kappa C\varphi}\frac{1}{1-\kappa}\Delta\big((1-\kappa)F-\kappa\delta f-\kappa C\varphi\big)dvol_g\\
&=-\int_M\frac{v^{p-1}e^{(1-\kappa)F-\kappa\delta f-\kappa C\varphi}}{\kappa-1}|\nabla\big((1-\kappa)F-\kappa\delta f-\kappa C\varphi\big)|^2dvol_g\\
&-\int_M\frac{p-1}{\kappa-1}v^{p-2}e^{(1-\kappa)F-\kappa\delta f-\kappa C\varphi}\nabla v\cdot\nabla\big((1-\kappa)F-\kappa\delta f-\kappa C\varphi\big)dvol_g\\
&\leq \int_M\frac{(p-1)^2}{2(\kappa-1)}v^{p-3}e^{(1-\kappa)F-\kappa\delta f-\kappa C\varphi}|\nabla v|^2dvol_g\\
&\leq\int_M\frac{(p-1)^2}{2(\kappa-1)}v^{p-3}e^{-\kappa(F+\delta f+C\varphi)}|\nabla_{\varphi}v|_{\varphi}^2(n+\Delta\varphi)dvol_{\varphi}\\
&=\int_M\frac{(p-1)^2}{2(\kappa-1)}v^{p-2}|\nabla_{\varphi}v|_{\varphi}^2dvol_{\varphi}.
\end{split}
\end{equation}
From 3rd line to 4th line above, we observed that
\begin{equation*}
\begin{split}
-&\frac{p-1}{\kappa-1}v^{p-2}e^{(1-\kappa)F-\kappa\delta f-\kappa C\varphi}\nabla v\cdot\nabla\big((1-\kappa)F-\kappa\delta f-\kappa C\varphi\big)\\
&\leq\frac{v^{p-1}e^{(1-\kappa)F-\kappa\delta f-\kappa C\varphi}}{2(\kappa-1)}|\nabla\big((1-\kappa)F-\kappa\delta f-\kappa C\varphi\big)|^2\\
&+\frac{(p-1)^2}{2(\kappa-1)}v^{p-3}e^{(1-\kappa)F-\kappa\delta f-\kappa C\varphi}|\nabla v|^2.
\end{split}
\end{equation*}
Combining (\ref{2.29}), (\ref{2.30}), we see
\begin{equation}\label{2.31n}
\begin{split}
-\int_Mv^{p-1}e^{-\kappa(F+\delta f+C\varphi)}&\Delta Fdvol_{\varphi}\leq \int_M\frac{(p-1)^2}{2(\kappa-1)}v^{p-2}|\nabla_{\varphi}v|_{\varphi}^2dvol_{\varphi}\\
&-\int_Mv^{p-1}e^{-\kappa(F+\delta f+C\varphi)}\frac{\kappa\delta \Delta f+\kappa C\Delta\varphi}{1-\kappa}dvol_{\varphi}.
\end{split}
\end{equation}

Plug (\ref{2.31n}) back to (\ref{2.28}), we obtain
\begin{equation}\label{2.31}
\begin{split}
&\int_M\big(p-1-\frac{(p-1)^2}{2(\kappa-1)}\big)v^{p-2}|\nabla_{\varphi}v|_{\varphi}^2dvol_{\varphi}\leq-\int_M\frac{\kappa C}{4}tr_{\varphi}gv^pdvol_{\varphi}\\
&+\int_Mv^{p-1}e^{-\kappa(F+\delta f+C\varphi)}\big(-\kappa(1-\delta)\Delta_{\varphi}f(n+\Delta\varphi)-\frac{\kappa\delta\Delta f}{1-\kappa}\big)dvol_{\varphi}\\
&+\int_Mv^{p-1}e^{-\kappa(F+\delta f+C\varphi)}\big(\kappa C_{11}-\frac{\kappa C\Delta\varphi}{1-\kappa}\big)dvol_{\varphi}.
\end{split}
\end{equation}
Now we choose $\delta=\frac{\kappa-1}{\kappa}$, then we have
\begin{equation}\label{2.32}
\begin{split}
-\kappa&(1-\delta)\Delta_{\varphi}f(n+\Delta\varphi)-\frac{\kappa\delta\Delta f}{1-\kappa}=-\Delta_{\varphi}f(n+\Delta\varphi)+\Delta f\\
&=-\sum_{i\neq j}\frac{f_{i\bar{i}}(1+\varphi_{j\bar{j}})}{1+\varphi_{i\bar{i}}}\leq\sum_{i\neq j}\frac{(\beta_0)_{i\bar{i}}(1+\varphi_{j\bar{j}})}{1+\varphi_{i\bar{i}}}\leq\max_{M}|\beta_0|_gtr_{\varphi}g(n+\Delta\varphi).
\end{split}
\end{equation}
We also have for $\kappa\geq 2$, 
\begin{equation}\label{2.33}
\kappa C_{11}-\frac{\kappa C\Delta\varphi}{1-\kappa}\leq \kappa (C_{12}+C)(n+\Delta\varphi).
\end{equation}
Here we used the fact that $n+\Delta\varphi\geq e^{\frac{F}{n}}$, which is bounded from below in terms of $||f+F||_0$. Indeed, $F\geq -f-||f+F||_0\geq -||f+F||_0$.
Hence if we plug (\ref{2.32}), (\ref{2.33}) back to (\ref{2.31}), we conclude that for $p\geq1$, $\kappa\geq2$, $C$ chosen sufficiently large depending only on the curvature bound of the background metric
and $\max_M|\beta_0|_g$, we have
\begin{equation}\label{2.34}
\begin{split}
\int_M&\big(p-1-\frac{(p-1)^2}{\kappa-1}\big)v^{p-2}|\nabla_{\varphi}v|_{\varphi}^2dvol_{\varphi}+\int_M\big(\frac{\kappa C}{4}-\max_M|\beta_0|_g\big)tr_{\varphi}gv^pdvol_{\varphi}\\
&\leq\int_M\kappa(C_{12}+C)v^pdvol_{\varphi}.
\end{split}
\end{equation}
Next we choose $\kappa$ so that $\kappa\geq 2$ and $\kappa\geq p$, with this choice, we have
$$
p-1-\frac{(p-1)^2}{\kappa-1}\geq0.
$$
Choose $C$ sufficiently so as to satisfy $C\geq 8(\max_M|\beta_0|_g+1)$, with this choice, we can guarantee
$$
\frac{\kappa C}{4}-\max_M|\beta_0|_g\geq\frac{\kappa C}{8}\geq\kappa.
$$
Hence we obtain from (\ref{2.34}) that for some constant $C_{13}$
\begin{equation}
\int_Me^{-\frac{F}{n-1}}(n+\Delta\varphi)^{\frac{1}{n-1}}v^pdvol_{\varphi}\leq\int_Mtr_{\varphi}gv^pdvol_{\varphi}\leq\int_MC_{13}v^pdvol_{\varphi}
\end{equation}
Recall our definition for $v$, this means:
\begin{equation}
\begin{split}
\int_Me^{(\frac{n-2}{n-1}-p\kappa)F-p(\kappa-1)f-p\kappa C\varphi}&(n+\Delta\varphi)^{p+\frac{1}{n-1}}dvol_g\\
&\leq C_{13}\int_Me^{(1-p\kappa)F-p(\kappa-1)f-p\kappa C\varphi}(n+\Delta\varphi)^pdvol_g.
\end{split}
\end{equation}
From the boundedness of $F+f$ and $\varphi$ proved in Corollary \ref{c2.2} and Lemma \ref{l2.3}, we obtain for $p\geq 1$:
\begin{equation}\label{2.37}
\int_Me^{(p-\frac{n-2}{n-1})f}(n+\Delta\varphi)^{p+\frac{1}{n-1}}dvol_g\leq C_{14}\int_Me^{(p-1)f}(n+\Delta\varphi)^pdvol_g.
\end{equation}

Take $p=1+k\frac{1}{n-1}$ in (\ref{2.37}) with $k\geq0$, the result follows from induction on $k$.
\end{proof}

As a consequence of above calculation, we obtain:
\begin{cor}\label{c2.4}
For any $1<q<p_0$, there exists a constant $\tilde{C}_q$, depending only on the bound $\int_Me^{-p_0f}dvol_g$, $||R||_0$, $\max_M|\beta|_g$, the bound $||F+f||_0$, $||\varphi||_0$, the background metric $(M,\omega_0)$, and $q$, such that
$$
\int_M(n+\Delta\varphi)^qdvol_g\leq\tilde{C}_q.
$$
Besides, $\tilde{C}_q$ is uniform in $q$ as long as $q$ is bounded away from $p_0$ and remains bounded.
\end{cor}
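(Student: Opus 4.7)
The plan is to deduce the unweighted $L^q$ bound on $n+\Delta\varphi$ from the weighted estimate of Theorem 2.1 by an application of Hölder's inequality, using the hypothesis $\int_M e^{-p_0 f}\,dvol_g < \infty$ to absorb the weight $e^{(p-1)f}$.

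Fix $1 < q < p_0$. Given a parameter $p$ to be chosen shortly, I would split
\begin{equation*}
(n+\Delta\varphi)^q = \bigl[(n+\Delta\varphi)^p e^{(p-1)f}\bigr]^{q/p}\cdot e^{-(p-1)f\,q/p}
\end{equation*}
and apply Hölder with exponents $p/q$ and $p/(p-q)$ (requiring $p > q$), which yields
\begin{equation*}
\int_M (n+\Delta\varphi)^q\,dvol_g \leq \Bigl(\int_M (n+\Delta\varphi)^p e^{(p-1)f}\,dvol_g\Bigr)^{q/p} \Bigl(\int_M e^{-(p-1)qf/(p-q)}\,dvol_g\Bigr)^{(p-q)/p}.
\end{equation*}
The first factor is controlled by $C_p$ via Theorem 2.1, so the only issue is the second factor, for which we need the exponent in the exponential to satisfy $(p-1)q/(p-q) \le p_0$.

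The key algebraic step is to choose $p$ so this is guaranteed. Rearranging $(p-1)q \leq p_0(p-q)$ gives $p \geq q(p_0-1)/(p_0-q)$, and since $q<p_0$ this threshold is finite. So I would take, e.g., $p := q(p_0-1)/(p_0-q)$ (bumped up to $\max(p,1)$ if necessary so that Theorem 2.1 applies); a direct check shows $(p-1)q/(p-q) = p_0$ with this choice. The second factor is then bounded by the hypothesis $\int_M e^{-p_0 f}\,dvol_g \leq K$, and the first factor is bounded by $C_p^{q/p}$.

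For the final uniformity claim, I would observe that as $q$ ranges over a compact subinterval of $(1,p_0)$ that stays bounded away from $p_0$, the quantity $p = q(p_0-1)/(p_0-q)$ remains in a bounded interval, so $C_p$ in Theorem 2.1 stays uniformly bounded, yielding a uniform constant $\tilde C_q$. There is no genuine obstacle here — the entire argument is a one-line Hölder estimate — the only thing to verify carefully is the exponent bookkeeping that matches the integrability assumption $e^{-p_0 f}\in L^1$.
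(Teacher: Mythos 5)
Your proof is correct and is essentially the paper's own argument: both reduce to Theorem \ref{t2.1} via a single H\"older inequality whose exponents are chosen so that the weighted factor has the form $\int_M e^{(p-1)f}(n+\Delta\varphi)^p$ and the other factor is exactly $\int_M e^{-p_0 f}$, and indeed your choice $p=q(p_0-1)/(p_0-q)$ coincides with the value of $p$ the paper arrives at (via the substitution $s=(q-1)p_0/(p_0-1)$). The exponent bookkeeping and the uniformity remark both check out.
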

\begin{proof}
Choose $s=\frac{(q-1)p_0}{p_0-1}$, then we can calculate
\begin{equation*}
\begin{split}
\int_M(n+\Delta\varphi)^qdvol_g&=\int_Me^{-sf}\cdot e^{sf}(n+\Delta\varphi)^qdvol_g\\
&\leq\bigg(\int_Me^{-p_0f}dvol_g\bigg)^{\frac{s}{p_0}}\cdot\bigg(\int_Me^{\frac{sp_0}{p_0-s}f}(n+\Delta\varphi)^{\frac{p_0q}{p_0-s}}dvol_g\bigg)^{1-\frac{s}{p_0}}.
\end{split}
\end{equation*}
Notice our choice of $s$ makes $\frac{sp_0}{p_0-s}=\frac{p_0q}{p_0-s}-1$, so the result follows from Theorem \ref{t2.1}.
\end{proof}

\subsection{Estimate on $\nabla(F+f)$}
In this section, we continues to assume $\beta\geq0$. Moreover, we also need $p_0$ to be sufficiently large depending only on $n$. Our goal is to obtain the following estimate.
\begin{thm}\label{t2.2}
There exists $\kappa_n$, depending only on $n$, such that as long as $p_0> \kappa_n$, we have
$$
|\nabla_{\varphi}(F+f)|_{\varphi}\leq C_{14}.
$$
Here $C_{14}$ is a constant with the same dependence as in Theorem \ref{t2.1}.
\end{thm}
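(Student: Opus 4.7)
Let $h := F + f$. Adding $\Delta_\varphi f$ to (\ref{eq-?2}) and using $\beta = \beta_0 + \sqrt{-1}\partial\bar\partial f$ yields the linear Poisson identity
$$
\Delta_\varphi h = tr_\varphi(Ric - \beta_0) - R,
$$
so $|\Delta_\varphi h| \leq C(tr_\varphi g + 1)$, while $\|h\|_0 \leq C$ by Corollary \ref{c2.2} and Lemma \ref{l2.3}. My plan is a Moser iteration on $w := |\nabla h|^2_\varphi$ in the spirit of \cite{cc1}, with extra care for the singular factor $f$.

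I would first apply the Bochner inequality on $(M,\omega_\varphi)$:
$$
\Delta_\varphi w \;\geq\; |\nabla\bar\nabla h|^2_\varphi + Ric_\varphi(\nabla h, \overline{\nabla h}) + 2\,\mathrm{Re}\,\bigl\langle \nabla(\Delta_\varphi h), \nabla h\bigr\rangle_\varphi.
$$
The Ricci term is expanded via $Ric_\varphi = Ric - \sqrt{-1}\partial\bar\partial F$ together with $F_{i\bar j} = h_{i\bar j} - \beta_{i\bar j} + (\beta_0)_{i\bar j}$. The crucial input is the sign $\beta \geq 0$, which forces
$$
Ric_\varphi(\nabla h, \overline{\nabla h}) \;\geq\; -h_{i\bar j} h^i h^{\bar j} - C(tr_\varphi g + 1)\, w,
$$
with the quadratic in $h_{i\bar j}$ absorbed into $|\nabla\bar\nabla h|^2_\varphi$ by Cauchy--Schwarz.

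I would then multiply by $w^{p-1} e^{-\kappa \varphi}$ and integrate against $dvol_\varphi$. The dangerous term $\bigl\langle \nabla(\Delta_\varphi h),\nabla h\bigr\rangle_\varphi$ hides a third derivative of $\varphi$ in $\nabla_k g_\varphi^{i\bar j}$, which I would remove by one integration by parts, transferring a derivative onto $w^{p-1}e^{-\kappa\varphi}$. After algebraic manipulation parallel to \cite{cc1}, the $\nabla^3\varphi$ contribution cancels, leaving the energy inequality
$$
\int_M w^{p-2}|\nabla w|^2_\varphi\, e^{-\kappa\varphi} dvol_\varphi \;\leq\; C_p \int_M (w^p + 1)(tr_\varphi g + 1)\, e^{-\kappa\varphi} dvol_\varphi.
$$

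The last step is Moser iteration against the background volume form. Using $dvol_\varphi = e^F dvol_g$, the arithmetic--geometric bound $tr_\varphi g \cdot e^F \leq C(n+\Delta\varphi)^{n-1}$, and the integrability $(n+\Delta\varphi) \in L^q$ for all $q < p_0$ furnished by Corollary \ref{c2.4}, I would combine H\"older with the Sobolev inequality on $(M,\omega_0)$ to upgrade the energy estimate into a reverse-H\"older step with a definite gain factor $\sigma > 1$, provided $p_0$ exceeds a dimensional constant $\kappa_n$. Iteration then yields $\|w\|_\infty \leq C$. The main obstacle, and the source of the constraint $p_0 > \kappa_n$, lies in the simultaneous cancellation of the third-derivative terms and the control of the $\nabla f$ pieces generated by the integration by parts: the positivity $\beta \geq 0$ is what makes those pieces absorbable, and the exponent loss from H\"older at each iteration step is what sets the dimensional threshold on $p_0$.
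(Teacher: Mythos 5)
Your overall architecture (Bochner on $h=F+f$, integration by parts to kill the third derivatives hiding in $\nabla(\Delta_\varphi h)$, then a Moser iteration whose reverse-H\"older step is powered by the $L^q$ bounds on $n+\Delta\varphi$ from Corollary \ref{c2.4}) matches the paper's, and your identification of where $\beta\geq 0$ and the threshold $p_0>\kappa_n$ enter is correct. But there is a genuine gap at the step where you claim the term $-h_{i\bar j}h^ih^{\bar j}$ coming from $Ric_\varphi=Ric-\sqrt{-1}\partial\bar\partial F$ can be ``absorbed into $|\nabla\bar\nabla h|_\varphi^2$ by Cauchy--Schwarz.'' That term is linear in the Hessian and cubic in the gradient, so Cauchy--Schwarz gives
$$
|h_{i\bar j}h^ih^{\bar j}|\;\leq\;\eps\,|\nabla\bar\nabla h|_\varphi^2+C_\eps\,|\nabla_\varphi h|_\varphi^4 ,
$$
and the leftover $|\nabla_\varphi h|_\varphi^4=w^2$ makes your energy inequality read $\int w^{p-2}|\nabla w|^2\lesssim\int w^{p+1}(\cdots)$ rather than $\int w^{p}(\cdots)$. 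A superlinear right-hand side of this kind does not iterate: each Sobolev step consumes one more power of $w$ than it gains, and the base case already fails because the only a priori information is $\|w\|_{L^1}$ (from $\int|\nabla h|_\varphi^2\,dvol_\varphi=-\int h\,\Delta_\varphi h\,dvol_\varphi$), while the first step would require an $L^{2q'}$ bound. Your weight $e^{-\kappa\varphi}$ cannot repair this, since $\Delta_\varphi(e^{-\kappa\varphi})$ produces $\kappa^2|\nabla_\varphi\varphi|_\varphi^2$, a gradient of $\varphi$, not of $h$.

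The paper's device, which is the one missing idea, is to run the argument on $u=e^{\frac12(F+f)}|\nabla_\varphi(F+f)|_\varphi^2+1$. The factor $e^{\frac12 w}$ contributes $\tfrac14 e^{\frac12 w}|\nabla_\varphi w|_\varphi^4$ through its Laplacian, and this completes a perfect square with the pure second derivatives $g_\varphi^{i\bar j}g_\varphi^{\alpha\bar\beta}w_{,\alpha i}w_{,\bar\beta\bar j}$ and the cross term $\mathrm{Re}\bigl(g_\varphi^{i\bar j}g_\varphi^{\alpha\bar\beta}w_iw_\alpha w_{,\bar\beta\bar j}\bigr)$ generated by $\nabla_\varphi w\cdot_\varphi\nabla_\varphi(|\nabla_\varphi w|_\varphi^2)$; meanwhile the mixed second derivative $w_{,\alpha\bar j}$ contracted against $w_iw_{\bar\beta}$ is cancelled exactly by the $-F_{\alpha\bar j}$ inside $Ric_\varphi$, leaving only $f_{\alpha\bar j}\geq-(\beta_0)_{\alpha\bar j}$ --- this is precisely where $\beta\geq 0$ is used, not in absorbing a Hessian term. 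With the square completed there is no quartic remainder, the energy inequality is linear in $u^p$, and the iteration (with Sobolev exponent $2-\eps$, $\eps=\tfrac1{2n}$) closes once $p_0\geq 8n(2n-1)+1$; the starting $L^1$ bound for $u$ is then obtained by integrating $\Delta_\varphi(e^{\frac12 w})=\tfrac14 e^{\frac12 w}|\nabla_\varphi w|_\varphi^2+\tfrac12 e^{\frac12 w}\Delta_\varphi w$. Without the exponential weight in $F+f$ your scheme does not close.
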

\begin{proof}
Denote $w=F+f$, we need to calculate:
\begin{equation}\label{8.58}
\begin{split}
&\Delta_{\varphi}\big(e^{\frac{1}{2}w}|\nabla_{\varphi}w|_{\varphi}^2\big)=\Delta_{\varphi}(e^{\frac{1}{2}w})|\nabla_{\varphi}w|_{\varphi}^2\\
&\quad\quad\quad\quad+e^{\frac{1}{2}w}\Delta_{\varphi}(|\nabla_{\varphi}w|_{\varphi}^2)+e^{\frac{1}{2}w}\nabla_{\varphi}w\cdot_{\varphi}\nabla_{\varphi}(|\nabla_{\varphi}w|_{\varphi}^2)\\
&=\frac{1}{4}e^{\frac{1}{2}w}|\nabla_{\varphi}w|_{\varphi}^4+\frac{1}{2}e^{\frac{1}{2}w}\Delta_{\varphi}w|\nabla_{\varphi}w|_{\varphi}^2\\
&\quad\quad\quad\quad+e^{\frac{1}{2}w}\Delta_{\varphi}(|\nabla_{\varphi}w|_{\varphi}^2)+e^{\frac{1}{2}w}\nabla_{\varphi}w\cdot_{\varphi}\nabla_{\varphi}(|\nabla_{\varphi}w|_{\varphi}^2).
\end{split}
\end{equation}
Now we have 
\begin{equation}
\Delta_{\varphi}w=tr_{\varphi}(Ric-\beta_0)-R.
\end{equation}
Also
\begin{equation}
\begin{split}
\Delta_{\varphi}(|\nabla_{\varphi}w|_{\varphi}^2)=g_{\varphi}^{i\bar{j}}&g_{\varphi}^{\kappa\bar{\beta}}w_{,\kappa i}w_{,\bar{\beta}\bar{j}}+g_{\varphi}^{i\bar{j}}g_{\varphi}^{\kappa\bar{\beta}}w_{,\alpha\bar{j}}w_{,\bar{\beta}i}+2\nabla_{\varphi}w\cdot_{\varphi}\nabla_{\varphi}\Delta_{\varphi}w\\
&+g_{\varphi}^{i\bar{j}}g_{\varphi}^{\alpha\bar{\beta}}Ric_{\varphi,i\bar{\beta}}w_{\alpha}w_{\bar{j}}.
\end{split}
\end{equation}
Besides,
\begin{equation}
\nabla_{\varphi}w\cdot_{\varphi}\nabla_{\varphi}(|\nabla_{\varphi}w|^2)=Re\big(g_{\varphi}^{i\bar{j}}g_{\varphi}^{\alpha\bar{\beta}}w_i(w_{,\alpha\bar{j}}w_{\bar{\beta}}+w_{\alpha}w_{,\bar{\beta}\bar{j}})\big).
\end{equation}
In the above, $w_{,i\alpha}$ denotes the covariant derivative under the metric $g_{\varphi}$.
Again observe the complete square:
\begin{equation*}
\begin{split}
\frac{1}{4}&|\nabla_{\varphi}w|_{\varphi}^4+g_{\varphi}^{i\bar{j}}g_{\varphi}^{\alpha\bar{\beta}}w_{,\alpha i}w_{,\bar{\beta}\bar{j}}+Re\big(g_{\varphi}^{i\bar{j}}g_{\varphi}^{\alpha\bar{\beta}}w_iw_{\alpha}w_{,\bar{\beta}\bar{j}}\big)\\
&=g_{\varphi}^{i\bar{j}}g_{\varphi}^{\alpha\bar{\beta}}(w_{,i\alpha}+\frac{1}{2}w_iw_{\alpha})(w_{,\bar{j}\bar{\beta}}+\frac{1}{2}w_{\bar{j}}w_{\bar{\beta}}).
\end{split}
\end{equation*}
Hence we obtain from (\ref{8.58}):
\begin{equation}\label{8.62}
\begin{split}
&\Delta_{\varphi}(e^{\frac{1}{2}w}|\nabla_{\varphi}w|_{\varphi}^2)\geq\frac{1}{2}e^{\frac{1}{2}w}|\nabla_{\varphi}w|_{\varphi}^2\big(tr_{\varphi}(Ric-\beta_0)-R\big)+e^{\frac{1}{2}w}g_{\varphi}^{i\bar{j}}g_{\varphi}^{\alpha\bar{\beta}}w_{,\alpha\bar{j}}w_{,\bar{\beta}i}\\
&+e^{\frac{1}{2}w}2\nabla_{\varphi}w\cdot_{\varphi}\nabla_{\varphi}\Delta_{\varphi}w+g_{\varphi}^{i\bar{j}}g_{\varphi}^{\alpha\bar{\beta}}Ric_{\varphi,i\bar{\beta}}w_{\alpha}w_{\bar{j}}+Re\big(g_{\varphi}^{i\bar{j}}g_{\varphi}^{\alpha\bar{\beta}}w_iw_{\bar{\beta}}w_{,\alpha\bar{j}}\big).
\end{split}
\end{equation}
Note that 
$$
Ric_{\varphi,i\bar{\beta}}=Ric_{i\bar{\beta}}-F_{i\bar{\beta}}.
$$
Hence
\begin{equation}
\begin{split}
&g_{\varphi}^{i\bar{j}}g_{\varphi}^{\alpha\bar{\beta}}Ric_{\varphi,i\bar{\beta}}w_{\alpha}w_{\bar{j}}+Re\big(g_{\varphi}^{i\bar{j}}g_{\varphi}^{\alpha\bar{\beta}}w_iw_{\bar{\beta}}w_{,\alpha\bar{j}}\big)\\
&=g_{\varphi}^{i\bar{j}}g_{\varphi}^{\alpha\bar{\beta}}Ric_{i\bar{\beta}}w_{\alpha}w_{\bar{j}}+Re\big(g_{\varphi}^{i\bar{j}}g_{\varphi}^{\alpha\bar{\beta}}w_iw_{\bar{\beta}}(w_{,\alpha\bar{j}}-F_{\alpha\bar{j}})\big)\\
&=g_{\varphi}^{i\bar{j}}g_{\varphi}^{\alpha\bar{\beta}}Ric_{i\bar{\beta}}w_{\alpha}w_{\bar{j}}+Re\big(g_{\varphi}^{i\bar{j}}g_{\varphi}^{\alpha\bar{\beta}}w_iw_{\bar{\beta}}f_{\alpha\bar{j}}\big)\\
&\geq g_{\varphi}^{i\bar{j}}g_{\varphi}^{\alpha\bar{\beta}}Ric_{i\bar{\beta}}w_{\alpha}w_{\bar{j}}-Re\big(g_{\varphi}^{i\bar{j}}g_{\varphi}^{\alpha\bar{\beta}}w_iw_{\bar{\beta}}(\beta_0)_{\alpha\bar{j}}\big).
\end{split}
\end{equation}
In the last line, we use the fact that $\sqrt{-1}\partial\bar{\partial}f=\beta-\beta_0\geq-\beta_0$, hence $f_{i\bar{j}}\geq-(\beta_0)_{i\bar{j}}$.
Hence we obtain from (\ref{8.62}):
\begin{equation}\label{8.64}
\begin{split}
\Delta_{\varphi}&(e^{\frac{1}{2}w}|\nabla_{\varphi}w|_{\varphi}^2)\geq \frac{1}{2}e^{\frac{1}{2}w}|\nabla_{\varphi}w|_{\varphi}^2\big(tr_{\varphi}(Ric-\beta_0)-R\big)+e^{\frac{1}{2}w}2\nabla_{\varphi}w\cdot_{\varphi}\nabla_{\varphi}\Delta_{\varphi}w\\
&+e^{\frac{1}{2}w}g_{\varphi}^{i\bar{j}}g_{\varphi}^{\alpha\bar{\beta}}Ric_{i\bar{\beta}}w_{\alpha}w_{\bar{j}}-e^{\frac{1}{2}w}Re\big(g_{\varphi}^{i\bar{j}}g_{\varphi}^{\alpha\bar{\beta}}w_iw_{\bar{\beta}}(\beta_0)_{\alpha\bar{j}}\big).
\end{split}
\end{equation}
Next we estimate:
\begin{equation}
tr_{\varphi}\big(Ric-\beta_0\big)-R\geq -C_{15}(tr_{\varphi}\omega_0+1)\geq -C_{15}(e^{-F}(n+\Delta\varphi)^{n-1}+1).
\end{equation}
Also
\begin{equation}
\begin{split}
g_{\varphi}^{i\bar{j}}&g_{\varphi}^{\alpha\bar{\beta}}Ric_{i\bar{\beta}}w_{\alpha}w_{\bar{j}}\geq -C_{14.5}(tr_{\varphi}\omega_0)^2|\nabla w|^2\geq -C_{15}(tr_{\varphi}\omega_0)^2(n+\Delta\varphi)|\nabla_{\varphi}w|_{\varphi}^2\\
&\geq -C_{15}e^{-2F}(n+\Delta\varphi)^{2n-1}|\nabla_{\varphi}w|_{\varphi}^2.
\end{split}
\end{equation}
We can also estimate
\begin{equation}
\begin{split}
-Re\big(g_{\varphi}^{i\bar{j}}g_{\varphi}^{\alpha\bar{\beta}}w_iw_{\bar{\beta}}(\beta_0)_{\alpha\bar{j}}\big)&\geq -C_{14.5}(tr_{\varphi}\omega_0)^2|\nabla w|^2\\
&\geq -C_{15}e^{-2F}(n+\Delta\varphi)^{2n-1}|\nabla_{\varphi}w|_{\varphi}^2.
\end{split}
\end{equation}
Hence we may conclude from (\ref{8.64}) that
\begin{equation}
\begin{split}
\Delta_{\varphi}&(e^{\frac{1}{2}w}|\nabla_{\varphi}w|_{\varphi}^2)\geq 2e^{\frac{1}{2}w}\nabla_{\varphi}w\cdot_{\varphi}\nabla_{\varphi}\Delta_{\varphi}w-e^{\frac{1}{2}w}|\nabla_{\varphi}w|_{\varphi}^2\\
&\times C_{15}\big(2e^{-2F}(n+\Delta\varphi)^{2n-1}+e^{-F}(n+\Delta\varphi)^{n-1}+1\big).
\end{split}
\end{equation}
Denote $u=e^{\frac{1}{2}w}|\nabla_{\varphi}w|_{\varphi}^2+1$, $\tilde{G}=C_{15}\big(2e^{-2F}(n+\Delta\varphi)^{2n-1}+e^{-F}(n+\Delta\varphi)^{n-1}+1\big)$.
Then we have
\begin{equation}
\Delta_{\varphi}u\geq 2e^{\frac{1}{2}w}\nabla_{\varphi}w\cdot_{\varphi}\nabla_{\varphi}\Delta_{\varphi}w-u\tilde{G}.
\end{equation}
Now let $p\geq 1$, then we have 
\begin{equation}\label{8.70}
\begin{split}
\int_M&(p-1)u^{p-2}|\nabla_{\varphi}u|_{\varphi}^2dvol_{\varphi}=\int_Mu^{p-1}(-\Delta_{\varphi}u)dvol_{\varphi}\\
&\leq \int_Mu^p\tilde{G}dvol_{\varphi}-\int_M2u^{p-1}e^{\frac{1}{2}w}\nabla_{\varphi}w\cdot_{\varphi}\nabla_{\varphi}\Delta_{\varphi}wdvol_{\varphi}.
\end{split}
\end{equation}
We need to integrate by parts to handle the last term above. We have
\begin{equation}
\begin{split}
&-\int_M2u^{p-1}e^{\frac{1}{2}w}\nabla_{\varphi}w\cdot_{\varphi}\nabla_{\varphi}\Delta_{\varphi}wdvol_{\varphi}=\int_M2u^{p-1}e^{\frac{1}{2}w}(\Delta_{\varphi}w)^2dvol_{\varphi}\\
&+\int_Mu^{p-1}e^{\frac{1}{2}w}|\nabla_{\varphi}w|_{\varphi}^2\Delta_{\varphi}wdvol_{\varphi}+\int_M2(p-1)u^{p-2}e^{\frac{1}{2}w}\nabla_{\varphi}u\cdot_{\varphi}\nabla_{\varphi}w\Delta_{\varphi}wdvol_{\varphi}\\
&\leq \int_M2u^{p-1}e^{\frac{1}{2}w}(\Delta_{\varphi}w)^2dvol_{\varphi}+\int_Mu^p\Delta_{\varphi}wdvol_{\varphi}-\int_Mu^{p-1}\Delta_{\varphi}wdvol_{\varphi}\\
&+\int_M\frac{p-1}{2}u^{p-2}|\nabla_{\varphi}u|_{\varphi}^2dvol_{\varphi}+\int_M2(p-1)u^{p-2}e^w|\nabla_{\varphi}w|_{\varphi}^2(\Delta_{\varphi}w)^2dvol_{\varphi}\\
&\leq \int_M2p u^{p-1}e^{\frac{1}{2}w}(\Delta_{\varphi}w)^2dvol_{\varphi}+\int_Mu^p\big((\Delta_{\varphi}w)^2+1\big)dvol_{\varphi}\\
&+\int_M\frac{p-1}{2}u^{p-2}|\nabla_{\varphi}u|_{\varphi}^2dvol_{\varphi}.
\end{split}
\end{equation}
Some explanations of above calculations are in order.

In the first inequality, we observed that $u^{p-1}e^{\frac{1}{2}w}|\nabla_{\varphi}w|^2_{\varphi}\Delta_{\varphi}w=u^p\Delta_{\varphi}w-u^{p-1}\Delta_{\varphi}w$, from our definition of $u$.
Also we observed that
$$
2(p-1)u^{p-2}e^{\frac{1}{2}w}\nabla_{\varphi}u\cdot_{\varphi}\nabla_{\varphi}w\Delta_{\varphi}w\leq\frac{p-1}{2}u^{p-2}|\nabla_{\varphi}u|^2_{\varphi}+2(p-1)u^{p-2}e^w|\nabla_{\varphi}w|^2_{\varphi}(\Delta_{\varphi}w)^2.
$$

In the second inequality, we noticed that 
$$u^p\Delta_{\varphi}w-u^{p-1}\Delta_{\varphi}w\leq\frac{1}{2} (u^p+u^{p-1})\big(1+(\Delta_{\varphi}w)^2\big)\leq u^p\big(1+(\Delta_{\varphi}w)^2\big).
$$
Hence we conclude from (\ref{8.70}):
\begin{equation}
\begin{split}
&\int_M\frac{p-1}{2}u^{p-2}|\nabla_{\varphi}u|_{\varphi}^2dvol_{\varphi}\leq\int_Mu^p\big(\tilde{G}+(\Delta_{\varphi}w)^2+1\big)dvol_{\varphi}\\
&+\int_M2pu^{p-1}e^{\frac{1}{2}w}(\Delta_{\varphi}w)^2dvol_{\varphi}\\
&\leq \int_Mu^p\big(\tilde{G}+(\Delta_{\varphi}w)^2+1\big)dvol_{\varphi}+\int_M2pu^pe^{\frac{1}{2}w}(\Delta_{\varphi}w)^2dvol_{\varphi}.
\end{split}
\end{equation}
From 1st line to 2nd line above, we noticed $u\geq 1$. 
Now denote\\ $G=\tilde{G}+(\Delta_{\varphi}w)^2+1+2e^{\frac{1}{2}w}(\Delta_{\varphi}w)^2$, we have
\begin{equation}\label{2.54new}
\begin{split}
\int_M&\frac{p-1}{2}u^{p-2}|\nabla_{\varphi}u|_{\varphi}^2dvol_{\varphi}\leq \int_Mp u^pGe^Fdvol_g.
\end{split}
\end{equation}
For the left hand side, we have
\begin{equation}\label{2.55new}
\int_M\frac{p-1}{2}u^{p-2}|\nabla_{\varphi}u|_{\varphi}^2dvol_{\varphi}\geq \frac{1}{C_{16}}\int_M\frac{2(p-1)}{p^2}|\nabla_{\varphi}(u^{\frac{p}{2}})|_{\varphi}^2dvol_g.
\end{equation}
Let $\eps>0$ to be determined, we can also estimate
\begin{equation}\label{8.75}
\begin{split}
\int_M&|\nabla(u^{\frac{p}{2}})|^{2-\eps}dvol_g\leq \int_M|\nabla_{\varphi}(u^{\frac{p}{2}})|_{\varphi}^{2-\eps}(n+\Delta\varphi)^{1-\eps/2}dvol_g\\
&\leq \bigg(\int_M|\nabla_{\varphi}(u^{\frac{p}{2}})|_{\varphi}^2dvol_g\bigg)^{\frac{2-\eps}{2}}\times\bigg(\int_M(n+\Delta\varphi)^{\frac{2}{\eps}-1}dvol_g\bigg)^{\frac{\eps}{2}}.
\end{split}
\end{equation}
Denote
\begin{equation}\label{2.57new}
K_{\eps}=\bigg(\int_M(n+\Delta\varphi)^{\frac{2}{\eps}-1}dvol_g\bigg)^{\frac{\eps}{2-\eps}}.
\end{equation}
Then we have
\begin{equation}
\begin{split}
||\nabla(u^{\frac{p}{2}})||_{L^{2-\eps}(\omega_0^n)}^2&\leq K_{\eps}\Vert|\nabla_{\varphi}(u^{\frac{p}{2}})|_{\varphi}\Vert_{L^2(\omega_0^n)}^2\leq\frac{K_{\eps}C_{16}p^2}{4}\int_Mu^{p-2}|\nabla_{\varphi}u|^2_{\varphi}dvol_{\varphi}\\
&\leq\frac{K_{\eps}C_{16}p^3}{2(p-1)}\int_Mu^p Ge^Fdvol_g.
\end{split}
\end{equation}
In the above, the first inequality follows from (\ref{8.75}).
The second inequality follows from (\ref{2.55new}), and the last inequality uses (\ref{2.54new}).

Apply the Sobolev inequality with exponent $2-\eps$ to conclude
\begin{equation}\label{8.77}
\begin{split}
||&u^{\frac{p}{2}}||_{L^{\frac{2n(2-\eps)}{2n-2+\eps}}}^2\leq C_{\eps}\big(||\nabla(u^{\frac{p}{2}})||_{L^{2-\eps}}^2+||u^{\frac{p}{2}}||_{L^{2-\eps}}^2\big)\\
&\leq C_{\eps}\big(\frac{K_{\eps}C_{16}p^3}{2(p-1)}\int_Mu^p Ge^Fdvol_g+||u^{\frac{p}{2}}||_{L^{2-\eps}}^2\big)\\
&\leq D_{\eps}\big(\frac{K_{\eps}C_{16}p^3}{2(p-1)}\bigg(\int_Mu^{\frac{2p}{2-\eps}}dvol_g\bigg)^{\frac{2-\eps}{2}}\times\bigg(\int_MG^{\frac{2}{\eps}}e^{\frac{2F}{\eps}}dvol_g\bigg)^{\frac{\eps}{2}}+||u^{\frac{p}{2}}||_{L^{\frac{4}{2-\eps}}}^2\big).
\end{split}
\end{equation}
In the last line above, we use H\"older's inequality to estimate $||u^{\frac{p}{2}}||_{L^{2-\eps}}$, and $D_{\eps}$ depends on $C_{\eps}$ and $vol(M)$.
Denote 
\begin{equation}\label{2.60new}
L_{\eps}=\bigg(\int_MG^{\frac{2}{\eps}}e^{\frac{2F}{\eps}}dvol_g\bigg)^{\frac{\eps}{2}}.
\end{equation}
Also we choose $\eps$ to be sufficiently small so that the following holds:
\begin{equation}\label{2.58}
\frac{2n(2-\eps)}{2n-2+\eps}>\frac{4}{2-\eps}.
\end{equation}
Hence we may conclude from (\ref{8.77}) that 
\begin{equation}\label{8.78}
||u^{\frac{p}{2}}||^2_{L^{\frac{2n(2-\eps)}{2n-2+\eps}}}\leq C_{18}\frac{p^3}{p-1}(K_{\eps}L_{\eps}+1)||u^{\frac{p}{2}}||^2_{L^{\frac{4}{2-\eps}}}.
\end{equation}
We need to have a bound for $K_{\eps}$, $L_{\eps}$. 
Choose $\eps=\frac{1}{2n}$, it is clear that this $\eps$ verifies (\ref{2.58}) since $n\geq2$.
With this choice, from the expressions of $K_{\eps}$ and $L_{\eps}$ in (\ref{2.57new}) and (\ref{2.60new}), we need $\int_M(n+\Delta\varphi)^{4n-1}dvol_g$, $\int_MG^{4n}e^{4nF}dvol_g$ is bounded.
First from Corollary \ref{c2.4}, if $p_0\geq 4n$, then $\int_M(n+\Delta\varphi)^{4n-1}dvol_g$ is bounded.

While for $\int_MG^{4n}e^{4n F}dvol_g$, first we have
\begin{equation}
\begin{split}
&G=C_{15}\big(2e^{-2F}(n+\Delta\varphi)^{2n-1}+e^{-F}(n+\Delta\varphi)^{n-1}+1\big)+\big(\Delta_{\varphi}(F+f)\big)^2+1\\
&+2e^{\frac{1}{2}(F+f)}(\Delta_{\varphi}(F+f))^2\leq C_{21}(n+\Delta\varphi)^{2n-1}+C_{21}(tr_{\varphi}g)+C_{21}(tr_{\varphi}g)^2\\
&\leq C_{21}(n+\Delta\varphi)^{2n-1}+C_{21}e^{-F}(n+\Delta\varphi)^{n-1}+C_{21}e^{-2F}(n+\Delta\varphi)^{2n-2}\\
&\leq C_{22}(n+\Delta\varphi)^{2n-1}.
\end{split}
\end{equation}
Hence
\begin{equation*}
\begin{split}
\int_M&G^{\frac{2}{\eps}}e^{\frac{2F}{\eps}}dvol_g\leq\big(\int_MG^{8n}dvol_g\big)^{\frac{1}{2}}\times\big(\int_Me^{8nF}dvol_g\big)^{\frac{1}{2}}\\
&\leq\big(\int_MC_{22}^{8n}(n+\Delta\varphi)^{8n(2n-1)}dvol_g\big)^{\frac{1}{2}}\times\big(\int_Me^{8nF}dvol_g\big)^{\frac{1}{2}}
\end{split}
\end{equation*}
By Corollary \ref{c2.2} and \ref{c2.4}, it's enough to assume that $p_0\geq 8n(2n-1)+1$.
With this choice, we know that $K_{\eps}$ and $L_{\eps}$ given by (\ref{2.57new}), (\ref{2.60new}) are bounded with the said dependence in the theorem. Then we can iterate (\ref{8.78}) as in cscK case to deduce $||u||_{L^{\infty}}$ is bounded in terms of $||u||_{L^1(\omega_0^n)}$.

To see that we have an estimate for $||u||_{L^1}$, we can compute
\begin{equation}
\Delta_{\varphi}(e^{\frac{1}{2}w})=\frac{1}{4}e^{\frac{1}{2}w}|\nabla_{\varphi}w|_{\varphi}^2+\frac{1}{2}e^{\frac{1}{2}w}\Delta_{\varphi}w.
\end{equation}
Hence
\begin{equation}
\begin{split}
\int_M&e^{\frac{1}{2}w}|\nabla_{\varphi}w|_{\varphi}^2dvol_g\leq C_{23}\int_Me^{\frac{1}{2}w}|\nabla_{\varphi}w|_{\varphi}^2dvol_{\varphi}\leq C_{23}\int_M2e^{\frac{1}{2}w}(-\Delta_{\varphi}w)dvol_{\varphi}\\
&\leq \int_MC_{24}(tr_{\varphi}\omega_0+1)dvol_{\varphi}=(n+1)C_{24}vol(M).
\end{split}
\end{equation}
\end{proof}
As an immediate consequence, we observe
\begin{cor}\label{c8.14}
Assume $\beta\geq0$ in (\ref{eq-?1}), (\ref{eq-?2}). 
Suppose $p_0\geq\kappa_n$, where $\kappa_n$ is as in Theorem \ref{t2.2}, then for any $p<p_0$, we have
$$
||\nabla(F+f)||_{L^{2p}(\omega_0^n)}\leq C_{25}.
$$
Here $C_{25}$ has the same dependence as in Theorem \ref{t2.1}, but additionally on $p$.
Besides, the bound is uniform in $p$ as long as $p$ is bounded away from $p_0$.
\end{cor}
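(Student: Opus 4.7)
The plan is to reduce the $L^{2p}$ bound on $\nabla(F+f)$ with respect to the background metric $\omega_0$ to the pointwise bound for $|\nabla_{\varphi}(F+f)|_{\varphi}$ obtained in Theorem \ref{t2.2}, combined with the Laplacian bound from Corollary \ref{c2.4}. The key observation is that the two metrics are related by the factor $n+\Delta\varphi$, which is controlled in $L^q$ for $q<p_0$.

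First, I would diagonalize at a point: choose normal coordinates at $p\in M$ so that $g_{i\bar{j}}(p)=\delta_{ij}$ and $\varphi_{i\bar{j}}(p)=\varphi_{i\bar{i}}\delta_{ij}$. Writing $w=F+f$, the arithmetic--geometric comparison gives
\begin{equation*}
|\nabla w|^2_g = \sum_i |w_i|^2 = \sum_i (1+\varphi_{i\bar{i}})\cdot\frac{|w_i|^2}{1+\varphi_{i\bar{i}}} \leq (n+\Delta\varphi)\,|\nabla_{\varphi}w|^2_{\varphi}.
\end{equation*}
By Theorem \ref{t2.2}, the right-hand factor $|\nabla_{\varphi}w|^2_{\varphi}$ is bounded by $C_{14}^2$ pointwise (provided $p_0\geq\kappa_n$), so
\begin{equation*}
|\nabla w|_g^{2p}\leq C_{14}^{2p}\,(n+\Delta\varphi)^p\quad\text{on } M.
\end{equation*}

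Second, I would integrate this pointwise inequality against $\omega_0^n$ and invoke Corollary \ref{c2.4}: since $p<p_0$, we have
\begin{equation*}
\int_M (n+\Delta\varphi)^p\,\omega_0^n \leq \tilde{C}_p,
\end{equation*}
with $\tilde{C}_p$ uniform in $p$ bounded away from $p_0$. Combining the two displays gives
\begin{equation*}
\|\nabla(F+f)\|_{L^{2p}(\omega_0^n)}^{2p}\leq C_{14}^{2p}\,\tilde{C}_p,
\end{equation*}
and extracting the $2p$-th root yields $\|\nabla(F+f)\|_{L^{2p}(\omega_0^n)}\leq C_{25}$ with the claimed dependences. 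The uniformity in $p$ for $p$ bounded away from $p_0$ follows from the corresponding uniformity of $\tilde{C}_p$ in Corollary \ref{c2.4}.

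This argument is essentially a two-line consequence of the results already established, so there is no substantial obstacle; the only point to be careful about is ensuring that the exponent $p$ on the Laplacian side stays strictly below $p_0$, which is precisely the hypothesis, and that $p_0\geq\kappa_n$ is assumed so that Theorem \ref{t2.2} applies to give the underlying pointwise bound on $|\nabla_\varphi w|_\varphi$.
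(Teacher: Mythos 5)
Your proof is correct and follows exactly the paper's own argument: the pointwise bound $|\nabla(F+f)|^2\leq|\nabla_{\varphi}(F+f)|_{\varphi}^2(n+\Delta\varphi)\leq C_{14}^2(n+\Delta\varphi)$ from Theorem \ref{t2.2}, followed by integration and Corollary \ref{c2.4}. No issues.
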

\begin{proof}
We know from Theorem \ref{t2.2} that 
$|\nabla_{\varphi}(F+f)|_{\varphi}\leq C_{14}$.
On the other hand, we have
\begin{equation}
|\nabla(F+f)|^2\leq|\nabla_{\varphi}(F+f)|_{\varphi}^2(n+\Delta\varphi)\leq C_{14}^2(n+\Delta\varphi).
\end{equation}
Hence the result follows from Corollary \ref{c2.4}.
\end{proof}

Combining the estimates in this section, we can formulate the following theorem.
\begin{thm}\label{t2.3}
Assume $\beta\geq0$ in (\ref{eq-?1}), (\ref{eq-?2}). 
Let $\varphi$ be a smooth solution to (\ref{eq-?1}), (\ref{eq-?2}). Suppose $p_0\geq\kappa_n$ for some constant $\kappa_n$ depending only on $n$. Then for any $p<p_0$,
$$
||F+f||_{W^{1,2p}}\leq C_{25.1},\,\,\,||n+\Delta\varphi||_{L^p(\omega_0^n)}\leq C_{25.1}.
$$
Here $C_{25.1}$ depends only on an upper bound of entropy $\int_M\log\big(\frac{\omega_{\varphi}^n}{\omega_0^n}\big)\omega_{\varphi}^n$, $p_0>1$, $p<p_0$, the bound for $\int_Me^{-p_0f}dvol_g$, $||R||_0$, $\max_M|\beta_0|_g$ and background metric $\omega_0$. Besides, the bound is uniform in $p_0$ as long as $p_0$ is bounded away from 1 and $p$ bounded away from $p_0$.
\end{thm}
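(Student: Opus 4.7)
The theorem is essentially a consolidation of the estimates developed in subsections 2.1--2.3, so the plan is to assemble them in the correct order, being careful with normalizations and with the passage from the entropy bound to the integrated bounds that were actually used in the preceding lemmas.

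First, I would fix the normalization $\sup_M \varphi = 0$ (which is harmless since only $\sqrt{-1}\partial\bar\partial\varphi$ enters the equations). The starting point is converting the given entropy bound $\int_M \log(\omega_\varphi^n/\omega_0^n)\,\omega_\varphi^n = \int_M F e^F\,dvol_g$ into the hypothesis of Lemma \ref{l2.1}. Applying Lemma \ref{l2.1} with $\eps_0$ chosen so that $\alpha/\eps_0 = 2p_0/(p_0-1)$, and then invoking Kolodziej's $L^\infty$ theorem exactly as in the proof of Corollary \ref{c2.2}, yields simultaneously $\|\varphi\|_0 \leq C$ and the upper bound $F+f \leq C$. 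The lower bound $F + f \geq -C$ then follows from Lemma \ref{l2.3}, which requires only the ingredients just established plus the hypothesis on $\int_M e^{-p_0 f}\,dvol_g$. Thus $\|F+f\|_0$ is controlled with the dependence stated.

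Second, with $\|F+f\|_0$ and $\|\varphi\|_0$ in hand, Theorem \ref{t2.1} gives the weighted bound $\int_M e^{(p-1)f}(n+\Delta\varphi)^p\,dvol_g \leq C_p$ for every $p \geq 1$, and Corollary \ref{c2.4} converts this into the unweighted bound $\|n+\Delta\varphi\|_{L^{p'}(\omega_0^n)} \leq \tilde C_{p'}$ for any $p' < p_0$, via a single H\"older split against $\int_M e^{-p_0 f}\,dvol_g$. This is the second half of the desired estimate, and the claimed uniformity in $p_0$ (bounded away from $1$) and in $p'$ (bounded away from $p_0$) is visible directly from that H\"older exponent $s = (q-1)p_0/(p_0-1)$.

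Third, for the gradient part I would invoke Theorem \ref{t2.2}, which requires $p_0 \geq \kappa_n$ for a dimensional constant $\kappa_n$ (this is where $\kappa_n$ in the statement comes from; tracing the proof of Theorem \ref{t2.2}, $\kappa_n = 8n(2n-1)+1$ suffices). That theorem gives $|\nabla_\varphi (F+f)|_\varphi \leq C$, and then Corollary \ref{c8.14} uses the pointwise comparison $|\nabla(F+f)|^2 \leq |\nabla_\varphi(F+f)|_\varphi^2 (n + \Delta\varphi)$ together with the Corollary \ref{c2.4} bound on $\|n+\Delta\varphi\|_{L^{p'}(\omega_0^n)}$ to get $\|\nabla(F+f)\|_{L^{2p'}(\omega_0^n)} \leq C$ for any $p' < p_0$. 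Combining this with the already-established $\|F+f\|_0$ bound gives the $W^{1,2p'}$ estimate, which is the first half of the theorem.

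There is essentially no new analytic content to prove here; the only real work is bookkeeping. The only point that requires any care is verifying that the implicit constants at each step depend only on the quantities listed in the statement (in particular, that the bound on $\int_M F e^F\,dvol_g$ is sufficient input for Lemma \ref{l2.1}, and that the choice of $\kappa_n$ made in Theorem \ref{t2.2} is preserved through the subsequent corollary), and that the claimed uniformity as $p_0 \to 1^+$ is not worse than allowed (which it is not, since each H\"older step loses only a $(p_0-1)^{-1}$-type factor that is controlled under the stated assumption).
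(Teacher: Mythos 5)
Your proposal is correct and coincides with the paper's own treatment: the paper states Theorem \ref{t2.3} with only the remark that it follows by ``combining the estimates in this section,'' and the assembly you describe (Lemma \ref{l2.1} and Corollary \ref{c2.2} with the choice $\alpha/\eps_0=2p_0/(p_0-1)$, then Lemma \ref{l2.3}, Theorem \ref{t2.1}, Corollary \ref{c2.4}, Theorem \ref{t2.2} and Corollary \ref{c8.14}) is exactly the intended chain, including the identification $\kappa_n=8n(2n-1)+1$ and the source of the uniformity in $p_0$ and $p$.
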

\section{Properness conjecture when $Aut_0(M,J)\neq0$}

 Define
$$
\mathcal{H}_{0}=\{\varphi\in C^{\infty}(M):\omega_{\varphi}:=\omega_0+\sqrt{-1}\partial\bar{\partial}\varphi\geq0,\,\,\,I(\varphi)=0\}.
$$
Here the functional $I$ is defined as
$$
I(\varphi)=\frac{1}{(n+1)!}\int_M\varphi\sum_{k=0}^n\omega_0^k\wedge\omega_{\varphi}^{n-k}.
$$
The set $\mathcal{H}_0$ can be identified as the set of K\"ahler metrics cohomologous to $\omega_0$.
We also know that for any $\varphi\in \mathcal{H}_{0}$, any $\sigma\in G$, one has $\sigma^*\omega_{\varphi}$ is still in the K\"ahler class $[\omega_0]$. 
Hence there exists a unique element $\psi\in\mathcal{H}_{0}$, such that $\sigma^*\omega_{\varphi}=\omega_{\psi}$.
We will write in short as $\sigma.\varphi=\psi$.
It is clear that this defines an action of $G$ on $\mathcal{H}_{0}$.

Let $d_1$ be the $L^1$ geodesic distance considered in the second paper, \cite{cc2}. Now we try to explain how to extend the notion of properness
to the general case.   For any given metric  $\omega_0$, we may consider its $G$ orbit
\[
{\mathcal O}_{\omega_0} = \{ \varphi \in {\mathcal H} \mid \sigma^* \omega_0 = \omega_\varphi, \textrm{ for some} \;\sigma \in G\}.
\]
Note that if $\omega_0$ is a cscK metric, then it is symmetric with respect to a maximal compact subgroup (\cite{calabi82}, \cite{calabi85}). Moreover,  one can check directly
that ${\mathcal O}_{\omega_0} \subset {\mathcal H}$  is a totally  geodesic submanifold (c.f. Proposition 2.1, \cite{CPZ}). Therefore, it is natural  to define a notion  of distance  to this submanifold ${\mathcal O}_{\omega_0} $ from  any  K\"ahler potential $\varphi$
 by 
\[
\begin{array}{lcl} d_p (\varphi, {\mathcal O}_{\omega_0} ) & =& \displaystyle \inf_{\psi \in {\mathcal O}_{\omega_0} }\; d_p (\varphi, \psi)\\
 &=&  \displaystyle \inf_{ \sigma \in G, \omega_\psi= \sigma^*\omega_0 }\; d_p (\varphi, \psi)\\
& =&  \displaystyle \inf_{ \sigma \in G, \omega_\psi =\sigma^*\omega_\varphi }\; d_p (0, \psi)
\end{array}
\]
More importantly, this infimum can be realized (Lemma 2.2, \cite{CPZ}), i.e., there exists a $\sigma_0 \in G$ such that
\[
d_p(\omega_\varphi, \sigma_0^* \omega_0) = d_p (\varphi, {\mathcal O}_{\omega_0} ).
\]
It means that this distance is positive unless $\varphi$ lies in this orbit.
 Motivated by this observation, we extend the properness definition to the general case, following \cite{Dar-Rub-17}.  First, as in \cite{Dar-Rub-17}, one can define

\begin{equation}
d_{1,G}(\varphi,\psi)=\displaystyle \inf_{\sigma_1,\sigma_2\in G}d_1(\sigma_1.\varphi,\sigma_2.\psi),\textrm{ for any $\varphi$, $\psi\in\mathcal{H}_{0}$.}
\end{equation}
The group $G$ acts on $\mathcal{H}_{0}$ by isometry, in the sense that
$$
d_1(\sigma.\varphi,\sigma.\psi)=d_1(\varphi,\psi),\textrm{ for any $\sigma\in G$, any $\varphi$, $\psi\in\mathcal{H}_{0}$.}
$$
As a result of this, we see that
\begin{equation}\label{3.2}
d_{1,G}(\varphi,\psi)=\inf_{\sigma\in G}d_1(\varphi,\sigma.\psi)=\inf_{\sigma\in G}d_1(\sigma.\varphi,\psi).
\end{equation}
Also it is immediate to check that $d_{1,G}$ satisfies triangle inequality: for any $\varphi_i\in\mathcal{H}_0$, $i=1,2,3$, we have
\begin{equation}
d_{1,G}(\varphi_1,\varphi_3)\leq d_{1,G}(\varphi_1,\varphi_2)+d_{1,G}(\varphi_2,\varphi_3).
\end{equation}
The cscK metrics in the class $[\omega_0]$ are critical points of the $K$-energy, which is implicitly defined by
\begin{equation}
\frac{d K(\varphi)}{dt}=\int_M\frac{\partial \varphi}{\partial t}(\underline{R}-R_{\varphi})\frac{\omega_{\varphi}^n}{n!}.
\end{equation}
In the above, $\underline{ R}$ is the average scalar curvature, $R_{\varphi}$ is the scalar curvature of the metric $\omega_{\varphi}$.
The $K$-energy has the following explicit formula:
\begin{equation}\label{3.4}
K(\varphi)=\int_M\log\bigg(\frac{\omega_{\varphi}^n}{\omega_0^n}\bigg)\frac{\omega_{\varphi}^n}{n!}+J_{-Ric}(\varphi),
\end{equation}
where for any $(1,1)$ form $\chi$, we define $J_{\chi}$ as
\begin{equation}
\begin{split}
J_{\chi}(\varphi)=\int_0^1&\int_M\varphi\bigg(\chi\wedge\frac{\omega_{\lambda\varphi}^{n-1}}{(n-1)!}-\underline{\chi}\frac{\omega_{\lambda\varphi}^n}{n!}\bigg)d\lambda\\
&=\frac{1}{n!}\int_M\varphi\sum_{k=0}^{n-1}\chi\wedge\omega_0^k\wedge\omega_{\varphi}^{n-1-k}-\frac{1}{(n+1)!}\int_M\underline{\chi}\varphi\sum_{k=0}^n\omega_0^k\wedge\omega_{\varphi}^{n-k}.
\end{split}
\end{equation}

\begin{equation}\label{3.5n}
\frac{d J_{\chi}(\varphi)}{dt}=\int_M\frac{\partial\varphi}{\partial t}(tr_{\varphi}\chi-\underline{\chi})\frac{\omega_{\varphi}^n}{n!}.
\end{equation}
The readers may look up section 2 of \cite{cc2} for more details.
First we make precise the notion of properness of $K$-energy with respect to $d_{1,G}$, in a similar vein as properness with respect to $d_1$ introduced in the second paper.
\begin{defn}\label{d3.1}
We say $K$-energy is proper with respect to $d_{1,G}$, if 
\begin{enumerate}
\item for any sequence $\{\varphi_i\}\subset\mathcal{H}_0$, $d_{1,G}(0,\varphi_i)\rightarrow\infty$ implies $K(\varphi_i)\rightarrow+\infty$.
\item $K$-energy is bounded from below on $\mathcal{H}$.
\end{enumerate}
\end{defn}
In this section, we will prove the following result:
\begin{thm}\label{t3.1}
Suppose that $K$-energy functional is proper with respect to  $d_{1,G}$ as defined in (\ref{d3.1}), then the class $[\omega_0]$ admits a cscK metric.
\end{thm}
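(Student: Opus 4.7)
My plan is to establish existence via the continuity path
\begin{equation*}
t(R_{\varphi}-\underline{R}) = (1-t)(tr_{\varphi}\omega_0 - n), \qquad t\in[0,1],
\end{equation*}
already employed in the second paper~\cite{cc2}. At $t=0$ the trivial solution $\varphi\equiv 0$ is available; the goal is to reach $t=1$. Openness of the set of solvable parameters follows from the implicit function theorem once one quotients out the kernel coming from $aut(M,J)$, so the main work is closedness: given a sequence $t_i\to t_\infty\in(0,1]$ with smooth solutions $\varphi_i$, one needs uniform \emph{a priori} estimates modulo the $G$-action.

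The $K$-energy is $G$-invariant and is bounded above along the continuity path (cf.~\cite{cc2}), so the properness hypothesis forces $d_{1,G}(0,\varphi_i)\leq C$. By the attainment result (Lemma~2.2 of~\cite{CPZ} recalled in the excerpt), one can pick $\sigma_i\in G$ with $d_1(0,\sigma_i.\varphi_i) = d_{1,G}(0,\varphi_i)$. Set $\tilde\varphi_i := \sigma_i.\varphi_i$. A direct pullback computation shows that $\tilde\varphi_i$ solves the \emph{twisted} continuity equation
\begin{equation*}
t_i(R_{\tilde\varphi_i}-\underline{R}) = (1-t_i)\bigl(tr_{\tilde\varphi_i}(\sigma_i^*\omega_0) - n\bigr),
\end{equation*}
i.e.\ the same family but with the twisting form replaced by $\chi_i := \sigma_i^*\omega_0$, still a nonnegative $(1,1)$ form in $[\omega_0]$. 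Writing $\chi_i = \omega_0 + \sqrt{-1}\partial\bar{\partial}f_i$ with $\sup_M f_i = 0$, the $f_i$ are K\"ahler potentials produced by pulling back a fixed smooth form under holomorphic automorphisms, which yields a uniform Skoda-type bound $\int_M e^{-p f_i}\,dvol_g \leq C_p$ for every $p$ below the alpha-invariant of $(M,\omega_0)$; in particular one may take $p\geq\kappa_n$ as required by Theorem~\ref{t2.3}.

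With the entropy controlled by the uniform upper bound on $K(\tilde\varphi_i) = K(\varphi_i)$ and the $d_1$-bound on $\tilde\varphi_i$, Theorem~\ref{t2.3} delivers uniform $W^{1,2p'}$ bounds on $F_i+f_i$ and $L^{p'}$ bounds on $n+\Delta\tilde\varphi_i$ for any $p'<p$. Standard bootstrapping via the Calabi-type $C^3$ estimate and Evans--Krylov, exactly as in~\cite{cc1} and~\cite{cc2}, promotes these to $C^{k,\alpha}$ bounds for every $k$; extracting a smooth limit closes the continuity method and yields a cscK metric at $t=1$. The principal obstacle is precisely the uniform Skoda integrability of $e^{-p f_i}$ when $\sigma_i$ escapes every compact subset of $G$: the potentials $f_i$ may concentrate mass, and one must bound the relevant Lelong-type numbers uniformly in terms of the fixed background geometry. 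This is the feature that motivates assumption~(\ref{rhs}) and necessitates the new estimates of Section~2 beyond those of~\cite{cc2}.
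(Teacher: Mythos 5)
Your overall strategy is the same as the paper's (continuity path, $G$-invariance of $K$-energy, properness giving a $d_{1,G}$-bound, pulling back by $\sigma_i$ to land in the framework of Section 2), but there is a genuine gap at the step you yourself identify as ``the principal obstacle,'' and you do not resolve it. You write the twist as $\chi_i=\sigma_i^*\omega_0=\omega_0+\sqrt{-1}\partial\bar\partial f_i$ with $\sup_M f_i=0$ and claim $\int_M e^{-pf_i}\,dvol_g\le C_p$ for all $p$ below the alpha-invariant, ``in particular'' for $p\ge\kappa_n$. This does not follow: Tian's alpha-invariant $\alpha(M,\omega_0)$ is a fixed (typically small) positive constant, and there is no reason whatsoever that $\alpha\ge\kappa_n\sim 8n(2n-1)$. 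With your normalization the Skoda bound is only available for $p<\alpha$, which is far below the threshold needed to invoke Theorem \ref{t2.3}, and indeed the potentials $\theta_i$ of $\sigma_i^*\omega_0$ can develop arbitrarily large Lelong-type concentration as $\sigma_i$ leaves every compact subset of $G$. The paper's resolution is that the twisting form actually entering equation (\ref{eq-n-2}) is $\beta_i=\frac{1-t_i}{t_i}\,\omega_{\theta_i}$, so the relevant potential is $f_i=\frac{1-t_i}{t_i}\theta_i$, and the factor $\frac{1-t_i}{t_i}\to 0$ is exactly what upgrades the integrability: $\int_M e^{-pf_i}\le C$ as soon as $p\,\frac{1-t_i}{t_i}<\alpha$, i.e.\ for \emph{any} fixed $p$ once $t_i$ is close enough to $1$ (Lemma \ref{l3.10}). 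You have dropped this factor, and without it the argument does not close.

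A second, related gap is the endgame. Theorem \ref{t2.3} only yields uniform $W^{1,2p}$ bounds on $F_i+f_i$ and $L^{p}$ bounds on $n+\Delta\tilde\varphi_i$; it does not give uniform $C^{k,\alpha}$ bounds along the sequence, and your appeal to ``Calabi $C^3$ estimate and Evans--Krylov'' is not justified because the right-hand side still contains the degenerating potentials $f_i$. The paper instead extracts a weak limit $\varphi_*\in W^{2,p}$, $F_*\in W^{1,p}$, and must then (i) show $f_i\to 0$ in $L^p$ so that the twist disappears in the limit --- this is Lemma \ref{l3.13} and requires the nontrivial energy identity $(1-t_i)d_1(0,\tilde\varphi_i)\to 0$ coming from the coercivity of $J_{\omega_0}$; (ii) identify $\varphi_*$ as a weak cscK solution in the distributional sense of Proposition \ref{p3.12}; and (iii) prove regularity of this weak solution via the Chen--He $W^{3,p}$ estimate for the Monge--Amp\`ere equation, B\l{}ocki's uniqueness, and Schauder. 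None of these steps appears in your proposal, and step (i) in particular is not a routine limit: without it the limit equation retains an uncontrolled twist term.
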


\begin{rem}
The converse direction has been established by  \cite{Darvas1605} and \cite{Dar-Rub-17}.
\end{rem}
As a preliminary step, we observe that the assumption $K$-energy being bounded from below implies it is invariant under the action of $G$.
\begin{lem}\label{l3.3new}
Suppose that the $K$-energy is bounded from below, then the $K$-energy is invariant under the action of $G$, i.e. $K(\sigma.\varphi)=K(\varphi)$ for any $\varphi\in\mathcal{H}$ and $\sigma\in G$.
\end{lem}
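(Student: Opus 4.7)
The plan is to exploit the fact that the directional derivative of the $K$-energy along a one-parameter subgroup of $G$ is a constant — namely (up to sign) the classical Futaki invariant, which depends only on the generating vector field and the Kähler class $[\omega_0]$, not on the reference potential. Since $G$ is a connected Lie group, every $\sigma \in G$ can be written as a finite product $\exp(X_1)\cdots\exp(X_k)$ with $X_i \in aut(M,J)$, so it suffices to show $K(\sigma_t.\varphi) = K(\varphi)$ for every one-parameter subgroup $\sigma_t = \exp(tX)$ and every $\varphi \in \mathcal{H}$.

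Fix $\varphi$ and set $\psi_t = \sigma_t.\varphi$. Since $\sigma_t \in G$ is a biholomorphism, $\omega_{\psi_t} = \sigma_t^* \omega_\varphi$ and $R_{\psi_t} = R_\varphi \circ \sigma_t$. Differentiating the first identity in $t$ gives $\sqrt{-1}\partial\bar{\partial}\dot\psi_t = \sigma_t^*(L_X \omega_\varphi)$, so $\theta_X := \dot\psi_t \circ \sigma_t^{-1}$ satisfies $\sqrt{-1}\partial\bar{\partial}\theta_X = L_X \omega_\varphi$, i.e.\ $\theta_X$ is a holomorphy potential for $X$ with respect to $\omega_\varphi$. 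Applying (\ref{3.5n}) and changing variables through $\sigma_t$,
\[
\frac{d}{dt} K(\psi_t) \;=\; \int_M \dot\psi_t\,(\underline R - R_{\psi_t})\,\frac{\omega_{\psi_t}^n}{n!} \;=\; \int_M \theta_X\,(\underline R - R_\varphi)\,\frac{\omega_\varphi^n}{n!},
\]
which (up to sign) is the Futaki invariant $\mathcal{F}(X,[\omega_0])$. By the classical result of Futaki, this quantity is independent of the representative metric in $[\omega_0]$, hence independent of $t$ and of $\varphi$; call it $c(X)$. Consequently $K(\psi_t) = K(\varphi) + t\cdot c(X)$ is affine in $t$.

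The hypothesis that $K$ is bounded from below forces $c(X) = 0$ for every $X \in aut(M,J)$, since otherwise one of the limits $t \to \pm\infty$ would drive $K(\psi_t) \to -\infty$. Therefore $K$ is constant along every one-parameter subgroup of $G$, and invariance $K(\sigma.\varphi) = K(\varphi)$ follows by factoring any $\sigma \in G$ as a product of exponentials of elements of $aut(M,J)$.

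The main subtlety is the identification of $\dot\psi_t \circ \sigma_t^{-1}$ with a genuine holomorphy potential for $X$ (so that the integral above is exactly the Futaki invariant) together with the change-of-variables step under the pullback by $\sigma_t$. Both are classical computations and present no real difficulty in the current smooth setting; once they are in place, the conclusion is immediate from the affineness of $t \mapsto K(\psi_t)$ and the lower bound on $K$.
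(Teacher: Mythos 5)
Your proof is correct and follows essentially the same route as the paper: differentiate $K$ along a one-parameter subgroup of $G$, identify the derivative with the (constant) Calabi--Futaki invariant, and use the lower bound on $K$ along the resulting affine function of $t$ to force that constant to vanish. Your extra care in factoring a general $\sigma\in G$ as a finite product of exponentials (rather than assuming $\sigma$ itself lies on a one-parameter subgroup) is a minor refinement of the same argument.
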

\begin{proof}
We will prove this by showing the Calabi-Futaki invariant vanishes. Let $\sigma\in G$, then there exists a holomorphic vector field $X$ which generates a one-parameter path $\{\sigma(t)\}_{t\in\mathbb{R}}$, with $\sigma(0)=id$ and $\sigma(1)=\sigma$.

From the definition of $K$-energy and Calabi-Futaki invariant, we know that
$$
\frac{d}{dt}\big(K(\sigma(t)^*\omega_{\varphi})\big)=Re\big(\mathcal{F}(X,[\omega_0])\big)=a.
$$
Here $a$ is a constant depending only on the holomorphic vector field $X$ and cohomology class of $[\omega_0]$.
Since $K$-energy is bounded from below on the holomorphic line $\{\sigma(t)^*\omega_{\varphi}\}_{t\in\mathbb{R}}$, we must have $a=0$. This implies that $K(\sigma.\varphi)=K(\varphi)$.
\end{proof}


Theorem \ref{t3.1} will be proved by solving the following path of continuity:
\begin{equation}\label{3.5}
t(R_{\varphi}-\underline{R})=(1-t)(tr_{\varphi}\omega_0-n),\textrm{ $t\in[0,1]$.}
\end{equation}
Let $\varphi$ solves (\ref{3.5}), then we call $\omega_{\varphi}$ to be twisted cscK metric.
 For $t>0$, equation (\ref{3.5}) can be equivalently put as:
\begin{align}
\label{eq-1}
&\det(g_{i\bar{j}}+\varphi_{i\bar{j}})=e^F\det g_{i\bar{j}},\\
\label{eq-2}
&\Delta_{\varphi}F=-\big(\underline{R}-\frac{1-t}{t}n\big)+tr_{\varphi}\big(Ric(\omega_0)-\frac{1-t}{t}\omega_0\big).
\end{align}
One important fact about this continuity path is that the set of solvable $t$ is open, more precisely, 
\begin{lem} (\cite{chen15}, \cite{zeng}, \cite{Hashi})
Suppose for some $0\leq t_0<1$, (\ref{3.5}) has a solution $\varphi\in C^{4,\alpha}(M)$ with $t=t_0$, then for some $\delta>0$, (\ref{3.5}) has a solution in $C^{4,\alpha}(M)$ for any $t\in(t_0-\delta,t_0+\delta)\cap[0,1]$.
\end{lem}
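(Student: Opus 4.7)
The plan is to apply the Banach implicit function theorem to the map
\[
\Phi(\varphi, t) = t(R_\varphi - \underline R) - (1-t)(tr_\varphi \omega_0 - n),
\]
viewed as a $C^1$ map from a neighborhood of $(\varphi_0, t_0)$ in $\{\varphi \in C^{4,\alpha}(M) : \omega_\varphi > 0,\, I(\varphi) = 0\} \times [0,1]$ into $C^{0,\alpha}(M)$. Using the standard variational formulas for scalar curvature and for $tr_\varphi \omega_0$, the $\varphi$-linearization at $(\varphi_0, t_0)$ is
\[
L_{t_0}(u) \;=\; -t_0\bigl(\Delta_{\varphi_0}^2 u + R_{\varphi_0}^{k\bar l} u_{k\bar l}\bigr) \;+\; (1-t_0)\, g_{\varphi_0}^{i\bar k} g_{\varphi_0}^{l\bar j}(g_0)_{i\bar j}\, u_{k\bar l} \;+\; \text{(lower order)}.
\]
For $t_0 \in (0,1)$ the principal symbol is $t_0|\xi|^4$, so $L_{t_0}$ is a fourth-order linear elliptic operator, formally self-adjoint in $L^2(\omega_{\varphi_0}^n)$ up to lower-order terms; hence Fredholm of index zero. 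In the boundary case $t_0 = 0$, the unique normalized solution is $\varphi_0 \equiv 0$ and $L_0 = \Delta_{\omega_0}$, and the result is immediate from classical elliptic theory.

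For $t_0 \in (0,1)$ I would next analyze the kernel of $L_{t_0}$ by an energy argument. Pairing $L_{t_0}u = 0$ with $u$ against $\omega_{\varphi_0}^n$ and integrating by parts, the fourth-order piece produces $-t_0 \int_M |\mathcal{D}_{\varphi_0} u|^2_{\varphi_0}\, \omega_{\varphi_0}^n$ plus a lower-order $\nabla R_{\varphi_0}$ term, where $\mathcal{D}_{\varphi_0} u := \bar\partial \nabla^{1,0}_{\varphi_0} u$ so that $\mathcal{D}^*\mathcal{D}$ is the Lichnerowicz operator. The second-order piece produces $-(1-t_0)\int_M g_{\varphi_0}^{i\bar k} g_{\varphi_0}^{l\bar j}(g_0)_{i\bar j}\, u_k\, u_{\bar l}\, \omega_{\varphi_0}^n$, which is strictly negative whenever $\nabla u \not\equiv 0$ since $g_0, g_{\varphi_0} > 0$. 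Because $1 - t_0 > 0$, this coercive second-order term dominates the lower-order cross-terms (after a Poincar\'e inequality on the $I(\varphi) = 0$ slice combined with Cauchy--Schwarz) and forces $u$ to be constant. Consequently $L_{t_0}$ is injective on the $I(\varphi) = 0$ slice; together with Fredholm index zero and formal self-adjointness, its image in $C^{0,\alpha}(M)$ is precisely the $L^2(\omega_{\varphi_0}^n)$-orthogonal complement of constants, which is the natural target of $\Phi$ after the $I$-normalization.

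Given this invertibility, the Banach implicit function theorem produces a unique $C^{4,\alpha}$ family $\varphi(t)$ of solutions to $\Phi(\varphi(t), t) = 0$ for $t$ in a small interval around $t_0$ intersected with $[0, 1]$, with smoothness in $(t, x)$ following by elliptic bootstrap. The main obstacle is the kernel analysis of $L_{t_0}$ when $G = Aut_0(M, J)$ is non-discrete: the pure Lichnerowicz operator $\mathcal{D}_{\varphi_0}^*\mathcal{D}_{\varphi_0}$ then has a genuinely larger kernel (the real-valued holomorphy potentials), so one must lean on the strictly positive $(1-t_0)$ second-order term to collapse the kernel of the full $L_{t_0}$ back to the constants. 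This is precisely why the lemma is restricted to $t_0 < 1$; at the cscK endpoint $t = 1$ the coercive term vanishes, the kernel truly enlarges, and a slice construction transverse to the $G$-orbit (together with the vanishing of the Calabi--Futaki invariant supplied by Lemma \ref{l3.3new}) would be required --- a more delicate argument carried out elsewhere in the paper.
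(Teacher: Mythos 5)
Your overall architecture --- linearize the map $\Phi(\varphi,t)$, check ellipticity and Fredholm index zero, kill the kernel using the twist term, and invoke the implicit function theorem --- is exactly the route taken in the cited references \cite{chen15}, \cite{zeng}, \cite{Hashi} (the paper itself only quotes the result). However, the central step, the kernel analysis of $L_{t_0}$, is not correct as written. You propose to absorb the ``lower order cross-terms'' into the coercive piece $-(1-t_0)\int_M g_{\varphi_0}^{i\bar k}g_{\varphi_0}^{l\bar j}(g_0)_{i\bar j}u_ku_{\bar l}\,\omega_{\varphi_0}^n$ by Cauchy--Schwarz plus a Poincar\'e inequality. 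This cannot work: Cauchy--Schwarz on a term of the form $\int_M u\,b\cdot\nabla u$ yields $\eps\int|\nabla u|^2+C_\eps\int u^2$, and Poincar\'e converts the second term back into $C_\eps C_P\int|\nabla u|^2$ with a constant that is not small; worse, the uncontrolled first-order terms coming from the fourth-order part carry the coefficient $t_0$, while the coercive term carries the coefficient $1-t_0$, which is arbitrarily small as $t_0\to1$, so no absorption argument can be uniform. The actual mechanism is an \emph{exact cancellation}, not domination: using the contracted Bianchi identity for the divergence of $R_{\varphi_0}^{i\bar j}$, the closedness of $\omega_0$ (which gives $\nabla_{\bar l}\big(g_{\varphi_0}^{i\bar l}g_{\varphi_0}^{k\bar j}(g_0)_{i\bar j}\big)=\nabla^k(tr_{\varphi_0}\omega_0)$), and the identity $\nabla R_{\varphi_0}=\frac{1-t_0}{t_0}\nabla(tr_{\varphi_0}\omega_0)$ valid precisely because $\varphi_0$ solves the equation, one finds
\begin{equation*}
\int_M u\,L_{t_0}u\,\omega_{\varphi_0}^n=-t_0\int_M|\mathcal{D}_{\varphi_0}u|^2_{\varphi_0}\,\omega_{\varphi_0}^n-(1-t_0)\int_M g_{\varphi_0}^{i\bar l}g_{\varphi_0}^{k\bar j}(g_0)_{i\bar j}u_ku_{\bar l}\,\omega_{\varphi_0}^n
\end{equation*}
with no remainder. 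Equivalently, $L_{t_0}$ is minus the second variation of the twisted $K$-energy $t_0K+(1-t_0)J_{\omega_0}$ at its critical point $\varphi_0$, hence genuinely self-adjoint in $L^2(\omega_{\varphi_0}^n)$ --- not merely ``formally self-adjoint up to lower-order terms.'' This matters also for your surjectivity claim: identifying the image of $L_{t_0}$ with the orthogonal complement of the constants requires knowing $\ker L_{t_0}^*$, which reduces to $\ker L_{t_0}$ only once true self-adjointness is established.

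The endpoint $t_0=0$ is also not ``immediate from classical elliptic theory.'' There the linearization degenerates from fourth order to second order: $D_\varphi\Phi(0,0)=\Delta_{\omega_0}$ maps $C^{4,\alpha}$ onto the mean-zero part of $C^{2,\alpha}$, which is a proper, non-closed subspace of the mean-zero part of $C^{0,\alpha}$, so the linearization is not surjective in your chosen spaces and the implicit function theorem does not apply; one cannot shrink the source to $C^{2,\alpha}$ either, since $R_\varphi$ is then undefined for $t>0$. Producing solutions for small $t>0$ is a genuine singular perturbation problem, and is precisely why \cite{chen15} and \cite{Hashi} are cited alongside \cite{zeng} (in the present paper the issue is bypassed: Lemma \ref{l3.5} obtains solvability for all $t\in[0,1)$ from properness of the twisted $K$-energy rather than from openness at $t=0$). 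Your remarks about the case $t_0=1$ and the role of $Aut_0(M,J)$ are correct but not at issue here.
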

\begin{rem}
One can see by bootstrap that the solution $\varphi$ of (\ref{3.5})(or equivalently of (\ref{eq-1}), (\ref{eq-2}) for $t>0$) is smooth if we know it's in $C^{4,\alpha}$.
\end{rem}
Another important fact about twisted path is that solutions to (\ref{3.5}) are minimizers of the twisted $K$-energy, defined as \begin{equation}\label{3.9}
K_{\omega_0,t}=tK+(1-t)J_{\omega_0}=t\int_M\log\bigg(\frac{\omega_{\varphi}^n}{\omega_0^n}\bigg)\frac{\omega_{\varphi}^n}{n!}+J_{-tRic+(1-t)\omega_0},\,\,\,t\in[0,1].
\end{equation}
First we observe that if the $K$-energy satisfies the assumptions of Definition \ref{d3.1}, the twisted path (\ref{3.5}) is solvable for any $0\leq t<1$. Indeed, we have
\begin{lem}\label{l3.5}
Suppose the $K$-energy is bounded from below, then (\ref{3.5}) is solvable for $0\leq t<1$.
\end{lem}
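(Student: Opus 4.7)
The plan is to apply the continuity method to equation (\ref{3.5}), starting from the trivial solution $\varphi \equiv 0$ at $t = 0$. Let $S = \{t \in [0,1) : \text{(\ref{3.5}) is solvable at } t\}$. The openness of $S$ in $[0,1)$ is supplied by the cited result of Chen, Zeng and Hashimoto, and $0 \in S$, so it suffices to establish closedness. Fix $t^* \in (0,1)$ and a sequence $t_k \in S$ with $t_k \to t^*$, together with smooth normalized solutions $\varphi_k \in \mathcal{H}_0$. After discarding finitely many indices we may assume $t_k \in [\epsilon, t^* + \delta] \subset (0,1)$, so that the auxiliary data $\beta_0 = \frac{1-t_k}{t_k}\omega_0$ and $R = \underline{R} - \frac{1-t_k}{t_k} n$ appearing in the reformulation (\ref{eq-1})--(\ref{eq-2}) are uniformly bounded.

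The first substantive step is to obtain uniform energy/entropy bounds. Since $\varphi_k$ minimizes the twisted $K$-energy $K_{\omega_0,t_k} = t_k K + (1-t_k) J_{\omega_0}$ (as stated in the paragraph preceding the lemma, following the convexity results of \cite{cc2}), testing against $\varphi = 0$ yields
\[
t_k K(\varphi_k) + (1-t_k) J_{\omega_0}(\varphi_k) \;=\; K_{\omega_0,t_k}(\varphi_k) \;\leq\; K_{\omega_0,t_k}(0) \;=\; 0.
\]
Combined with the standing hypothesis $K \geq -C_0$ and the Aubin inequality $J_{\omega_0} \geq 0$, this gives the two-sided control
\[
0 \leq J_{\omega_0}(\varphi_k) \leq \frac{t_k C_0}{1-t_k}, \qquad -C_0 \leq K(\varphi_k) \leq 0,
\]
uniform for $t_k \in [\epsilon, t^*+\delta]$. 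Using the decomposition $K = \mathrm{Ent} + J_{-Ric}$ from (\ref{3.4}) and the standard comparison $|J_{-Ric}(\varphi_k)| \leq C(1 + J_{\omega_0}(\varphi_k))$, the entropy $\int_M \log(\omega_{\varphi_k}^n / \omega_0^n)\, \omega_{\varphi_k}^n$ is uniformly bounded from above.

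With the entropy bound in hand, the a priori estimates from Section 2 apply directly, since here $f \equiv 0$ (so the hypothesis $\int_M e^{-p_0 f} < \infty$ holds trivially for any $p_0$) and $\beta = \beta_0 \geq 0$ is bounded. Corollary \ref{c2.2} and Lemma \ref{l2.3} then yield uniform bounds on $\|\varphi_k\|_0$ and $\|F_k\|_0$; Theorem \ref{t2.1} and Corollary \ref{c2.4} give $\|n + \Delta \varphi_k\|_{L^p}$ for arbitrary $p$; and Theorem \ref{t2.2} gives $\|\nabla_{\varphi_k}(F_k)\|_{\varphi_k}$ bounds. A standard bootstrap using the complex Monge--Amp\`ere equation (\ref{eq-1}) together with (\ref{eq-2}) upgrades these to uniform $C^{4,\alpha}$ bounds on $\varphi_k$. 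Extracting a convergent subsequence produces a limit $\varphi^* \in C^{4,\alpha}(M)$ solving (\ref{3.5}) at $t^*$, whence $t^* \in S$ and closedness follows.

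The main obstacle is really the passage from the abstract energy inequality to the concrete entropy bound required to feed into Section 2; once this is done, the heavy lifting is all contained in the estimates of Section 2. It should be emphasized that the case $f = 0$ here does not genuinely need the new singular-right-hand-side estimates, so in practice one can appeal to the simpler estimates already present in \cite{cc1,cc2}; the endpoint $t = 1$ is avoided by the assumption $t^* < 1$, and the endpoint $t = 0$ causes no difficulty because $\varphi \equiv 0$ is solvable there without estimates.
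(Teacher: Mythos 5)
Your argument is correct, but it takes a different route from the paper. The paper disposes of this lemma in a few lines by invoking Theorem \ref{t3.2} (Theorem 4.1 of \cite{cc2}: properness of the twisted $K$-energy $K_{\omega_0,t_0}$ with respect to $d_1$ implies existence of a smooth solution) and then verifying the hypothesis: since $K$ is bounded below, properness of $K_{\omega_0,t_0}=t_0K+(1-t_0)J_{\omega_0}$ reduces to properness of $J_{\omega_0}$, which follows from $J_{\omega_0}\geq\delta J-C$ (\cite{CoSz}, Proposition 22) and $J(\varphi)\geq \frac{1}{C'}d_1(0,\varphi)-C'$ (\cite{Dar-Rub-17}, Proposition 5.5). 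You instead run the continuity method directly and establish closedness by hand: the minimizing property of the twisted solutions tested against $\varphi=0$, together with $J_{\omega_0}\geq 0$ and $K\geq -C_0$, gives uniform bounds on $J_{\omega_0}(\varphi_k)$ and hence on the entropy via the comparison $|J_{-Ric}|\leq C(1+J_{\omega_0})$; the Section 2 estimates (trivially applicable with $f\equiv 0$) plus the standard $W^{1,p}\Rightarrow C^{4,\alpha}$ bootstrap (the same chain the paper uses later via \cite{chenhe12} and Schauder) then give compactness. In effect you are inlining the proof of the cited Theorem 4.1 of \cite{cc2} in the special case $\chi=\omega_0$, $f=0$; this buys self-containedness at the cost of length, while the paper's reduction is shorter but leans on the black box from the second paper. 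Your entropy bound derivation is the one genuinely new computation relative to the paper's proof, and it is sound; note only that your uniform bound $J_{\omega_0}(\varphi_k)\leq t_kC_0/(1-t_k)$ degenerates as $t_k\to 1$, which is exactly why this argument cannot reach the endpoint $t=1$ and why Section 3 of the paper needs the separate, more delicate analysis there.
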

This will be proved as a result of the following theorem, proved in the second paper:
\begin{thm}\label{t3.2}
(\cite{cc2}, Theorem 4.1)Fix $0<t_0\leq 1$, suppose the twisted $K$-energy defined in (\ref{3.9})(with $t=t_0$) is proper with respect to $d_1$, then there exists a smooth solution to (\ref{3.5}).
\end{thm}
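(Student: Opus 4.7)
The plan is to prove Theorem \ref{t3.2} by a continuity method on the parameter $t$ in (\ref{3.5}), running from $t = 0$ (trivially solvable by $\varphi \equiv 0$, since $tr_{\omega_0}\omega_0 = n$) up to $t = t_0$. Let $S \subset [0, t_0]$ denote the set of $t$ for which (\ref{3.5}) admits a smooth solution. Then $0 \in S$, and openness of $S$ (at least for $t < 1$) is supplied by the lemma immediately preceding the statement; for $t = 1$, openness is not needed. Hence the content of the proof lies in \emph{closedness}: given $t_k \in S$ with $t_k \to t^* \in (0, t_0]$ and smooth solutions $\varphi_k$ of (\ref{3.5}) at $t = t_k$ (normalized by $\sup_M \varphi_k = 0$), I would extract a $C^\infty$-convergent subsequence and thereby produce a smooth solution at $t^*$.

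The first step is to bound $d_1(0, \varphi_k)$. Since $\varphi_k$ minimizes the twisted $K$-energy $K_{\omega_0, t_k}$ on $\mathcal{H}$, one has $K_{\omega_0, t_k}(\varphi_k) \leq K_{\omega_0, t_k}(0) = 0$. A direct algebraic manipulation of (\ref{3.9}) gives the convex-combination identity
\begin{equation*}
K_{\omega_0, t} \;=\; \tfrac{t}{t_0}\, K_{\omega_0, t_0} \;+\; \tfrac{t_0 - t}{t_0}\, J_{\omega_0}.
\end{equation*}
Since $J_{\omega_0}$ is non-negative and $d_1$-coercive (Darvas-type comparison), the hypothesis that $K_{\omega_0, t_0}$ is $d_1$-proper propagates to uniform $d_1$-properness of $K_{\omega_0, t}$ for $t$ in any compact sub-interval of $(0, t_0]$. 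Coupled with $K_{\omega_0, t_k}(\varphi_k) \leq 0$, this forces $d_1(0, \varphi_k) \leq C$ for all $t_k$ near $t^*$.

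The next step is to promote this to an entropy bound. From (\ref{3.4}), $K(\varphi) = \int_M \log(\omega_\varphi^n / \omega_0^n)\,\omega_\varphi^n/n! + J_{-Ric}(\varphi)$, and $|J_{-Ric}(\varphi)|$ is controlled by $d_1(0, \varphi)$ via the standard Lipschitz estimate for $J_\chi$ when $\chi$ is a smooth $(1,1)$-form. Combined with the upper bound $t_k K(\varphi_k) + (1-t_k) J_{\omega_0}(\varphi_k) \leq 0$ and the $d_1$-bound, this yields an upper bound on the entropy of $\varphi_k$ (its lower bound being automatic from Jensen's inequality applied to the probability measure $\omega_{\varphi_k}^n / \mathrm{vol}(M)$). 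Now Theorem \ref{t2.3} applies, specialized to $\beta_0 = \omega_0$ and $f \equiv 0$ (so $\int_M e^{-p_0 f}\,dvol_g = \mathrm{vol}(M) < \infty$ for every $p_0$): it delivers uniform $\|\varphi_k\|_{W^{2, p}}$ and $\|F_k\|_{W^{1, 2p}}$ bounds for every $p < \infty$. From $C^{1, \alpha}$ control on $F_k$, the Evans-Krylov theorem applied to (\ref{eq-1}) and standard elliptic bootstrap give uniform $C^{k, \alpha}$ bounds for every $k$; Arzel\`a-Ascoli then provides a subsequence converging smoothly to a solution at $t^*$, closing the continuity argument.

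The main obstacle is the passage from $d_1$-control, which is all that the properness hypothesis directly furnishes, to the entropy control demanded by the Section 2 estimates. The bridge is the decomposition $K = \mathrm{Entropy} + J_{-Ric}$ together with the Lipschitz continuity of $J$-type functionals in $d_1$ for smooth background forms. A secondary technical subtlety is ensuring that the properness modulus of $K_{\omega_0, t}$ does not degenerate as $t$ varies in a compact sub-interval of $(0, t_0]$; this is precisely what the convex-combination identity handles.
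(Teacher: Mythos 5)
First, a point of record: the paper does not prove Theorem \ref{t3.2} here at all --- it is imported verbatim from the second paper of the series (\cite{cc2}, Theorem 4.1). Your continuity-method outline is nonetheless the strategy of that series, and the closest in-paper analogue of your closedness step is the chain Lemma \ref{l3.6} $\to$ Proposition \ref{l3.17} $\to$ Lemma \ref{l3.14} in Section 3 (carried out there for the harder endpoint $t\to 1$ with moving twist). Your reduction from properness to a $d_1$ bound via the identity $K_{\omega_0,t}=\frac{t}{t_0}K_{\omega_0,t_0}+\frac{t_0-t}{t_0}J_{\omega_0}$, using $J_{\omega_0}\geq J_{\omega_0}(0)=0$ and the fact that solutions of (\ref{3.5}) minimize the twisted energy (\cite{cc2}, Corollary 4.5), is correct; so is the passage from the $d_1$ bound to an entropy bound via $K=\mathrm{Entropy}+J_{-Ric}$ together with the $d_1$-Lipschitz bound on $J_{-Ric}$. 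These are exactly the manipulations the paper performs in Lemma \ref{l3.6} and in the entropy lemma following Corollary \ref{c3.8new}.

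The genuine gap is in your final regularity step. Theorem \ref{t2.3} gives $F+f\in W^{1,2p}$ and $n+\Delta\varphi\in L^{p}$ for every finite $p$; this yields $C^{0,\alpha}$ control of $F_k$, not $C^{1,\alpha}$, and --- more importantly --- it does \emph{not} give a uniform two-sided bound on $\omega_{\varphi_k}$, so the Monge--Amp\`ere equation (\ref{eq-1}) is not known to be uniformly elliptic and Evans--Krylov cannot be invoked as stated. The passage from $W^{2,p}$-for-all-$p$ to quasi-isometry and then to $C^{2,\alpha}$ is precisely the delicate point, and the paper's route (Lemma \ref{l3.14}) is different: one applies the theorem of Chen--He (\cite{chenhe12}, Theorem 1.1), which for $\omega_{\psi}^n=e^{G}\omega_0^n$ with $G\in W^{1,q}$, $q>2n$, produces $\psi\in W^{3,q}$ with estimates; this gives $\omega_{\varphi}\in C^{\alpha}$ and, combined with the lower bound on $F$ from Lemma \ref{l2.3}, quasi-isometry with $\omega_0$. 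Only at that point is the coupled system uniformly elliptic, so that Schauder estimates applied to (\ref{eq-2}) and then to the differentiated Monge--Amp\`ere equation bootstrap to $C^{\infty}$. With this substitution your closedness argument closes; as written, the step from the Section 2 estimates to smooth convergence does not go through.
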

Now we are ready to prove Lemma \ref{l3.5}.
\begin{proof}
(of Lemma \ref{l3.5})
In view of Theorem \ref{t3.2}, we just need to verify for $0<t_0<1$, $K_{\omega_0,t_0}$ is proper with respect to $d_1$.
More specifically, since we know $K$-energy is bounded from below, we just need to observe $J_{\omega_0}$ is proper with respect to $d_1$.

To see that $J_{\omega_0}$ is proper, this follows from Proposition 22 in \cite{CoSz}, which says  that for some $\delta>0$ and some $C>0$, one has
$$
J_{\omega_0}(\varphi)\geq\delta J(\varphi)-C,\textrm{ for any $\varphi\in\mathcal{H}_0$.}
$$
Here $J$ is Aubin's $J$-functional, defined as
$$
J(\varphi)=\int_M\varphi(\omega_0^n-\omega_{\varphi}^n).
$$
It is elementary to show that $J(\varphi)\geq \frac{1}{C'}d_1(0,\varphi)-C'$ for $\varphi\in\mathcal{H}_0$(c.f. \cite{Dar-Rub-17}, Proposition 5.5). Hence we see that $J_{\omega_0}$ is proper with respect to $d_1$.
\end{proof}
Hence to get existence of cscK, the only remaining issue is to understand what happens  as $t\rightarrow1$.
We will handle this difficulty now. Throughout the rest of this section, we assume the $K$-energy is proper with respect to $d_{1,G}$, in the sense defined by Definition \ref{d3.1}.

Let $t_i<1$, and $t_i$ monotonically increase to 1.
Denote $\tilde{\varphi}_i\in\mathcal{H}_0$ to be solutions to (\ref{3.5}) with $t=t_i$. They exist due to Lemma \ref{l3.5}.
First we show that for the sequence $\tilde{\varphi}_i$, the $K$-energy is uniformly bounded from above.
\begin{lem}\label{l3.6}
Let $\tilde{\varphi}_i$ be as in previous paragraph, then we have
\begin{equation}\label{3.10n}
K_{\omega_0,t_i}(\tilde{\varphi}_i)=\inf_{\mathcal{H}}K_{\omega_0,t_i}(\varphi)\rightarrow\inf_{\mathcal{H}}K(\varphi),\textrm{ as $t_i\rightarrow1$.}
\end{equation}
Also
\begin{equation}\label{3.11n}
K(\tilde{\varphi}_i)\rightarrow\inf_{\mathcal{H}}K(\varphi), \textrm{ as $t_i\rightarrow1$.}
\end{equation}
\end{lem}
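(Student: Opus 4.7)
The plan is to prove (\ref{3.10n}) and (\ref{3.11n}) by a two-sided sandwich argument, using the minimizing property of $\tilde\varphi_i$ together with uniform lower bounds on $K$ and on $J_{\omega_0}$. Both lower bounds are already available in the paper: $K$ is bounded below by the standing hypothesis (part (2) of Definition \ref{d3.1}), and $J_{\omega_0}$ is bounded below on $\mathcal{H}_0$ since, as recalled in the proof of Lemma \ref{l3.5}, the Collins--Sz\'ekelyhidi bound $J_{\omega_0}\geq \delta J-C$ combined with nonnegativity of Aubin's $J$ gives $J_{\omega_0}\geq -C_0$ on $\mathcal{H}_0$.

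First I would establish the upper bound in (\ref{3.10n}). Since $\tilde\varphi_i$ is a minimizer of $K_{\omega_0,t_i}$ over $\mathcal{H}$, for any fixed test potential $\psi\in\mathcal{H}$,
\[
K_{\omega_0,t_i}(\tilde\varphi_i)\leq K_{\omega_0,t_i}(\psi)=t_iK(\psi)+(1-t_i)J_{\omega_0}(\psi).
\]
Letting $i\to\infty$ so that $t_i\to 1$ yields $\limsup_i K_{\omega_0,t_i}(\tilde\varphi_i)\leq K(\psi)$, and taking the infimum over $\psi\in\mathcal{H}$ gives $\limsup_i K_{\omega_0,t_i}(\tilde\varphi_i)\leq \inf_{\mathcal{H}}K$. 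For the matching lower bound, set $m:=\inf_{\mathcal{H}}K$ (finite by hypothesis). Then
\[
K_{\omega_0,t_i}(\tilde\varphi_i)=t_iK(\tilde\varphi_i)+(1-t_i)J_{\omega_0}(\tilde\varphi_i)\geq t_im-(1-t_i)C_0,
\]
so $\liminf_i K_{\omega_0,t_i}(\tilde\varphi_i)\geq m$. The two bounds together prove (\ref{3.10n}).

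To deduce (\ref{3.11n}), I would rearrange the displayed inequality above to obtain
\[
t_iK(\tilde\varphi_i)\leq K_{\omega_0,t_i}(\tilde\varphi_i)+(1-t_i)C_0.
\]
Since $t_i\to 1$ and the right-hand side tends to $m$ by (\ref{3.10n}), we get $\limsup_i K(\tilde\varphi_i)\leq m$. The reverse inequality $K(\tilde\varphi_i)\geq m$ is immediate from the definition of $m$, so $K(\tilde\varphi_i)\to m$, as required.

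The argument is essentially soft, and I do not expect a serious obstacle; the only structural ingredients are the minimizing property of $\tilde\varphi_i$ for $K_{\omega_0,t_i}$ (asserted just after (\ref{3.9})) and the uniform lower bound of $J_{\omega_0}$ on $\mathcal{H}_0$. The properness hypothesis is used here only indirectly, via the lower bound on $K$; the stronger $d_{1,G}$-properness will enter later when one needs compactness of the sequence $\{\tilde\varphi_i\}$ modulo $G$.
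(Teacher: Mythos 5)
Your proof is correct and follows essentially the same sandwich argument as the paper: the upper bound comes from testing the minimizing property of $\tilde\varphi_i$ against a near-optimal fixed potential, and the lower bound from a uniform lower bound on $J_{\omega_0}(\tilde\varphi_i)$, with (\ref{3.11n}) then obtained by the same rearrangement. The only (immaterial) difference is that the paper gets the lower bound on $J_{\omega_0}$ by noting that $\varphi=0$ solves $tr_\varphi\omega_0=n$ and hence minimizes $J_{\omega_0}$, so $J_{\omega_0}(\tilde\varphi_i)\geq J_{\omega_0}(0)$, whereas you invoke the Collins--Sz\'ekelyhidi coercivity estimate together with nonnegativity of Aubin's functional; both are valid.
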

\begin{proof}
That $K_{\omega_0,t_i}(\tilde{\varphi}_i)=\inf_{\mathcal{H}}K_{\omega_0,t_i}(\varphi)$ follows from the convexity of the twisted $K$-energy and has been proved in Corollary 4.5 of our second paper, \cite{cc2}.
By the second part of Definition \ref{d3.1}, we know that $\inf_{\mathcal{H}}K(\varphi)>-\infty$.
On the other hand, let $\varphi^{\eps}\in\mathcal{H}$ be such that $K(\varphi^{\eps})\leq \inf_{\mathcal{H}}K(\varphi)+\eps$, and we know that 
\begin{equation}\label{8.6}
\lim\sup_{i\rightarrow\infty}K_{\omega_0,t_i}(\tilde{\varphi}_i)\leq\lim\sup_{i\rightarrow\infty}K_{\omega_0,t_i}(\varphi^{\eps})= K(\varphi^{\eps})\leq\inf_{\mathcal{H}}K(\varphi)+\eps.
\end{equation}
On the other hand, we also know that
\begin{equation}
K_{\omega_0,t_i}(\tilde{\varphi}_i)=t_iK(\tilde{\varphi}_i)+(1-t_i)J_{\omega_0}(\tilde{\varphi}_i)\geq t_i\inf_{\mathcal{H}}K(\varphi)+(1-t_i)J_{\omega_0}(0).
\end{equation}
In the last inequality above, we used the fact that $0$ is the solution to $tr_{\varphi}\omega_0=n$, therefore a minimizer of $J_{\omega_0}$. 
Hence we have 
\begin{equation}\label{8.8}
\lim\inf_{t_i\rightarrow1}K_{\chi,t_i}(\tilde{\varphi}_i)\geq\inf_{\mathcal{H}}K(\varphi).
\end{equation}
From (\ref{8.6}) and (\ref{8.8}), (\ref{3.10n}) follows.
To see (\ref{3.11n}), we observe for $t_i$ sufficiently close to 1, we have
$$
\inf_{\mathcal{H}}K(\varphi)+\eps\geq t_iK(\tilde{\varphi}_i)+(1-t_i)J_{\omega_0}(\tilde{\varphi}_i)\geq t_iK(\tilde{\varphi}_i)+(1-t_i)J_{\omega_0}(0).
$$
The first inequality follows from (\ref{3.10n}). Hence we have
\begin{equation}
\lim\sup_{t_i\rightarrow1}K(\tilde{\varphi}_i)\leq\lim_{t_i\rightarrow1}\big(\frac{1}{t_i}(\inf_{\mathcal{H}}K(\varphi)+\eps)-\frac{1-t_i}{t_i}J_{\omega_0}(0)\big)\leq \inf_{\mathcal{H}}K(\varphi)+\eps.
\end{equation}
From this (\ref{3.11n}) follows.
\end{proof}
As an immediate consequence of Lemma \ref{l3.6} and the properness assumption of $K$-energy, we deduce
\begin{cor}\label{c3.8new}
Let $\tilde{\varphi}_i$ be as in previous lemma, we have
$$
\sup_id_{1,G}(0,\tilde{\varphi}_i)<\infty.
$$
\end{cor}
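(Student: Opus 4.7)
The plan is to argue by contradiction using the contrapositive formulation of properness from Definition \ref{d3.1}. Suppose the conclusion fails, so that after passing to a subsequence we have $d_{1,G}(0,\tilde{\varphi}_i)\rightarrow\infty$ as $i\rightarrow\infty$. Since $K$-energy is assumed proper with respect to $d_{1,G}$, condition (1) of Definition \ref{d3.1} would then force $K(\tilde{\varphi}_i)\rightarrow+\infty$.

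However, this contradicts Lemma \ref{l3.6}. Indeed, (\ref{3.11n}) in that lemma gives
$$
K(\tilde{\varphi}_i)\rightarrow\inf_{\mathcal{H}}K(\varphi),
$$
and the infimum on the right is finite because condition (2) of Definition \ref{d3.1} asserts that $K$-energy is bounded from below on $\mathcal{H}$. In particular the sequence $\{K(\tilde{\varphi}_i)\}$ is bounded above, contradicting divergence. Hence the assumption $d_{1,G}(0,\tilde{\varphi}_i)\rightarrow\infty$ is untenable and the desired uniform bound $\sup_i d_{1,G}(0,\tilde{\varphi}_i)<\infty$ follows.

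There is essentially no obstacle here: the corollary is a direct contrapositive of the properness definition combined with the upper bound on $K(\tilde{\varphi}_i)$ furnished by Lemma \ref{l3.6}. The only point worth flagging is that one must invoke both clauses of Definition \ref{d3.1}, not just clause (1): clause (2) is what guarantees $\inf_{\mathcal{H}}K>-\infty$, which is needed to conclude that the limit in (\ref{3.11n}) is a \emph{finite} number rather than $-\infty$, and hence that $\{K(\tilde{\varphi}_i)\}$ is actually bounded above.
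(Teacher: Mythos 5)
Your proof is correct and is exactly the argument the paper intends: the corollary is stated as an "immediate consequence" of Lemma \ref{l3.6} and the properness assumption, i.e.\ the contrapositive of clause (1) of Definition \ref{d1.1} applied to the sequence $K(\tilde{\varphi}_i)\rightarrow\inf_{\mathcal{H}}K<+\infty$. Your closing remark about clause (2) is a fair observation, though strictly speaking the contradiction only needs $K(\tilde{\varphi}_i)\not\rightarrow+\infty$, which already follows from $\inf_{\mathcal{H}}K\leq K(0)<+\infty$.
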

The following proposition is the key technical result from which Theorem \ref{t3.1} immediately follows.
\begin{prop}\label{l3.17}
Consider the continuity path (\ref{3.5}).
Suppose for some sequence $t_i\nearrow 1$, there exists a solution $\tilde{\varphi}_i$ to (\ref{3.5}) with $t=t_i$ with $\tilde{\varphi}_i\in\mathcal{H}_0^1$ and $\sup_id_{1,G}(0,\tilde{\varphi}_i)<\infty$.
Let $\varphi_i\in\mathcal{H}_0^1$ be in the same $G$-orbit as $\tilde{\varphi}_i$ such that $\sup_id_1(0,\varphi_i)<\infty$. 
Suppose also that $K$-energy is $G$-invariant, then $\{\varphi_i\}_i$ contains a subsequence which converges in $C^{1,\alpha}$ (for any $0<\alpha<1$) to a smooth cscK potential.
\end{prop}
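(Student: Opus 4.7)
The plan is to transfer the twisted cscK equation from $\tilde{\varphi}_i$ to $\varphi_i$ via the automorphism $\sigma_i\in G$ so that it becomes an instance of the scalar curvature type equation analyzed in Section 2, and then to apply Theorem \ref{t2.3} to obtain compactness. Writing $\sigma_i^*\omega_0=\omega_0+\sqrt{-1}\partial\bar{\partial}h_{\sigma_i}$ with $\sup_M h_{\sigma_i}=0$ (so $h_{\sigma_i}\le 0$ is $\omega_0$-psh) and pulling back the equation $R_{\tilde{\varphi}_i}=tr_{\tilde{\varphi}_i}\!\bigl(\tfrac{1-t_i}{t_i}\omega_0\bigr)+\underline{R}-\tfrac{1-t_i}{t_i}n$ by $\sigma_i$, invariance of the scalar curvature gives
$$R_{\varphi_i}=tr_{\varphi_i}\beta_i+R_{(i)},\qquad \beta_i=\tfrac{1-t_i}{t_i}\sigma_i^*\omega_0=\tfrac{1-t_i}{t_i}\omega_0+\sqrt{-1}\partial\bar{\partial}f_i,$$
with $f_i=\tfrac{1-t_i}{t_i}h_{\sigma_i}$, $\sup_M f_i=0$, $\beta_i\ge 0$, and $R_{(i)}=\underline{R}-\tfrac{1-t_i}{t_i}n$. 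This is precisely of the form studied in Section 2, with $\beta_{i,0}=\tfrac{1-t_i}{t_i}\omega_0$.

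Next I verify the hypotheses of Theorem \ref{t2.3} uniformly in $i$. The quantities $\|R_{(i)}\|_0$ and $\max_M|\beta_{i,0}|_g$ are trivially bounded as $t_i\to 1$. For the entropy $\int_M\log(\omega_{\varphi_i}^n/\omega_0^n)\omega_{\varphi_i}^n$, the $G$-invariance of $K$ (Lemma \ref{l3.3new}) and Lemma \ref{l3.6} give $K(\varphi_i)=K(\tilde{\varphi}_i)\to\inf_{\mathcal{H}}K$, hence $K(\varphi_i)$ is uniformly bounded above; the hypothesis $\sup_i d_1(0,\varphi_i)<\infty$ controls the $J_{-Ric}$ piece of $K(\varphi_i)$, so the entropy is uniformly bounded above. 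The decisive step is the uniform bound on $\int_M e^{-p_0 f_i}dvol_g$ for some fixed $p_0>\kappa_n$. Since $h_{\sigma_i}$ is $\omega_0$-psh with $\sup h_{\sigma_i}=0$, Tian's $\alpha$-invariant gives constants $\alpha>0$ and $C$, independent of $i$, with $\int_M e^{-\alpha h_{\sigma_i}}dvol_g\le C$. Because $\tfrac{1-t_i}{t_i}\to 0$, for any fixed $p_0$ we have $p_0\tfrac{1-t_i}{t_i}\le\alpha$ for all sufficiently large $i$, and then (using $h_{\sigma_i}\le 0$)
$$\int_M e^{-p_0 f_i}\,dvol_g=\int_M e^{-p_0\tfrac{1-t_i}{t_i}h_{\sigma_i}}\,dvol_g\le\int_M e^{-\alpha h_{\sigma_i}}\,dvol_g\le C.$$
The remaining finitely many indices cause no issue.

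Theorem \ref{t2.3} now yields $\|n+\Delta\varphi_i\|_{L^{p'}(\omega_0^n)}\le C$ for any $p'<p_0$, and by choosing $p_0$ large we may take $p'$ arbitrarily large. Combined with the $L^\infty$ bound on $\varphi_i$ from Corollary \ref{c2.2} and standard elliptic $L^{p'}$-theory, $\varphi_i$ is uniformly bounded in $W^{2,p'}$, hence $C^{1,\alpha}$-precompact for every $\alpha\in(0,1)$. After passing to a subsequence, $\varphi_{i_k}\to\varphi_\infty$ in $C^{1,\alpha}$, and the limit is a bounded $\omega_0$-psh function in $W^{2,p'}$ with $I(\varphi_\infty)=0$ by continuity; in particular $\varphi_\infty\in\mathcal{E}_0^1$.

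To upgrade $\varphi_\infty$ to a smooth cscK potential, observe that $K(\varphi_{i_k})\to\inf_{\mathcal{H}}K=\inf_{\mathcal{E}^1}K$, and $K$ is lower semicontinuous along the $C^{1,\alpha}$-convergent sequence $\{\varphi_{i_k}\}$ (the entropy is lower semicontinuous under the weak convergence of $\omega_{\varphi_{i_k}}^n$, and the $J_{-Ric}$ term passes to the limit). Therefore $K(\varphi_\infty)=\inf_{\mathcal{E}^1}K$, so $\varphi_\infty$ is a weak minimizer of the $K$-energy on $\mathcal{E}^1$, and Theorem 1.5 of \cite{cc2} implies that $\varphi_\infty$ is smooth and cscK. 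The main technical obstacle is the uniform integrability of $e^{-p_0 f_i}$: a priori the automorphisms $\sigma_i$ could escape to infinity in $G$ and $h_{\sigma_i}$ become arbitrarily singular, but the vanishing factor $(1-t_i)/t_i$ in $f_i$ precisely compensates; this is the reason the new estimates of Section 2 for singular right-hand sides are indispensable here, and also why the argument is only effective as $t_i\to 1$.
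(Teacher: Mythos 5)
Your proof is correct, and its first two thirds coincide with the paper's: you pull the twisted equation back by $\sigma_i$ exactly as in Lemma \ref{l3.10nn}, get the uniform entropy bound from $G$-invariance of $K$ plus the $d_1$-bound on $\varphi_i$, control $\int_M e^{-p_0f_i}$ uniformly via Tian's $\alpha$-invariant and the vanishing factor $\tfrac{1-t_i}{t_i}$ (this is the paper's Lemma \ref{l3.10}), and then apply Theorem \ref{t2.3} to obtain $W^{2,p'}$ bounds and $C^{1,\alpha}$ precompactness. Where you genuinely diverge is in identifying the limit. The paper passes to the limit in the weak formulation of the PDE itself: it proves $f_i\to0$ in $L^p$ (Lemma \ref{l3.13}, which requires the energy identity forcing $(1-t_i)d_1(0,\tilde\varphi_i)\to0$), establishes $\omega_{\varphi_i}^k\rightharpoonup\omega_{\varphi_*}^k$ weakly in $L^p$, shows $\varphi_*$ solves a weak cscK equation (Proposition \ref{p3.12}), and then bootstraps regularity directly via the quasi-isometry Lemma \ref{l3.14} and Schauder estimates. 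You instead show $K(\varphi_\infty)=\inf_{\mathcal{E}^1}K$ by lower semicontinuity and quote the regularity theorem for weak minimizers of the $K$-energy (Theorem 1.5 of \cite{cc2}). Your route is shorter --- it dispenses with Lemma \ref{l3.13} and the weak-PDE limit entirely --- and it is legitimate, since the minimizer-regularity theorem is an independent prior result; but it gives up exactly what the authors advertise as the point of Section 3, namely a proof of the properness theorem that stays within the continuity-path framework rather than folding back into the ``combining known results'' route they explicitly set aside in the introduction. One small caveat: you invoke Lemma \ref{l3.6} for $K(\varphi_i)\to\inf_{\mathcal H}K$, which is stated under the properness hypothesis, whereas Proposition \ref{l3.17} does not literally assume $K$ is bounded below; however, the half of Lemma \ref{l3.6} you actually need (the upper bound $\limsup_iK(\tilde\varphi_i)\le\inf_{\mathcal H}K$, obtained by testing the twisted energy against fixed smooth potentials) does not use the lower bound, so this is harmless in both places where the proposition is applied.
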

Let $\sigma_i\in G$ be such that 
\begin{equation}
\sup_id_1(0,\sigma_i.\tilde{\varphi}_i)<\infty.
\end{equation}
The existence of such a sequence $\sigma_i$ follows from Corollary \ref{c3.8new}. 
Denote $\varphi_i=\sigma_i.\tilde{\varphi}_i$. Next we briefly explain how to obtain above proposition.

First we write down the equation satisfied by the sequence $\varphi_i$, and they turn out to satisfy an equation in the form studied in section 2, as shown by Lemma \ref{l3.10nn} below.
Moreover, the integrability exponent $p_0$ improves to infinity as $t_i$ approaches $1$. Hence the estimates in section 2 allow us to get uniform bounds of $\varphi_i$ in $W^{2,p}$ for any $p<\infty$.
Hence we can use compactness to take limit and we show the limit solves a weak form of cscK equation, as shown in Proposition \ref{p3.12} below.
Finally one argues that this weak solution of cscK equation is actually smooth.

As a preliminary step, we show the sequence $\{\varphi_i\}$ has uniformly bounded entropy.
\begin{lem}
Denote $\varphi_i=\sigma_i.\tilde{\varphi}_i$, then we have
$$
\sup_i\int_M\log\bigg(\frac{\omega_{\varphi_i}^n}{\omega_0^n}\bigg)\omega_{\varphi_i}^n<\infty.
$$
\end{lem}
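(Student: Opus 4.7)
The plan is to read the entropy off of the $K$-energy formula and use what we already know about $\varphi_i$. Recall from (\ref{3.4}) that
$$
\int_M \log\!\Big(\tfrac{\omega_{\varphi}^n}{\omega_0^n}\Big)\frac{\omega_\varphi^n}{n!} \;=\; K(\varphi) - J_{-Ric}(\varphi),
$$
so the claim reduces to showing that both $K(\varphi_i)$ and $J_{-Ric}(\varphi_i)$ are uniformly bounded.

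First, I would control $K(\varphi_i)$. By Lemma \ref{l3.3new}, the properness assumption forces the $K$-energy to be invariant under the action of $G$, hence $K(\varphi_i)=K(\sigma_i.\tilde\varphi_i)=K(\tilde\varphi_i)$. But Lemma \ref{l3.6} already tells us that $K(\tilde\varphi_i)\to\inf_{\mathcal H} K\in\mathbb R$ as $t_i\nearrow 1$, so in particular $\{K(\varphi_i)\}$ is bounded.

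Next, I would bound $J_{-Ric}(\varphi_i)$. Here I would invoke the fact (a standard $d_1$-Lipschitz estimate for $J_\chi$) that for any fixed smooth closed real $(1,1)$-form $\chi$ on $M$ there is a constant $C=C(\chi,\omega_0)$ such that
$$
|J_{\chi}(\varphi)-J_{\chi}(\psi)| \;\le\; C\, d_1(\varphi,\psi) \qquad \forall\,\varphi,\psi\in\mathcal H_0,
$$
which follows directly from the integral representation of $J_\chi$ in (\ref{3.5n}) (one differentiates along the $d_1$-geodesic, uses $|tr_\varphi\chi - \underline\chi|\le C$ after bounding $\chi$ by a multiple of $\omega_\varphi+\omega_0$, and integrates; alternatively one may quote \cite{Darvas1402}). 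Applying this with $\chi=-Ric(\omega_0)$ and using $\sup_i d_1(0,\varphi_i)<\infty$ from the construction of $\varphi_i$ (which is guaranteed by Corollary \ref{c3.8new}), we conclude $\sup_i |J_{-Ric}(\varphi_i)|<\infty$.

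Combining the two bounds,
$$
\int_M \log\!\Big(\tfrac{\omega_{\varphi_i}^n}{\omega_0^n}\Big)\omega_{\varphi_i}^n \;=\; n!\bigl(K(\varphi_i) - J_{-Ric}(\varphi_i)\bigr)
$$
is uniformly bounded in $i$, as desired. The only non-routine ingredient is the $d_1$-Lipschitz control of $J_{-Ric}$; everything else is a direct appeal to what has already been established in this section.
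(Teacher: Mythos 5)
Your proposal is correct and follows essentially the same route as the paper: both decompose the entropy via the $K$-energy formula (\ref{3.4}), bound $K(\varphi_i)$ by $G$-invariance (Lemma \ref{l3.3new}) together with Lemma \ref{l3.6}, and bound $J_{-Ric}(\varphi_i)$ by a $d_1$-Lipschitz-type estimate (the paper quotes Lemma 4.4 of \cite{cc2}, which gives exactly $|J_{-Ric}(\varphi_i)|\leq C_n|Ric|_{\omega_0}d_1(0,\varphi_i)$) combined with $\sup_i d_1(0,\varphi_i)<\infty$. No gaps.
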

\begin{proof}
First due to the $G$-invariance of $K$-energy observed in Lemma \ref{l3.3new}, we have
\begin{equation}\label{3.17}
\sup_iK(\varphi_i)=\sup_iK(\tilde{\varphi}_i)<\infty.
\end{equation}
On the other hand, we know from Lemma 4,4 of paper 2 that 
\begin{equation}\label{3.18}
\sup_i|J_{-Ric}(\varphi_i)|\leq\sup_iC_n|Ric|_{\omega_0}d_1(0,\varphi_i)<\infty.
\end{equation}
From (\ref{3.17}), (\ref{3.18}), and recall the formula for $K$-energy in (\ref{3.4}), the desired conclusion follows.
\end{proof}
Next we derive the equation satisfied by the sequence $\varphi_i$.
We have the following result:
\begin{lem}\label{l3.10nn}
Let $\theta_i$ be such that $\sigma_i^*\omega_0=\omega_{\theta_i}$, with $\sup_M\theta_i=0$.
Then $\varphi_i$ satisfies the following equations:
\begin{align}
\label{eq-n-1}
&\det(g_{\alpha\bar{\beta}}+(\varphi_i)_{\alpha\bar{\beta}})=e^{F_i}\det g_{\alpha\bar{\beta}},\\
\label{eq-n-2}
&\Delta_{\varphi_i}F_i=-\big(\underline{R}-\frac{1-t_i}{t_i}n\big)+tr_{\varphi_i}\big(Ric(\omega_0)-\frac{1-t_i}{t_i}\omega_{\theta_i}\big).
\end{align}
\end{lem}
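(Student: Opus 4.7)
The plan is to reduce the claim to the invariance of scalar curvature and trace operations under biholomorphisms of $M$, and then package the result in the Monge--Amp\`ere plus Laplacian form familiar from Section~2.

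First I would record the two defining identities. By the definition of the $G$-action, $\omega_{\varphi_i}=\sigma_i^\ast\omega_{\tilde\varphi_i}$, and by the choice of $\theta_i$, $\omega_{\theta_i}=\sigma_i^\ast\omega_0$ (the normalization $\sup_M\theta_i=0$ just fixes the additive constant and plays no role here). Since $\sigma_i$ is a holomorphic isometry in the sense of biholomorphism, for any K\"ahler form $\omega$ one has $R_{\sigma_i^\ast\omega}=R_\omega\circ\sigma_i$, and for any real $(1,1)$-form $\chi$ one has $tr_{\sigma_i^\ast\omega}(\sigma_i^\ast\chi)=(tr_\omega\chi)\circ\sigma_i$. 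These are the only geometric facts that are needed.

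Next I would pull back the twisted equation satisfied by $\tilde\varphi_i$. Since $\tilde\varphi_i$ solves
$$
t_i(R_{\tilde\varphi_i}-\underline R)=(1-t_i)(tr_{\tilde\varphi_i}\omega_0-n),
$$
precomposing both sides with $\sigma_i$ and applying the two identities above with $\omega=\omega_{\tilde\varphi_i}$ and $\chi=\omega_0$ gives
$$
t_i(R_{\varphi_i}-\underline R)=(1-t_i)(tr_{\varphi_i}\omega_{\theta_i}-n).
$$
This is the twisted cscK equation for $\varphi_i$ with the twisting form $\omega_0$ replaced by $\omega_{\theta_i}$, which is the only substantive change caused by the transform $\sigma_i$.

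Finally, I would convert this into the form $(\ref{eq-n-1})$, $(\ref{eq-n-2})$. Define $F_i$ by $(\ref{eq-n-1})$, which is just the Monge--Amp\`ere equation that determines the volume ratio. Taking $\sqrt{-1}\partial\bar\partial\log$ of $(\ref{eq-n-1})$ yields the standard identity $Ric(\omega_{\varphi_i})=Ric(\omega_0)-\sqrt{-1}\partial\bar\partial F_i$, and tracing against $\omega_{\varphi_i}$ gives
$$
R_{\varphi_i}=tr_{\varphi_i}Ric(\omega_0)-\Delta_{\varphi_i}F_i.
$$
Substituting the expression for $R_{\varphi_i}$ obtained above and rearranging yields exactly $(\ref{eq-n-2})$.

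There is no genuine obstacle in this argument; the only thing to be careful about is bookkeeping the pullbacks and not confusing the twisting form $\omega_{\theta_i}$ with the background form $\omega_0$ that appears in the Ricci term (which remains $Ric(\omega_0)$ because the Monge--Amp\`ere equation $(\ref{eq-n-1})$ is written with respect to the fixed reference $\omega_0$). The upshot is that $\varphi_i$ satisfies a system of the type analyzed in Section~2 with $\beta_0=\frac{1-t_i}{t_i}\omega_{\theta_i}$ and $f\equiv 0$ when $\omega_{\theta_i}$ is smooth, which is the setup needed to feed into the estimates of Theorem~\ref{t2.3}.
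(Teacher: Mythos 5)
Your proof is correct and takes essentially the same route as the paper: both pull back the twisted equation by $\sigma_i$ and track how the Ricci/twisting terms transform, the only difference being that you work at the level of the scalar-curvature formulation while the paper manipulates $\tilde F_i$ and the identity $\sqrt{-1}\partial\bar\partial\log\big((\sigma_i^*\omega_0)^n/\omega_0^n\big)=Ric(\omega_0)-Ric(\sigma_i^*\omega_0)$. One caveat on your closing remark: for the subsequent application of Theorem \ref{t2.3} one must take $\beta_0=\frac{1-t_i}{t_i}\omega_0$ and $f_i=\frac{1-t_i}{t_i}\theta_i$ (not $f\equiv 0$), since $\omega_{\theta_i}$ is smooth for each $i$ but not uniformly bounded in $i$, and the whole point of the singular right-hand-side estimates is to absorb $\theta_i$ into the potential $f$ controlled via the $\alpha$-invariant.
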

\begin{proof}
Define $e^{\tilde{F}_i}=\frac{\omega_{\tilde{\varphi}_i}^n}{\omega_0^n}$. We have the following calculations:
\begin{equation}
\sigma_i^*(\omega_{\tilde{\varphi}_i}^n)=(\sigma_i^*\omega_{\tilde{\varphi}_i})^n=\omega_{\varphi_i}^n.
\end{equation}
On the other hand,
\begin{equation}
\sigma_i^*(e^{\tilde{F}_i}\omega_0^n)=e^{\tilde{F}_i\circ\sigma_i}(\sigma_i^*\omega_0)^n.
\end{equation}
So
\begin{equation}
\frac{\omega_{\varphi_i}^n}{(\sigma_i^*\omega_0)^n}=e^{\tilde{F}_i\circ\sigma_i}.
\end{equation}
Hence if we define $F_i$ to be $e^{F_i}=\frac{\omega_{\varphi_i}^n}{\omega_0^n}$, so as to make sure (\ref{eq-n-1}) always holds, we have
\begin{equation}\label{3.24}
F_i=\tilde{F}_i\circ\sigma_i+\log\bigg(\frac{(\sigma_i^*\omega_0)^n}{\omega_0^n}\bigg).
\end{equation}
To see (\ref{eq-n-2}), we go back to (\ref{eq-2}), and note that (\ref{eq-2}) is equivalent to:
\begin{equation}
\sqrt{-1}\partial\bar{\partial}\tilde{F}_i\wedge\frac{\omega_{\tilde{\varphi}_i}^{n-1}}{(n-1)!}=-\big(\underline{R}-\frac{1-t_i}{t_i}n\big)\frac{\omega_{\tilde{\varphi}_i}^n}{n!}+\big(Ric(\omega_0)-\frac{1-t_i}{t_i}\omega_0\big)\wedge\frac{\omega_{\tilde{\varphi}_i}^{n-1}}{(n-1)!}.
\end{equation}
Pulling back using $\sigma_i$, we obain
\begin{equation}
\begin{split}
\sqrt{-1}\partial\bar{\partial}(\tilde{F}_i\circ\sigma_i)\wedge\frac{\omega_{\varphi_i}^{n-1}}{(n-1)!}&=-\big(\underline{R}-\frac{1-t_i}{t_i}n\big)\frac{\omega_{\varphi_i}^n}{n!}\\
&+\big(Ric(\sigma_i^*\omega_0)-\frac{1-t_i}{t_i}\sigma_i^*\omega_0\big)\wedge\frac{\omega_{\varphi_i}^{n-1}}{(n-1)!}.
\end{split}
\end{equation}
Using (\ref{3.24}) and recall that 
$$
\sqrt{-1}\partial\bar{\partial}\log\bigg(\frac{(\sigma_i^*\omega_0)^n}{\omega_0^n}\bigg)=Ric(\omega_0)-Ric(\sigma_i^*\omega_0), 
$$
we conclude
\begin{equation}
\begin{split}
\big(\sqrt{-1}\partial\bar{\partial}F_i+Ric&(\sigma_i^*\omega_0)-Ric(\omega_0)\big)\wedge\frac{\omega_{\varphi_i}^{n-1}}{(n-1)!}=-\big(\underline{R}-\frac{1-t_i}{t_i}n\big)\frac{\omega_{\varphi_i}^n}{n!}\\
&+\big(Ric(\sigma_i^*\omega_0)-\frac{1-t_i}{t_i}\sigma_i^*\omega_0\big)\wedge\frac{\omega_{\varphi_i}^{n-1}}{(n-1)!}.
\end{split}
\end{equation}
This is equivalent to (\ref{eq-n-2}).
\end{proof}
Next we would like to use the result obtained in the last section to study the regularity of $\varphi_i$.
Denote $R_i=\underline{R}-\frac{1-t_i}{t_i}n$, $\beta_i=\frac{1-t_i}{t_i}\omega_{\theta_i}$, $(\beta_0)_i=\frac{1-t_i}{t_i}\omega_0$, and $f_i=\frac{1-t_i}{t_i}\theta_i$.
Then we have $\beta_i\geq0$, and $\beta_i=(\beta_0)_i+\sqrt{-1}\partial\bar{\partial}f_i$.
Here we prove a property about the $f_i$ which will be crucial for our proof.
\begin{lem}\label{l3.10}
There exists a constant $C_{26}$, which depends only on the background metric $\omega_0$, such that for any $p>1$, there exists $\eps_p>0$, depending only on $p$ and the background metric $\omega_0$, such that for any $t_i\in(1-\eps_p,1)$, one has $e^{-f_i}\in L^p(\omega_0^n)$ with $||e^{-f_i}||_{L^p(\omega_0^n)}\leq C_{26}$.
\end{lem}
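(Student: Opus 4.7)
The plan is to recognize $f_i = \frac{1-t_i}{t_i}\theta_i$ as a small multiple of an $\omega_0$-psh function normalized by $\sup_M \theta_i = 0$, and then apply Tian's $\alpha$-invariant estimate (Proposition 2.1 of \cite{tian87}, quoted already in the proof of Lemma \ref{l2.1}) to bound the $L^p$ norm of $e^{-f_i}$.

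First I would observe that $\theta_i$ is an $\omega_0$-psh function: indeed $\omega_{\theta_i} = \sigma_i^*\omega_0 \geq 0$ and $\sup_M \theta_i = 0$ by the normalization. By Tian's theorem, there exist constants $\alpha > 0$ and $C_{26}$ depending only on $(M,\omega_0)$ such that
\[
\int_M e^{-\alpha \theta_i}\,\omega_0^n \leq C_{26}^{\,p}
\]
(say, after absorbing a harmless constant). Next I would compute
\[
\int_M e^{-p f_i}\,\omega_0^n = \int_M \exp\!\Bigl(-p\,\tfrac{1-t_i}{t_i}\,\theta_i\Bigr)\,\omega_0^n.
\]
Since $\theta_i \leq 0$, the integrand is a monotone function of the coefficient in front of $-\theta_i$. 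So as long as $p\,\tfrac{1-t_i}{t_i} \leq \alpha$, equivalently $t_i \geq \tfrac{p}{p+\alpha}$, we obtain $\int_M e^{-p f_i}\,\omega_0^n \leq C_{26}^{\,p}$, which gives exactly the claimed bound with $\eps_p = \tfrac{\alpha}{p+\alpha}$.

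Since the constants $\alpha$ and $C_{26}$ in Tian's inequality depend only on the background K\"ahler metric $\omega_0$, both the threshold $\eps_p$ (which only depends on $p$ and $\alpha$) and the uniform bound $C_{26}$ have the required dependence. There is essentially no obstacle here: the only step to verify is that $\theta_i$ is indeed $\omega_0$-psh with $\sup = 0$, which is immediate from the definition $\omega_{\theta_i} = \sigma_i^*\omega_0 \geq 0$ and the normalization imposed in the statement of the lemma. The whole content is a clean application of the $\alpha$-invariant, exploiting that the twisting factor $\frac{1-t_i}{t_i}$ shrinks to $0$ as $t_i \to 1$, so any prescribed integrability exponent can be achieved by taking $t_i$ close enough to $1$.
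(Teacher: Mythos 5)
Your proof is correct and follows essentially the same route as the paper: both hinge on Tian's $\alpha$-invariant bound $\int_M e^{-\alpha(u-\sup_M u)}\,dvol_g\leq C$ applied to the $\omega_0$-psh function $\theta_i$, with the threshold $p\frac{1-t_i}{t_i}\leq\alpha$ determining $\eps_p$. The only (immaterial) difference is the last step: you use the pointwise monotonicity $e^{-p\frac{1-t_i}{t_i}\theta_i}\leq e^{-\alpha\theta_i}$ coming from $\theta_i\leq0$, whereas the paper interpolates via H\"older's inequality between $\int_M e^{-\alpha\theta_i}$ and $vol(M)$; both yield the same uniform constant.
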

\begin{proof}
Since we know that $\omega_{\theta_i}=\omega_0+\sqrt{-1}\partial\bar{\partial}\theta_i\geq0$, with $\sup_M\theta_i=0$, hence by a result of Tian(c.f. \cite{tian87}, Proposition 2.1), we know that there exists $\alpha>0$, $C_{25.5}>0$, depending only on the background metric $\omega_0$, such that for any $u\in C^2(M)$, $\omega_0+\sqrt{-1}\partial\bar{\partial}u\geq0$, one has $\int_Me^{-\alpha(u-\sup_Mu)}dvol_g\leq C_{25.5}$.

Given $p>1$, suppose $t_i$ is sufficiently close to 1 such that $p\frac{1-t_i}{t_i}<\alpha$, then we have
\begin{equation}
\begin{split}
\int_Me^{-pf_i}dvol_g=\int_M&e^{-p\frac{1-t_i}{t_i}\theta_i}dvol_g\leq\big(\int_Me^{-\alpha \theta_i}dvol_g\big)^{p\frac{1-t_i}{\alpha t_i}}vol(M)^{1-\frac{p(1-t_i)}{\alpha t_i}}\\
&\leq C_{25.5}^{p\frac{1-t_i}{\alpha t_i}}vol(M)^{1-\frac{p(1-t_i)}{\alpha t_i}}\leq \max\big(C_{25.5},vol(M)\big):=C_{26}.
\end{split}
\end{equation}
\end{proof}
As an application of the estimate in Theorem \ref{t2.3}, we conclude the following uniform estimate for the sequence $\varphi_i$.
\begin{prop}
For any $p>1$, there exists a constant $C_{27}$, and $\eps'_p>0$, such that for any $t_i\in(1-\eps'_p,1)$, 
$$
||F_i+f_i||_{W^{1,2p}}\leq C_{27},\,\,\,||n+\Delta\varphi_i||_{L^p(\omega_0^n)}\leq C_{27}.
$$
In the above, $\eps_p'$ depends only on $p$ and background metric $\omega_0$, and $C_{27}$ depends on $p$, background metric $\omega_0$ and the uniform entropy bound $\sup_i\int_M\log\big(\frac{\omega_{\varphi_i}^n}{\omega_0^n}\big)\omega_{\varphi_i}^n$.
\end{prop}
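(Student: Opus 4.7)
The plan is to apply Theorem~\ref{t2.3} to the equations (\ref{eq-n-1})--(\ref{eq-n-2}) of Lemma~\ref{l3.10nn}, with the identifications $\beta \leftrightarrow \beta_i$, $\beta_0 \leftrightarrow (\beta_0)_i$, $f \leftrightarrow f_i$, $R \leftrightarrow R_i$. All the structural hypotheses demanded in Section~2 are immediate from the definitions stated just above the proposition: $\beta_i \geq 0$, $\beta_i = (\beta_0)_i + \sqrt{-1}\partial\bar{\partial} f_i$, and $\sup_M f_i = \tfrac{1-t_i}{t_i}\sup_M \theta_i = 0$ because $\sup_M \theta_i = 0$ and $\tfrac{1-t_i}{t_i} \geq 0$. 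Moreover $(\beta_0)_i = \tfrac{1-t_i}{t_i}\omega_0$ is smooth with $\max_M |(\beta_0)_i|_g \leq |\omega_0|_g$, and $\|R_i\|_0 = \bigl|\underline{R} - \tfrac{1-t_i}{t_i} n\bigr|$ is uniformly bounded for $t_i$ close to $1$; both tend to a finite limit as $t_i \to 1$.

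Given $p > 1$ as in the statement, I would choose $p_0 := \max(\kappa_n,\, p+1)$, so that $p_0 \geq \kappa_n$ and $p$ is bounded away from $p_0$, which is precisely what Theorem~\ref{t2.3} requires in order to produce the $W^{1,2p}$ estimate with a $p_0$-uniform constant. Lemma~\ref{l3.10} applied with this $p_0$ then furnishes an $\eps'_p > 0$, depending only on $p$ and $\omega_0$, such that for every $t_i \in (1-\eps'_p,\,1)$ one has the uniform bound
$$
\int_M e^{-p_0 f_i}\, dvol_g \;\leq\; C_{26}^{p_0}.
$$
Combined with the uniform entropy bound $\sup_i \int_M \log\bigl(\omega_{\varphi_i}^n/\omega_0^n\bigr)\,\omega_{\varphi_i}^n < \infty$ established just before this proposition, every quantity entering the constant $C_{25.1}$ of Theorem~\ref{t2.3} is controlled uniformly in $i$. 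Invoking Theorem~\ref{t2.3} with this choice of $p_0$ then yields a constant $C_{27}$, depending on $p$, $\omega_0$, and the uniform entropy bound, such that $\|F_i + f_i\|_{W^{1,2p}} \leq C_{27}$ and $\|n + \Delta \varphi_i\|_{L^p(\omega_0^n)} \leq C_{27}$ for all $t_i \in (1-\eps'_p,\,1)$.

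The argument is essentially bookkeeping: no new analytic estimate beyond those of Section~2 is required, and there is no real obstacle to overcome. The one point worth emphasizing is that although the right-hand side of (\ref{eq-n-2}) becomes singular in the limit $t_i \to 1$ (the form $\omega_{\theta_i}$ can degenerate and $\theta_i$ can blow down), the integrability exponent $p_0$ for $e^{-f_i}$ actually \emph{improves} as $t_i \to 1$, since $f_i = \tfrac{1-t_i}{t_i}\theta_i$ and Tian's $\alpha$-invariant bound used in Lemma~\ref{l3.10} gives $\|e^{-p_0 f_i}\|_{L^1(\omega_0^n)}$ a uniform bound for any prescribed $p_0$ once $t_i$ is sufficiently close to $1$. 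It is exactly this monotonicity that permits feeding arbitrarily large $p_0$ into Theorem~\ref{t2.3} and thereby recovering higher Sobolev control on the $\varphi_i$ in the limit.
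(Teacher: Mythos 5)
Your proposal is correct and follows the same route as the paper: identify the equation for $\varphi_i$ with the singular-right-hand-side setting of Section 2, use Lemma \ref{l3.10} to secure $e^{-f_i}\in L^{p_0}$ for $p_0\geq\max(\kappa_n,p+1)$ once $t_i$ is close enough to $1$, and then invoke Theorem \ref{t2.3} together with the uniform entropy bound. Your write-up is in fact somewhat more careful than the paper's two-line proof, e.g.\ in noting explicitly that $p_0$ must also exceed $p$ (not merely $\kappa_n$) and in checking the uniform bounds on $\|R_i\|_0$ and $\max_M|(\beta_0)_i|_g$.
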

\begin{proof}
We may assume that $\eps_p'$ is chosen so small such that for any $t_i\in(1-\eps_p',1)$, $e^{-f_i}\in L^q$ for some $q\geq\kappa_n$.
Such smallness depends only on $n$ and the $\alpha$-invariant of the background metric. The result then follows from Lemma \ref{l3.10} and Theorem \ref{t2.3}.
\end{proof}
With this preparation, we can pass to the limit.
Hence we may take a subsequence of $\varphi_i$(without relabeled), and a function $\varphi_*\in W^{2,p}$ for any $p<\infty$, and another function $F_*\in W^{1,p}$ for any $p<\infty$, such that 
\begin{align}
\label{conv-1}
&\varphi_i\rightarrow\varphi_*\textrm{ in $C^{1,\alpha}$ for any $0<\alpha<1$ and $\sqrt{-1}\partial\bar{\partial}\varphi_i\rightarrow\sqrt{-1}\partial\bar{\partial}\varphi_*$ weakly in $L^p$.}\\
\label{conv-2}
&F_i+f_i\rightarrow F_*\textrm{ in $C^{\alpha}$ for any $0<\alpha<1$ and $\nabla(F_i+f_i)\rightarrow\nabla F_*$ weakly in $L^p$.}
\end{align}
As a result of (\ref{conv-1}), we have
\begin{equation}\label{3.31}
\omega_{\varphi_i}^k\rightarrow\omega_{\varphi_*}^k,\textrm{ weakly in $L^p$ for any $1\leq k\leq n$ and $p<\infty$.}
\end{equation}
Here we provide an argument(more or less standard) for this weak convergence.
\begin{lem}
Suppose the convergence in (\ref{conv-1}) holds. Then for any $p<\infty$ and any $1\leq k\leq n$,
$$
\omega_{\varphi_i}^k\rightarrow\omega_{\varphi_*}^k\textrm{ weakly in $L^p$.}
$$
\end{lem}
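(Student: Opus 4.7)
The plan is to induct on $k$. The case $k=1$ is exactly hypothesis (\ref{conv-1}), since $\omega_{\varphi_i}^1=\omega_0+\sqrt{-1}\partial\bar{\partial}\varphi_i$. For the inductive step, assuming $\omega_{\varphi_i}^{k-1}\to\omega_{\varphi_*}^{k-1}$ weakly in $L^p$ for every $p<\infty$, I use the decomposition
\[
\omega_{\varphi_i}^k \;=\; \omega_0\wedge\omega_{\varphi_i}^{k-1}\;+\;\sqrt{-1}\partial\bar{\partial}\varphi_i\wedge\omega_{\varphi_i}^{k-1}.
\]
The first summand converges weakly in $L^p$ by the inductive hypothesis, since wedging with the fixed smooth form $\omega_0$ preserves weak convergence. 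The substance of the argument lies in the second summand.

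For the second summand, fix any smooth $(n-k,n-k)$ test form $\eta$ on $M$. Since $\varphi_i$ is smooth and $\omega_{\varphi_i}^{k-1}$ is closed, integration by parts on the closed manifold $M$ gives
\[
\int_M \eta\wedge\sqrt{-1}\partial\bar{\partial}\varphi_i\wedge\omega_{\varphi_i}^{k-1} \;=\; \int_M \varphi_i\,\sqrt{-1}\partial\bar{\partial}\eta\wedge\omega_{\varphi_i}^{k-1}.
\]
From the $C^{1,\alpha}$ convergence $\varphi_i\to\varphi_*$, the form $\varphi_i\,\sqrt{-1}\partial\bar{\partial}\eta$ converges uniformly, hence strongly in every $L^{p'}$, to $\varphi_*\,\sqrt{-1}\partial\bar{\partial}\eta$. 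Pairing this strong convergence with the weak $L^p$ convergence of $\omega_{\varphi_i}^{k-1}$, the right-hand side converges to $\int_M \varphi_*\,\sqrt{-1}\partial\bar{\partial}\eta\wedge\omega_{\varphi_*}^{k-1}$. Since $\varphi_*\in W^{2,p}$ for every $p<\infty$ and $\omega_{\varphi_*}^{k-1}$ is a closed $L^q$ current for every $q<\infty$, a distributional integration by parts in the limit identifies this with $\int_M \eta\wedge\sqrt{-1}\partial\bar{\partial}\varphi_*\wedge\omega_{\varphi_*}^{k-1}$. This establishes the desired convergence of pairings against arbitrary smooth test forms. To upgrade to weak $L^p$ convergence against arbitrary $L^{p'}$ functions, I invoke density combined with the uniform $L^q$ (any $q<\infty$) bound on $\omega_{\varphi_i}^k$, which follows from $|\omega_{\varphi_i}^k|_{\omega_0}\leq C(n+\Delta\varphi_i)^k$ and the $W^{2,p}$ estimate of Theorem \ref{t2.3}.

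The one step that warrants care is the backward integration by parts in the limit: because $\omega_{\varphi_*}^{k-1}$ is only a closed $L^q$ current rather than a smooth form, one must justify the distributional IBP. This is valid precisely because each factor $\omega_{\varphi_*}=\omega_0+\sqrt{-1}\partial\bar{\partial}\varphi_*$ is closed as a current with $L^q$ coefficients (for any $q<\infty$) and $\varphi_*\in W^{2,p}$, so the wedge products involved are classically defined $L^q$ forms and the Leibniz rule holds distributionally. Alternatively, one can sidestep this issue by approximating $\varphi_*$ by smooth potentials; either route is routine, and no genuine obstacle arises.
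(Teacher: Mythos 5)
Your proof is correct and follows essentially the same route as the paper: induction on $k$, the decomposition $\omega_{\varphi_i}^k=\omega_0\wedge\omega_{\varphi_i}^{k-1}+\sqrt{-1}\partial\bar\partial\varphi_i\wedge\omega_{\varphi_i}^{k-1}$, and integration by parts to pair a strongly convergent factor against the weakly convergent $\omega_{\varphi_i}^{k-1}$. The only (immaterial) difference is that the paper moves a single derivative onto the test form, keeping $d^c\varphi_i\wedge d\zeta$ (which converges uniformly by the $C^{1,\alpha}$ convergence), whereas you move both derivatives and keep $\varphi_i\,\sqrt{-1}\partial\bar\partial\eta$; both versions face, and routinely resolve, the same backward integration by parts against the closed $L^q$ current $\omega_{\varphi_*}^{k-1}$.
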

\begin{proof}
We need to show that, for any $\zeta$, a smooth $(n-k,n-k)$ form, the  following convergence holds:
\begin{equation}\label{3.33n}
\int_M\omega_{\varphi_i}^k\wedge\zeta\rightarrow\int_M\omega_{\varphi_*}^k\wedge\zeta,\textrm{ as $i\rightarrow\infty$.}
\end{equation}
Since $\omega_{\varphi_i}^k$ is uniformly bounded in $L^p$ for any $p<\infty$, (\ref{3.33n}) will imply the same convergence holds for any $\zeta\in L^q$ with $q>1$.
Now we prove (\ref{3.33n}) by induction in $k$.

First observe that when $k=1$, (\ref{3.33n}) follows from the weak convergence of $\sqrt{-1}\partial\bar{\partial}\varphi_i$.

Now assume (\ref{3.33n}) holds for $k=l-1$, we need to show (\ref{3.33n}) holds for $k=l$. Indeed, let $\zeta$ be a smooth $(n-l,n-l)$ form, we have
\begin{equation}\label{3.34n}
\begin{split}
\int_M\omega_{\varphi_i}^l\wedge\zeta&=\int_M\omega_{\varphi_i}^{l-1}\wedge\omega_0\wedge\zeta+\int_M\omega_{\varphi_i}^{l-1}\wedge\sqrt{-1}\partial\bar{\partial}\varphi_i\wedge\zeta\\
&=\int_M\omega_{\varphi_i}^{l-1}\wedge\omega_0\wedge\zeta-\int_M\omega_{\varphi_i}^{l-1}\wedge d^c\varphi_i\wedge d\zeta.
\end{split}
\end{equation}
Here $d^c=\frac{\sqrt{-1}}{2}(\partial-\bar{\partial})$.
From the induction hypothesis, we know that 
\begin{equation}\label{3.35n}
\int_M\omega_{\varphi_i}^{l-1}\wedge\omega_0\wedge\zeta\rightarrow\int_M\omega_{\varphi_*}^{l-1}\wedge\omega_0\wedge\zeta,\textrm{ as $i\rightarrow\infty$.}
\end{equation}
On the other hand, we know from (\ref{conv-1}) that $d^c\varphi_i\rightarrow d^c\varphi_*$ uniformly, hence $d^c\varphi_i\wedge d\zeta\rightarrow d^c\varphi_*\wedge d\zeta$ strongly in $L^q$ for any $q>1$.
This combined with the weak convergence of $\omega_{\varphi_i}^{l-1}$ is sufficient to imply
\begin{equation}\label{3.36n}
\int_M\omega_{\varphi_i}^{l-1}\wedge d^c\varphi_i\wedge d\zeta\rightarrow\int_M\omega_{\varphi_*}^{l-1}\wedge d^c\varphi_*\wedge d\zeta,\textrm{ as $i\rightarrow\infty$.}
\end{equation}
Combining (\ref{3.34n}), (\ref{3.35n}) and (\ref{3.36n}), we conclude as $i\rightarrow\infty$,
\begin{equation}
\int_M\omega_{\varphi_i}^l\wedge\zeta\rightarrow\int_M\omega_{\varphi_*}^{l-1}\wedge\omega_0\wedge\zeta-\int_M\omega_{\varphi_*}^{l-1}\wedge d^c\varphi_*\wedge d\zeta=\int_M\omega_{\varphi_*}^l\wedge\zeta.
\end{equation}
This proves (\ref{3.33n}) for $k=l$ and finishes the induction.
\end{proof}

It is crucial matter to identify the limit. Actually we will show the solution $\varphi_*$ is a weak solution to cscK in the following sense:
\begin{prop}\label{p3.12}
Let $\varphi_*$, $F_*$ be the limit obtained in (\ref{conv-1}), (\ref{conv-2}).
Then $\varphi_*$ is a weak solution to cscK in the following sense:
\begin{enumerate}
\item $\omega_{\varphi_*}^n=e^{F_*}\omega_0^n$,

\item For any $\eta\in C^{\infty}(M)$, we have
\begin{equation}\label{3.32}
-\int_Md^cF_*\wedge d\eta\wedge\frac{\omega_{\varphi_*}^{n-1}}{(n-1)!}=\int_M-\eta\underline{R}\frac{\omega_{\varphi_*}^n}{n!}+\eta Ric\wedge\frac{\omega_{\varphi_*}^{n-1}}{(n-1)!}.
\end{equation}
\end{enumerate}
In the above, $d^c=\frac{\sqrt{-1}}{2}(\partial-\bar{\partial})$.
\end{prop}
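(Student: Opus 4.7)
The plan is to pass to the limit in the equations (\ref{eq-n-1})--(\ref{eq-n-2}) using the convergence statements (\ref{conv-1}), (\ref{conv-2}) together with the weak convergence $\omega_{\varphi_i}^k \to \omega_{\varphi_*}^k$ established in the preceding lemma. For (1), rewrite $\omega_{\varphi_i}^n=e^{F_i+f_i}e^{-f_i}\omega_0^n$ and test against an arbitrary smooth $\eta$. The left side tends to $\int_M\eta\,\omega_{\varphi_*}^n$. On the right, $F_i+f_i\to F_*$ uniformly, hence $e^{F_i+f_i}\to e^{F_*}$ uniformly; meanwhile $f_i=\frac{1-t_i}{t_i}\theta_i\le 0$, and since $(1-t_i)/t_i\to 0$ while Lemma \ref{l3.10} keeps $\int_M e^{-pf_i}dvol_g$ uniformly bounded for every fixed $p$, one obtains $f_i\to 0$ in $L^q$ and consequently $e^{-f_i}\to 1$ in $L^q$ for every finite $q$. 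A uniformly convergent sequence multiplied by an $L^1$-convergent one converges in $L^1$, so $\int_M\eta e^{F_i}\omega_0^n\to\int_M\eta e^{F_*}\omega_0^n$, yielding $\omega_{\varphi_*}^n=e^{F_*}\omega_0^n$.

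For (2), the first step is to substitute $\sqrt{-1}\partial\bar{\partial}f_i=\frac{1-t_i}{t_i}(\omega_{\theta_i}-\omega_0)$ into (\ref{eq-n-2}) to obtain the cleaner identity
\begin{equation*}
\sqrt{-1}\partial\bar{\partial}(F_i+f_i)\wedge\frac{\omega_{\varphi_i}^{n-1}}{(n-1)!}=-\Big(\underline{R}-\frac{1-t_i}{t_i}n\Big)\frac{\omega_{\varphi_i}^n}{n!}+\Big(Ric-\frac{1-t_i}{t_i}\omega_0\Big)\wedge\frac{\omega_{\varphi_i}^{n-1}}{(n-1)!},
\end{equation*}
in which $\omega_{\theta_i}$ no longer appears. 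Pair both sides with a smooth test function $\eta$ and integrate by parts twice on the left, which is legitimate because $F_i+f_i$ is smooth and $\omega_{\varphi_i}^{n-1}$ is $d$-closed, to obtain
\begin{equation*}
\int_M (F_i+f_i)\,\sqrt{-1}\partial\bar{\partial}\eta\wedge\frac{\omega_{\varphi_i}^{n-1}}{(n-1)!}=\int_M \eta\cdot(\text{right-hand side above}).
\end{equation*}
Now $F_i+f_i\to F_*$ uniformly while $\sqrt{-1}\partial\bar{\partial}\eta\wedge\omega_{\varphi_i}^{n-1}$ is bounded in every $L^p$ and converges weakly to $\sqrt{-1}\partial\bar{\partial}\eta\wedge\omega_{\varphi_*}^{n-1}$, so the left side converges to $\int_M F_*\sqrt{-1}\partial\bar{\partial}\eta\wedge\omega_{\varphi_*}^{n-1}/(n-1)!$; the right side converges term by term by weak convergence of $\omega_{\varphi_i}^n$ and $\omega_{\varphi_i}^{n-1}$, with the $O(1-t_i)$ corrections vanishing because $\int_M\eta\,\omega_{\varphi_i}^n$ and $\int_M\eta\,\omega_0\wedge\omega_{\varphi_i}^{n-1}$ are uniformly bounded (the latter being cohomological). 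Finally, since $F_*\in W^{1,p}$ for every $p<\infty$ and $\omega_{\varphi_*}^{n-1}$ is closed as a current, a single integration by parts in the limiting identity trades $\sqrt{-1}\partial\bar{\partial}\eta$ for $d^cF_*\wedge d\eta$ under the convention $d^c=\frac{\sqrt{-1}}{2}(\partial-\bar{\partial})$, producing exactly (\ref{3.32}).

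The main obstacle is that in the raw integral
\begin{equation*}
\int_M\eta\,\sqrt{-1}\partial\bar{\partial}(F_i+f_i)\wedge\frac{\omega_{\varphi_i}^{n-1}}{(n-1)!}
\end{equation*}
neither $d^c(F_i+f_i)$ nor $\omega_{\varphi_i}^{n-1}$ converges strongly, so one cannot naively pair these two weak limits. The device that resolves the problem is the closedness of $\omega_{\varphi_i}^{n-1}$: integrating by parts twice moves all derivatives onto the smooth $\eta$, after which $F_i+f_i$ enters without derivatives and with uniform convergence, and uniform convergence pairs harmlessly against the weakly convergent measure $\sqrt{-1}\partial\bar{\partial}\eta\wedge\omega_{\varphi_i}^{n-1}$. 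A parallel subtlety in (1) is that $F_i$ alone need not converge; what saves us there is that the stable combination is $F_i+f_i$, and the remaining factor $e^{-f_i}$ is tamed by the $\alpha$-invariant bound of Lemma \ref{l3.10} together with $(1-t_i)/t_i\to 0$.
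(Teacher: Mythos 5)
Your proof is correct and, for part (2), follows the paper's argument step for step: the same reduction of (\ref{eq-n-2}) to an identity in $F_i+f_i$ with $\omega_0$ replacing $\omega_{\theta_i}$, the same double integration by parts against a smooth $\eta$ exploiting closedness of $\omega_{\varphi_i}^{n-1}$, the same pairing of the strongly convergent $F_i+f_i$ with the weakly convergent $\sqrt{-1}\partial\bar\partial\eta\wedge\omega_{\varphi_i}^{n-1}$, and the same final integration by parts using $F_*\in W^{1,p}$. The one genuine divergence is in part (1), in how you justify $e^{-f_i}\to 1$ in $L^q$. The paper isolates this as Lemma \ref{l3.13} and proves it through the variational structure: the energy inequality (\ref{3.34nn}) (which uses that $\tilde\varphi_i$ minimizes the twisted energy and that $J_{\omega_0}$ dominates $d_1$) gives $(1-t_i)d_1(0,\tilde\varphi_i)\to 0$, the $d_1$-isometry of the $G$-action transfers this to $(1-t_i)d_1(0,\tilde\theta_i)\to 0$, and a separate claim controls $(1-t_i)I(\theta_i)$. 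You instead get $\int_M|f_i|\,dvol_g\to 0$ directly from the uniform bound $\int_Me^{-pf_i}\,dvol_g\le C_{26}$ of Lemma \ref{l3.10} (whose constant is independent of $p$) via Jensen's inequality, letting $p\to\infty$ as $t_i\to 1$; equivalently, from the standard uniform $L^1$ bound on $\omega_0$-psh functions normalized by $\sup_M\theta_i=0$, which the paper itself invokes inside the proof of Lemma \ref{l3.13}. Your route is more elementary and avoids the $d_1$-geometry and the minimizing property of $\tilde\varphi_i$ entirely; the paper's route yields the quantitatively stronger statement $(1-t_i)d_1(0,\tilde\theta_i)\to 0$, which is of the type reused elsewhere in the compactness arguments, but for the purposes of this proposition your shorter deduction suffices. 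To make it fully airtight you should record the one-line Jensen step and the interpolation $\Vert f_i\Vert_{L^q}^q\le\Vert f_i\Vert_{L^1}^{1/2}\,\Vert f_i\Vert_{L^{2q-1}}^{(2q-1)/2}$ against the exponential moment bounds, but these are routine.
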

Before we prove this proposition, we need the following lemma, which shows $f_i\rightarrow0$ in $L^1$. This is needed to justify (1) in the above proposition.
\begin{lem}\label{l3.13}
Recall $\theta_i$ is defined as $\sigma_i^*\omega_0=\omega_{\theta_i}$ with $\sup_M\theta_i=0$.
$f_i=\frac{1-t_i}{t_i}\theta_i$. Then we have
$$
e^{-f_i}\rightarrow 1\textrm{ in $L^p(\omega_0^n)$ as $t_i\rightarrow1$ for any $p<\infty$.}
$$
\end{lem}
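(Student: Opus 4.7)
The plan is to combine three ingredients: (i) $L^1$-compactness of normalized $\omega_0$-psh functions, (ii) Tian's $\alpha$-invariant estimate applied in Lemma \ref{l3.10}, and (iii) Vitali's convergence theorem to upgrade pointwise convergence to $L^p$ convergence.

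First, I would establish that $f_i \to 0$ in $L^1(\omega_0^n)$. Since each $\theta_i$ is $\omega_0$-psh with $\sup_M \theta_i = 0$, the standard compactness theorem for normalized $\omega_0$-psh functions yields a uniform bound $\sup_i \|\theta_i\|_{L^1(\omega_0^n)} \leq C$. (Alternatively, this follows directly from Tian's $\alpha$-invariant estimate since $\int_M e^{-\alpha\theta_i} dvol_g \leq C$ controls $\int_M |\theta_i| dvol_g$ via Jensen.) Then
$$
\|f_i\|_{L^1(\omega_0^n)} = \tfrac{1-t_i}{t_i}\|\theta_i\|_{L^1(\omega_0^n)} \leq \tfrac{1-t_i}{t_i}\,C \longrightarrow 0.
$$
In particular, after extracting a subsequence (and since the argument applies to every subsequence, the full sequence will converge), we may assume $f_i \to 0$ almost everywhere, and hence $e^{-f_i} \to 1$ almost everywhere.

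Next, I would show uniform integrability of $\{|e^{-f_i} - 1|^p\}$. Note that $\theta_i \leq 0$ implies $f_i \leq 0$, hence $e^{-f_i} \geq 1$. Fix any $q > p$. By Lemma \ref{l3.10} (applied with exponent $q$ in place of $p$), there exists $\eps_q > 0$ such that for $t_i \in (1-\eps_q, 1)$,
$$
\int_M e^{-q f_i}\, dvol_g \leq C_{26}.
$$
Therefore $\{e^{-f_i}\}$ is uniformly bounded in $L^q(\omega_0^n)$. Since $|e^{-f_i} - 1|^p \leq 2^{p-1}(e^{-pf_i} + 1)$ is uniformly bounded in $L^{q/p}(\omega_0^n)$, the family $\{|e^{-f_i} - 1|^p\}$ is uniformly integrable.

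Combining the almost everywhere convergence $e^{-f_i} \to 1$ with this uniform integrability, Vitali's convergence theorem yields
$$
\int_M |e^{-f_i} - 1|^p\, dvol_g \longrightarrow 0,
$$
which is precisely the claimed $L^p$ convergence. No step is genuinely difficult here: the only point requiring care is that the exponent $q > p$ must be chosen \emph{before} determining how close $t_i$ must be to $1$, so that the $\alpha$-invariant bound from Lemma \ref{l3.10} kicks in at the right integrability level. This is a mild quantifier-order matter, not a real obstacle.
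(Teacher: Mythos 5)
Your proof is correct, and for the key step --- showing $\int_M|f_i|\,\omega_0^n\to0$ --- it takes a genuinely shorter route than the paper. You obtain the uniform bound $\sup_i\|\theta_i\|_{L^1(\omega_0^n)}\le C$ directly from the normalization $\sup_M\theta_i=0$ (compactness of normalized $\omega_0$-psh functions, or equivalently Tian's $\alpha$-invariant estimate plus Jensen), so $\|f_i\|_{L^1}\le\frac{1-t_i}{t_i}C\to0$ is immediate. The paper instead first deduces $(1-t_i)d_1(0,\tilde{\varphi}_i)\to0$ from the energy asymptotics of Lemma \ref{l3.6} together with the coercivity $J_{\omega_0}\ge\delta\, d_1-C$, transfers this to $\tilde{\theta}_i=\theta_i-I(\theta_i)/vol(M)$ using that $G$ acts on $(\mathcal{H}_0,d_1)$ by isometries and that $\sup_i d_1(0,\varphi_i)<\infty$, invokes Darvas's comparison of $d_1$ with the $L^1$ norm, and then separately proves $(1-t_i)I(\theta_i)\to0$ by an integration-by-parts estimate. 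Notably, inside that last step the paper itself records the very bound $0\le\int_M(-\theta_i)\,\omega_0^n\le C_{28}$ on which your shortcut rests, so your argument is fully consistent with the paper's toolkit; what the longer route buys is the intermediate quantitative fact that $\tilde{\varphi}_i$ and the group elements $\sigma_i$ drift to infinity in $d_1$ at rate $o\big((1-t_i)^{-1}\big)$, which is of independent interest in the surrounding arguments but is not needed for this lemma. Your second half --- passing to a.e.\ convergence along subsequences and combining it with uniform integrability of $|e^{-f_i}-1|^p$ coming from Lemma \ref{l3.10} applied at an exponent $q>p$, then invoking Vitali --- is the same interpolation idea the paper compresses into its final sentence (``by taking $p'>p$''), and your closing remark about fixing $q>p$ before letting $t_i\to1$ is exactly the right point of care.
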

\begin{proof}
First we know from (\ref{3.10n}) that there exists $\eps_i\rightarrow0$, such that 
\begin{equation}\label{3.34nn}
\begin{split}
\inf_{\mathcal{H}}K(\varphi)+\eps_i\geq K_{\omega_0,t_i}(\tilde{\varphi}_i)&=t_iK(\tilde{\varphi}_i)+(1-t_i)J_{\omega_0}(\tilde{\varphi}_i)\\
&\geq t_i\inf_{\mathcal{H}}K(\varphi)+(1-t_i)\delta d_1(0,\tilde{\varphi}_i)-(1-t_i)C.
\end{split}
\end{equation}
This implies $(1-t_i)d_1(0,\tilde{\varphi}_i)\rightarrow0$ as $t_i\rightarrow1$. On the other hand, denote $\tilde{\theta}_i=\theta_i-\frac{I(\theta_i)}{vol(M)}$, then we have $\tilde{\theta}_i\in\mathcal{H}_0$ and $\sigma_i.0=\tilde{\theta}_i$.
Also we know that $G$ acts on $\mathcal{H}_0$ by isometry, hence
\begin{equation}
d_1(0,\tilde{\theta}_i)-d_1(0,\varphi_i)\leq d_1(\tilde{\theta}_i,\varphi_i)=d_1(\sigma_i.0,\sigma_i.\tilde{\varphi}_i)=d_1(0,\tilde{\varphi}_i).
\end{equation}
Since $\sup_id_1(0,\varphi_i)<\infty$, we know $(1-t_i)d_1(0,\tilde{\theta}_i)\rightarrow0$. 
Therefore from \cite{Darvas1402}, Theorem 5.5, we see that as $t_i\rightarrow1$,
$$
(1-t_i)\int_M|\tilde{\theta}_i|\omega_0^n\leq(1-t_i)d_1(0,\tilde{\theta}_i)\rightarrow0.
$$
Now we claim that 
\begin{equation}\label{3.34}
I(\theta_i)(1-t_i)\rightarrow0,\textrm{ as $t_i\rightarrow1$.}
\end{equation}
If we have shown this claim, then we will have $\int_M|f_i|\omega_0^n\rightarrow0$. Since we already know for any $1<p'<\infty$, we have $\sup_i\int_Me^{-p'f_i}\omega_0^n<\infty$, the claimed result then follows(by taking $p'>p$).

Hence it only remains to show the claim. Since we know that $\sup_M\theta_i=0$, we know that 
$$
0\leq\int_M(-\theta_i)\omega_0^n\leq C_{28},\textrm{ $C_{28}$ depends only on background metric $\omega_0$.}
$$
On the other hand, 
\begin{equation}
\begin{split}
&I(\theta_i)+\int_M(-\theta_i)\frac{\omega_0^n}{n!}=\frac{1}{(n+1)!}\int_M\theta_i\sum_{k=0}^n\big(\omega_0^k\wedge\omega_{\theta_i}^{n-k}-\omega_0^n\big)\\
&=\frac{1}{(n+1)!}\int_M\theta_i\sqrt{-1}\partial\bar{\partial}\theta_i\wedge\sum_{k=0}^{n-1}(n-k)\omega_{\theta_i}^k\wedge\omega_0^{n-k-1}\\
&\geq -\frac{n}{(n+1)!}\int_M\sqrt{-1}\partial \theta_i\wedge\bar{\partial}\theta_i\wedge\sum_{k=0}^{n-1}\omega_0^k\wedge\omega_{\theta_i}^{n-1-k}\\
&=-\frac{n}{(n+1)!}\int_M\tilde{\theta}_i(\omega_0^n-\omega_{\tilde{\theta}_i}^n)\geq-Cd_1(0,\tilde{\theta}_i).
\end{split}
\end{equation}
Hence we have 
$$
0\geq I(\theta_i)\geq-C'(1+d_1(0,\tilde{\theta}_i)).
$$
From here the claim (\ref{3.34}) immediately follows.
\end{proof}
Now we are ready to show Proposition \ref{p3.12}.
We will obtain this as the result of the previous lemma
\begin{proof}
(of Proposition \ref{p3.12}) 

First we show the equation (1) holds. First for each fixed $i$, we have $\omega_{\varphi_i}^n=e^{F_i}\omega_0^n$.
(\ref{3.31}) shows $\omega_{\varphi_i}^n\rightarrow\omega_{\varphi_*}^n$ weakly in $L^p$ for any $p<\infty$. For the convergence of the right hand side, we can write $
e^{F_i}=e^{F_i+f_i}\cdot e^{-f_i}.
$
According to (\ref{conv-2}), we see that $F_i+f_i$ is uniformly bounded, and converges to $F_*$ strongly in $L^p$ for $p<\infty$.
This implies $e^{F_i+f_i}\rightarrow e^{F_*}$ in $L^p$ for any finite $p$.
On the other hand, we have just shown in Lemma \ref{l3.13} that $e^{-f_i}\rightarrow 1$ in $L^p$ for any $p<\infty$.
From here we can conclude $e^{F_i}\rightarrow e^{F_*}$ in $L^p$ for $p<\infty$.
Hence the equation (1) of Proposition follows.

To see the second equation, first we see from (\ref{eq-n-2}) that 
$$
\Delta_{\varphi_i}(F_i+f_i)=-\big(\underline{R}-\frac{1-t_i}{t_i}n\big)+tr_{\varphi_i}\big(Ric-\frac{1-t_i}{t_i}\omega_0\big).
$$
This implies for $\eta\in C^{\infty}(M)$, one has
\begin{equation}\label{3.36}
\begin{split}
\int_M&(F_i+f_i)d^cd\eta\wedge\frac{\omega_{\varphi_i}^{n-1}}{(n-1)!}\\
&=\int_M-\eta\big(\underline{R}-\frac{1-t_i}{t_i}n\big)\frac{\omega_{\varphi_i}^n}{n!}+\eta\big(Ric-\frac{1-t_i}{t_i}\omega_0\big)\wedge\frac{\omega_{\varphi_i}^{n-1}}{(n-1)!}.
\end{split}
\end{equation}
We wish to pass to limit in (\ref{3.36}) as $t_i\rightarrow1$. First because of (\ref{3.31}), we can easily conclude:
\begin{equation}
\textrm{R.H.S of (\ref{3.36})}\rightarrow\int_M\eta\bigg(-\underline{R}\frac{\omega_{\varphi_*}^n}{n!}+Ric\wedge\frac{\omega_{\varphi_*}^{n-1}}{(n-1)!}\bigg).
\end{equation}
For the left hand side, since $F_i+f_i\rightarrow F_*$ strongly in $L^p$, $\omega_{\varphi_i}^{n-1}\rightarrow\omega_{\varphi_*}^{n-1}$ weakly in $L^p$ for any $p<\infty$, we can conclude
$$
\int_M(F_i+f_i)d^cd\eta\wedge\frac{\omega_{\varphi_i}^{n-1}}{(n-1)!}\rightarrow\int_MF_* d^cd\eta\wedge\frac{\omega_{\varphi_*}^{n-1}}{(n-1)!}.
$$
Since $F_*\in W^{1,p}$, we have
\begin{equation}
-\int_Md^cF_*\wedge d\eta\wedge\frac{\omega_{\varphi_*}^{n-1}}{(n-1)!}=\int_MF_*d^cd\eta\wedge\frac{\omega_{\varphi_*}^{n-1}}{(n-1)!}=\int_M\eta\bigg(-\underline{R}\frac{\omega_{\varphi_*}^n}{n!}+Ric\wedge\frac{\omega_{\varphi_*}^{n-1}}{(n-1)!}\bigg).
\end{equation}
\end{proof}

Next we argue that $\omega_{\varphi_*}$ is quasi-isometric to $\omega_0$.
\begin{lem}\label{l3.14}
There exists a constant $C_{29}$, such that $\frac{1}{C_{29}}\omega_0\leq\omega_{\varphi_*}\leq C_{29}\omega_0.$
\end{lem}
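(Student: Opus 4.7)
\bigskip

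\noindent\textbf{Plan for Lemma \ref{l3.14}.} The strategy is to upgrade the uniform $W^{2,p}$ bounds from Section 2, applied to the approximating sequence $\varphi_i$, into an $L^\infty$ bound on $n + \Delta\varphi_i$, and then pass to the limit. A key preliminary observation is that, by Lemma \ref{l3.10}, as $t_i \nearrow 1$ the integrability exponent $p_0 = p_0(t_i)$ governing $e^{-f_i}$ can be taken arbitrarily large; hence Theorem \ref{t2.3} applies to the sequence $\{\varphi_i\}$ for \emph{every} finite $p$ with constants depending only on the entropy bound (which is uniform by the lemma following Proposition \ref{l3.17}) and the background metric.

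\medskip

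\noindent\emph{Step 1 (Upper bound $\omega_{\varphi_*} \leq C_{29}\omega_0$).} I would revisit the maximum-principle computation of Theorem \ref{t2.1} applied to $\varphi_i$, focusing on the weighted quantity $v_i = e^{-\kappa(F_i + \delta f_i + C\varphi_i)}(n + \Delta\varphi_i)$. Inequality (2.37), namely
\[
\int_M e^{(p - \tfrac{n-2}{n-1})f_i}(n + \Delta\varphi_i)^{p + \tfrac{1}{n-1}}\, dvol_g \leq C\!\int_M e^{(p-1)f_i}(n + \Delta\varphi_i)^p\, dvol_g,
\]
is a reverse H\"older--type estimate that, when iterated for a geometric sequence of exponents $p_k = 1 + k/(n-1)$, produces $L^p$ bounds with constants $C_p$ that grow controllably, provided $p_0 \to \infty$ (which holds in our setting). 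Running a standard Moser iteration using this recursion then yields the uniform bound $\sup_i\, \|n + \Delta\varphi_i\|_{L^\infty} \leq C$. Combined with $C^{1,\alpha}$ convergence $\varphi_i \to \varphi_*$ and weak $L^p$ convergence $\omega_{\varphi_i} \to \omega_{\varphi_*}$, this forces $n + \Delta\varphi_* \in L^\infty$, hence $\omega_{\varphi_*} \leq C_{29}\omega_0$.

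\medskip

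\noindent\emph{Step 2 (Lower bound $\omega_{\varphi_*} \geq C_{29}^{-1}\omega_0$).} By Proposition \ref{p3.12}(1), $\omega_{\varphi_*}^n = e^{F_*}\omega_0^n$, and $F_*$ is bounded (as a uniform limit of the bounded functions $F_i + f_i$ together with $f_i \to 0$ uniformly from Lemma \ref{l3.13}). Hence $\det(\omega_{\varphi_*}/\omega_0) \geq e^{-\|F_*\|_\infty} > 0$. Combined with the uniform upper eigenvalue bound from Step 1, the arithmetic--geometric mean inequality forces every eigenvalue of $\omega_{\varphi_*}/\omega_0$ to be bounded away from zero, giving the lower bound.

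\medskip

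\noindent\emph{Main obstacle.} The delicate point is Step 1: Theorem \ref{t2.3} as stated only provides $L^p$ estimates with a priori $p$-dependent constants, so a naive limit $p \to \infty$ gives nothing. The resolution is to track the recursion (2.37) more carefully, noting that the dependence on the integrability exponent $p_0$ in the constant drops out in our setting (since $p_0(t_i) \to \infty$), leaving only a dependence on the growing exponent $p$ that is compatible with Moser's geometric iteration. Alternatively, if this iteration cannot be closed directly, one can instead invoke a regularity result for complex Monge--Amp\`ere equations with H\"older right-hand side applied to $\varphi_*$ itself, using that $F_* \in C^{0,\alpha}$ for every $\alpha < 1$ by Sobolev embedding.
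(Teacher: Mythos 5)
There is a genuine gap, in fact in both of the routes you offer for the upper bound. For Step 1: the recursion (\ref{2.37}) increases the exponent only \emph{arithmetically}, $p\mapsto p+\tfrac{1}{n-1}$, not geometrically, and the constant $C_{14}$ grows with $p$ (through the choice $\kappa\geq p$ in the proof of Theorem \ref{t2.1}). A Moser iteration closes only when the exponent gain is multiplicative, so that $\sum_k 1/p_k<\infty$ and the product of the iterated constants converges; here $p_k=1+k/(n-1)$ gives $\sum_k 1/p_k=\infty$ and the normalized product $\bigl(\prod_{j<k}C(p_j)\bigr)^{1/p_k}$ diverges. There is no Sobolev-type improvement hidden in (\ref{2.37}) that would convert the additive gain into a multiplicative one, so no $L^\infty$ bound on $n+\Delta\varphi_i$ comes out of this recursion, and indeed the paper never attempts one. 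Your fallback --- regularity for complex Monge--Amp\`ere with H\"older right-hand side --- also does not exist in the form you need: the known $C^{2,\alpha}$ results for $\omega_{\varphi}^n=e^{F}\omega_0^n$ with $F\in C^{0,\alpha}$ (Evans--Krylov/Caffarelli-type statements) all \emph{presuppose} an a priori bound on $\Delta\varphi$, which is exactly what Lemma \ref{l3.14} is trying to produce; second-order estimates for this equation genuinely require control of derivatives of $F$, not just its H\"older norm.

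What the paper actually uses is the full strength of $F_*\in W^{1,p}$ for \emph{every} $p<\infty$ (not merely $F_*\in C^{0,\alpha}$). One approximates $F_*$ in $W^{1,p}$ by uniformly bounded smooth functions $G_k$, solves $\omega_{\psi_k}^n=e^{G_k}\omega_0^n$, and invokes the Chen--He theorem (\cite{chenhe12}, Theorem 1.1), which gives $\sup_k\|\psi_k\|_{W^{3,p}}<\infty$ from a $W^{1,p}$ bound on the right-hand side with $p$ large; passing to the limit yields a $W^{3,p}$ (hence $C^{2,\alpha}$, hence quasi-isometric) solution $\psi_*$ of $\omega_{\psi_*}^n=e^{F_*}\omega_0^n$, and B\l{}ocki's uniqueness theorem identifies $\omega_{\psi_*}$ with $\omega_{\varphi_*}$. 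This last step is also needed in your framework: since $\varphi_*$ is only known a priori as a $W^{2,p}$ weak solution, any regularity you establish for \emph{some} solution of the limiting equation must be transferred to $\varphi_*$ via uniqueness. Your Step 2 (the lower eigenvalue bound from $F_*\geq -C$ plus the upper bound on $\Delta\varphi_*$) is correct and agrees with the paper.
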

\begin{proof}
We know that $F_*\in W^{1,p}$ for any $p<\infty$, hence we may take $G_k\in C^{\infty}(M)$, uniformly bounded, and $G_k\rightarrow F_*$ in $W^{1,p}$.
Let $\psi_k$ be the solution to $\omega_{\psi_k}^n=e^{G_k}\omega_0^n$ with $\sup_M\psi_k=0$.
The result of \cite{chenhe12}, Theorem 1.1 shows that for any $p<\infty$, one has
$$
\sup_k||\psi_k||_{W^{3,p}}<\infty.
$$
Hence up to a subsequence, we can assume  that for some $\psi_*\in W^{3,p}$ for any finite $p$, $\psi_k\rightarrow\psi_*$ in $W^{2,p}$ for any finite $p$.
Therefore $\omega_{\psi_*}^n=e^{F_*}\omega_0^n$.
Because of uniqueness result of Monge-Amp$\grave{e}$re equations(c.f. \cite{blocki}, Theorem 1.1), we can conclude $\varphi_*$ and $\psi_*$ differ by a constant, hence $\omega_{\varphi_*}=\omega_{\psi_*}\leq C_{29}\omega_0$.
That $\omega_{\varphi_*}=\omega_{\psi_*}\geq\frac{1}{C_{29}}\omega_0$ follows from $F_*$ is bounded from below.
\end{proof}
As a result of this, we now show that $\varphi_*$ is actually a smooth cscK.
\begin{cor}
$\varphi_*$ is a smooth solution to cscK.
\end{cor}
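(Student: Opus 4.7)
The plan is to run a standard elliptic bootstrap, starting from the weak cscK equation (\ref{3.32}) and the uniform two-sided bound $\frac{1}{C_{29}}\omega_0\leq\omega_{\varphi_*}\leq C_{29}\omega_0$ provided by Lemma \ref{l3.14}. First I would rewrite (\ref{3.32}) in local coordinates as a linear second-order equation for $F_*$ of divergence form
$$
\partial_i\bigl(a^{i\bar j}\partial_{\bar j}F_*\bigr) = h,
$$
where $a^{i\bar j}$ are the cofactors of $g_{\alpha\bar\beta}+(\varphi_*)_{\alpha\bar\beta}$ and $h$ is built from $Ric$, $\underline R$ and $\omega_{\varphi_*}^n$. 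The quasi-isometry bound gives uniform ellipticity of $(a^{i\bar j})$, and since $\varphi_*\in W^{2,p}$ for all $p<\infty$ the coefficients lie in $C^{0,\alpha}$ for every $\alpha\in(0,1)$; similarly $h\in L^p$ for all $p<\infty$.

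Next, De Giorgi--Nash--Moser applied to this uniformly elliptic divergence equation upgrades $F_*$ from $W^{1,p}$ to $C^{0,\alpha}$. With $F_*\in C^{0,\alpha}$, the Monge--Amp\`ere equation $\omega_{\varphi_*}^n=e^{F_*}\omega_0^n$ together with the already-known quasi-isometry bound falls into the regime of Caffarelli's interior $C^{2,\alpha}$ estimates (or equivalently the Evans--Krylov theorem applied once the equation is uniformly elliptic), yielding $\varphi_*\in C^{2,\alpha}$. This step is where the quasi-isometry bound of Lemma \ref{l3.14} is essential: without it one cannot treat the Monge--Amp\`ere equation as uniformly elliptic and Caffarelli's theory does not apply.

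With $\varphi_*\in C^{2,\alpha}$, the coefficients $a^{i\bar j}$ and the right-hand side $h$ in the equation for $F_*$ are both $C^{0,\alpha}$, so Schauder theory gives $F_*\in C^{2,\alpha}$. Differentiating the Monge--Amp\`ere equation then upgrades $\varphi_*$ to $C^{4,\alpha}$, after which the coefficients for the $F_*$-equation are $C^{2,\alpha}$ and Schauder gives $F_*\in C^{4,\alpha}$, and so on. A standard alternating bootstrap between the linear scalar-curvature equation and the Monge--Amp\`ere equation yields $\varphi_*,F_*\in C^\infty$.

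Finally, once $\varphi_*$ is smooth, the integral identity (\ref{3.32}) from Proposition \ref{p3.12} can be integrated by parts against arbitrary test functions $\eta$ to recover the classical equation $R_{\varphi_*}=\underline R$ pointwise, so $\omega_{\varphi_*}$ is a smooth cscK metric. The main obstacle in executing this plan is the very first bootstrap step: applying Caffarelli's $C^{2,\alpha}$ theory for Monge--Amp\`ere with only Hölder right-hand side, which crucially requires the two-sided bound on $\omega_{\varphi_*}$ established in Lemma \ref{l3.14}; every subsequent step is then routine Schauder iteration.
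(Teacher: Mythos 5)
Your overall strategy (linear equation for $F_*$ alternating with the Monge--Amp\`ere equation for $\varphi_*$) is the right shape, and the final bootstrap and the recovery of the pointwise equation $R_{\varphi_*}=\underline R$ are fine. But the route differs from the paper's at the decisive first step, and as written that step has a gap. The paper never needs a $C^{2,\alpha}$ theory for the fully nonlinear complex Monge--Amp\`ere operator: the proof of Lemma \ref{l3.14} already yields $\varphi_*\in W^{3,p}$ for every $p<\infty$ (via the Chen--He $W^{3,p}$ estimate \cite{chenhe12} for $\omega_{\psi}^n=e^{G}\omega_0^n$ with $G$ controlled in $W^{1,p}$, plus the uniqueness theorem identifying $\psi_*$ with $\varphi_*$ up to a constant), so $\omega_{\varphi_*}\in C^{\alpha}$ is available for free; one then reads (\ref{3.32}) as a uniformly elliptic \emph{linear} equation for $F_*$ with $C^{\alpha}$ coefficients, applies Schauder, and bootstraps. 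Your route instead wants a $C^{2,\alpha}$ estimate for the complex Monge--Amp\`ere equation assuming only a two-sided bound on the \emph{complex} Hessian and a H\"older right-hand side. Such an estimate is in fact a theorem (Y.~Wang's $C^{2,\alpha}$ regularity for the complex Monge--Amp\`ere equation with bounded Laplacian; see also Dinew--Zhang--Zhang and Chen--Wang), but it is not ``Caffarelli's interior $C^{2,\alpha}$ estimate'' (that theory concerns the real Monge--Amp\`ere equation) and it is not the classical Evans--Krylov theorem, which requires an a priori bound on the full real Hessian, i.e.\ genuine $C^{1,1}$ regularity --- and Lemma \ref{l3.14} only bounds $\partial\bar\partial\varphi_*$. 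So the step you flag as the crux does not go through with the tools you cite; it can be repaired by quoting the complex analogue, or avoided entirely by the paper's $W^{3,p}$ shortcut.

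Two further inaccuracies. First, $\varphi_*\in W^{2,p}$ for all $p<\infty$ does \emph{not} make the coefficients $a^{i\bar j}$ H\"older continuous: they are built from second derivatives of $\varphi_*$, which at that stage are only in $L^p$. They are bounded and uniformly elliptic by Lemma \ref{l3.14}, which is all De Giorgi--Nash--Moser needs --- though that step is redundant anyway, since $F_*\in W^{1,p}$ for all $p$ already gives $F_*\in C^{0,\alpha}$ by Morrey embedding. Second, divergence-form Schauder with merely $C^{0,\alpha}$ coefficients gives $F_*\in C^{1,\alpha}$, not $C^{2,\alpha}$; to reach $C^{2,\alpha}$ you should observe that the cofactor matrix of $g+\partial\bar\partial\varphi_*$ is divergence-free (equivalently, $d(\omega_{\varphi_*}^{n-1})=0$), so the weak equation (\ref{3.32}) is simultaneously the non-divergence equation $g_{\varphi_*}^{i\bar j}\partial_i\partial_{\bar j}F_*=h$, to which interior Schauder for non-divergence operators applies.
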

\begin{proof}
We know from the proof of Lemma \ref{l3.14} that $\varphi_*\in W^{3,p}$ for any $p<\infty$, hence we know that $\omega_{\varphi_*}\in C^{\alpha}$ for any $0<\alpha<1$. From (\ref{3.32}) and Schauder estimate, we conclude $F_*\in C^{2,\alpha}$ for any $0<\alpha<1$. Then the higher regularity follows from bootstrap.
\end{proof}

\section{Geodesic stability and existence of cscK}
In this section, we will first propose a definition of geodesic stability when there exist nontrivial holomorphic vector fields, then we show that geodesic stability is equivalent to the existence of cscK.
This is a generalization of our previous results in \cite{cc2}. In that work, we restricted to the case when $G$ is trivial and geodesic stability is defined as that $\yen$ invariant(defined below) is positive for any geodesic ray. The main result we will prove in this section is:
\begin{thm}\label{t5.1}
The following statements are equivalent:
\begin{enumerate}
\item The K\"ahler class $[\omega_0]$ admits a cscK metric.
\item There exists $\phi_0\in\mathcal{E}_0^1$ with $K(\phi_0)<\infty$, such that $(M,[\omega_0])$ is geodesic stable at $\phi_0$.
\item $(M,[\omega_0])$ is geodesic stable.
\end{enumerate}
\end{thm}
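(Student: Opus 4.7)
\noindent\textbf{Proof plan for Theorem \ref{t5.1}.} Since $(3)\Rightarrow(2)$ is immediate (take any smooth $\phi_0$), I focus on $(1)\Rightarrow(3)$ and $(2)\Rightarrow(1)$. The unifying strategy is to combine Theorem \ref{t1.1} (existence of cscK $\Leftrightarrow$ properness of $K$ modulo $G$) with Theorem \ref{t1.2} (uniqueness and existence of parallel geodesic rays) and the $d_p$-convexity of Theorem \ref{t5.1new}.

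For $(1)\Rightarrow(3)$, assume a cscK metric $\phi_*\in[\omega_0]$ exists. By Theorem \ref{t1.1} (the easy direction, \cite{Darvas1605}), the $K$-energy is proper with respect to $d_{1,G}$; in particular $K$ is bounded below, and by Lemma \ref{l3.3new} it is $G$-invariant. Let $\rho$ be a unit speed locally finite energy geodesic ray starting at some $\phi_0\in\mathcal{E}_0^1$ with $K(\phi_0)<\infty$. By convexity of $K$ along $\rho$ plus boundedness below, $\yen[\rho]\ge 0$ exists. If $\yen[\rho]>0$ we are done; otherwise $\yen[\rho]=0$, and Theorem \ref{t1.2} produces a parallel ray $\tilde\rho$ from $\phi_*$ with $\yen[\tilde\rho]=0$. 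Convexity of $K$ along $\tilde\rho$, combined with $K(\tilde\rho(0))=\inf K$ and asymptotic slope zero, forces $K$ to be constant along $\tilde\rho$; hence every $\tilde\rho(t)$ is a minimizer of $K$, therefore (by the regularity result of \cite{cc2} and the Berman-Berndtsson uniqueness of cscK modulo $G$) of the form $\sigma_t.\phi_*$ for a one-parameter subgroup $\{\sigma_t\}\subset G$. Thus $\tilde\rho$ is a holomorphic line ray and $\rho$ is parallel to it.

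For $(2)\Rightarrow(1)$, by Theorem \ref{t1.1} it suffices to prove that geodesic stability at $\phi_0$ implies the $K$-energy is proper with respect to $d_{1,G}$. I argue by contradiction, in two stages. First, suppose $K$ were not bounded below; then I could select $\phi_i\in\mathcal{H}_0$ with $K(\phi_i)\to-\infty$ and (after passing to a subsequence) with $T_i:=d_1(\phi_0,\phi_i)\to\infty$ and $K(\phi_i)/T_i\to-\eps$ for some $\eps>0$. Form the unit speed $d_1$-geodesic segments $\rho_i:[0,T_i]\to\mathcal{E}^1$ from $\phi_0$ to $\phi_i$; apply Theorem \ref{t5.1new} and standard compactness in $(\mathcal{E}^1,d_1)$ to extract a limiting unit speed locally finite energy ray $\rho$, and use convexity of $K$ along $\rho_i$ together with lower semi-continuity of $K$ to conclude $\yen[\rho]\le-\eps<0$, contradicting geodesic stability. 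So $K$ is bounded below, and by Lemma \ref{l3.3new} it is $G$-invariant. Second, suppose there exists $\varphi_i\in\mathcal{H}_0$ with $d_{1,G}(0,\varphi_i)\to\infty$ and $K(\varphi_i)\le C$. Using Lemma 2.2 of \cite{CPZ}, choose $\sigma_i\in G$ realizing $d_1(0,\sigma_i.\varphi_i)=d_{1,G}(0,\varphi_i)$; then $K(\sigma_i.\varphi_i)=K(\varphi_i)\le C$ by $G$-invariance. Extract from the unit speed $d_1$-segments $\phi_0\to\sigma_i.\varphi_i$ a limiting ray $\rho$; convexity of $K$ gives $K(\rho(t))\le\max(K(\phi_0),C)$, hence $\yen[\rho]=0$. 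By stability, $\rho$ is parallel to a holomorphic line ray $\rho'$; since $\rho'(t)$ lies in a single $G$-orbit, $d_{1,G}(0,\rho'(t))$ is bounded, and parallelism gives $d_{1,G}(0,\rho(t))$ bounded. A further application of Theorem \ref{t5.1new} to the segments $[\phi_0,\sigma_i.\varphi_i]$ and the limiting ray $\rho$ then forces $d_{1,G}(0,\sigma_i.\varphi_i)$ to remain bounded along a subsequence, contradicting $d_{1,G}(0,\varphi_i)\to\infty$.

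The principal technical difficulty is the twofold limit argument in $(2)\Rightarrow(1)$: extracting a unit speed locally finite energy ray from segments of diverging length, and then pulling boundedness of $d_{1,G}$ along the limiting ray back to boundedness of $d_{1,G}(0,\sigma_i.\varphi_i)$. Both steps rely on Theorem \ref{t5.1new} (the $d_1$-convexity of $d_1$-geodesics) together with lower semi-continuity of $K$ along $d_1$-convergent sequences. A subtle but crucial point is that one must rule out $K$ being unbounded below \emph{before} invoking Lemma \ref{l3.3new}, since the $G$-invariance of $K$---used to replace $\varphi_i$ by $\sigma_i.\varphi_i$ without changing the $K$-value---requires $K$ to be bounded below.
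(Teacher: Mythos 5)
Your outline of $(1)\Rightarrow(3)$ matches the paper's argument in substance: the paper likewise reduces to showing that a ray with $\yen[\rho]=0$ is parallel to a ray of minimizers issuing from the cscK potential, constructed as a limit of connecting geodesic segments (this is exactly the content of Lemma \ref{l4.6}, which is also what the existence half of Theorem \ref{t1.2} rests on), and then invokes regularity of weak minimizers and Berman--Berndtsson uniqueness. Your $(2)\Rightarrow(1)$ takes a genuinely different route --- establishing properness modulo $G$ and citing Theorem \ref{t1.1}, rather than running the continuity path directly with the weaker input $\sup_i d_{1,G}(\phi_0,\tilde{\varphi}_i)<\infty$ (Proposition \ref{l3.17}) --- and that route would be acceptable in principle, since Theorem \ref{t3.1} is proved independently in Section 3. (One by-product of the paper's direct route is Remark \ref{r4.5}, which your route does not recover.)

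However, stage one of your $(2)\Rightarrow(1)$ has a genuine gap. From the mere assumption that $K$ is unbounded below you cannot, after passing to a subsequence, arrange $K(\phi_i)/T_i\to-\eps$ for a fixed $\eps>0$: nothing prevents $K(\phi_i)\sim-\sqrt{T_i}$, in which case the limiting ray satisfies only $\yen[\rho]\le 0$, and geodesic stability does not exclude the possibility $\yen[\rho]=0$ with $\rho$ parallel to a holomorphic line. To rule that out one must show the ray escapes in $d_{1,G}$, which requires choosing the $\phi_i$ to nearly realize $d_{1,G}(\phi_0,\cdot)$ within their $G$-orbits --- and replacing $\phi_i$ by $\sigma_i.\phi_i$ without destroying $K(\phi_i)\to-\infty$ requires $G$-invariance of $K$. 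You defer $G$-invariance to after boundedness below (via Lemma \ref{l3.3new}), so the argument is circular at exactly the point you flagged as delicate. The paper breaks the circle with Lemma \ref{l4.1new}: geodesic semistability alone forces the Calabi--Futaki invariant to vanish (a nonzero invariant produces, from a holomorphic line satisfying $d_1(\varphi,\varphi_t)\le Ct$, a limit geodesic ray with $\yen<0$), hence $K$ is $G$-invariant before anything is known about its infimum; only then is Proposition \ref{p5.4} run with the $G$-normalized sequence and the escape estimate $d_{1,G}(\phi_0,\rho_i(t))\ge t-1$. A second, smaller imprecision: the final contradiction in your stage two does not follow from Theorem \ref{t5.1new}; what is needed is precisely that escape estimate (the triangle-inequality lemma following Proposition \ref{p5.4}), applied to segments whose far endpoints nearly minimize $d_1$ over their $G$-orbit.
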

Here geodesically stability is defined as in Definition \ref{d5.3}. 
Observe that the implication $(3)\Rightarrow(2)$ is trivial.
Therefore we will focus on the implications $(2)\Rightarrow(1)$ and $(1)\Rightarrow(3)$.
First we show the implication $(2)\Rightarrow(1)$.
As a preliminary step, we observe that $(2)$ implies that $K$-energy is invariant under $G$.
\begin{lem}\label{l4.1new}
If $(M,[\omega_0])$ is geodesic semistable at $\phi_0$, in particular, if (2) of Theorem \ref{t5.1} holds, then the  $K$-energy is invariant under $G$.
\end{lem}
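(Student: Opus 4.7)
The plan is to proceed as in Lemma \ref{l3.3new}, reducing $G$-invariance of $K$ to the vanishing of the Calabi--Futaki invariant $\mathcal{F}(\cdot,[\omega_0])$ on $aut(M,J)$. The new ingredient is that semistability replaces the lower-bound hypothesis used there; crucially, semistability can be tested against both $X$ and $-X$, which already pins down the sign of the Futaki invariant to zero.

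Fix any $X\in aut(M,J)$ and let $\sigma(t)=\exp(tX)\subset G$ be the one-parameter subgroup it generates. Define the smooth potential curve $\tau_X:[0,\infty)\to\mathcal{H}_0$ by $\omega_{\tau_X(t)}=\sigma(t)^*\omega_{\phi_0}$ together with the normalization $I(\tau_X(t))=0$. Since $\sigma(t)$ acts by $d_1$-isometries on $\mathcal{H}_0$, the curve $\tau_X$ has constant $d_1$-speed $s>0$; its arclength reparameterization yields a unit-speed locally finite energy geodesic ray $\rho_X:[0,\infty)\to\mathcal{E}_0^1$ initiating from $\phi_0$, and replacing $X$ by $-X$ produces a second such ray $\rho_{-X}$ also initiating from $\phi_0$. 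By the computation recalled in the proof of Lemma \ref{l3.3new}, $\frac{d}{dt}K(\tau_X(t))=\mathrm{Re}\,\mathcal{F}(X,[\omega_0])=:a$, so $K\circ\tau_X$ is linear in $t$; transferred to unit speed this gives $\yen[\rho_X]=a/s$ and $\yen[\rho_{-X}]=-a/s$.

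Geodesic semistability at $\phi_0$ gives $\yen[\rho_X]\geq 0$ and $\yen[\rho_{-X}]\geq 0$, hence $a=0$. Since $X$ was arbitrary, the Calabi--Futaki invariant vanishes on $aut(M,J)$. Consequently, for any $\varphi\in\mathcal{H}_0$ and any one-parameter subgroup $\sigma(t)\subset G$, the map $t\mapsto K(\sigma(t).\varphi)$ has identically zero derivative, so it is constant. The conclusion $K(\sigma.\varphi)=K(\varphi)$ for every $\sigma\in G$ then follows from the connectedness of $G=Aut_0(M,J)$ and its generation by one-parameter subgroups. The only technical subtlety I anticipate is verifying that $\tau_X$ (suitably reparameterized) qualifies as a locally finite energy geodesic ray in the sense of Definition \ref{d5.3}, but since $\tau_X$ is smooth and arises from a one-parameter isometry action on $(\mathcal{H},d_1)$, this is routine; the only real content of the lemma is the two-sided semistability trick applied to $\pm X$.
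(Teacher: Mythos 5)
There is a genuine gap at the pivotal step: you assert that the orbit $\tau_X(t)$ of $\phi_0$ under $\sigma(t)=\exp(tX)$, after arclength reparameterization, ``yields a unit-speed locally finite energy geodesic ray.'' This is false for a general $X\in aut(M,J)$. The orbit of a potential under a one-parameter subgroup of $G$ is what the paper calls a \emph{holomorphic line}, and the paper is careful to distinguish holomorphic lines from geodesic rays: the Mabuchi geodesic equation $\ddot\varphi=|\nabla_\varphi\dot\varphi|_\varphi^2$ is satisfied along such an orbit essentially only when $X$ admits a \emph{real} holomorphy potential (the ``Hermitian'' directions in Donaldson's $G^{\mathbb C}/G$ picture); for a general holomorphic vector field the orbit is a smooth curve in $\mathcal{H}_0$ that is not a geodesic, just as the orbit of a point in $SL(2,\mathbb{C})/SU(2)$ under a generic one-parameter subgroup is not a geodesic. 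Since Definition \ref{d5.3} quantifies only over locally finite energy geodesic rays, semistability cannot be tested directly against $\tau_{\pm X}$, and your derivation of $\yen[\rho_{\pm X}]=\pm a/s\ge 0$ does not get off the ground. (The $\pm X$ observation itself is not the missing content — the paper uses the same two-sided reduction by considering $t\in[0,\infty)$ or $t\in(-\infty,0]$ according to the sign of $a$.)

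The paper's proof supplies exactly the bridge you are missing. Assuming $a=\frac{d}{dt}K(\varphi_t)<0$ along the holomorphic line $\varphi_t=\sigma(t).\varphi$, it first shows (i) $d_1(\varphi,\varphi_t)\to\infty$, via compactness (\cite{Darvas1602}, Corollary 4.8) and lower semicontinuity of $K$, since otherwise one would produce a potential with $K=-\infty$; and (ii) $d_1(\varphi,\varphi_t)\le Ct$, by checking that $\partial_t\varphi_t=\theta\circ\sigma(t)$ with no extra additive term, so the $d_1$-length grows at most linearly. It then connects $\phi_0$ to $\varphi_{t_k}$ by unit-speed finite energy geodesic segments $\rho_k$, uses convexity of $K$ along $\rho_k$ together with (ii) to get $K(\rho_k(s))\le \max(K(\phi_0),K(\varphi))+\frac{s a t_k}{Ct_k+d_1(\phi_0,\varphi)}$, and extracts a limiting unit-speed locally finite energy geodesic ray $\rho_\infty$ from $\phi_0$ with $\yen[\rho_\infty]\le a/C<0$, contradicting semistability at $\phi_0$. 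If you want to salvage your argument, you must either restrict to vector fields with real holomorphy potentials and then explain why vanishing of the Futaki character on those suffices, or run the limiting construction above; as written, the proof does not establish the lemma.
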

\begin{proof}
Let $\sigma\in G$, and let $\varphi\in\mathcal{H}_0$, we need to check $K(\varphi)=K(\sigma.\varphi)$.
Here $\sigma.\varphi$ is defined as in the beginning of section 3.
We will prove the desired result by showing that the Calabi-Futaki invariant must vanish.
To see why this implies our result, let $X$ be a holomorphic vector field and $\{\sigma(t)\}_{t\in\mathbb{R}}$ be the one-parameter family of holomorphic transformation generated by $Re(X)$, such that $\sigma$ lies inside the one-parameter subgroup $\{\sigma(t)\}_{t\in\mathbb{R}}$.
Define $\varphi_t:=\sigma(t).\varphi\in\mathcal{H}_0$. Then for any $t\in\mathbb{R}$ we have
\begin{equation}\label{4.1}
\frac{d K(\varphi_t)}{dt}=\int_M\partial_t\varphi(\underline{R}-R_{\varphi})dvol_{\varphi}=-\int_MRe(X)(\xi)dvol_{\varphi}=-Re\big(\mathcal{F}(X,[\omega_0])\big).
\end{equation}
In the above, $\xi$ is a function chosen so that $\Delta_{\varphi}\xi=R_{\varphi}-\underline{R}$.
$\mathcal{F}(X,[\omega_0])$ is the Calabi-Futaki invariant which depends only on $X$ and K\"ahler class $[\omega_0]$.
So the right hand side of (\ref{4.1}) is a constant. Our result immediately follows as long as we can show the following claim:
\begin{claim}
$$\frac{d}{dt}(K(\varphi_t))=0.$$
\end{claim}
To see the claim, we can assume that $\frac{d}{dt}(K(\varphi_t)):=a<0$, and consider the holomorphic ray $\{\varphi_t\}_{t\in[0,\infty)}$. If instead we have $a>0$, we can consider the holomorphic ray $\{\varphi_t\}_{t\in(-\infty,0]}$, and the same argument below applies.

First we show that $d_1(\varphi,\varphi_t)\rightarrow\infty$ as $t\rightarrow\infty$.
Indeed, we know that $K(\varphi_t)=K(\varphi)+at\leq K(\varphi)$.
 If there exists a sequence of $t_k\rightarrow\infty$, such that $\sup_kd_1(\varphi,\varphi_{t_k})<\infty$, then we may apply \cite{BBEGZ},
 Theorem 2.17, or \cite{Darvas1602}, Corollary 4.8 to conclude that there exists a subsequence $t_{k_l}$, and $\varphi_0\in\mathcal{E}^1$, such that $d_1(\varphi_{t_{k_l}}
,\varphi_0)\rightarrow0$.
But then from the lower semicontinuity of $K$-energy, we know that $K(\varphi_0)\leq\lim\inf_{l\rightarrow\infty}K(\varphi_{t_{k_l}})=-\infty$.
This is a contradiction.

Besides, we also have $d_1(\varphi,\varphi_t)\leq C t$ for some $C>0$. Indeed, if denote $\theta=\partial_t\varphi|_{t=0}$, then $\partial_t\varphi(t)=\theta(\sigma(t))$.
To see this, fix $t_0>0$, we can compute
\begin{equation*}
\begin{split}
\frac{d}{dt}&\big(\sigma(t)^*\omega_{\varphi}\big)|_{t=t_0}=\sqrt{-1}\partial\bar{\partial}\big(\partial_t\varphi|_{t=t_0}\big)=\frac{d}{dt}\sigma(t_0)^*\big(\sigma(t)^*\omega_{\varphi}\big)|_{t=0}=\sigma(t_0)^*(\sqrt{-1}\partial\bar{\partial}\theta)\\
&=\sqrt{-1}\partial\bar{\partial}\big(\theta\circ\sigma(t_0)\big).
\end{split}
\end{equation*}
Hence $\partial_t\varphi|_{t=t_0}=\theta\circ\sigma(t_0)+h(t_0)$, for some function $h$, with $h(0)=0$.
Then from the normalization $I(\varphi_t)=0$, we get
\begin{equation*}
0=\frac{d}{dt}I(\varphi_t)=\int_M\partial_t\varphi\frac{\omega_{\varphi_t}^n}{n!}=\int_M\big(\theta\circ\sigma(t)+h(t)\big)\sigma(t)^*\bigg(\frac{\omega_{\varphi}^n}{n!}\bigg)=\int_M\theta\frac{\omega_{\varphi}^n}{n!}+h(t)vol(M).
\end{equation*}
Since $h(0)=0$, we have $\int_M\theta\frac{\omega_{\varphi}^n}{n!}=0$, which implies $h(t)=0$ for all $t$. But then
$$
d_1(\varphi,\varphi_{\tau})\leq\int_0^{\tau}\int_M|\partial_t\varphi(t)|\frac{\omega_{\varphi_t}^n}{n!}dt=\tau\int_M|\theta|\frac{\omega_{\varphi}^n}{n!}.
$$

Let $t_k\nearrow\infty$ and let $\rho_k(s):[0,d_1(\phi_0,\varphi_{t_k})]\rightarrow \mathcal{E}_0^1$ be the unit speed finite energy geodesic connecting $\phi_0$ and $\varphi_{t_k}$.
Using the convexity of $K$-energy along $\rho_k$ (c.f. \cite{Ber14-01}), we know that for any $s\in[0,d_1(\phi_0,\varphi_{t_k})]$, 
\begin{equation}\label{4.2new}
\begin{split}
K(\rho_k(s))&\leq \big(1-\frac{s}{d_1(\phi_0,\varphi_{t_k})}\big)K(\phi_0)+\frac{s}{d_1(\phi_0,\varphi_{t_k})}K(\varphi_{t_k})\\
&= \big(1-\frac{s}{d_1(\phi_0,\varphi_{t_k})}\big)K(\phi_0)+\frac{s(K(\varphi)+at_k)}{d_1(\phi_0,\varphi_{t_k})}\\
&\leq \max(K(\phi_0),K(\varphi))+\frac{sat_k}{d_1(\phi_0,\varphi_{t_k})}\\
&\leq \max(K(\phi_0),K(\varphi))+\frac{sat_k}{d_1(\varphi,\varphi_{t_k})+d_1(\phi_0,\varphi)}\\
&\leq \max(K(\phi_0),K(\varphi))+\frac{sat_k}{Ct_k+d_1(\phi_0,\varphi)}
\end{split}
\end{equation}
In the first line of (\ref{4.2new}), we used the convexity of $K$-energy along $\rho_k$. 
From the first to the second line, we used that $K(\varphi_{t_k})=K(\varphi)+t_ka$.
From the third to the forth line, we used triangle inequality for $d_1$ and also $a<0$.
From the forth line to the last line, we used $d_1(\varphi,\varphi_{t_k})\leq Ct_k$.

In particular, for each fixed $s$, the $K$-energy is bounded from above, uniform in $k$. Hence we can use the compactness result \cite{Darvas1602}, Corollary 4.8 to conclude there exists a subsequence $\rho_{k_l}(s)$ which converges under $d_1$ distance.
Then we may apply the same argument as in \cite{cc2}, Lemma 6.3 to conclude there exists a subsequence $k_l$, such that for all $s\geq0$, $\rho_{k_l}(s)$ converges under $d_1$ distance. And the limit, denoted as $\rho_{\infty}(s)$, is a unit speed locally finite energy geodesic ray initiating from $\phi_0$.
Using the lower semicontinuity of $K$-energy, we obtain from (\ref{4.2new}):
\begin{equation*}
K(\rho_{\infty}(s))\leq\lim\inf_lK(\rho_{k_l}(s))\leq \max(K(\varphi),K(\phi_0))+\frac{sa}{C}.
\end{equation*}
Hence we get 
\begin{equation*}
\yen[\rho_{\infty}]=\lim_{s\rightarrow\infty}\frac{K(\rho(s))}{s}\leq \frac{a}{C}<0.
\end{equation*}
This contradicts the geodesic semistability.
\end{proof}

As a preliminary step, we show that $(2)$ implies $K$-energy is bounded from below.
\begin{prop}\label{p5.4}
Under the assumption of point (2) of Theorem \ref{t5.1}, we have that $K$-energy is bounded from below.
\end{prop}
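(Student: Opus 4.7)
The plan is to argue by contradiction: assume there is a sequence $\varphi_k\in\mathcal{H}_0$ with $K(\varphi_k)\to-\infty$, and derive a contradiction from geodesic stability at $\phi_0$ by manufacturing a unit-speed locally finite energy geodesic ray from $\phi_0$ whose $\yen$ invariant is non-positive.

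The first step uses the $G$-invariance of $K$ from Lemma~\ref{l4.1new}. Since the infimum defining $d_{1,G}(\phi_0,\varphi_k)$ is attained (Lemma~2.2 of \cite{CPZ}), I replace $\varphi_k$ by $\tilde\varphi_k=\sigma_k.\varphi_k$ with $d_1(\phi_0,\tilde\varphi_k)=d_{1,G}(\phi_0,\varphi_k)$; the sequence still satisfies $K(\tilde\varphi_k)\to-\infty$. Set $d_k:=d_1(\phi_0,\tilde\varphi_k)$. If $\{d_k\}$ stays bounded, the compactness theorem (\cite{Darvas1602}, Corollary~4.8) extracts a $d_1$-limit $\varphi_\infty\in\mathcal{E}^1$, and lower semicontinuity of $K$ forces the impossible $K(\varphi_\infty)=-\infty$. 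Hence $d_k\to\infty$.

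Next I construct the limit ray. Let $\rho_k\colon[0,d_k]\to\mathcal{E}_0^1$ be the unit-speed finite energy geodesic from $\phi_0$ to $\tilde\varphi_k$. Convexity of $K$ along $\rho_k$ gives $K(\rho_k(s))\leq(1-s/d_k)K(\phi_0)+(s/d_k)K(\tilde\varphi_k)$, which for each fixed $s$ is bounded above uniformly in $k$ (in fact by $K(\phi_0)$ once $k$ is large). Applying the compactness-plus-diagonalization scheme of Lemma~\ref{l4.1new} (cf.\ Lemma~6.3 of \cite{cc2}), I extract a subsequence with $\rho_k(s)\to\rho_\infty(s)$ in $d_1$ for every $s\geq 0$, where $\rho_\infty$ is a unit-speed locally finite energy geodesic ray from $\phi_0$. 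Passing to a further subsequence so that $c:=\lim_k K(\tilde\varphi_k)/d_k\in[-\infty,0]$ exists, lower semicontinuity yields $K(\rho_\infty(s))\leq K(\phi_0)+cs$; since $K$ is bounded below by $J_{-\mathrm{Ric}}(\cdot)$ on $\mathcal{E}^1$ the case $c=-\infty$ is immediately absurd, so $c$ is finite and $\yen[\rho_\infty]\leq c\leq 0$. If $c<0$ this already contradicts geodesic semistability.

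The main obstacle is the borderline case $c=0$, where geodesic stability only forces $\rho_\infty$ to be parallel to some holomorphic ray $\rho'$, i.e.\ $\sup_t d_1(\rho_\infty(t),\rho'(t))<\infty$. Here the choice of $\tilde\varphi_k$ as a $d_{1,G}$-minimizer becomes essential. Since $d_1(\phi_0,\tilde\varphi_k)=d_{1,G}(\phi_0,\tilde\varphi_k)=d_k$, combining $d_{1,G}\leq d_1$ (so $d_{1,G}(\phi_0,\rho_k(s))\leq s$) with the reverse triangle inequality $d_{1,G}(\phi_0,\rho_k(s))\geq d_k-d_1(\rho_k(s),\tilde\varphi_k)=s$ yields $d_{1,G}(\phi_0,\rho_k(s))=s$; passing to the limit in $k$ gives $d_{1,G}(\phi_0,\rho_\infty(s))=s\to\infty$. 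But $\rho'(t)=\sigma_t.\rho'(0)$ lies in a single $G$-orbit, so $d_{1,G}(\phi_0,\rho'(t))$ is constant in $t$, and the parallelism then bounds $d_{1,G}(\phi_0,\rho_\infty(s))\leq d_{1,G}(\phi_0,\rho'(0))+\sup_t d_1(\rho_\infty(t),\rho'(t))$ uniformly in $s$, giving the required contradiction.
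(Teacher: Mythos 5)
Your proof is correct and follows essentially the same route as the paper: pass to $G$-translated (near-)minimizers of $d_{1,G}$, split into the bounded and unbounded cases, build a limit geodesic ray from $\phi_0$ with $\yen\leq 0$ via convexity and lower semicontinuity, and rule out the parallel-to-holomorphic alternative by showing the ray is $d_{1,G}$-unbounded while any holomorphic ray stays in a single $G$-orbit. The only cosmetic difference is that you invoke exact attainment of the infimum defining $d_{1,G}$ (via Lemma 2.2 of \cite{CPZ}, whose applicability to the orbit of an arbitrary potential is not entirely spelled out in the paper), whereas the paper simply takes $\sigma_i$ realizing the infimum up to $\eps=1$ and proves the corresponding lower bound $d_{1,G}(\phi_0,\rho_i(t))\geq t-1$, which suffices for the identical contradiction.
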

\begin{proof}
Suppose otherwise, then there exists a sequence of potentials $\tilde{\varphi}_i\in\mathcal{E}_0^1$, such that $K(\tilde{\varphi}_i)\rightarrow-\infty$. We can choose $\sigma_i\in G$, such that for $\varphi_i:=\sigma_i.\tilde{\varphi}_i\in\mathcal{E}_0^1$, we have $d_{1,G}(\phi_0,\tilde{\varphi}_i)\leq d_1(\phi_0,\varphi_i)\leq d_{1,G}(\phi_0,\tilde{\varphi}_i)+1$.
Because we have shown $K$-energy is invariant under $G$, we know $K(\varphi_i)\rightarrow-\infty$ as well. Next we distinguish two cases and we show there is contradiction in both cases.

(1)$\sup_id_1(\phi_0,\varphi_i)<\infty$. We can invoke
\cite{BBEGZ}, Theorem 2.17, or \cite{Darvas1602}, Corollary 4.8 to conclude that there exists a subsequence $\varphi_{i_k}\stackrel{d_1}{\rightarrow}\psi\in\mathcal{E}^1$. Because of lower semicontinuity of $K$-energy (c.f. \cite{Darvas1602}, Theorem 4.7), we see that $K(\psi)\leq\lim\inf_{i_k}K(\varphi_{i_k})=-\infty$. This is not possible.

(2)$\sup_id_1(\phi_0,\varphi_i)=\infty$. Without loss of generality, we can assume $d_1(\phi_0,\varphi_i)\rightarrow\infty$.
Let $\rho_i:[0,d_1(\phi_0,\varphi_i)]\rightarrow\mathcal{E}_0^1$ be unit speed geodesic segment connecting $\phi_0$ with $\varphi_i$. Since $K$-energy is convex along $\rho_i$ (c.f. \cite{Darvas1602}, Theorem 4.7), we conclude that for any $t\in[0,d_1(\phi_0,\varphi_i)]$,
\begin{equation}\label{5.1}
K(\rho_i(t))\leq(1-\frac{t}{d_1(\phi_0,\varphi_i)})K(\phi_0)+\frac{t}{d_1(\phi_0,\varphi_i)}K(\varphi_i)\leq\max(K(\phi_0),K(\varphi_i)).
\end{equation}
Hence for each fixed $t>0$, we may apply \cite{Darvas1602}, Corollary 4.8 to conclude there exists a subsequence, denoted as $i_k$, such that $\rho_{i_k}(t)$ converges under $d_1$.
Repeat the argument of Lemma 5.3 of \cite{cc2}, one can actually conclude it is possible to take a subsequence $i_k$, such that $\rho_{i_k}(t)$ converges for all $t\in\mathbb{R}$, and the limit $\rho_{\infty}(t)$ is a unit speed locally finite energy geodesic ray(first use Cantor's process to get a subsequence which converges for all $t\in\mathbb{Q}$, then use geodesic property to extend to $t\in\mathbb{R}$).
Also because of lower semicontinuity of $K$-energy and (\ref{5.1}), we actully have $K$-energy is uniformly bounded from above 
on $\rho_{\infty}$. Due to convexity, the alternative (1) in Definition \ref{d5.3} cannot hold for $\rho_{\infty}$.
Hence $\rho_{\infty}$ must be in the second alternative, which means, $\rho_{\infty}$ is parallel to a geodesic ray $\rho'$, which is generated from a holomorphic vector field. This implies $\rho_{\infty}$ is $d_{1,G}$ bounded. 
Indeed, for any $t>0$,
\begin{equation*}
\begin{split}
d_{1,G}(\rho_{\infty}(0),\rho_{\infty}(t))&\leq d_{1,G}(\rho_{\infty}(0),\rho'(0))+d_{1,G}(\rho'(0),\rho'(t))+d_{1,G}(\rho'(t),\rho_{\infty}(t))\\
&\leq d_1(\rho_{\infty}(0),\rho'(0))+\sup_{t>0}d_1(\rho'(t),\rho_{\infty}(t)).
\end{split}
\end{equation*}
In the above, note that $d_{1,G}(\rho'(0),\rho'(t))=0$ since $\rho'$ is generated from a one-parameter family of holomorphic automorphism. 
Also we have $\sup_{t>0}d_1(\rho'(t),\rho_{\infty}(t))<\infty$ since $\rho'$ and $\rho_{\infty}$ are parallel.

 On the other hand, due to the following lemma, we know that $d_{1,G}(\rho_i(t),\phi_0)\geq t-1$, for any $t\in[1,d_1(\phi_0,\varphi_i)]$.
Therefore, 
\begin{equation*}
\begin{split}
d_{1,G}(\rho_{\infty}(t),\phi_0)&\geq d_{1,G}(\rho_i(t),\phi_0)-d_{1,G}(\rho_i(t),\rho_{\infty}(t))\geq t-1-d_1(\rho_i(t),\rho_{\infty}(t))\\
&\rightarrow t-1,\textrm{ as $i\rightarrow\infty$.}
\end{split}
\end{equation*}
This contradicts that $\rho_{\infty}$ is $d_{1,G}$ bounded.
\end{proof}
Above proof involves the use of the following lemma:
\begin{lem}
Let $\varphi$, $\psi\in\mathcal{E}_0^1$.
Suppose that for some $\eps>0$, we have $d_1(\varphi,\psi)\leq d_{1,G}(\varphi,\psi)+\eps$. Let $\rho:[0,K]\rightarrow\mathcal{E}_0^1$ be a finite energy geodesic connecting $\varphi$ and $\psi$, then we have $d_{1,G}(\varphi, \rho(t))\geq d_1(\varphi,\rho(t))-\eps$.
\end{lem}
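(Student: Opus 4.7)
The plan is to argue by contradiction using the fact that $G$ acts on $\mathcal{E}_0^1$ by $d_1$-isometries, combined with the defining triangle identity for points on a geodesic. Specifically, I will assume for the sake of contradiction that $d_{1,G}(\varphi, \rho(t)) < d_1(\varphi,\rho(t)) - \eps$ for some $t \in [0,K]$, and then construct an element of $G$ that moves $\psi$ strictly closer to $\varphi$ than allowed by the hypothesis $d_1(\varphi,\psi) \le d_{1,G}(\varphi,\psi) + \eps$.

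Under this contradiction hypothesis, there exists $\sigma \in G$ such that
$$d_1(\varphi, \sigma.\rho(t)) < d_1(\varphi, \rho(t)) - \eps.$$
Since $G$ acts by isometry (see the computation right before (\ref{3.2})), we have $d_1(\sigma.\rho(t), \sigma.\psi) = d_1(\rho(t), \psi)$. Applying the triangle inequality to $\varphi$, $\sigma.\rho(t)$, and $\sigma.\psi$, and then using the geodesic property of $\rho$ (so $d_1(\varphi, \rho(t)) + d_1(\rho(t), \psi) = d_1(\varphi, \psi)$), I get
$$
d_1(\varphi, \sigma.\psi) \le d_1(\varphi, \sigma.\rho(t)) + d_1(\rho(t), \psi) < d_1(\varphi, \rho(t)) - \eps + d_1(\rho(t), \psi) = d_1(\varphi,\psi) - \eps.
$$

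By the definition of $d_{1,G}$ in (\ref{3.2}), $d_{1,G}(\varphi,\psi) \le d_1(\varphi, \sigma.\psi)$, so the preceding chain gives $d_{1,G}(\varphi,\psi) < d_1(\varphi,\psi) - \eps$, contradicting the standing hypothesis $d_1(\varphi,\psi) \le d_{1,G}(\varphi,\psi) + \eps$. Hence no such $t$ exists, and $d_{1,G}(\varphi,\rho(t)) \ge d_1(\varphi,\rho(t)) - \eps$ for every $t \in [0,K]$.

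There is no real obstacle in this argument — the only subtlety is verifying that geodesics in $\mathcal{E}_0^1$ do satisfy additivity of $d_1$ along them (this is part of the defining metric-geodesic property used throughout \cite{cc2}) and that the $G$-action is by $d_1$-isometries, which was recorded just before equation (\ref{3.2}).
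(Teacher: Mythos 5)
Your proof is correct and is essentially the paper's own argument, just phrased contrapositively: the same chain (triangle inequality through $\sigma.\rho(t)$, the $d_1$-isometry of the $G$-action, additivity of $d_1$ along the geodesic, and the standing hypothesis) appears in the paper as a direct inequality for arbitrary $\sigma\in G$ rather than via contradiction. No substantive difference.
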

\begin{proof}
Let $\sigma\in G$ be arbitrary, we need to show 
\begin{equation}\label{5.2}
d_1(\varphi,\sigma.\rho(t))\geq d_1(\varphi,\rho(t))-\eps.
\end{equation}
Indeed,
\begin{equation*}
\begin{split}
d_{1,G}&(\varphi,\psi)\leq d_1(\varphi,\sigma.\psi)\leq d_1(\varphi,\sigma.\rho(t))+d_1(\sigma.\rho(t),\sigma.\psi)\\
&=d_1(\varphi,\sigma.\rho(t))+d_1(\rho(t),\psi)=d_1(\varphi,\sigma.\rho(t))+d_1(\varphi,\psi)-d_1(\varphi,\rho(t))\\
&\leq d_1(\varphi,\sigma.\rho(t))+d_{1,G}(\varphi,\psi)+\eps-d_1(\varphi,\rho(t)).
\end{split}
\end{equation*}
In the first equality of the second line, we use that $G$ is $d_1$-isometry.
In the second equality, we use that $\rho(t)$ is a geodesic.
In the last inequality, we use our assumption.
(\ref{5.2}) immediately follows from this calculation.
\end{proof}
With this preparation, we are ready to prove $(2)\Rightarrow(1)$.
\begin{proof}
Consider the continuity path (\ref{3.5}). Since we have shown $K$-energy is bounded from below, we know from Theorem 1.6 of \cite{cc2} to conclude that (\ref{3.5}) can be solved for any $t<1$(This follows from the properness of twisted $K$-energy $tK+(1-t)J_{\omega_0}$).

Let $t_i\nearrow1$, and let $\tilde{\varphi}_i$ be solution to (\ref{3.5}). We distinguish two cases:

(1)$\sup_id_{1,G}(\phi_0,\tilde{\varphi}_i)<\infty$.
Since we have shown $K$-energy is invariant under the action of $G$ in Lemma \ref{l4.1new}, Proposition \ref{l3.17} applies and we are done.

(2)$\sup_id_{1,G}(\phi_0,\tilde{\varphi}_i)=\infty$. We will show contradiction occurs in this case.
Without loss of generality, we may assume $d_{1,G}(\phi_0,\tilde{\varphi}_i)\rightarrow\infty$.
We may find $\sigma_i\in G$, such that for $\varphi_i=\sigma_i.\tilde{\varphi}_i$, we have $d_{1,G}(\phi_0,\tilde{\varphi}_i)\leq d_1(\phi_0,\varphi_i)\leq d_{1,G}(\phi_0,\tilde{\varphi}_i)+1$.
From Lemma \ref{l3.6}, we know that in particular $\sup_iK(\tilde{\varphi}_i)<\infty$.
From $G$-invariance of $K$-energy, we know that $\sup_iK(\varphi_i)<\infty$.
From now on, the argument is very similar to Proposition \ref{p5.4}. Indeed, let $\rho_i$ be the unit speed finite energy geodesic connecting $\phi_0$, $\varphi_i$.
From the convexity of $K$-energy, we see that $K$-energy is uniformly bounded from above on $\rho_i$(independent of $i$).
Hence we may take limit and get a geodesic ray $\rho_{\infty}$ initiating from $\phi_0$, on which the $K$-energy is decreasing.
Hence the first alternative in Definition \ref{d5.3} fails for $\rho_{\infty}$.
On the other hand, the argument of Proposition \ref{p5.4} shows that $\rho_{\infty}$ is $d_{1,G}$ unbounded. Hence the second alternative in Definition \ref{d5.3} fails as well.
Therefore $\rho_{\infty}$ violates geodesic stability at $\phi_0$.
\end{proof}
\begin{rem}\label{r4.5}
In the proof for existence, we observe that one can weaken the second alternative in Definition \ref{d5.3} to only assume this geodesic ray is $d_{1,G}$ bounded.
\end{rem}

Next we will move on to show the implication $(1)\Rightarrow(3)$.

\begin{proof}
(of $(1)\Rightarrow(3)$)

Without loss generality, we may assume $\omega_0$ itself is cscK.
By the main result of \cite{Darvas1605} and \cite{Dar-Rub-17}, the existence of cscK metric implies that $K$-energy is $G$-invariant and  $K(\varphi)\geq Cd_{1,G}(0,\varphi)-D$, for some constant $C>0$, $D>0$.

Let $\phi\in\mathcal{E}_0^1$ be such that $K(\phi)<\infty$ and let $\rho:[0,\infty)\rightarrow\mathcal{E}_0^1$ be a geodesic ray initiating from $\phi$. There is no loss of generality to assume it is of unit speed. Namely $d_1(\rho(s),\rho(t))=|s-t|$, for any $s,\,t\geq0$. Again we distinguish two cases:

(1)$K$-energy is unbounded from above on $\rho$.
Since $K$-energy is convex on $\rho$, we see that we are in the first alternative of Definition \ref{d5.3}.

(2)$K$-energy is bounded from above on $\rho$. We need to argue that we are in the second alternative of Definition \ref{d5.3}.
Actually we will show that $\rho$ is parallel to a geodesic ray which initiates from $0$ and consists of minimizers of $K$-energy. From the main result of section 5 of our second paper \cite{cc2} (or alternatively from main results of \cite{Darvas1605}), we know the ray consists of cscK potentials.
Then the uniqueness result of \cite{Ber14-01}(Theorem 1.3) applies and shows they differ from each other by a holomorphic transformation.

Let $t_k>0$ be such that $t_k\rightarrow\infty$.
Let $r_k:[0,d_1(0,\rho(t_k))]\rightarrow\mathcal{E}_0^1$ be the unit speed finite energy geodesic segment connecting $0$ and $\rho(t_k)$. Due to the convexity of $K$-energy along $r_k$ and cscK are minimizers of $K$-energy, we know for $t\in[0,d_1(0,\rho(t_k))]$,
\begin{equation}\label{5.3}
\begin{split}
K(r_k(t))&\leq(1-\frac{t}{d_1(0,\rho(t_k))})K(0)+\frac{t}{d_1(0,\rho(t_k))}K(\rho(t_k))\\
& \leq(1-\frac{t}{d_1(0,\rho(t_k))})\inf_{\mathcal{E}^1}K+\frac{t}{d_1(0,\rho(t_k))}\sup_{t\geq0}K(\rho(t)).
\end{split}
\end{equation}
In particular, this shows that $K$-energy is uniformly bounded from above, independent of $k$ and $t$. 
Hence we may repeat the argument of Lemma 5.3 in \cite{cc2}(in particular the compactness result \cite{Darvas1602}, Corollary 4.8) to conclude that one may take a subsequence, denoted as $k_l$, such that $r_{k_l}(t)\rightarrow r_{\infty}(t)$ for any $t\geq0$, and $r_{\infty}(t)$ is a locally finite energy geodesic ray with unit speed.
Now one can replace $k$ by $k_l$ in (\ref{5.3}) and take the limit $k_l\rightarrow\infty$, we see that
\begin{equation}
K(r_{\infty}(t))\leq\lim\inf_{k_l} K(r_{k_l}(t))\leq \inf_{\mathcal{E}^1}K,\textrm{ for any $t\geq0$.}
\end{equation} 
This again uses lower semicontinuity of $K$-energy with respect to $d_1$-convergence (c.f. \cite{Darvas1602}, Theorem 4.7).
So we get $r_{\infty}$ is a unit speed geodesic ray consisting of minimizers of $K$-energy.
The only matter left is to show $r_{\infty}$ and $\rho$ are parallel. We prove this in the following lemma.
\end{proof}
\begin{lem}\label{l4.6}
Let $\rho:[0,\infty)\rightarrow\mathcal{E}_0^1$ be a locally finite energy geodesic ray with unit speed.
Let $t_k\nearrow\infty$, $\phi\in\mathcal{E}_0^1$, and $r_k:[0,d_1(\phi,\rho(t_k))]\rightarrow\mathcal{E}_0^1$ be the finite energy geodesic connecting $\phi$ and $\rho(t_k)$ with unit speed. Suppose $r_k(t)\rightarrow r_{\infty}(t)$ as $k\rightarrow \infty$ in $d_1$, for any $t\geq0$.
Then $r_{\infty}$ is a locally finite energy geodesic with unit speed parallel to $\rho$.
\end{lem}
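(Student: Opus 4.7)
The plan is to first verify that $r_\infty$ is a unit speed locally finite energy geodesic ray, and then use Theorem \ref{t5.1new} with $p=1$ (the quasi-convexity of the $d_1$ distance along pairs of finite energy geodesics) to uniformly bound $d_1(\rho(s), r_\infty(s))$ in $s$.

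For the geodesic property of $r_\infty$, let $L_k := d_1(\phi, \rho(t_k))$, which is the length of the parameter domain of $r_k$. By the triangle inequality and unit-speed of $\rho$, $|L_k - t_k| \leq d_1(\phi, \rho(0))$, so $L_k \to \infty$ and $a_k := L_k/t_k \to 1$. Since each $r_k$ is a unit-speed finite energy geodesic on $[0, L_k]$, for $s_1, s_2 \geq 0$ with $s_1, s_2 \leq L_k$ we have $d_1(r_k(s_1), r_k(s_2)) = |s_1 - s_2|$; letting $k\to\infty$ and using $r_k(s)\stackrel{d_1}{\to} r_\infty(s)$ pointwise preserves this identity, so $r_\infty$ is a unit-speed curve on $[0,\infty)$ which, on every compact subinterval, realizes the $d_1$ distance between its endpoints. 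Combined with $d_1$-completeness of $\mathcal E^1$, this exhibits $r_\infty$ as a locally finite energy geodesic ray.

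For the parallelism, fix $s \geq 0$ and pick $k$ so large that $s < t_k$. Apply Theorem \ref{t5.1new} to the two finite energy geodesics
$$u \in [0,1] \; \longmapsto \; \phi_{0,u} := \rho(u\,t_k), \qquad u \in [0,1] \; \longmapsto \; \phi_{1,u} := r_k(u\,L_k),$$
connecting $\rho(0)$ to $\rho(t_k)$ and $\phi$ to $\rho(t_k)$ respectively. Setting $u = s/t_k \in [0,1)$, the theorem yields
$$d_1\bigl(\rho(s),\, r_k(s\,a_k)\bigr) \;\leq\; \Bigl(1-\tfrac{s}{t_k}\Bigr)d_1(\rho(0),\phi) + \tfrac{s}{t_k}\cdot 0 \;\leq\; d_1(\rho(0),\phi).$$
Now let $k \to \infty$. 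Since $r_k$ is unit speed, $d_1(r_k(s\,a_k), r_k(s)) = s|a_k - 1| \to 0$, and by hypothesis $d_1(r_k(s), r_\infty(s)) \to 0$; hence $r_k(s\,a_k) \stackrel{d_1}{\to} r_\infty(s)$, and passing to the limit gives
$$d_1(\rho(s), r_\infty(s)) \;\leq\; d_1(\rho(0), \phi)$$
for every $s \geq 0$. This is the desired parallelism.

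The main technical point is matching parametrizations: $\rho$ has unit speed on $[0,t_k]$ while $r_k$ has unit speed on $[0, L_k]$, and these intervals do not coincide. The rescaling trick via $u = s/t_k$ in Theorem \ref{t5.1new} is clean only because a priori we know $L_k$ and $t_k$ are comparable up to the fixed constant $d_1(\phi,\rho(0))$, so the reparametrization factor $a_k$ tends to $1$ and introduces only a vanishing error term $s|a_k-1|$ in the limit. Without this quantitative comparison between $L_k$ and $t_k$ one could not identify the limit of $r_k(s\,a_k)$ with $r_\infty(s)$, so this step is the real content of the argument.
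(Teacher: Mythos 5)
Your proposal is correct and follows essentially the same route as the paper: both apply the convexity estimate of Theorem \ref{t5.1new} (restated as Theorem \ref{t6.1} in the Appendix) to the two geodesic segments ending at the common point $\rho(t_k)$, and both handle the parametrization mismatch between $[0,t_k]$ and $[0,L_k]$ using the unit-speed property together with $|L_k-t_k|\leq d_1(\phi,\rho(0))$. The only cosmetic difference is that you let the mismatch term $s|a_k-1|$ vanish as $k\to\infty$ for fixed $s$, obtaining the bound $d_1(\rho(0),\phi)$, whereas the paper bounds $|s-t|$ uniformly by $d_1(\rho(0),\phi)$ and settles for the constant $2d_1(\phi,\rho(0))$.
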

\begin{proof}
That $r_{\infty}$ is a unit speed locally finite energy geodesic follows the argument in Lemma 5.3 of \cite{cc2}.
It only remains to show that $r_{\infty}$ and $\rho$ are parallel.

Fix $t>0$, we may take $t_k$ sufficiently large so that $t_k\geq t+d_1(\phi,\rho(0))$. Define $s$ so as to satisfy
$$
\frac{t}{t_k}=\frac{s}{d_1(\phi,\rho(t_k))}.
$$
Observe that 
\begin{equation}\label{5.5}
d_1(\rho(t),r_k(t))\leq d_1(\rho(t),r_k(s))+d_1(r_k(s),r_k(t))=d_1(\rho(t),r_k(s))+|s-t|.
\end{equation}
Now
\begin{equation}\label{5.6}
\begin{split}
|s-t|&=t\frac{|t_k-d_1(\phi,\rho(t_k))|}{d_1(\phi,\rho(t_k))}=t\frac{|d_1(\rho(0),\rho(t_k))-d_1(\phi,\rho(t_k))|}{d_1(\phi,\rho(t_k))}\\
&\leq t\frac{d_1(\rho(0),\phi)}{d_1(\phi,\rho(t_k))}\leq t\frac{d_1(\rho(0),\phi)}{t_k-d_1(\rho(0),\phi)}\leq d_1(\rho(0),\phi).
\end{split}
\end{equation}
Hence it only remains to bound $d_1(\rho(t),r_k(s))$.
For this we consider the reparametrization: for $\tau\in[0,1]$, define $\tilde{\rho}(\tau)=\rho\big((1-\tau)t_k\big)$, $\tilde{r}_k(\tau)=r_k\big((1-\tau)d_1(\phi,\rho(t_k))\big)$.

First we consider the case where one has $\phi$, $\rho(0)\in\mathcal{E}^2$. 
The main result of \cite{calabi-chen} and also the extension in \cite{Mabuchi-completion} shows that $(\mathcal{E}^2,d_2)$ is non-positively curved.
 Hence
$$
d_1(\tilde{\rho}(\tau),\tilde{r}_k(\tau))\leq d_2(\tilde{\rho}(\tau),\tilde{r}_k(\tau))\leq\tau d_2(\tilde{\rho}(1),\tilde{r}_k(1))=\tau d_2(\rho(0),\phi),\textrm{ for any $\tau\in[0,1]$.}
$$
Now we take $\tau=1-\frac{t}{t_k}$ to conclude
\begin{equation}\label{5.7}
d_1\big(\rho(t),r_k\big(tt_k^{-1}d_1(\phi,\rho(t_k))\big)=d_1(\rho(t),r_k(s))\leq \big(1-\frac{t}{t_k}\big)d_2(\rho(0),\phi)\leq d_2(\rho(0),\phi).
\end{equation}
Combining (\ref{5.5}), (\ref{5.6}), (\ref{5.7}), we conclude that $d_1(\rho(t),r_k(t))\leq 2d_2(\phi,\rho(0))$ for all $t_k$ sufficiently large.
We can send $k\rightarrow\infty$ and use that $r_k(t)\rightarrow r_{\infty}(t)$ in $d_1$ to conclude that
$$
d_1(\rho(t),r_{\infty}(t))\leq 2d_2(\phi,\rho(0)).
$$
In the general case where we don't assume that $\rho(0)$ or $\phi\in\mathcal{E}^2$, we need to use Theorem \ref{t6.1} to conclude
\begin{equation}
d_1(\tilde{\rho}(\tau),\tilde{r}_k(\tau))\leq \tau d_1(\tilde{\rho}(1),\tilde{r_k}(1))=\tau d_1(\rho(0),\phi).
\end{equation}
Then the rest of the above argument goes through but we no longer need to use $d_2$ distance.
\end{proof}

Next we will prove Theorem \ref{t1.4new}, as an application of equivalence between geodesic stability and existence of cscK metric. Again observe that the implication $(3)\Rightarrow(2)$ is trivial.
It only remains to show the implications $(2)\Rightarrow(1)$ and $(1)\Rightarrow(3)$.
\begin{proof}
(of Theorem \ref{t1.4new})
First we show $(2)\Rightarrow(1)$. 
If Calabi-Futaki invariant is nonzero, then we know cscK metric cannot exist.

In the other case,
let $\rho:[0,\infty)\rightarrow\mathcal{E}_0^1$ be such a geodesic ray as described in $(2)$, initiating from $\varphi$.
We show that this geodesic ray violates the geodesic stability at $\varphi$. Indeed, since $K$-energy is non-increasing on $\rho$, we have $\yen[\rho]\leq0$.

If $\yen[\rho]<0$, then it violates both alternatives in Definition \ref{d5.3}.

If $\yen[\rho]=0$, then Definition \ref{d5.3} requires $\rho$ to be parallel to a geodesic ray generated from a holomorphic vector field, but we assumed this is not the case.

Next we show $(1)\Rightarrow(3)$. 
If Calabi-Futaki invariant is nonzero, then (3) already holds. Now suppose this invariant is zero and there exists $\varphi\in\mathcal{E}_0^1$, such that all geodesic rays either have $K$-energy unbounded from above, or is parallel to a holomorphic ray.
Observe that Calabi-Futaki invariant being zero means $K$-energy is $G$-invariant.
Also for all geodesic rays $\rho$ initiating from $\varphi$, either $\yen[\rho]>0$(when $K$-energy is unbounded), or $\rho$ is bounded under $d_{1,G}$, when $\rho$ is parallel to a holomorphic ray, following the argument of Proposition \ref{p5.4}.
As observed in Remark \ref{r4.5}, this is sufficient to imply cscK metric exists.
\end{proof}

Finally we prove Theorem \ref{t1.6}.
\begin{proof}
(of Theorem \ref{t1.6})
First we assume that $(M,[\omega_0])$ is geodesic semistable.
Fix $0<t_0<1$, if there is no solution to the twisted equation $t_0(R_{\varphi}-\underline{R})=(1-t_0)(tr_{\varphi}\omega_0-n)$, then we can apply Corollary 6.4 of the second paper \cite{cc2} to conclude there exists a locally finite energy geodesic ray with unit speed $\rho(s):[0,\infty)\rightarrow\mathcal{E}_0^1$, such that $K_{\omega_0,t_0}=t_0K+(1-t_0)J_{\omega_0}$ is non-increasing along $\rho$.
On the other hand, from \cite{CoSz}, Proposition 21, we know that $J_{\omega_0}(\varphi)\geq Cd_1(0,\varphi)-D$, for some constant $C,\,D>0$ and any $\varphi\in\mathcal{H}_0^1$.
This implies 
$$
K_{\omega_0,t_0}(\rho(0))\geq K_{\omega_0,t_0}(\rho(s))\geq t_0K(\rho(s))+(1-t_0)Cs-(1-t_0)D.
$$
This means $\yen[\rho]\leq-\frac{C(1-t_0)}{t_0}<0$, contradicting the geodesic semistability.

Then we assume that the twisted equation can be solved for any $0<t<1$. Since we know the solutions are minimizers of the twisted $K$-energy (c.f. \cite{cc2}, Corollary 4.5), we see that $K_{\omega_0,t_0}$ are bounded from below.
From this we can conclude that for any locally finite energy geodesic ray,
\begin{equation*}
\begin{split}
-C_{t_0}&\leq t_0K(\rho(s))+(1-t_0)J_{\omega_0}
(\rho(s))\leq t_0K(\rho(s))+(1-t_0)C'd_1(0,\rho(s))\\
&\leq t_0K(\rho(s))+(1-t_0)C'(d_1(0,\rho(0))+s).
\end{split}
\end{equation*}
In the second inequality above, we used Lemma 4.4 of our second paper \cite{cc2}. Here $C'$ depends only on the background metric $\omega_0$.
In the last inequality, we use that $\rho(s)$ is of unit speed.

Hence 
\begin{equation*}
\yen[\rho]=\lim_{s\rightarrow\infty}\frac{K(\rho(s))}{s}\geq-\frac{(1-t_0)C'}{t_0}.
\end{equation*}
Since $t_0<1$ is arbitrary, we actually have $\yen[\rho]\geq0$.
\end{proof}

\section{Appendix}
Our goal in the Appendix is to prove the following result, which is used in the proof of Theorem \ref{t5.1}.
\begin{thm}\label{t6.1}
Let $1\leq p<\infty$. Let $\phi_0$, $\phi_0'$, $\phi_1$, $\phi_1'\in \mathcal{E}^p$.
Denote $\{\phi_{0,t}\}_{t\in[0,1]}$, $\{\phi_{1,t}\}_{t\in[0,1]}$ be two finite energy geodesics, such that $\phi_{0,t}$ connects $\phi_0$ and $\phi_0'$, $\phi_{1,t}$ connects $\phi_1$ and $\phi_1'$.
Then we have
$$
d_p(\phi_{0,t},\phi_{1,t})\leq (1-t)d_p(\phi_0,\phi_1)+td_p(\phi_0',\phi_1').
$$
\end{thm}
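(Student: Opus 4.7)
My plan is to proceed by a two-stage approximation: first reduce to smooth boundary data, then prove the smooth case by separating $p=2$ (via NPC) from $p\neq 2$ (via $\epsilon$-geodesics and a maximum-principle argument).

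First, approximate the four endpoints $\phi_0, \phi_0', \phi_1, \phi_1'$ by smooth $\omega_0$-psh potentials in $\mathcal{H}$ converging in $d_p$, using Demailly-type regularization together with the $d_p$-density of $\mathcal{H}$ in $\mathcal{E}^p$ (cf.\ \cite{Darvas1402}). By the stability of finite-energy geodesics under $d_p$-perturbation of endpoints (\cite{BBEGZ}, \cite{Darvas1602}), the corresponding geodesics converge in $d_p$ uniformly in $t\in[0,1]$. It therefore suffices to prove the inequality when all four endpoints lie in $\mathcal{H}$.

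For $p=2$, the conclusion is direct from the non-positively curved (NPC) property of $(\mathcal{E}^2, d_2)$ due to Calabi-Chen \cite{calabi-chen}: in any NPC metric space, pairs of geodesics satisfy Reshetnyak's quadratic comparison, which algebraically implies the desired linear convexity of distance along geodesics. For general $p$, NPC is not available; instead, I would approximate the weak geodesic $\phi_{i,t}$ by the smooth $\epsilon$-geodesics $\Phi^\epsilon_{i,t}$ obtained from the $\epsilon$-perturbation of the complex Monge-Amp\`ere equation on $M\times([0,1]\times S^1)$, with $\Phi^\epsilon_{i,t}\to\phi_{i,t}$ in $C^{1,1}$ as $\epsilon\to 0$. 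For each fixed $t$, let $s\mapsto \Psi^\epsilon(\cdot, s, t)$ be the smooth $\epsilon$-geodesic from $\Phi^\epsilon_{0,t}$ to $\Phi^\epsilon_{1,t}$; its $L^p$-length in $s$ converges to $d_p(\phi_{0,t},\phi_{1,t})$ as $\epsilon\to 0$, so the inequality reduces to showing that the $L^p$-length of the $s$-slice of $\Psi^\epsilon$ is a convex function of $t$.

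The main obstacle is precisely this PDE convexity, which is the appropriate analogue of NPC for general $p$. I would aim to derive it from a maximum-principle argument on the two-parameter family $\Psi^\epsilon(\cdot, s, t)$ regarded as a solution of a degenerate complex Monge-Amp\`ere equation on $M\times [0,1]^2$ (with the two real parameters interpreted through $S^1\times S^1$-symmetry), exploiting the uniform convexity of the $L^p$-norm for $1<p<\infty$; the case $p=1$ would be handled separately, in the spirit of the representation-formula argument of \cite{Darvas1602}. Once $t$-convexity at the $\epsilon$-level is established, sending $\epsilon\to 0$ and combining with the first-paragraph reduction yields the theorem in full generality.
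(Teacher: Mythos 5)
Your outer layers (reduction to smooth endpoints via $d_p$-density and endpoint stability of finite energy geodesics, approximation of weak geodesics by smooth $\eps$-geodesics, and the remark that $p=2$ follows from the NPC property of $(\mathcal{E}^2,d_2)$) all match the paper. But the heart of the theorem is exactly the step you defer --- the convexity in $t$ of the transversal $L^p$-length --- and your proposal does not prove it; it only names it as ``the main obstacle'' and gestures at a maximum-principle argument on $M\times[0,1]^2$ without specifying the equation, the quantity to which the maximum principle is applied, or why uniform convexity of the $L^p$-norm would enter. As it stands this is a plan with the decisive computation missing.

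Moreover, your choice of which direction carries the $\eps$-geodesics makes that computation harder than it needs to be. You take, for each fixed $t$, an $\eps$-geodesic in $s$ joining $\Phi^\eps_{0,t}$ to $\Phi^\eps_{1,t}$; then the $t$-curves of your two-parameter family at intermediate $s$ are not geodesics, so there is no control on $\nabla_XX$ for $X=\partial_t\Psi^\eps$, which is precisely what any second-variation computation of the $s$-length in $t$ requires. The paper sets the family up the other way: for \emph{every} $s$, the curve $t\mapsto\varphi^\eps(s,t)$ is an $\eps$-geodesic (so $\nabla_XX=\eps H\geq0$ is known), the $s$-curves at $t=0,1$ are chosen to realize the endpoint distances, and at intermediate $t$ one only needs that length dominates distance, i.e.\ $d_p(\varphi^\eps(0,t),\varphi^\eps(1,t))\leq L_p^\eps(t)$. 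The key computation (Proposition \ref{p6.1}) is then a direct second variation of $\int_M\chi(\partial_s\varphi)\,dvol_\varphi$ in $t$ using the Mabuchi connection: the curvature term is
$(\chi'(\partial_s\varphi),\nabla_X\nabla_YX-\nabla_Y\nabla_XX)=-\int_M\chi''(\partial_s\varphi)\{\partial_t\varphi,\partial_s\varphi\}^2\,dvol_\varphi\leq0$,
the $\eps$-geodesic equation contributes a nonnegative term, and one applies this with $\chi_\delta(x)=(x^2+\delta^2)^{p/2}$ together with a Cauchy--Schwarz inequality to absorb the normalization $|Y|_{\chi_\delta}^{1/p}$; the inequality $\chi_\delta''\chi_\delta\geq\frac{p-1}{p}(\chi_\delta')^2$ makes this work uniformly for all $1\leq p<\infty$, so no separate treatment of $p=1$ (or $p=2$) is needed. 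You should either carry out an analogue of this computation in your setup or switch to the paper's arrangement of the two-parameter family; without one of these, the argument is incomplete.
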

When $p=2$, this result follows from that $(\mathcal{E}^2,d_2)$ is NPC, proved in \cite{Mabuchi-completion} (see also \cite{calabi-chen}).
For general $p$, we were not able to prove $(\mathcal{E}^p,d_p)$ is NPC in the sense of Alexandrov. Nevertheless, above weaker result still holds.

In the following argument, we will mostly follow the notation in \cite{calabi-chen}.
Let $\varphi(x,s,t)\in C^{\infty}(M\times[0,1]\times[0,1])$ be such that $\varphi(\cdot,s,t)\in\mathcal{H}$. Denote $X=\partial_t\varphi$, $Y=\partial_s\varphi$. Given $U\in C^{\infty}(M\times[0,1]\times[0,1])$, consider the connection first introduced by Mabuchi:
\begin{equation}\label{6.1nn}
\nabla_XU=\partial_tU-\nabla_{\varphi}\partial_t\varphi\cdot_{\varphi}\nabla_{\varphi}U,\,\,\,\nabla_YU=\partial_sU-\nabla_{\varphi}\partial_s\varphi\cdot_{\varphi}\nabla_{\varphi}U.
\end{equation}
The dot product in the above line has the following expression in local coordinates:
$$
\nabla_{\varphi}u\cdot_{\varphi}\nabla_{\varphi}v=\frac{1}{2}g_{\varphi}^{i\bar{j}}\big(u_iv_{\bar{j}}+v_iu_{\bar{j}}\big).
$$

Given $\psi_1$, $\psi_2\in C^{\infty}(M)$, we denote 
$$
(\psi_1,\psi_2)=\int_M\psi_1\psi_2dvol_{\varphi}.
$$
This is the so-called Mabuchi's metric on $\mathcal{H}$.

Given $\varphi_0$, $\varphi_1\in\mathcal{H}$, and $\eps>0$, one can consider the so-called $\eps$-geodesic, introduced in \cite{chen00}:
\begin{equation}\label{6.1}
\begin{split}
&\big(\partial_t^2\varphi-|\nabla_{\varphi}\partial_t\varphi|_{\varphi}^2\big)\det g_{\varphi}=\eps\det 
g_0\textrm{ for $(x,t)\in M\times[0,1]$} \\
&\varphi|_{t=0}=\varphi_0,\,\, \varphi|_{t=1}=\varphi_1.
\end{split}
\end{equation}
It is shown in \cite{chen00} that (\ref{6.1}) can be written as a complex Monge-Amp$\grave{e}$re equation on $M\times[0,1]$ with nondegenerate and smooth right hand side, hence is smooth.

The key to prove Theorem \ref{t6.1} is the following estimate:
\begin{prop}\label{p6.1}
Let $\varphi_i: s\in[0,1]\rightarrow\mathcal{H}$, $i=0,1$ be two smooth curves in $\mathcal{H}$. Let $\chi:\mathbb{R}\rightarrow\mathbb{R}_+$ be smooth and convex. Suppose that for each $s
\in[0,1]$, $[0,1]\ni t\mapsto \varphi_{\eps}(s,t)$ is the $\eps$-geodesic connecting $\varphi_0(s)$ and $\varphi_1(s)$. Denote $X=\partial_t\varphi$, $Y=\partial_s\varphi$, then we have
$$
\partial_t^2\int_M\chi(\partial_s\varphi)dvol_{\varphi}\geq\int_M\chi''(\partial_s\varphi)(\nabla_XY)^2dvol_{\varphi}.
$$
\end{prop}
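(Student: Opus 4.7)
The plan is to apply $\partial_t$ twice to $I(t):=\int_M \chi(Y)\,dvol_{\varphi}$ and identify the extra term beyond $\int \chi''(Y)(\nabla_X Y)^2\,dvol_{\varphi}$ as manifestly nonnegative. The key general identity is that for any smooth $U(x,t)$,
\[
\partial_t \int_M U\,dvol_{\varphi} \;=\; \int_M \nabla_X U\,dvol_{\varphi},
\]
which follows from $\partial_t(dvol_{\varphi})=\Delta_{\varphi} X\,dvol_{\varphi}$ together with integration by parts against $\nabla_{\varphi} X$. Applying this once with $U=\chi(Y)$ gives $I'(t)=\int_M \chi'(Y)\nabla_X Y\,dvol_{\varphi}$, and applying it again, using the Leibniz rule $\nabla_X(\chi'(Y)\nabla_X Y)=\chi''(Y)(\nabla_X Y)^2+\chi'(Y)\nabla_X\nabla_X Y$, produces
\[
I''(t)\;=\;\int_M \chi''(Y)(\nabla_X Y)^2\,dvol_{\varphi}\;+\;\int_M \chi'(Y)\,\nabla_X\nabla_X Y\,dvol_{\varphi}.
\]
The entire problem reduces to showing that the second integral is $\geq 0$.

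To reorganize $\nabla_X\nabla_X Y$, I would use three standard properties of the Mabuchi connection: (i) torsion-freeness $\nabla_X Y=\nabla_Y X$, which follows at once from $\partial_s X=\partial_t Y$ and the symmetry of $\nabla_{\varphi} X\cdot_{\varphi}\nabla_{\varphi} Y$; (ii) the $\eps$-geodesic equation $\nabla_X X=\eps e^{-F}$ with $e^F=\det g_{\varphi}/\det g_0$; and (iii) the curvature identity $\nabla_X\nabla_Y X=\nabla_Y\nabla_X X+R(X,Y)X$ (valid since $[X,Y]=0$). Combining these yields
\[
\nabla_X\nabla_X Y\;=\;\nabla_Y(\eps e^{-F})\;+\;R(X,Y)X,
\]
so the remaining integral splits into an $\eps$-geodesic contribution and a curvature contribution, which I will show are nonnegative separately.

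For the $\eps$-geodesic contribution, the computation $\partial_s F=\Delta_{\varphi} Y$ gives $\nabla_Y(\eps e^{-F})=-\eps e^{-F}[\Delta_{\varphi} Y-\nabla_{\varphi} Y\cdot_{\varphi}\nabla_{\varphi} F]$. The bracketed operator is the $F$-drift Laplacian, which is self-adjoint with respect to $dvol_0=e^{-F}dvol_{\varphi}$, and integration by parts produces
\[
\int_M \chi'(Y)\,\nabla_Y(\eps e^{-F})\,dvol_{\varphi} \;=\; \eps\int_M \chi''(Y)\,|\nabla_{\varphi} Y|_{\varphi}^2\,e^{-F}\,dvol_{\varphi}\;\geq\;0.
\]
For the curvature contribution, I invoke the Mabuchi--Semmes formula $R(X,Y)Z=\tfrac14\{\{X,Y\}_{\varphi},Z\}_{\varphi}$, where $\{\cdot,\cdot\}_{\varphi}$ is the Poisson bracket of $\omega_{\varphi}$. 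Since Hamiltonian vector fields preserve $dvol_{\varphi}$, the Poisson bracket satisfies the cyclic self-adjointness $\int f\{g,h\}\,dvol_{\varphi}=\int g\{h,f\}\,dvol_{\varphi}$. Using this with $(f,g,h)=(\chi'(Y),\{X,Y\}_{\varphi},X)$ and the chain rule $\{X,\chi'(Y)\}_{\varphi}=\chi''(Y)\{X,Y\}_{\varphi}$ collapses the term to
\[
\int_M\chi'(Y)\,R(X,Y)X\,dvol_{\varphi}\;=\;\tfrac14\int_M\chi''(Y)\,\{X,Y\}_{\varphi}^2\,dvol_{\varphi}\;\geq\;0.
\]
Adding the two nonnegative pieces completes the proof.

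The main obstacle will be keeping signs and normalizations straight in the curvature step: one must fix a single convention for Poisson brackets, Hamiltonian vector fields, and the Riemann tensor, and verify that Mabuchi's formula comes out with the positive coefficient needed here. Once these conventions are pinned down, the two remaining integration-by-parts arguments are essentially formal, and the only regularity concern is that $\varphi_{\eps}$ is smooth so all manipulations are classical.
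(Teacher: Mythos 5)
Your proposal is correct and follows essentially the same route as the paper: differentiate twice, split the remainder $\int\chi'(Y)\nabla_X\nabla_X Y$ into the $\eps$-geodesic term $\nabla_Y(\eps H)$ (nonnegative after integration by parts, yielding $\eps\int\chi''(Y)H|\nabla_\varphi Y|_\varphi^2$) and the curvature term, which the Poisson-bracket representation of the Mabuchi curvature reduces to $c\int\chi''(Y)\{X,Y\}_\varphi^2\geq 0$. The only differences are cosmetic — you invoke cyclic invariance of $\int f\{g,h\}\,dvol_\varphi$ and self-adjointness of the drift Laplacian with respect to $e^{-F}dvol_\varphi$, where the paper performs the same two integrations by parts directly — and your flagged concern about the normalization of the curvature formula is resolved in the paper's conventions with the positive sign you need.
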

We will postpone the proof of this proposition later, and we will show next how to use this proposition to deduce Theorem \ref{t6.1}.

First we apply Proposition \ref{p6.1} to obtain
\begin{lem}
Let $\phi_0$, $\phi_0'$, $\phi_1$, $\phi_1'\in\mathcal{H}$.
Let $c_1(s):[0,1]\rightarrow\mathcal{H}$ be a smooth curve connecting $\phi_0$ and $\phi_1$, $c_2(s):[0,1]\rightarrow\mathcal{H}$ be a smooth curve connecting $\phi_0'$ and $\phi_1'$.
Let $\{\varphi^{\eps}(s,t)\}_{(s,t)\in[0,1]^2}$ be such that for each fixed $s$, $[0,1]\ni t\mapsto \varphi^{\eps}(s,t)$ is the $\eps$-geodesic connecting $c_1(s)$ with $c_2(s)$. Denote $L_p^{\eps}(t)$ be the length of the curve  $[0,1]\ni s\mapsto\varphi^{\eps}(s,t)\in\mathcal{H}$ under the distance $d_p$, then $t\mapsto L_p^{\eps}(t)$ is convex.
\end{lem}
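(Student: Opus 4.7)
The plan is to reduce convexity of $L_p^\eps(t)$ to pointwise-in-$s$ convexity of $t \mapsto h(s,t) := \|\partial_s\varphi^\eps(s,t)\|_{p,\varphi^\eps(s,t)}$, since $L_p^\eps(t) = \int_0^1 h(s,t)\,ds$ and integration in $s$ preserves convexity in $t$. Writing $Y = \partial_s\varphi^\eps$ and $H(s,t) := \int_M |Y|^p\,dvol_{\varphi^\eps(s,t)}$, so that $h = H^{1/p}$, the task is to show $t \mapsto H(s,t)^{1/p}$ is convex for each fixed $s$.

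To this end I will apply Proposition \ref{p6.1} with $\chi(y) = |y|^p$ (smoothed to $\chi_\delta(y) = (y^2+\delta^2)^{p/2}$ if $1 \leq p < 2$, then $\delta \to 0$). This gives
\[
\partial_t^2 H \geq p(p-1) \int_M |Y|^{p-2}(\nabla_X Y)^2\,dvol_\varphi.
\]
Simultaneously, the identity $\partial_t \int_M f\,dvol_\varphi = \int_M \nabla_X f\,dvol_\varphi$ (which follows from $\partial_t\, dvol_\varphi = (\Delta_\varphi X)\,dvol_\varphi$, integration by parts on $M$, and the definition of $\nabla_X$ in (\ref{6.1nn})) together with the chain rule $\nabla_X(\chi(Y)) = \chi'(Y)\nabla_X Y$ yields $\partial_t H = p\int_M |Y|^{p-2}Y\,\nabla_X Y\,dvol_\varphi$. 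Cauchy--Schwarz with the factorization $|Y|^{p-2}Y\,\nabla_X Y = |Y|^{p/2} \cdot |Y|^{(p-4)/2}Y\,\nabla_X Y$ then produces
\[
(\partial_t H)^2 \leq p^2 H\int_M |Y|^{p-2}(\nabla_X Y)^2\,dvol_\varphi.
\]

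Combining these two estimates produces the key differential inequality $pH\,\partial_t^2 H \geq (p-1)(\partial_t H)^2$. The direct computation
\[
\partial_t^2 (H^{1/p}) = \tfrac{1}{p}H^{\frac{1}{p}-2}\Bigl[H\,\partial_t^2 H - \tfrac{p-1}{p}(\partial_t H)^2\Bigr]
\]
shows that this is precisely the condition $\partial_t^2(H^{1/p}) \geq 0$, so $h = H^{1/p}$ is convex in $t$ for each $s$; integrating in $s$ yields convexity of $L_p^\eps(t)$.

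The main technical obstacle I anticipate is the non-smoothness of $|y|^p$ at $y=0$ when $1 \leq p < 2$: the inequalities must be derived for the smooth approximants $\chi_\delta$ and then passed to the limit $\delta \to 0$ with enough uniform control in $\delta$ to preserve both estimates, and hence the differential inequality, in the limit. A secondary issue is that the Cauchy--Schwarz factorization involves negative powers of $|Y|$ which are singular at zeros of $Y$; this is harmless since the integrands vanish where $|Y|=0$, but requires checking on the support of $Y$. The case $p=1$ is actually easier, as Proposition \ref{p6.1} with $\chi(y)=|y|$ (suitably smoothed) gives $\partial_t^2 H \geq 0$ directly and $h = H$.
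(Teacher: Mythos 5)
Your proposal is correct and follows essentially the same route as the paper: the paper likewise reduces to showing $\partial_t^2\big((\int_M\chi_\delta(\partial_s\varphi)\,dvol_\varphi)^{1/p}\big)\geq 0$ for the smoothed weight $\chi_\delta(x)=(x^2+\delta^2)^{p/2}$, using Proposition \ref{p6.1} for the second-derivative bound, the same first-variation formula $\partial_t\int_M\chi_\delta(Y)\,dvol_\varphi=\int_M\chi_\delta'(Y)\nabla_XY\,dvol_\varphi$, and the same Cauchy--Schwarz step via the inequality $\chi_\delta''\chi_\delta\geq\frac{p-1}{p}(\chi_\delta')^2$ (which for $|y|^p$ is your factorization), then letting $\delta\to0$. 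The only cosmetic difference is that the paper carries the $\delta$-regularization throughout and passes to the limit only at the level of $L_{p,\delta}\to L_p$, which disposes of the smoothness issue you flag.
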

\begin{proof}
In the following, we will write $L_p^{\eps}(t)$ simply as $L_p(t)$. 
By definition, we have
\begin{equation}
L_p(t)=\int_0^1\bigg(\int_M|\partial_s\varphi|^pdvol_{\varphi}\bigg)^{\frac{1}{p}}ds.
\end{equation}
Denote $\chi_{\delta}(x)=(x^2+\delta^2)^{\frac{p}{2}}$ and put $L_{p,\delta}(t)=\int_0^1\big(\int_M\chi_{\delta}(\partial_s\varphi)dvol_{\varphi}\big)^{\frac{1}{p}}ds=\int_0^1|Y|_{\chi_{\delta}}^{\frac{1}{p}}ds$.
Here for simplicity, we use the notation: $|Y|_{\chi_{\delta}}=\int_M\chi_{\delta}(\partial_s\varphi)dvol_{\varphi}$. Then we have
$$
\frac{d^2}{dt^2}L_{p,\delta}(t)=\int_0^1\partial_t^2(|Y|_{\chi_{\delta}}^{\frac{1}{p}})ds.
$$
We claim that $\partial_t^2(|Y|_{\chi_{\delta}}^{\frac{1}{p}})\geq0$. If this were true, then we know $t\mapsto L_{p,\delta}(t)$ is convex. Also we know that $L_{p,\delta}(t)\rightarrow L_p(t)$ for each $t\in[0,1]$ as $\delta\rightarrow 0$.
This will imply the desired result. 
Hence it only remains to verify the claim.
We can compute
\begin{equation}\label{19}
\begin{split}
&\partial_t^2\big(|Y|_{\chi_{\delta}}^{\frac{1}{p}}\big)=\partial_t\big(\frac{1}{p}|Y|_{\chi_{\delta}}^{\frac{1}{p}-1}\partial_t(|Y|_{\chi_{\delta}})\big)\\
&=\frac{1}{p}|Y|_{\chi_{\delta}}^{\frac{1}{p}-1}\partial_t^2(|Y|_{\chi_{\delta}})-\frac{1}{p}\big(1-\frac{1}{p}\big)|Y|_{\chi_{\delta}}^{\frac{1}{p}-2}|\partial_t(|Y|_{\chi_{\delta}})|^2\\
&\geq\frac{1}{p}|Y|_{\chi_{\delta}}^{\frac{1}{p}-1}\bigg(\int_M\chi_{\delta}''(\partial_s\varphi)(\nabla_XY)^2dvol_{\varphi}-\big(1-\frac{1}{p}\big)|Y|_{\chi_{\delta}}^{-1}|\partial_t(|Y|_{\chi_{\delta}})|^2\bigg).
\end{split}
\end{equation}
In the last inequality, we used Proposition \ref{p6.1}.

On the other hand
\begin{equation}
\begin{split}
\partial_t(&|Y|_{\chi_{\delta}})=\int_M\big(\chi_{\delta}'(\partial_s\varphi)\partial_{st}\varphi+\chi_{\delta}(\partial_s\varphi)\Delta_{\varphi}(\partial_t\varphi)\big)dvol_{\varphi}\\
&=\int_M\chi'_{\delta}(\partial_s\varphi)\big(\partial_{st}\varphi-\nabla_{\varphi}\partial_s\varphi\cdot_{\varphi}\nabla_{\varphi}\partial_t\varphi\big)dvol_{\varphi}=\int_M\chi_{\delta}'(\partial_s\varphi)\nabla_XYdvol_{\varphi}.
\end{split}
\end{equation}
Hence we may apply Cauchy-Schwarz inequality to get
\begin{equation}\label{21}
|\partial_t(|Y|_{\chi_{\delta}})|^2\leq\int_M\frac{(\chi_{\delta}'(\partial_s\varphi))^2}{\chi_{\delta}''(\partial_s\varphi)}dvol_{\varphi}\times\int_M\chi_{\delta}''(\partial_s\varphi)(\nabla_XY)^2dvol_{\varphi}.
\end{equation}
It is straightforward to calculate
$$
\chi_{\delta}'(x)=p(x^2+\delta^2)^{\frac{p}{2}-1}x.
$$
$$
\chi_{\delta}''(x)=p(p-2)(x^2+\delta^2)^{\frac{p}{2}-2}x^2+p(x^2+\delta^2)^{\frac{p}{2}-1}.
$$
Therefore
\begin{equation*}
\begin{split}
\chi''_{\delta}&\chi_{\delta}=p(p-2)(x^2+\delta^2)^{p-2}x^2+p(x^2+\delta^2)^{p-1}\\
&\geq p(p-1)(x^2+\delta^2)^{p-2}x^2=\frac{p-1}{p}(\chi_{\delta}')^2.
\end{split}
\end{equation*}
Hence we obtain from (\ref{21}) that
\begin{equation}\label{22}
\begin{split}
\big(1-\frac{1}{p}\big)|\partial_t(|Y|_{\chi_{\delta}})|^2&\leq\int_M\frac{p-1}{p}\frac{(\chi_{\delta}'(\partial_s\varphi))^2}{\chi_{\delta}''(\partial_s\varphi)}dvol_{\varphi}\times \int_M\chi_{\delta}''(\partial_s\varphi)(\nabla_XY)^2dvol_{\varphi}\\
&\leq\int_M\chi_{\delta}(\partial_s\varphi)dvol_{\varphi}\times\int_M\chi_{\delta}''(\partial_s\varphi)(\nabla_XY)^2dvol_{\varphi}.
\end{split}
\end{equation}
Combining (\ref{19}) and (\ref{22}), the result follows.
\end{proof}
As a consequence, we have
\begin{cor}
Let $\phi_0$, $\phi_0'$, $\phi_1$, $\phi_1'\in\mathcal{H}$.
Let $\{\rho_0(t)\}_{t\in[0,1]}$ be the $C^{1,1}$ geodesic connecting $\phi_0$ and $\phi_0'$, and $\{\rho_1(t)\}_{t\in[0,1]}$ be the $C^{1,1}$ geodesic connecting $\phi_1$ and $\phi_1'$. Then we have
$$
d_p(\rho_0(t),\rho_1(t))\leq (1-t)d_p(\phi_0,\phi_1)+td_p(\phi_0',\phi_1'),\textrm{ for any $t\in[0,1]$.}
$$
\end{cor}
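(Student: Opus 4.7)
The plan is to combine the convexity estimate for $L_p^\eps(t)$ established in the preceding lemma with an approximation argument and the limit $\eps\to 0$. Fix smooth curves $c_1:[0,1]\to\mathcal{H}$ connecting $\phi_0$ to $\phi_1$ and $c_2:[0,1]\to\mathcal{H}$ connecting $\phi_0'$ to $\phi_1'$. For each $s\in[0,1]$, let $\varphi^\eps(s,\cdot)$ be the $\eps$-geodesic from $c_1(s)$ to $c_2(s)$, and denote by $\rho_0^\eps$, $\rho_1^\eps$ the $\eps$-geodesics from $\phi_0$ to $\phi_0'$ and from $\phi_1$ to $\phi_1'$, respectively. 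By the boundary conditions, $\varphi^\eps(s,0)=c_1(s)$ and $\varphi^\eps(s,1)=c_2(s)$, so $L_p^\eps(0)=L_p(c_1)$ and $L_p^\eps(1)=L_p(c_2)$, while at any intermediate time $t$ the curve $s\mapsto\varphi^\eps(s,t)$ has endpoints $\rho_0^\eps(t)$ and $\rho_1^\eps(t)$ with length $L_p^\eps(t)$.

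By definition of $d_p$ as the infimum of lengths of smooth curves, one immediately obtains
$$
d_p(\rho_0^\eps(t),\rho_1^\eps(t))\leq L_p^\eps(t),
$$
and the convexity lemma then gives
$$
d_p(\rho_0^\eps(t),\rho_1^\eps(t))\leq (1-t)L_p(c_1)+tL_p(c_2).
$$
Next I would let $\eps\to 0$. It is standard (going back to X. X. Chen's original paper on the space of K\"ahler metrics, combined with the $d_p$-continuity of the $C^{1,1}$-limit, see also \cite{Darvas1402}) that $\rho_i^\eps(t)\to\rho_i(t)$ in $d_p$ as $\eps\to 0$ for $i=0,1$. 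Passing to the limit via the continuity of $d_p$ yields
$$
d_p(\rho_0(t),\rho_1(t))\leq (1-t)L_p(c_1)+tL_p(c_2).
$$

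Finally, since $c_1$ and $c_2$ were arbitrary smooth curves with the prescribed endpoints, one takes the infimum on the right, using the fact that $d_p(\phi_0,\phi_1)=\inf L_p(c_1)$ and $d_p(\phi_0',\phi_1')=\inf L_p(c_2)$ over smooth curves in $\mathcal{H}$; this is the definition of the Finsler distance and is established in \cite{Darvas1402}. This produces
$$
d_p(\rho_0(t),\rho_1(t))\leq (1-t)d_p(\phi_0,\phi_1)+td_p(\phi_0',\phi_1'),
$$
which is the desired estimate. The main obstacle (already shouldered by the preceding lemma) was the convexity of $L_p^\eps(t)$, which required Proposition \ref{p6.1}; the remaining technical subtlety is the $d_p$-convergence of $\eps$-geodesics to the $C^{1,1}$ geodesic, which is a classical approximation result. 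To upgrade this corollary to Theorem \ref{t6.1} in full generality (with endpoints in $\mathcal{E}^p$ rather than $\mathcal{H}$) one then approximates the four potentials $\phi_0,\phi_0',\phi_1,\phi_1'\in\mathcal{E}^p$ by sequences in $\mathcal{H}$ in the $d_p$ metric and invokes $d_p$-continuity of finite energy geodesic endpoints once more.
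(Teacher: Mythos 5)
Your proof is correct and follows essentially the same route as the paper: apply the convexity of $L_p^{\eps}(t)$ for the two-parameter family of $\eps$-geodesics over a pair of connecting curves, bound $d_p(\rho_0^{\eps}(t),\rho_1^{\eps}(t))$ by $L_p^{\eps}(t)$, and let $\eps\to0$ using uniform convergence of $\eps$-geodesics to the $C^{1,1}$ geodesic. The only cosmetic difference is that the paper chooses $c_1,c_2$ to be $\eps$-geodesics themselves, so that $L_p^{\eps}(0)$ and $L_p^{\eps}(1)$ converge directly to $d_p(\phi_0,\phi_1)$ and $d_p(\phi_0',\phi_1')$, whereas you use arbitrary smooth curves and take the infimum over them at the end; both are valid.
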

\begin{proof}
Let $\eps>0$. Let $c_1^{\eps}(s):[0,1]\rightarrow\mathcal{H}$ be the $\eps$-geodesic connecting $\phi_0$ and $\phi_1$, $c_2^{\eps}(s):[0,1]\rightarrow\mathcal{H}$ be the $\eps$-geodesic connecting $\phi_0'$ and $\phi_1'$.
Then define $\{\varphi^{\eps}(s,t)\}_{(s,t)\in[0,1]^2}$ be such that for each fixed $s$, $t\mapsto \varphi^{\eps}(s,t)$ is the $\eps$-geodesic connecting $c_1^{\eps}(s)$, $c_2^{\eps}(s)$.

We can apply the previous lemma to conclude that
\begin{equation}\label{29}
d_p(\varphi^{\eps}(0,t),\varphi^{\eps}(1,t))\leq L_p^{\eps}(t)\leq (1-t)L_p^{\eps}(0)+tL_p^{\eps}(1).
\end{equation}
Then we let $\eps\rightarrow0$. Since $t\mapsto\varphi^{\eps}(0,t)$ is the $\eps$-geodesic connecting $\phi_0$, $\phi_0'$, we have $\varphi^{\eps}(0,t)\rightarrow \rho_0(t)$ uniformly (c.f. \cite{chen00}, Lemma 7, point 3), hence in $d_p$ distance, for each fixed $t$, as $\eps\rightarrow0$.
Similarly, $\varphi^{\eps}(1,t)\rightarrow\rho_1(t)$ in $d_p$.
Therefore,
$$
d_p(\varphi^{\eps}(0,t),\varphi^{\eps}(1,t))\rightarrow d_p(\rho_0(t),\rho_1(t)),\textrm{ as $\eps\rightarrow0$.}
$$
While $L_p^{\eps}(0)$ is the length of $c_1^{\eps}$, hence $L_p^{\eps}(0)\rightarrow d_p(\phi_0,\phi_1)$ as $\eps\rightarrow0$. Similarly $L_p^{\eps}(1)\rightarrow d_p(\phi_0',\phi_1')$.
\end{proof}
Now we are ready to prove Theorem \ref{t6.1}, via an approximating argument. 
\begin{proof}
(of Theorem \ref{t6.1})
We choose smooth approximations of $\phi_0$, $\phi_0'$, $\phi_1$, $\phi_1'$. Namely we choose $\phi_{0,k}\rightarrow\phi_0$, $\phi_{0,k}'\rightarrow \phi_0'$, $\phi_{1,k}\rightarrow\phi_1$, $\phi_{1,k}'\rightarrow\phi_1'$ as $k\rightarrow\infty$ under distance $d_p$.
Then from previous corollary, we know
\begin{equation}\label{30}
d_p(\phi_{0,k}(t),\phi_{1,k}(t))\leq (1-t)d_p(\phi_{0,k},\phi_{1,k})+td_p(\phi_{0,k}',\phi_{1,k}'),\textrm{ for any $t\in[0,1]$.}
\end{equation}
In the above, $\{\phi_{0,k}(t)\}_{t\in[0,1]}$ is the $C^{1,1}$ geodesic connecting $\phi_{0,k}$, $\phi_{0,k}'$ and $\{\phi_{1,k}(t)\}_{t\in[0,1]}$ is the $C^{1,1}$ geodesic connecting $\phi_{1,k}$, $\phi_{1,k}'$.

From the end point stability of finite energy geodesic segment (c.f. \cite{Darvas1602}, Proposition 4.3), we know that $\phi_{0,k}(t)\rightarrow \phi_{0,t}$ in $d_p$ as $k\rightarrow\infty$, and $\phi_{1,k}(t)\rightarrow\phi_{1,t}$ as $k\rightarrow\infty$.
Taking limit as $k\rightarrow\infty$ in (\ref{30}), the result follows. 
\end{proof}

It only remains to show Proposition \ref{p6.1}.
\begin{proof}
For simiplicity, we denote $|Y|_{\chi}=\int_M\chi(\partial_s\varphi)dvol_{\varphi}$. Then we may calculate:
\begin{equation}
\begin{split}
\partial_t(&|Y|_{\chi})=\int_M\big(\chi'(\partial_s\varphi)\partial_{st}\varphi+\chi(\partial_s\varphi)\Delta_{\varphi}(\partial_t\varphi)\big)dvol_{\varphi}\\
&=\int_M\chi'(\partial_s\varphi)\big(\partial_{st}\varphi-\nabla_{\varphi}\partial_s\varphi\cdot_{\varphi}\nabla_{\varphi}\partial_t\varphi\big)dvol_{\varphi}=(\chi'(\partial_s\varphi),\nabla_YX).
\end{split}
\end{equation}
Differentiate in $t$ once more, we have 
\begin{equation}
\begin{split}
&\partial_t^2(|Y|_{\chi})=\int_M\chi''(\partial_s\varphi)\partial_{st}\varphi\nabla_YXdvol_{\varphi}+\int_M\chi'(\partial_s\varphi)\partial_t(\nabla_YX)dvol_{\varphi}\\
&+\int_M\chi'(\partial_s\varphi)\nabla_YX\Delta_{\varphi}(\partial_t\varphi )dvol_{\varphi}\\
&=\int_M\chi''(\partial_s\varphi)(\nabla_YX)^2dvol_{\varphi}+(\chi'(\partial_s\varphi),\nabla_X\nabla_YX)\\
&=\int_M\chi''(\partial_s\varphi)(\nabla_YX)^2dvol_{\varphi}+(\chi'(\partial_s\varphi),\nabla_Y\nabla_XX)\\
&\qquad\qquad+(\chi'(\partial_s\varphi),\nabla_X\nabla_YX-\nabla_Y\nabla_XX).
\end{split}
\end{equation}
Since $t\mapsto \varphi^{\eps}(s,t)$ is an $\eps$-geodesic, we have $\nabla_XX=\eps H$, where $H=\frac{\det g_0}{\det g_{\varphi}}$.
Hence
\begin{equation}
\begin{split}
(&\chi'(\partial_s\varphi),\nabla_Y\nabla_XX)=\int_M\chi'(\partial_s\varphi)\eps\nabla_YHdvol_{\varphi}\\
&=\int_M\eps \chi'(\partial_s\varphi)(\partial_sH-\nabla_{\varphi}\partial_s\varphi\cdot_{\varphi}\nabla_{\varphi}H)dvol_{\varphi}\\
&=\int_M\eps\chi'(\partial_s\varphi)(-H\Delta_{\varphi}(\partial_s\varphi)-\nabla_{\varphi}\partial_s\varphi\cdot_{\varphi}\nabla_{\varphi}H)dvol_{\varphi}\\
&=\int_M\eps\chi''(\partial_s\varphi) H|\nabla_{\varphi}\partial_s\varphi|_{\varphi}^2dvol_{\varphi}\geq0.
\end{split}
\end{equation}
From third line to the last line above, we integrated by parts.
Hence it only remains to handle the term $(\chi'(\partial_s\varphi),\nabla_X\nabla_YX-\nabla_Y\nabla_XX)$. The following lemma shows this term is $\geq0$, so we are done.
\end{proof}
\begin{lem}
\begin{equation}\label{6.12}
\begin{split}
(&\chi'(\partial_s\varphi),\nabla_Y\nabla_XX-\nabla_X\nabla_YX)=\int_M\frac{1}{4}\chi''(\partial_s\varphi)g_{\varphi}^{i\bar{j}}\bigg((\partial_t\varphi)_i(\partial_s\varphi)_{\bar{j}}-(\partial_s\varphi)_i(\partial_t\varphi)_{\bar{j}}\bigg)\\
&\times g_{\varphi}^{p\bar{q}}\bigg((\partial_t\varphi)_p(\partial_s\varphi)_{\bar{q}}-(\partial_t\varphi)_{\bar{q}}(\partial_s\varphi)_p\bigg)dvol_{\varphi}=-\int_M\chi''(\partial_s\varphi)\big(\{\partial_t\varphi,\partial_s\varphi\}\big)^2dvol_{\varphi}.
\end{split}
\end{equation}
In the above, $\{\cdot,\cdot\}$ is the Poisson product, defined as 
\begin{equation*}
\{f,g\}_{\varphi}:=Im\big(g_{\varphi}^{i\bar{j}}f_ig_{\bar{j}}\big),\textrm{ $f,\,g\in C^{\infty}(M)$, $\varphi\in\mathcal{H}$.}
\end{equation*}
In particular, if $\chi''\geq0$, the expression in (\ref{6.12}) $\leq0$.
\end{lem}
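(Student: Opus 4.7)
Since $X = \partial_t\varphi$ and $Y = \partial_s\varphi$ arise from the commuting coordinate vector fields $\partial_t, \partial_s$ on the parameter square, equality of mixed partials yields $\partial_s X = \partial_t Y$, and hence $\nabla_X Y = \nabla_Y X$. The commutator $\nabla_Y\nabla_X X - \nabla_X\nabla_Y X$ is therefore the Riemann curvature $R(Y,X)X$ of Mabuchi's connection (the bracket term dropping because $[X,Y]=0$), so (6.12) is essentially the Mabuchi--Semmes--Donaldson curvature formula paired against $\chi'(Y)$.

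The plan is a direct local-coordinate computation. Set $P = \nabla_X X = \partial_t X - g_\varphi^{i\bar j}X_iX_{\bar j}$ and $Q = \nabla_Y X = \partial_t Y - g_\varphi^{i\bar j}Y_iX_{\bar j}$, so that
\begin{equation*}
\nabla_Y P - \nabla_X Q = (\partial_s P - \partial_t Q) - g_\varphi^{i\bar j}Y_i P_{\bar j} + g_\varphi^{i\bar j}X_i Q_{\bar j}.
\end{equation*}
Expanding via the inverse-metric variation identities
\begin{equation*}
\partial_s g_\varphi^{i\bar j} = -g_\varphi^{i\bar p}Y_{q\bar p}g_\varphi^{q\bar j},\qquad \partial_t g_\varphi^{i\bar j} = -g_\varphi^{i\bar p}X_{q\bar p}g_\varphi^{q\bar j},
\end{equation*}
the pure time-derivative contributions $\partial_s\partial_t X = \partial_t^2 Y$ cancel, and the terms linear in $\partial_t X_{\bar j}, \partial_t Y_{\bar j}$ also cancel against matching contributions on the other side. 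The surviving terms are quadratic in the first spatial derivatives of $X, Y$, modulated either by a $\partial g^{i\bar j}$ factor (producing spatial Hessians $X_{q\bar p}, Y_{q\bar p}$) or by second spatial derivatives $X_{p\bar j}, Y_{p\bar j}$ coming from the $P_{\bar j}, Q_{\bar j}$ differentiations.

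Pair the surviving expression against $\chi'(Y)$ and integrate against $dvol_\varphi$. Integration by parts in the spatial directions moves one derivative off the second-order terms onto $\chi'(Y)$, producing $\chi''(Y) = \partial_Y\chi'(Y)$ times a product of first-order derivatives of $Y$. After invoking K\"ahler symmetries (e.g.\ $\partial_{\bar j}g_{s\bar r} = \partial_{\bar r}g_{s\bar j}$) and collecting like terms, the integrand reorganizes into
\begin{equation*}
\tfrac14\chi''(Y)\,g_\varphi^{i\bar j}(X_iY_{\bar j}-Y_iX_{\bar j})\,g_\varphi^{p\bar q}(X_pY_{\bar q}-Y_pX_{\bar q}),
\end{equation*}
which is the first form on the right of (6.12). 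The passage to $-\chi''(Y)\{X,Y\}_\varphi^2$ is then algebraic: for real $X, Y$ one has $g_\varphi^{i\bar j}(X_iY_{\bar j}-Y_iX_{\bar j}) = 2i\,\mathrm{Im}(g_\varphi^{i\bar j}X_iY_{\bar j}) = 2i\{X,Y\}_\varphi$, so its square equals $-4\{X,Y\}^2_\varphi$ and cancels the $\tfrac14$.

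The principal obstacle is the bookkeeping in the direct expansion: tracking the cancellations between similar-looking terms and applying the K\"ahler symmetries at exactly the right moment to expose the antisymmetric combination. A cleaner but less elementary route is to invoke the cyclic identity $\int_M f\{g,h\}_\varphi\,dvol_\varphi = \int_M g\{h,f\}_\varphi\,dvol_\varphi$ (valid because Hamiltonian vector fields preserve $dvol_\varphi$) together with the derivation identity $\{X,\chi'(Y)\}_\varphi = \chi''(Y)\{X,Y\}_\varphi$. Starting from the Mabuchi--Semmes--Donaldson formula $R(Y,X)X \propto \{\{Y,X\}_\varphi, X\}_\varphi$, two cyclic swaps applied to $(\chi'(Y),R(Y,X)X)$ reduce the pairing directly to $-\int_M\chi''(Y)\{X,Y\}_\varphi^2\,dvol_\varphi$, once the universal constant matching the paper's normalization of $\{\,,\}_\varphi$ has been fixed by checking against a single test case.
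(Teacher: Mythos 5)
Your ``cleaner but less elementary route'' at the end is in fact exactly the paper's proof. The paper quotes the Mabuchi--Semmes--Donaldson formula $R_{\varphi}(X,Y)Z=\nabla_X\nabla_YZ-\nabla_Y\nabla_XZ=\{\{X,Y\},Z\}$, writes the pairing as $-\int_M\chi'(\partial_s\varphi)\{\{\partial_t\varphi,\partial_s\varphi\},\partial_t\varphi\}\,dvol_{\varphi}$, expands the outer bracket as $Im\big(g_{\varphi}^{i\bar j}\{\partial_t\varphi,\partial_s\varphi\}_i(\partial_t\varphi)_{\bar j}\big)$, integrates by parts once, discards the term containing $g_{\varphi}^{i\bar j}(\partial_t\varphi)_{i\bar j}=\Delta_{\varphi}(\partial_t\varphi)$ because it is real, and is left with $-\int_M\chi''(\partial_s\varphi)\{\partial_t\varphi,\partial_s\varphi\}^2\,dvol_{\varphi}$. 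The ``cyclic identity'' you invoke is nothing but this single integration by parts, and there is no undetermined universal constant to pin down by a test case: with the paper's normalization $\{f,g\}_{\varphi}=Im(g_{\varphi}^{i\bar j}f_ig_{\bar j})$ the constant is exactly $1$, as the algebraic identity $g_{\varphi}^{i\bar j}(X_iY_{\bar j}-Y_iX_{\bar j})=2i\{X,Y\}_{\varphi}$ (which you state correctly) confirms against the first form in (\ref{6.12}).

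Your primary route, the direct coordinate expansion, is not a proof as written, for two reasons. First, the setup is wrong: the Mabuchi connection (\ref{6.1nn}) uses the \emph{symmetrized} product $\nabla_{\varphi}u\cdot_{\varphi}\nabla_{\varphi}v=\tfrac12 g_{\varphi}^{i\bar j}(u_iv_{\bar j}+v_iu_{\bar j})=Re\big(g_{\varphi}^{i\bar j}u_iv_{\bar j}\big)$, whereas you set $Q=\nabla_YX=\partial_tY-g_{\varphi}^{i\bar j}Y_iX_{\bar j}$ without taking the real part. For real $X,Y$ the discrepancy is $i\,Im\big(g_{\varphi}^{i\bar j}Y_iX_{\bar j}\big)=i\{Y,X\}_{\varphi}$ --- precisely the quantity whose square the lemma is about --- so this omission is not cosmetic (your $Q$ is not even real-valued), and carrying the expansion through with the unsymmetrized term would not reproduce the Mabuchi curvature. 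Second, every step that carries content (``the terms linear in $\partial_tX_{\bar j}$ cancel against matching contributions,'' ``the integrand reorganizes into the antisymmetric combination'') is asserted rather than verified; these cancellations \emph{are} the computation. The lemma is nonetheless established by your alternative route, which coincides with the paper's argument.
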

When $\chi(x)=\frac{1}{2}x^2$, this lemma just expresses the well-known fact that $\mathcal{H}$ has nonpositive sectional curvature under Mabuchi metric.
\begin{proof}
We know that the curvature operator can be represented in terms of Poisson product:
\begin{equation*}
R_{\varphi}(X,Y)Z:=\nabla_X \nabla_Y Z-\nabla_Y \nabla_X Z=\{ \{X,Y\},Z\},\textrm{ $X,\,Y,\,Z\in C^{\infty}(M)$, $\varphi\in\mathcal{H}$.}
\end{equation*}
Therefore,
\begin{equation}
\begin{split}
\big(&\chi'(\partial_s\varphi),\nabla_Y\nabla_XX-\nabla_X\nabla_YX\big)=-\int_M\chi'(\partial_s\varphi)R_{\varphi}(\partial_t\varphi,\partial_s\varphi)\partial_t\varphi dvol_{\varphi}\\
&=-\int_M\chi'(\partial_s\varphi)\{\{\partial_t\varphi,\partial_s\varphi\},\partial_t\varphi\}dvol_{\varphi}\\
&=-\int_MIm\bigg(\chi'(\partial_s\varphi)g_{\varphi}^{i\bar{j}}\{\partial_t\varphi,\partial_s\varphi\}_i(\partial_t\varphi)_{\bar{j}}\bigg)dvol_{\varphi}\\
&=\int_MIm\bigg(\chi'(\partial_s\varphi)g_{\varphi}^{i\bar{j}}\{\partial_t\varphi,\partial_s\varphi\}(\partial_t\varphi)_{i\bar{j}}\bigg)dvol_{\varphi}\\
&+\int_MIm\bigg(\chi''(\partial_s\varphi)g_{\varphi}^{i\bar{j}}(\partial_s\varphi)_i\{\partial_t\varphi,\partial_s\varphi\}(\partial_t\varphi)_{\bar{j}}\bigg)dvol_{\varphi}\\
&=-\int_M\chi''(\partial_s\varphi)\bigg(\{\partial_t\varphi,\partial_s\varphi\}\bigg)^2dvol_{\varphi}.
\end{split}
\end{equation}
From the third line to forth line above, we integrated by parts. Also we noticed that $g_{\varphi}^{i\bar{j}}(\partial_t\varphi)_{i\bar{j}}=\Delta_{\varphi}(\partial_t\varphi)$ is real.
\end{proof}
As an immediate consequence of Theorem \ref{t6.1}, we have
\begin{cor}\label{c5.5}
Let $\rho_i:[0,\infty)\rightarrow\mathcal{E}_0^p$, $i=1,\,2$ be two locally finite energy geodesic rays, then the function $t\mapsto d_p(\rho_1(t),\rho_2(t))$ is convex on $[0,\infty)$.
\end{cor}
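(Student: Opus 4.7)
The plan is to reduce convexity on $[0,\infty)$ to the midpoint/affine inequality supplied by Theorem \ref{t6.1}, exploiting the fact that the restriction of a locally finite energy geodesic ray to any closed subinterval is itself a finite energy geodesic segment with the same endpoints.

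First I would fix $0\leq t_1<t_2$ and $\lambda\in[0,1]$, and set $t=(1-\lambda)t_1+\lambda t_2$. Define the four ``corner'' potentials
\[
\phi_0=\rho_1(t_1),\quad \phi_0'=\rho_1(t_2),\quad \phi_1=\rho_2(t_1),\quad \phi_1'=\rho_2(t_2),
\]
all of which lie in $\mathcal{E}_0^p\subset\mathcal{E}^p$. Since $\rho_1,\rho_2$ are locally finite energy geodesic rays, the reparametrizations
\[
s\mapsto \rho_1\big((1-s)t_1+st_2\big)\AND s\mapsto \rho_2\big((1-s)t_1+st_2\big),\qquad s\in[0,1],
\]
are precisely the finite energy geodesic segments $\{\phi_{0,s}\}_{s\in[0,1]}$ and $\{\phi_{1,s}\}_{s\in[0,1]}$ connecting these endpoint pairs (by uniqueness of finite energy geodesics between prescribed endpoints in $\mathcal{E}^p$).

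Next I would apply Theorem \ref{t6.1} with these choices at the parameter value $s=\lambda$, which yields
\[
d_p(\phi_{0,\lambda},\phi_{1,\lambda})\leq (1-\lambda)d_p(\phi_0,\phi_1)+\lambda d_p(\phi_0',\phi_1').
\]
Substituting back gives exactly
\[
d_p(\rho_1(t),\rho_2(t))\leq (1-\lambda)d_p(\rho_1(t_1),\rho_2(t_1))+\lambda d_p(\rho_1(t_2),\rho_2(t_2)),
\]
which is the desired convexity inequality on $[0,\infty)$. Since $t_1$, $t_2$, and $\lambda$ were arbitrary, $t\mapsto d_p(\rho_1(t),\rho_2(t))$ is convex.

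There is no serious obstacle here: the content is entirely in Theorem \ref{t6.1}. The only point to verify carefully is that restricting a locally finite energy geodesic ray to $[t_1,t_2]$ really does produce the (unique) finite energy geodesic segment between its endpoints, which is immediate from the definition of ``locally finite energy geodesic ray'' adopted in the paper.
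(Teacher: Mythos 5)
Your proposal is correct and is exactly the argument the paper intends: the paper states Corollary \ref{c5.5} as an immediate consequence of Theorem \ref{t6.1}, and your reduction---restricting the rays to $[t_1,t_2]$, identifying the reparametrizations as the finite energy geodesic segments between the four corner potentials, and applying the theorem at $s=\lambda$---is precisely how that implication goes.
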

As a consequence of this corollary and elementary properties of convex functions on $[0,\infty)$, we can conclude
\begin{cor}\label{c5.6}
Let $\rho_i:[0,\infty)\rightarrow\mathcal{E}_0^p$, $i=1,\,2$ be two locally finite energy geodesic rays. Then exactly one of the two alternative holds:
\begin{enumerate}
\item The limit $\lim_{t\rightarrow\infty}\frac{d_p(\rho_1(t),\rho_2(t))}{t}$ exists and is positive.(may be $+\infty$.);
\item $t\mapsto d_p(\rho_1(t),\rho_2(t))$ is decreasing. In particular, $d_p(\rho_1(t),\rho_2(t))\leq d_p(\rho_1(0),\rho_2(0))$ for any $t>0$.
\end{enumerate}
\end{cor}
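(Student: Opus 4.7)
The plan is to reduce the claim to an elementary dichotomy for non-negative convex functions on $[0,\infty)$. Set $f(t) := d_p(\rho_1(t),\rho_2(t))$. By Corollary \ref{c5.5}, $f$ is convex on $[0,\infty)$, and since $d_p$ is a metric, $f \geq 0$. The remainder is standard real analysis of convex functions.

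First I would use the fact that a convex function on $[0,\infty)$ has a non-decreasing right derivative $f'_+$, so the extended-real limit
\[
L := \lim_{t\to\infty} f'_+(t) \in (-\infty, +\infty]
\]
exists. Writing $f(t) = f(0) + \int_0^t f'_+(s)\, ds$ (the fundamental theorem of calculus holds since convex functions are locally absolutely continuous) and applying a Cesaro-type averaging argument then yields $\lim_{t\to\infty} f(t)/t = L$ in the extended-real sense.

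Now I would split on the sign of $L$. If $L > 0$ (allowing $L = +\infty$), then $\lim_{t\to\infty} f(t)/t = L > 0$, which is alternative (1). If instead $L \leq 0$, then monotonicity of $f'_+$ gives $f'_+(t) \leq L \leq 0$ for every $t \geq 0$, so $f$ is non-increasing on $[0,\infty)$; this is alternative (2), and it yields the ``in particular'' bound $f(t) \leq f(0) = d_p(\rho_1(0), \rho_2(0))$. Mutual exclusivity is automatic: a non-increasing, non-negative $f$ has a finite limit at infinity, hence $f(t)/t \to 0$, so (1) cannot also hold. There is no real obstacle; Corollary \ref{c5.5} (which itself rests on Theorem \ref{t6.1}) has done all the work, and the present corollary is essentially a clean packaging of convexity.
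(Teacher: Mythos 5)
Your proof is correct and matches the paper's intent exactly: the paper gives no argument for this corollary beyond invoking Corollary \ref{c5.5} and ``elementary properties of convex functions on $[0,\infty)$,'' and your dichotomy via the monotone right derivative $f'_+$ of the convex function $f(t)=d_p(\rho_1(t),\rho_2(t))$ is precisely that elementary argument, including the mutual exclusivity of the two alternatives.
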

The rest of this section is devoted to proving Theorem \ref{t1.2}.
First the uniqueness of such a geodesic ray $\rho_2$ parallel to $\rho_1$ initiating from $\varphi$ follows immediately from Corollary \ref{c5.6}.
The existence part is given by Lemma \ref{l4.6}.
Here we need the assumption $\yen[\rho_1]<\infty$ to show that for each fixed $t$, $K(r_k(t))$ is uniformly bounded from above when $k$ is sufficiently large (by convexity of $K$-energy),
 and then we can use the compactness result of \cite{Darvas1602}, Corollary 4.8 to conclude the convergence of $\{r_k(t)\}_k$ up to a subsequence. 

It only remains to check that $\yen$ invariants are equal for two parallel locally finite energy geodesic rays.
\begin{prop}
Suppose $\rho_i:[0,\infty)\rightarrow\mathcal{E}_0^p$, $i=1,\,2$ are two parallel geodesic rays  with unit speed, 
then we have $\yen[\rho_1]=\yen[\rho_2]$.
\end{prop}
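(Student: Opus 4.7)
The plan is to prove the two inequalities $\yen[\rho_1] \leq \yen[\rho_2]$ and $\yen[\rho_2] \leq \yen[\rho_1]$ by a symmetric argument. Each direction is obtained by constructing, for a sequence of times $t_k \nearrow \infty$, a finite energy geodesic from the base point of one ray to a late-time point on the other, passing to a $d_1$-limit via compactness of sublevel sets of $K$, and identifying that limit via the uniqueness of parallel rays (the uniqueness part of Theorem \ref{t1.2}, which it is natural to dispatch first).

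As a preparatory step, I would record two consequences of Theorem \ref{t6.1}. First, applied to the segments $\rho_1|_{[0,T]}$ and $\rho_2|_{[0,T]}$ for varying $T$, Theorem \ref{t6.1} implies that $t \mapsto d_1(\rho_1(t), \rho_2(t))$ is convex on $[0,\infty)$; being bounded by the parallelism hypothesis, it must be non-increasing, so in particular $d_1(\rho_1(t),\rho_2(t)) \leq D := d_1(\rho_1(0),\rho_2(0))$ for all $t \geq 0$. Second, two unit-speed parallel rays $\rho$, $\rho'$ starting at the same point must coincide: the same convexity argument shows $t \mapsto d_1(\rho(t),\rho'(t))$ is convex, bounded, and zero at $t=0$, hence identically zero. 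This establishes the uniqueness part of Theorem \ref{t1.2}.

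For the main inequality, assume first that $K(\rho_1(0)) < \infty$ and $\yen[\rho_2] < +\infty$. Pick $t_k \nearrow \infty$ and let $\gamma_k:[0,L_k] \to \mathcal{E}_0^1$ be the unit-speed finite energy geodesic from $\rho_1(0)$ to $\rho_2(t_k)$. By the triangle inequality and the $D$-bound above, $|L_k - t_k| \leq D$. Convexity of $K$ along $\gamma_k$ gives, for fixed $s$ and large $k$,
$$
K(\gamma_k(s)) \leq \Big(1 - \tfrac{s}{L_k}\Big) K(\rho_1(0)) + \tfrac{s}{L_k} K(\rho_2(t_k)),
$$
which is bounded uniformly in $k$ with $\limsup$ equal to $K(\rho_1(0)) + s\,\yen[\rho_2]$ (using $L_k/t_k \to 1$). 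The compactness result \cite{Darvas1602}, Corollary 4.8, together with a Cantor diagonal extraction and the geodesic continuation trick from Lemma 5.3 of \cite{cc2}, produces a subsequence along which $\gamma_k(s) \to r_\infty(s)$ in $d_1$ for every $s \geq 0$, with $r_\infty$ a unit-speed locally finite energy geodesic ray from $\rho_1(0)$. By Lemma \ref{l4.6}, $r_\infty$ is parallel to $\rho_2$ and hence to $\rho_1$ (parallelism is transitive via the triangle inequality). By the uniqueness above, $r_\infty = \rho_1$, and lower semicontinuity of the $K$-energy yields
$$
K(\rho_1(s)) \leq \liminf_{k\to\infty} K(\gamma_k(s)) \leq K(\rho_1(0)) + s\,\yen[\rho_2].
$$
Dividing by $s$ and letting $s \to \infty$ gives $\yen[\rho_1] \leq \yen[\rho_2]$; swapping the roles yields the reverse inequality.

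The degenerate cases need only a brief remark: if $K(\rho_i(0)) = +\infty$ but $K$ is finite at some later point (which must happen unless $K \equiv +\infty$ on the ray, in which case $\yen = +\infty$ trivially), reparametrize by a time shift, which preserves parallelism and $\yen$; if $\yen[\rho_2] = +\infty$ the stated inequality is automatic, and the reverse argument applied in its own finiteness regime forces $\yen[\rho_1] = +\infty$ as well, by contradiction. The main technical obstacle is the compactness step producing a single subsequence on which $\gamma_k$ converges as a whole ray, and the subsequent identification of the limit with $\rho_1$, which is precisely where uniqueness of parallel rays (and hence Theorem \ref{t6.1}) is indispensable.
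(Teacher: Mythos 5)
Your proof is correct, and its overall skeleton (geodesic segments $\gamma_k$ from $\rho_1(0)$ to $\rho_2(t_k)$, convexity of $K$ along $\gamma_k$, passage to the limit, lower semicontinuity of $K$) matches the paper's. The one step where you genuinely diverge is the identification of $\lim_k\gamma_k(s)$ with $\rho_1(s)$. You obtain a limit ray $r_\infty$ by compactness (\cite{Darvas1602}, Corollary 4.8, plus a diagonal extraction), show via Lemma \ref{l4.6} that $r_\infty$ is parallel to $\rho_2$, hence to $\rho_1$ by transitivity, and then invoke uniqueness of a unit-speed parallel ray from a given base point. The paper instead proves the convergence $\gamma_k(s)\to\rho_1(s)$ directly and quantitatively: reparametrizing both $\gamma_k$ and $\rho_1|_{[0,s_k]}$ (where $s_k=d_1(\rho_1(0),\rho_2(t_k))$) over $[0,1]$ and applying Theorem \ref{t6.1} with coinciding initial endpoints gives
$$
d_1(\gamma_k(s),\rho_1(s))\leq \frac{s}{s_k}\,d_1\big(\rho_2(t_k),\rho_1(s_k)\big)\leq \frac{s}{s_k}\Big(\sup_t d_1(\rho_1(t),\rho_2(t))+d_1(\rho_1(0),\rho_2(0))\Big)\rightarrow 0,
$$
so the full sequence converges and no compactness, no entropy bound, and no uniqueness statement are needed for this step. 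Your route buys nothing extra here and costs the additional hypotheses underlying the compactness theorem (uniform upper bound on $K$, which you do have, but which the direct estimate never uses), and it only yields subsequential convergence, forcing the small extra maneuver with the $\limsup$ of the uniform bound that you correctly perform. Both routes ultimately rest on Theorem \ref{t6.1}; the paper uses it once, directly, while you use it twice, packaged inside Lemma \ref{l4.6} and inside the uniqueness of parallel rays. Your preliminary observations (convexity and hence monotonicity of $t\mapsto d_1(\rho_1(t),\rho_2(t))$, and the resulting uniqueness statement) and your treatment of the degenerate cases $\yen[\rho_2]=+\infty$ are correct and consistent with what the paper does implicitly.
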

\begin{proof}
It is clear that we just need to show $\yen[\rho_1]\leq\yen[\rho_2]$.
The reverse inequality can be obtained by reversing the role of $\rho_1$ and $\rho_2$.
Also we may assume that $\yen[\rho_2]<\infty$, otherwise there is nothing to prove.

Choose $t_k\nearrow\infty$,
and let \sloppy $r_k:[0,d_1(\rho_1(0),\rho_2(t_k))]\rightarrow\mathcal{E}_0^1$
be the unit speed geodesic segment connecting $\rho_1(0)$ and $\rho_2(t_k)$(with $t=0$ corresponding to $\rho_1(0)$).
Let $t\in[0,d_1(\rho_1(0),\rho_2(t_k))]$, we know from the convexity of $K$-energy:
\begin{equation}
\begin{split}
K(r_k(t))&\leq \big(1-\frac{t}{d_1(\rho_1(0),\rho_2(t_k))}\big)K(\rho_1(0))+\frac{t}{d_1(\rho_1(0),\rho_2(t_k))}K(\rho_2(t_k))\\
&\leq \big(1-\frac{t}{d_1(\rho_1(0),\rho_2(t_k))}\big)K(\rho_1(0))+\frac{t}{t_k-d_1(\rho_1(0),\rho_2(0))}K(\rho_2(t_k)).
\end{split}
\end{equation}
In the second inequality, we used 
$$d_1(\rho_1(0),\rho_2(t_k))\geq d_1(\rho_2(0),\rho_2(t_k))-d_1(\rho_1(0),\rho_2(0))=t_k-d_1(\rho_1(0),\rho_2(0)).$$
Hence
\begin{equation}\label{25}
\frac{K(r_k(t))}{t}\leq \bigg(\frac{1}{t}-\frac{1}{d_1(\rho_1(0),\rho_2(t_k))}\bigg)K(\rho_1(0))+\frac{t_k}{t_k-d_1(\rho_1(0),\rho_2(0))}a.
\end{equation}
Next we make the following claim
\begin{claim}
$r_k(t)\rightarrow \rho_1(t),\textrm{ for fixed $t\geq0$ in $d_1$ distance as $k\rightarrow\infty$}.$
\end{claim}
Assuming this claim for the moment, we can fix $t$, and take limit in (\ref{25}) as $k\rightarrow\infty$, and use lower semicontinuity of $K$-energy to get:
\begin{equation}
\frac{K(\rho_1(t))}{t}\leq \lim\inf_k\frac{K(r_k(t))}{t}\leq \frac{K(\rho_1(0))}{t}+a,\textrm{ for any $t>0$.}
\end{equation}
Then we take limit as $t\rightarrow\infty$, and conclude $\yen[\rho_1]\leq a$.

Now it only remains to show the claim. We define the reparametrization: for $\tau\in[0,1]$, $\tilde{r}_k(\tau)=r_k\big(\tau d_1(\rho_1(0),\rho_2(t_k))\big)$, $\tilde{\rho}_1(\tau)=\rho_1\big(\tau d_1(\rho_1(0),\rho_2(t_k))\big)$.
Then we may use Theorem 0.1 to conclude(here $s_k=d_1(\rho_1(0),\rho_2(t_k))$.)
\begin{equation}
d_1(\tilde{r}_k(\tau),\tilde{\rho_1}(\tau))\leq \tau d_1(\rho_2(t_k),\rho_1(s_k)),\textrm{ for any $\tau\in[0,1]$.}
\end{equation}
Then choose $\tau=\frac{t}{s_k}$, we have 
\begin{equation}\label{28}
\begin{split}
d_1(r_k(t),\rho_1(t))&\leq\frac{t}{s_k}d_1(\rho_2(t_k),\rho_1(s_k))\\
&\leq\frac{t}{s_k}\big(d_1(\rho_2(t_k)
,\rho_1(t_k))+d_1(\rho_1(t_k),\rho_1(s_k))\big)\\
&\leq\frac{t}{s_k}\big(\sup_td_1(\rho_2(t),\rho_1(t))+|t_k-s_k|\big)\\
&\leq\frac{t}{s_k}\big(\sup_td_1(\rho_2(t),\rho_1(t))+d_1(\rho_1(0),\rho_2(0))\big).
\end{split}
\end{equation}

In the second inequality, we used triangle inequality.

In the third inequality, we used that $\rho_1$ is a unit speed geodesic ray.

In the last inequality, we used triangle inequality again to conclude $$|t_k-s_k|=|d_1(\rho_2(0),\rho_2(t_k))-d_1(\rho_1(0),\rho_2(t_k))|\leq d_1(\rho_1(0),\rho_2(0)).$$

Finally we let $k\rightarrow\infty$ in (\ref{28}) to see the claim.
\end{proof}
Next we observe that $\yen$-invariant has the following ``lower semicontinuity" property.
\begin{prop}
Let $\rho_k,\,\rho:[0,\infty)\rightarrow\mathcal{E}_0^p$ be locally finite energy geodesic rays with unit speed. Define $d_k=\lim_{t\rightarrow\infty}\frac{d_p(\rho_k(t),\rho(t))}{t}$(This is well-defined according to Corollary \ref{c5.6}).
Suppose that $d_k\rightarrow0$ and $d_p(\rho_k(0),\rho(0))\rightarrow0$ as $k\rightarrow\infty$, then $\yen[\rho]\leq\lim\inf_{k\rightarrow\infty}\yen[\rho_k]$.
\end{prop}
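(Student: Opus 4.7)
The plan is to compare $\yen[\rho]$ with $\yen[\rho_k]$ by replacing $\rho_k$ with a parallel geodesic ray starting at the common base point $\rho(0)$. Without loss of generality, assume $L := \liminf_k \yen[\rho_k] < \infty$ (else the inequality is trivial) and pass to a subsequence with $\yen[\rho_k] \to L$. Since $K(\rho(0)) < \infty$ is implicit in the definition of $\yen[\rho]$, Theorem \ref{t1.2} yields, for each such $k$, a unit speed locally finite energy geodesic ray $r^{(k)}:[0,\infty)\to \mathcal{E}_0^1$ emanating from $\rho(0)$, parallel to $\rho_k$ (so $M_k := \sup_{t>0} d_1(r^{(k)}(t),\rho_k(t)) < \infty$), and satisfying $\yen[r^{(k)}] = \yen[\rho_k]$.

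First I will show that $d_1(r^{(k)}(t),\rho(t))\to 0$ as $k\to\infty$ for each fixed $t>0$. By Corollary \ref{c5.5} applied with $p=1$, the function $g_k(t):=d_1(r^{(k)}(t),\rho(t))$ is convex in $t$ with $g_k(0)=0$, so the difference quotient $g_k(t)/t$ is nondecreasing and converges to its asymptotic slope. Combining the triangle inequality with the comparison $d_1 \leq C d_p$ on $\mathcal{E}^p$ (Darvas) and parallelism of $r^{(k)}$ with $\rho_k$, this slope is bounded by
$$\lim_{t\to\infty}\frac{d_1(r^{(k)}(t),\rho_k(t))}{t} + \lim_{t\to\infty}\frac{d_1(\rho_k(t),\rho(t))}{t} \leq 0 + C d_k.$$
Hence $g_k(t)\leq C d_k t$, and for fixed $t$ this tends to $0$ as $k\to\infty$.

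Next, convexity of $K$-energy along the locally finite energy geodesic ray $r^{(k)}$ (\cite{Darvas1602}, Theorem 4.7) implies that $(K(r^{(k)}(t))-K(r^{(k)}(0)))/t$ is nondecreasing in $t$ with limit $\yen[r^{(k)}]=\yen[\rho_k]$, whence
$$K(r^{(k)}(t)) \leq K(\rho(0)) + t\,\yen[\rho_k].$$
Combining this with $d_1$-lower semicontinuity of $K$-energy (\cite{Darvas1602}, Theorem 4.7) and the convergence $r^{(k)}(t)\to\rho(t)$ in $d_1$ established above, I obtain for each $t>0$:
$$K(\rho(t)) \leq \liminf_{k\to\infty} K(r^{(k)}(t)) \leq K(\rho(0)) + tL.$$
Dividing by $t$ and letting $t\to\infty$ yields $\yen[\rho]\leq L = \liminf_k \yen[\rho_k]$.

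The main technical step is the distance estimate: controlling the rate at which $r^{(k)}(t)$ approaches $\rho(t)$. This relies on parallelism of $r^{(k)}$ with $\rho_k$ (which forces the distance between them to grow sublinearly) combined with the hypothesis $d_k\to 0$. The reduction to a parallel ray anchored at $\rho(0)$ is essential: applying the convexity bound directly along $\rho_k$ would leave an uncontrolled $K(\rho_k(0))/t$ term since we have no a priori upper bound for $K$ at the basepoints $\rho_k(0)$, whereas anchoring at $\rho(0)$ replaces it with the finite quantity $K(\rho(0))$, which disappears in the limit $t\to\infty$.
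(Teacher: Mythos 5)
Your proof is correct, and it reaches the conclusion by a route that differs from the paper's in one essential step. Both arguments share the same skeleton: use convexity of the distance between two geodesic rays as a function of $t$ (Corollary \ref{c5.5}) to upgrade convergence at $t=0$ plus smallness of the asymptotic slope into convergence at every fixed $t$, then combine lower semicontinuity of $K$ with convexity of $K$ along a ray to bound $K(\rho(t))$ linearly in $t$. The difference is which ray carries the convexity bound for $K$. The paper applies it directly along $\rho_k$, obtaining $K(\rho_k(s))/s\le \yen[\rho_k]+K(\rho_k(0))/s$, and then replaces $K(\rho_k(0))$ by $K(\rho(0))$ in the limit --- a step that, as you observe, needs control of $K$ at the moving basepoints (the inequality $\limsup_k K(\rho_k(0))\le K(\rho(0))$ does not follow from $d_p$-convergence of the basepoints together with lower semicontinuity alone). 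You instead invoke Theorem \ref{t1.2} to replace each $\rho_k$ by the parallel ray $r^{(k)}$ emanating from the fixed point $\rho(0)$, with $\yen[r^{(k)}]=\yen[\rho_k]$; the basepoint term is then literally $K(\rho(0))$ and the issue disappears. The price is mild: the reductions $L<\infty$ and $K(\rho(0))<\infty$ (both harmless --- the latter is implicitly used in the paper's own proof through the term $K(\rho(0))/s$), the dependence on Theorem \ref{t1.2} (which is established earlier in the appendix, so there is no circularity), and the extra triangle-inequality step bounding the asymptotic slope of $t\mapsto d_1(r^{(k)}(t),\rho(t))$ by $Cd_k$ using parallelism and $d_1\le C\,d_p$. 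All of these check out, and your version is arguably the more watertight of the two.
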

\begin{proof}
Observe that for any $s>0$, we have $d_p(\rho_k(s),\rho(s))\rightarrow0$.
Indeed, from the convexity property of $t\mapsto d_p(\rho_k(t),\rho(t))$ obtained in Corollary \ref{c5.5}, we know that for any $s'>s>0$, and any $k$
\begin{equation*}
\frac{d_p(\rho_k(s),\rho(s))-d_p(\rho_k(0),\rho(0))}{s}\leq\frac{d_p(\rho_k(s'),\rho(s'))-d_p(\rho_k(0),\rho(0))}{s'}.
\end{equation*}
Let $s'\rightarrow\infty$, we know that 
\begin{equation}
\frac{d_p(\rho_k(s),\rho(s))}{s}\leq d_k+\frac{d_p(\rho_k(0),\rho(0))}{s}\rightarrow0,\textrm{ as $k\rightarrow\infty$ by  assumption.}
\end{equation}
Hence from the lower semicontinuity with respect to $d_p$ convergence, we can conclude that
\begin{equation}\label{5.31n}
\frac{K(\rho(s))}{s}\leq\lim\inf_{k\rightarrow\infty}\frac{K(\rho_k(s))}{s},\textrm{ for any $s>0$.}
\end{equation}
On the other hand, from the convexity of $K$-energy along $\rho_k$, it follows that for any $s''>s>0$,
\begin{equation}
\frac{K(\rho_k(s))}{s}\leq\frac{K(\rho_k(s''))}{s''}+\bigg(\frac{1}{s}-\frac{1}{s''}\bigg)K(\rho_k(0)).
\end{equation}
Let $s''\rightarrow\infty$ in the above and use the definition of $\yen$-invariant, we conclude
\begin{equation}\label{5.33n}
\frac{K(\rho_k(s))}{s}\leq \yen[\rho_k]+\frac{K(\rho_k(0))}{s},\textrm{ for any $s>0$.}
\end{equation}
Finally we let $k\rightarrow\infty$ in (\ref{5.33n}) and combine (\ref{5.31n}), we see
\begin{equation}\label{5.34n}
\frac{K(\rho(s))}{s}\leq\lim\inf_k\frac{K(\rho_k(s))}{s}\leq\lim\inf_k\yen[\rho_k]+\frac{K(\rho(0))}{s},\textrm{ for any $s>0$.}
\end{equation}
Finally we let $s\rightarrow\infty$ in (\ref{5.34n}) to conclude the proof.
\end{proof}

\noindent Xiuxiong Chen\\
University of Science and Technology of China and Stony Brook University\\

\noindent Jingrui Cheng\\
University of Wisconsin at Madison.


\begin{thebibliography}{05}
\bibitem{Ber14-01}
R.J. Berman, B. Berndtsson:
\newblock Convexity of the $K$-energy on the space of K\"ahler metrics.
\newblock J. Amer. math. soc, 30(2017), no 4, 1165-1196.


\bibitem{BBJ15}
R. J. Berman, S. Boucksom, and M. Jonsson.
\newblock A variational approach to the Yau-Tian-Donaldson conjecture.
\newblock  arXiv:1509.04561.

\bibitem{BBEGZ}
R. Berman, S. Boucksom, P. Eyssidieux, V. Guedj, A. Zeriahi.
K\"ahler-Einstein metrics and K\"ahler-Ricci flow on log Fano varieties. Preprint.
\newblock arXiv: 1111.7158.



\bibitem{BHJ16}
S. Boucksom, T. Hisamoto and M. Jonsson.
\newblock Uniform K-stability and asymptotics of energy functionals in K\"ahler geometry.
\newblock   Preprint, arXiv:1603.01026v3.


\bibitem{Darvas1605}
R. Berman, T. Darvas, Chinh H Lu.
\newblock Regularity of weak minimizers of the $K$-enegy and applications to properness and $K$-stability. Preprint.
\newblock arXiv: 1602.03114.

\bibitem{Darvas1602}
R. Berman, T. Darvas, Chinh H Lu.
\newblock Convexity of the extended $K$-energy and the large time behaviour of the weak Calabi flow.
\newblock Geom. Topol. 21(2017), no. 5, 2945-2988.


\bibitem{blocki}
Z. Blocki:
\newblock Uniqueness and stability for the complex Monge-Ampere equation on compact K\"ahler manifolds.
\newblock Indiana Univ. math. J. vol 52, No. 6(2003), 1697-1701.




\bibitem{calabi82}
E. Calabi.
\newblock Extremal K\"ahler metrics.
\newblock Seminar on Differential Geometry, Ann. of Math. Stud., 102(1982), 259-290, Princeton University Press.

\bibitem{calabi85}
E. Calabi.
\newblock Extremal K\"ahler metrics(II).
\newblock Differential Geometry and complex analysis, 1985, 95-114, Springer, Berlin.

\bibitem{calabi-chen}
E. Calabi, X. X. Chen.
\newblock The space of K\"ahler metrics II,
\newblock J. Differential Geom, vol 61(2002), no. 2, 173-193.

\bibitem{CLS10}
Bohui. Chen, An-Min Li and Li Sheng.
\newblock Extremal metrics on toric surfaces.
\newblock Preprint, arXiv:1008.2607

\bibitem{CLS11}
Bohui. Chen, An-Min Li and Li Sheng.
\newblock Uniform K-stability for extremal metrics on toric varieties
\newblock J. Differential Equations. 257 (2014), no. 5, 1487–1500.

\bibitem{CHLS14}
Bohui, Chen, Qing, Han, An-Min Li and Li Sheng.
\newblock Interior estimates for the $n$-dimensional Abreu's equation.
\newblock Adv. Math. 251(2014), 35-46.


\bibitem{chen00}
X. X. Chen.
\newblock The space of K\"ahler metrics.
\newblock J. Differential Geom, vol 56(2000), no. 2, 189-234.

\bibitem{chen-imrn}
X.-X. Chen.
\newblock On the lower bound of the Mabuchi energy and its application.
\newblock International Mathematics Research Notices, vol  12(2000), 607-623.

\bibitem{chen15}
X.-X. Chen:
\newblock  On the existence of constant scalar curvature K\"ahler metric: a new perspective.
\newblock   To appear in Annales math\'{e}matiques de Qu\'{e}bec,	https://link.springer.com/article/10.1007/s40316-017-0086-x,
\newblock  arXiv:1506.06423.



\bibitem{cc1}
X.-X. Chen, J. Cheng.
\newblock On the constant scalar curvature K\"ahler metrics(I): A priori estimates
\newblock preprint.

\bibitem{cc2}
X.-X. Chen, J. Cheng.
\newblock On the constant scalar curvature K\"ahler metrics(II): Existence results
\newblock preprint.

\bibitem{chenhe12}
X.X. Chen and W-Y. He.
\newblock The complex Monge-Amp\'{e}re equation on compact K\"{a}hler manifolds. 
\newblock Math. Ann. 354 (2012), no. 4, 1583--1600.

\bibitem{CPZ}
X.X. Chen, M. Paun and Yu Zeng.
\newblock On deformation of extremal metrics.
\newblock Preprint,  	arXiv:1506.01290.


\bibitem{CS2012}
X. X. Chen and S. Sun
\newblock  Space of K\"ahler metrics (V)-K\"ahler quantization.
\newblock  {\it Metric and differential geometry}, 19-41, Progr. Math., 297, Birkh\"auser/Springer, Basel, 2012. 

\bibitem{CoSz}
T. Collins, G.Sz$\acute{\text{e}}$kelyhidi.
 { Convergence of the J-flow on Toric manifolds}.
 \newblock  J. Differential Geom. 107(2017), no. 1, 47-81.
 


\bibitem{Mabuchi-completion}
T. Darvas.
\newblock The Mabuchi completion of the space of K\"ahler potentials.
\newblock Amer. J. Math, vol 139(2017), no 5, 1275-1313.



\bibitem{Darvas1402}
T. Darvas.
\newblock The Mabuchi Geometry of Finite Energy Classes. 
\newblock Adv. Math. 285(2015), 182-219.



\bibitem{Dar-Rub-17}
T. Darvas, Y. Rubinstein.
\newblock Tian's properness conjecture and Finsler geometry of the space of K\"ahler metrics.
\newblock J. Amer. Math. Soc. vol 30(2017), 347-387.


\bibitem{Dervan171}
R. Dervan,
\newblock Relative K-stability for K\"ahler manifolds,
\newblock  Preprint, arXiv:1611.00569v2.


\bibitem{Dona02new}
S.~K. Donaldson:
\newblock Scalar curvature and stability of toric varieties. 
\newblock J. Differential Geom. 62(2002), no 2, 289-349.

\bibitem{Dona08}
S.~K. Donaldson:
\newblock Extremal metrics on toric surfaces: a continuity method.
\newblock J. Differential Geom. 79(2008), no 3, 384-432.

\bibitem{Dona09}
S.~K. Donaldson:
\newblock Constant scalar curvature metrics on toric surfaces.
\newblock Geom. funct. anal. 19(2009), 83-136.

\bibitem{Hashi}
Y. Hashimoto:
\newblock  Existence of twisted constant scalar curvature K\"ahler metrics with a large twist. Preprint.
\newblock arXiv: 1508.00513.

\bibitem{He-Zeng}
W-Y. He, Y. Zeng.
\newblock Constant scalar curvature equation and the regularity of its weak solutions. Preprint,
\newblock arXiv: 1705.01236.


\bibitem{HIS16}
T. Hisamoto:
\newblock  Stability and coercivity for toric polarizations.
\newblock Preprint, arxiv: 1610.07998v1.

\bibitem{Kolo98}
S. Kolodziej:
\newblock The complex Monge-Amp$\grave{\textrm{e}}$re equations.
\newblock Acta Mathematica, vol 180, no. 1(1998), 69-117.


\bibitem{Ross05}
J. Ross:
\newblock Unstable products of smooth curves.
\newblock Invent. Math. 165(2006), 153-162.

\bibitem{Ross11}
Julius Ross, David Witt Nystrom.
\newblock Analytic test configurations and geodesic rays.
\newblock  Preprint, arXiv:1101.1612

\bibitem{stoppa}
J. Stoppa.
\newblock $K$-stability of constant scalar curvature K\"ahler manifolds.
\newblock Advances in Math. vol 221, (2009), no. 4, 1397-1408.

\bibitem{Sz06}
G. Sz$\acute{\text{e}}$kelyhidi:
\newblock  Extremal metrics and K-stability. 
\newblock arXiv:0611002. Ph.D Thesis. 1

\bibitem{tian87}
G. Tian.
\newblock On K\"ahler-Einstein metrics on certain K\"ahler manifolds with $C_1(M)>0$.
\newblock {Invent. Math.} 89(1987), 225-246.

\bibitem{Yau78}
S.-T. Yau.
\newblock On the {R}icci curvature of a compact {K}\"ahler manifold and the
  complex Monge-Amp$\grave{\text{e}}$re equation, ${I}^*$.
\newblock {\em Comm. Pure Appl. Math.}, 31:339--441, 1978.

\bibitem{zeng}
Y.  Zeng:
\newblock Deformations from a given K\"ahler metric to a twisted cscK metric. Preprint.
\newblock arXiv: 1507.06287.

\bibitem{zz08}
B. Zhou and X.H. Zhou
\newblock Relative $K-$stability and modified $K$-energy on Toric manifolds.
\newblock Advance in Mathematics, 219 (2008), no. 4, 1327-1362.
 
\end{thebibliography}
\end{document}